\documentclass[reqno,12pt]{amsart}
\usepackage{amssymb}
\usepackage{amsmath}
\usepackage{amsthm}
\usepackage{graphicx}
\usepackage{cite}

\usepackage[colorlinks,linkcolor=blue,anchorcolor=red,citecolor=blue]{hyperref}
\usepackage[margin=1in]{geometry} 
\usepackage{marginnote}

\usepackage{color}

\newtheorem{thm}{Theorem}[section]
\newtheorem{cor}[thm]{Corollary}
\newtheorem{lem}[thm]{Lemma}
\newtheorem{prop}[thm]{Proposition}
\newtheorem{rem}{Remark}[section]

\numberwithin{equation}{section}

\newcommand{\R}{\mathbb{R}}

\newcommand{\CE}{\mathcal{E}}

\newtheorem{defn}{Definition}[section]

\newcommand{\vertiii}[1]{{\left\vert\kern-0.25ex\left\vert\kern-0.25ex\left\vert #1
		\right\vert\kern-0.25ex\right\vert\kern-0.25ex\right\vert}}

\makeatletter
\@namedef{subjclassname@2020}{%
	\textup{2020} Mathematics Subject Classification}
\makeatother

\begin{document}
\title[Time-periodic problem on the Boltzmann equation]
{Three-dimensional time-periodic problem on the Boltzmann equation with external force}

\author[R.~Duan]{Renjun Duan}
\address[R.~Duan]{Department  of Mathematics, The Chinese University of Hong Kong,  Shatin, Hong Kong, P.R.~China}
\email{rjduan@math.cuhk.edu.hk}

\author[J.~Ni]{Jinkai Ni}   
\address[J.~Ni]{School of Mathematics, Nanjing University, Nanjing
 210093, P.R.~China}
\email{jinkaini123@gmail.com}

\begin{abstract}
The time-periodic problem on the Boltzmann equation with a given time-periodic external force in the three-dimensional whole space has remained open since it was first studied in \cite{DUYZ-CMP-2008} for only spatial dimensions not less than five. The goal of this paper is to give an affirmative answer to this problem provided that the external force is sufficiently small in the function space $\mathcal{C}(\mathbb{R};\dot{B}^{-3/2}_{2,\infty}\cap\dot{H}^N)$ with $N\geq 4$. The proof is based on Serrin's method  through studying the global-in-time stability of the Cauchy problem with time-periodic external forces. As a direct consequence, the result also yields the existence and stability of stationary solutions to the physically realistic three-dimensional Boltzmann equation when the external force is time-independent.   
\end{abstract}

	
\subjclass[2020]{35A01, 35B10, 35B35, 76P05.}

\keywords{Boltzmann equation, time-periodic force, time-periodic solution, global well-posedness, asymptotic stability, decay rates.}
\maketitle
\thispagestyle{empty}
	

\setcounter{equation}{0}
\indent 
\allowdisplaybreaks

\section{Introduction}

The Boltzmann equation for the hard-sphere gas under the influence of an external force in the three-dimensional full space is expressed as
\begin{align}\label{G1.1} 
\partial_t F + v\cdot \nabla_{x} F+E\cdot\nabla_{v}F = Q(F,F), 
\end{align}
where the unknown non-negative function $F = F(t,x,v)$ represents the velocity distribution function of gas particles at time $t \in \mathbb{R}$, position $x \in \mathbb{R}^3$, and velocity $v \in \mathbb{R}^3$. The given external force field is denoted as $E = E(t, x)\in \R^3$ independent of velocity $v$.  The bilinear collision operator $Q(\cdot,\cdot)$ for the hard-sphere model takes the form:
\begin{align*}
Q(F,G)=\int_{\mathbb{R}^3}\!\int_{\mathbb{S}^2}|(v - v_{*})\cdot\omega| (F^\prime G^\prime_{*}-FG_{*}) \,{\rm d}\omega{\rm d}v_{*},
\end{align*}
where
\begin{align*}
F=F(t,x,v),\quad F^\prime=F (t,x,v^\prime),\quad G_{*}=G(t,x,v_{*}),\quad G_{*}^\prime=G(t,x,v_{*}^\prime),     
\end{align*}
with
\begin{align*}
v^\prime= v-[(v-v_{*})\cdot\omega]\omega,\quad v_{*}^\prime=v_{*}+[(v-v_*)\cdot\omega]\omega, \quad \omega\in \mathbb{S}^2.  
\end{align*}

The Boltzmann equation stands as a cornerstone of kinetic theory, describing the nonequilibrium statistical behavior of dilute gases at the mesoscopic level, cf.~\cite{CIP-1994, Glassey 1996, Villani 2002}; see also a recent breakthrough Deng-Hani-Ma \cite{DHM} for the rigorous derivation of Boltzmann equation from the hard sphere system over an arbitrarily long time. A fundamental question that has persisted in the mathematical analysis of this equation is whether time-periodic external forces---ubiquitous in physical applications ranging from alternating electromagnetic fields to oscillatory mechanical driving---can sustain time-periodic distribution functions in the physically relevant three-dimensional space. Namely, we assume that the external force is time-periodic of period $T > 0$, i.e., $E(t,x)=E(t + T,x)$ for any $t\in\mathbb R$ and $x\in\mathbb R^3$, which is also applicable to the stationary case if $E=E(x)$ is further independent of time. Under such assumption, our final goal of this paper is to consider the following time-periodic problem:
 \begin{equation}\label{G1.2}
\left\{\begin{aligned}
& \partial_t F_{T}+ v\cdot \nabla_{x} F_{T}+E\cdot\nabla_{v}F_{T}=Q(F_{T},F_{T}) \quad \text{in}\quad \mathbb R\times \mathbb R_{x}^3\times \mathbb R_{v}^3,\\
&F_{T}(t,x,v)=F_{T}(t+T,x,v)\quad \text{for}\quad (t,x,v)\in \mathbb R\times \mathbb R_x^3\times \mathbb R_v^3, 
 \end{aligned}
 \right.
\end{equation}   
where we include the subscript $T$ to emphasize the dependence of $F$ on the period $T$. This question addresses the very capability of the Boltzmann equation to model periodically forced nonequilibrium states. While the first author of this paper with Ukai, Yang and Zhao \cite{DUYZ-CMP-2008} solved the problem in higher spatial dimensions ($d\geq 5$) where stronger dissipative effects simplify the analysis, the three-dimensional problem---which is precisely the dimension of our physical world---has remained unresolved so far. The present work closes this gap by establishing, for the first time, the existence and stability of time-periodic solutions to the hard-sphere Boltzmann equation driven by a small time-periodic external force in three spatial dimensions. This result validates the physical relevance of the periodic forcing mechanism. Moreover, as a direct corollary, our analysis yields the existence and stability of stationary solutions when the external force becomes time-independent, offering a unified framework for understanding both steady and periodically forced nonequilibrium states. 

We search for solutions of equation \eqref{G1.1} around the normalized global Maxwellian $M$, 
which is defined as 
\begin{align*}M = M(v)=(2\pi)^{-\frac{3}{2}}e^{-\frac{|v|^2}{2}}.   \end{align*}
Subsequently, we define the standard perturbation $f(t,x,v)$ with respect to $M$ as 
\begin{align*}F = M+\sqrt{M}f,    \end{align*}
from which the Boltzmann equation \eqref{G1.1} for the perturbation $f$ can be reformulated as follows: \begin{align}\label{G1.3}
\partial_t f + v\cdot\nabla_x f + \mathcal{L}f = \Gamma (f,f)  -E\cdot \nabla_{v} f+\frac{1}{2}E\cdot v f+E\cdot v\sqrt{M} , \end{align}
for $(t,x,v)\in \mathbb R\times \mathbb R^3_{x}\times \mathbb R^3_{v}$.
In \eqref{G1.3}, the linearized collision operator $\mathcal{L}$ and the nonlinear collision operator $\Gamma$ are respectively given by
\begin{align*}
\mathcal{L}g=-\frac{1}{\sqrt{M}}\big[ Q(M,\sqrt{M}g)+Q(\sqrt{M}g,M)  \big]   ,  
\end{align*}
and
\begin{align*}
\Gamma(g_1,g_2)=\frac{1}{\sqrt{M}}Q(\sqrt{M }g_1,\sqrt{M}g_2). 
\end{align*}
Recall (cf.~\cite{CIP-1994}) that the linearized collision operator $\mathcal L$ is non-negative definite on $L^2(\mathbb{R}_v^3)$, and its null space is five-dimensional, defined by
\begin{align*}
\mathcal{N}=\ker\mathcal{L}=\mathrm{span}\big\{1, v_1,v_2,v_3,|v|^2\big\}\sqrt{M}.
\end{align*}
For any fixed $(t,x)$, we define $\mathbf{P}$ as the orthogonal projection from $L^2(\mathbb{R}_v^3)$ onto $\mathcal{N}$. 
For later use, for any function $f(t,x,v)$, we introduce the macro-micro decomposition as 
\begin{align}\label{G1.4}
f = \mathbf{P}f+\{\mathbf{I}-\mathbf{P}\}f,
\end{align}
where $\mathbf{P}f$ is called the macro (fluid) part of $f$, and $\{\mathbf{I}-\mathbf{P}\}f$ is the micro (non-fluid) part.

There is an extensive literature on the global existence and asymptotic behavior theory of Boltzmann's Cauchy problem. For the Boltzmann equation without external forces, DiPerna and Lions \cite{DL-AM-1989} first established the global existence of renormalized weak solutions to the Cauchy problem with general large initial data. However, the uniqueness of such solutions remains open. For the initial-boundary value problem under different boundary conditions, Hamdache \cite{Hamdache-ARMA-1992} and Arkeryd-Maslova \cite{AM-JSP-1994}  obtained the global existence of renormalized weak solutions. Desvillettes and Villani \cite{DV-IM-2005} obtained the almost exponential convergence result for large-data solutions satisfying some additional regularity assumptions. Subsequently, Gualdani, Mischler, and Mouhot \cite{GMM-MSM-2017} extended the result for attaining optimal exponential convergence.

For classical solutions near the global Maxwellian equilibrium, significant progress has been made on both global well-posedness and time-asymptotic behavior. Ellis and Pinsky \cite{EP-JMPA-1975} developed the spectral analysis of the linearized Boltzmann equation and its fluid-dynamical limits. Based on the spectral analysis theory of the Boltzmann equation and the bootstrap argument, Ukai \cite{Ukai-1974} first established the global existence and exponential decay of strong solutions on the torus. The corresponding algebraic decay of strong solutions on the whole space can be found in \cite{NI-1976-PRIMS,Ukai-1976}.  Moreover, regarding the long time behavior, by combining the compensation function and the Fourier energy method, the time decay rates of the Boltzmann equation were investigated by Kawashima \cite{Ks-JJAM-1990}. Furthermore, there are also extensive results concerning the global existence and uniqueness of classical solutions to the Boltzmann equation near vacuum. Interested readers can refer to \cite{BT-JMP-1985,Glassey 1996,IS-CMP-1984}.
On the other hand, to study the global existence and uniqueness of strong solutions to the Boltzmann equation near the Maxwellian equilibrium, Guo \cite{Gy-IUMJ-2004} proposed a nonlinear energy method based on macro-micro decomposition around global Maxwellians, and Liu-Yang-Yu \cite{LYY-PD-2004} developed the energy method around local Maxwellians for studying stability of wave patterns in the context of conservation laws.

The study of time-periodic solutions is a classical and significant issue in both kinetic theory and gas dynamics. Regarding the compressible Navier–Stokes equations, these solutions and their stability characteristics have been comprehensively analyzed in various settings; for instance, refer to Beirão da Veiga \cite{Bb-ARMA-2005}, Feireisl et al. \cite{EMPS-ARMA-1999}, and Valli–Zajaczkowski \cite{VZ-CMP-1986}. In particular, for the isentropic case, Kagei and Tsuda \cite{KaTs} proved the existence of time-periodic solutions for sufficiently small time-periodic external force with some symmetry for the space dimension $d\geq 3$. The proof is based on the spectral properties of the time-$T$-map associated with the linearized problem around the motionless state with constant density in some weighted Sobolev space.  Tsuda \cite{Tk-ARMA-2016} further obtained the same result without any symmetry condition on the external force field. In the non-isentropic case, the time-periodic problem on the full compressible Navier-Stokes-Fourier system with  a time-periodic external force was very recently studied by Deguchi \cite{Deguchi-2025}. 

On the kinetic aspect, the periodic solutions to the Boltzmann equation driven by time-periodic source terms were investigated by Ukai \cite{Ukai-2006} and Ukai-Yang \cite{UY-AA-2006}, provided that an extra spatially zero-average condition is imposed on the inhomogeneous source term $S_0$ in case of the physically relevant three dimensions. Their proof is based on the fixed point argument for a nonlinear mapping over an infinite time integral $(-\infty,t)$, where the zero-average condition helps gain the faster time decay of the semigroup such that the infinite time integral term is integrable for the time-periodic source. Recently, the authors of this paper with Lei \cite{DLN-2026} removed this zero-average restriction on $S_0$ and developed a global dynamic theory in three dimensions. Nevertheless, for the Boltzmann equation with a time-periodic external force in the whole space, the theory remains far from complete. To the best of our knowledge, the existing result in  \cite{DUYZ-CMP-2008} mentioned before is applicable in only dimensions $d\geq 5$, and the physically crucial case $d = 3$ remains unresolved. The purpose of this paper is to address this problem for $d = 3$. Motivated by earlier work \cite{DL-2015-AMSSB} and the aforementioned preprint \cite{Deguchi-2025}, we shall adopt Serrin's approach \cite{Serrin-1959-ARMA} to treat the issue.

Before studying the time-periodic solution of \eqref{G1.3}, we first consider the following Cauchy problem:
 \begin{equation}\label{G1.5}
\left\{\begin{aligned}
& \partial_t f + v\cdot\nabla_x f + \mathcal{L}f  \\
&\quad= \Gamma (f,f)-E\cdot\nabla_v f+\frac{1}{2}E\cdot vf+E\cdot v\sqrt{M}, \quad (t,x,v)\in \mathbb R^+\times \mathbb R^3_{x}\times \mathbb R^3_{v},\\
 &f(0,x,v)=f_0(x,v)=\frac{F_{0}(x,v)-M}{\sqrt{M}}, \quad    (x,v)\in    \mathbb R^3_{x}\times \mathbb R^3_{v}.
 \end{aligned}
 \right.
\end{equation} 
Similarly to \cite{DLN-2026}, for the low-frequency part, we do not select the general $L^2_{x,v}$-norm; instead, we choose the $L_v^2(\dot B^{1/2}_{2,\infty})$. Our selection of homogeneous Besov spaces $L_v^2(\dot B^{1/2}_{2,\infty})$ is motivated by \cite{Deguchi-2025} for a study of the full compressible Navier-Stokes-Fourier system in three dimensions, which essentially noted that $\frac{1}{|x|}\in \dot B_{2,\infty}^{1/2}$.

To begin with, we show the global existence of strong solutions to the Cauchy problem \eqref{G1.5} in order to apply Serrin's approach in \cite{Serrin-1959-ARMA} for further studying the time-periodic problem \eqref{G1.2}. For later use, we introduce the energy norm $\|\cdot\|_{\CE^{s,N}}$ as 
\begin{align}
\label{norm.E}
\|f\|_{\mathcal{E}^{s,N}}:=&\|f\|_{L_v^2(\dot B_{2,\infty}^{s}\cap \dot H^N)}+\|\langle v\rangle f \|_{L_v^2(\dot H^1\cap \dot H^{N-1})} +\|\{\mathbf{I}-\mathbf{P}\}f\|_{L_v^2(L^2)}\nonumber\\
&\quad+\sum_{\substack{ 1\leq |\beta|\leq N \\|\alpha|+|\beta| \leq N}}\|\partial^\alpha_{x}\partial^\beta_{v}\{\mathbf{I}-\mathbf{P}\}f \|_{L_{v}^2(L^2)}.
\end{align}
The first main result is stated as follows.

\begin{thm}\label{Th1}
    Let $F_0(x,v)=M+\sqrt{M}f_0(x,v)\geq 0$ with
    \begin{equation*}
    f_0\in  L_v^2(\dot B_{2,\infty}^{\frac{1}{2}}\cap\dot H^N),\quad \langle v\rangle f_0\in L_v^2(\dot H^1\cap \dot H^{N-1}),\quad \{\mathbf{I}-\mathbf{P}\}f_0\in H^N_{x,v},
    \end{equation*}
 for an integer $N\geq 3$, then there exists a constant $\delta_0>0$ such that if the external force $E$ and the initial data $f_0$ satisfy
\begin{equation}\label{G1.6}
\|f_0\|_{\CE^{\frac{1}{2},N}}+\|E (t)\|_{\mathcal{C} (\mathbb R; \dot B_{2,\infty}^{-\frac{3}{2}}\cap \dot H^N)  }\leq \delta_0,
\end{equation}
then the Cauchy problem \eqref{G1.5} admits a unique global solution $f(t,x,v)$ that satisfies $F(t,x,v)=M+\sqrt{M}f(t,x,v)\geq 0$ with
\begin{align*}
f\in \mathcal{C}\big([0,\infty);L_v^2(\dot B_{2,\infty}^{\frac{1}{2}}\cap \dot H^N)\big),\  \langle v\rangle f\in \mathcal{C}\big([0,\infty);L_v^2( \dot H^1\cap \dot H^{N-1})\big),\ 
\{\mathbf{I}-\mathbf{P}\}f\in \mathcal{C}\big([0,\infty);H^N_{x,v}\big),
\end{align*}
and
\begin{align}\label{G1.7}
\|f(t)\|_{\CE^{\frac{1}{2},N}}
\leq\, C_0 \left(\|f_0\|_{\CE^{\frac{1}{2},N}}
+\|E (t)\|_{\mathcal{C} (\mathbb R; \dot B_{2,\infty}^{-\frac{3}{2}}\cap \dot H^N)  }\right), 
\end{align}
for any $t\geq 0$. Here $C_0$ is a positive constant independent of $t$. 
\end{thm}

\begin{rem}
Unlike  \cite[Theorem 1.1]{DLN-2026}, the nonlinear terms involve the $v$-weighted term and $v$-derivative term corresponding to $E \cdot vf$ and $E  \cdot \nabla_v f$. Therefore, relying only on the initial data condition $f_0 \in L_v^2(\dot B^{1/2}_{2,\infty}\cap \dot H^N)$ remains insufficient. To address this trouble, we require additional initial-data conditions for velocity weight and velocity derivative, namely, $\langle v\rangle f_0 \in L_v^2(\dot H^1\cap\dot H^{N - 1})$, $\{\mathbf{I}-\mathbf{P}\}f_0\in L_v^2(L^2)$, and  $\partial^\alpha_{x}\partial^\beta_{v}\{\mathbf{I}-\mathbf{P}\}f_0\in L_{v}^2(L^2)$ with $|\alpha|+|\beta|\leq N$ and $1\leq |\beta|\leq N$.
\end{rem}

Next, we demonstrate the asymptotic stability of the Cauchy problem \eqref{G1.5} that also needs to be used in the proof of the time-periodic problem \eqref{G1.2}.

\begin{thm}\label{Th2}
Let $0<\varepsilon<\frac{1}{2}$ and let $N\geq 4$ be an integer. 
Let $F^{(1)}_0(x,v)=M+\sqrt{M}f^{(1)}_0(x,v)\geq 0$ and $F^{(2)}_0(x,v)=M+\sqrt{M}f^{{(2)}}_0(x,v)\geq 0$, and let $f^{(1)}$ and $f^{(2)}$ be the pair of solutions of the Cauchy problem \eqref{G1.5} with initial data $f^{(1)}_0(x,v)$ and $f^{(2)}_0(x,v)$, respectively. Suppose that the external force  $E$ and the initial data $f^{(1)}_0$ and $f^{(2)}_0$ satisfy \eqref{G1.6} in Theorem \ref{Th1} with $\delta_0>0$. Then, if the initial data further satisfy
\begin{align}\label{G1.8}
f^{(1)}_0 - f^{(2)}_0\in L_v^2(\dot B_{2,\infty}^{s_0}),
\end{align}
for some $s_0\in\big(-\frac{3}{2},\frac{1}{2}\big]$, then we obtain the following time-weighted estimate
\begin{align}\label{G1.9}
&\sup_{t\geq 0}(1 + t)^{\frac{s - s_0}{2}}\big\|(f^{(1)} - f^{(2)})(t)\big\|_{L_v^2(\dot B_{2,\infty}^{s}\cap\dot H^{N-1})}\nonumber\\
&+\sup_{t\geq 0} (1+t)^{\frac{1-\varepsilon-s_0}{2}}  \big \|\{\mathbf{I}-\mathbf{P}\}  (f^{(1)}-f^{(2)})(t)\big\|_{L_{x,v}^2}\nonumber\\
 &+\sup_{t\geq 0} (1+t)^{\frac{1-\varepsilon-s_0}{2}} 
 \big\|\langle v\rangle (f^{(1)}-f^{(2)}) (t)\big\|_{L_v^2(\dot H^1\cap\dot H^{N-2})}\nonumber\\
& +\sup_{t\geq 0} (1+t)^{\frac{1-\varepsilon-s_0}{2}}   \sum_{\substack{ 1\leq |\beta|\leq N-1 \\|\alpha|+|\beta| \leq N-1}}\big\|\partial^\alpha_{x}\partial^\beta_{v}\{\mathbf{I}-\mathbf{P}\}( f^{(1)}-f^{(2)})(t) \big\|_{L_{x,v}^2 }   \nonumber\\
\leq\,& C_1\|f^{(1)}_0 - f^{(2)}_0\|_{\CE^{s_0,N-1}},
\end{align}
for any $s\in\big[-\frac{3}{2}+\varepsilon,1-\varepsilon \big]$ with $s\geq s_0$. Here $C_1$ is a positive constant independent of $t$. 
\end{thm}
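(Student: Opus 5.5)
Set $g=f^{(1)}-f^{(2)}$. Subtracting the two equations \eqref{G1.5}, the source $E\cdot v\sqrt M$ cancels and $g$ solves
\begin{align*}
\partial_t g+v\cdot\nabla_x g+\mathcal L g=\Gamma(g,f^{(1)})+\Gamma(f^{(2)},g)-E\cdot\nabla_v g+\tfrac12 E\cdot v\,g,\qquad g(0)=f^{(1)}_0-f^{(2)}_0 .
\end{align*}
This is a linear problem with small coefficients. Theorem \ref{Th1} controls $f^{(i)}$ uniformly in time in $\CE^{\frac12,N}$ by $C_0\delta_0$, and $E$ is small in $\mathcal C(\mathbb R;\dot B^{-3/2}_{2,\infty}\cap\dot H^N)$. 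Neither $f^{(i)}$ nor $E$ decays in time, so the whole decay must come from the linearized operator. I will therefore run a bootstrap on the time-weighted functional $X(t)$ defined by the left-hand side of \eqref{G1.9} with $\sup_{t\ge0}$ replaced by $\sup_{0\le\tau\le t}$. The goal is
\[
X(t)\le C\|g_0\|_{\CE^{s_0,N-1}}+C\delta_0X(t),
\]
and closing this for small $\delta_0$ gives \eqref{G1.9}.

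\textbf{Step 1 (linear decay).} Write $g$ by Duhamel with the linearized semigroup $e^{-tB}$, $B=v\cdot\nabla_x+\mathcal L$. I will use the known frequency-localized pointwise estimate
\[
|\widehat{e^{-tB}h}|\lesssim e^{-ct\min(1,|\xi|^2)}|\hat h|
\]
in $L^2_v$, together with its refinement that the micro part gains an extra factor $|\xi|$ at low frequency. On each dyadic block $|\xi|\sim2^j$ this gives
\[
\|e^{-tB}h\|_{L^2_v(\dot B^{s}_{2,\infty})}\lesssim(1+t)^{-\frac{s-s_0}2}\|h\|_{L^2_v(\dot B^{s_0}_{2,\infty})}
\]
for $s\ge s_0$ at low frequency, and exponential decay at high frequency. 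The micro-part gain yields the rate $(1+t)^{-\frac{1-s_0}2}$, which I weaken to the stated $(1+t)^{-\frac{1-\varepsilon-s_0}2}$ so there is room in the nonlinear step.

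\textbf{Step 2 (Duhamel terms at low frequency).} Place each nonlinear term in $L^2_v(\dot B^{s_0'}_{2,\infty})$, where $s_0'$ is taken close to $-\frac32$ via the product law $\dot B^{-3/2}_{2,\infty}\cdot \dot H^{3/2+}\hookrightarrow\dot B^{-3/2}_{2,\infty}$ and similar embeddings.
\begin{itemize}
\item For $E\cdot v g$, bound $E$ in $\dot B^{-3/2}_{2,\infty}\cap\dot H^N$ and $\langle v\rangle g$ in higher Sobolev norms. This produces the decay factor $(1+\tau)^{-\frac{1-\varepsilon-s_0}2}$ times $\delta_0X$.
\item For $\Gamma(g,f^{(1)})$, use that $\Gamma$ is purely microscopic (it lies in $\mathcal N^\perp$), so the extra micro gain applies.
\end{itemize}
The time integral then has the form $\int_0^t(1+t-\tau)^{-\frac{s-s_0'}2}(1+\tau)^{-\frac{\gamma}2}\,d\tau$. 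This is bounded by $(1+t)^{-\frac{s-s_0}2}$ provided the exponents sum beyond $1$ with one of them below $1$. This is exactly why $s\le 1-\varepsilon$, $s\ge-\frac32+\varepsilon$, and the loss $\varepsilon$ are needed: the endpoints $\pm\frac32$ and the borderline integral are avoided.

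\textbf{Step 3 (high frequency and weighted micro norms).} Use the energy method with macro–micro decomposition as in the proof of Theorem \ref{Th1}, at regularity $N-1$. One derivative is sacrificed because $E\cdot\nabla_v g$ and $E\cdot vg$ require $\partial_x^\alpha\partial_v^\beta$ estimates that lose one order. This is why $N\ge4$ is assumed and why the norms in \eqref{G1.9} stop at $N-1$ and $N-2$.
\begin{itemize}
\item The energy inequality takes the form $\frac d{dt}\mathcal E_{N-1}(g)+\lambda\mathcal D_{N-1}(g)\lesssim\delta_0\mathcal D_{N-1}(g)+\|\text{low-frequency part}\|^2$.
\item I will combine it with time weights $(1+t)^{1-\varepsilon-s_0}$ and feed in the low-frequency decay from Step 2.
\item The velocity-weighted terms $\langle v\rangle\partial^\alpha g$ are handled by weighted energy estimates. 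For hard spheres, the coercivity $\nu(v)\sim\langle v\rangle$ absorbs $\tfrac12E\cdot v\,g$ against $\delta_0$.
\end{itemize}

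I expect the main obstacle to be the term $E\cdot v g$ together with $E\cdot\nabla_vg$ acting on the macroscopic part, since it produces non-decaying forcing coupled to $\mathbf Pg$. My first attempt is to write $E\cdot v\,\mathbf Pg$ so as to exploit the structure of $\mathbf Pg$ (conservation-law form plus microscopic remainder), and to use that $E$ lies in the negative Besov space $\dot B^{-3/2}_{2,\infty}$, which gives the needed decay in the low-frequency Duhamel integral.
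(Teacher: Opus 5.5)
Your proposal follows essentially the same route as the paper. The paper runs a time-weighted high-frequency energy estimate at regularity $N-1$ (macro--micro decomposition, $\nu$-weighted and mixed $\partial_x^\alpha\partial_v^\beta$ microscopic estimates) driven by the decay of $\|g\|_{L^2_v(\dot B^{1-\varepsilon}_{2,\infty})}$. It couples this with low-frequency Duhamel estimates in which $E\cdot v g$ and $E\cdot\nabla_v g$ are placed in $L^2_v(\dot B^{-3/2}_{2,\infty})$ by a product law against the faster-decaying $\dot H^1$-type norms of $\langle v\rangle g$ and $\nabla_v g$, with $\Gamma$ handled by the microscopic gain and everything closed by smallness of $\delta_0$.

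The ``main obstacle'' you flag is already resolved by your Step~2. Since $\|\langle v\rangle\mathbf{P}g\|_{L^2_v(\dot H^1)}\lesssim\|g\|_{L^2_v(\dot H^1)}$ decays at the rate $\frac{1-\varepsilon-s_0}{2}$, no conservation-law structure of $E\cdot v\,\mathbf{P}g$ is needed, and the paper uses none.
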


\begin{rem}
Our method here differs from that in \cite{Deguchi-2025,DLN-2026}. We need to overcome the difficulty caused by the additional velocity weight and velocity derivative. This is resolved by obtaining higher time-decay rates of $\langle v\rangle \widetilde f$ and $\nabla_v\widetilde f$ at higher frequencies and then iteratively returning to lower frequencies.
However, despite such difficulty we overcame, in contrast to the method in \cite{Deguchi-2025,DLN-2026}, we have not obtained the optimal rate of $\widetilde f$ in the $L_v^2(\dot B_{2,\infty}^{s})$-norm for $s \in \big(1-\varepsilon, \frac{3}{2}-\varepsilon\big]$. 
\end{rem}

\begin{rem}
Unlike the case of $N\geq 3$ in Theorem \ref{Th1}, we here require $N\geq4$ in Theorem \ref{Th2}. This is because when estimating $\{\mathbf{I}-\mathbf{P}\}(f^{(1)}-f^{(2)})$, due to technical reasons, we need to ensure the embedding $\dot H^1\cap \dot H^{N - 2}\hookrightarrow L^\infty$ in case of three spatial dimensions.
\end{rem}

With the aid of Theorems \ref{Th1} and \ref{Th2}, we  further consider the  time-periodic problem \eqref{G1.2}. For the sake of notational simplicity, we set $F_T = M+\sqrt{M}f_{T}$. Then, we rewrite the problem \eqref{G1.2} as 
\begin{equation}\label{G1.10}
\left\{\begin{aligned}
& \partial_t f_{T}+ v\cdot \nabla_{x} f_{T}+\mathcal{L}f_{T} \\
&\quad =\Gamma(f_{T},f_{T})-E\cdot\nabla_v f_{T}+ \frac{1}{2}E\cdot vf_{T}+E\cdot v\sqrt{M},\quad \text{in}\quad \mathbb R\times \mathbb R_{x}^3\times \mathbb R_{v}^3,\\
&f_{T}(t,x,v)=f_{T}(t+T,x,v)\quad \text{for}\quad (t,x,v)\in \mathbb R\times \mathbb R_x^3\times \mathbb R_v^3,   
\end{aligned} \right.
\end{equation} 
where $E(t,x )=E(t+T,x )$ for $(t,x)\in \mathbb R\times \mathbb R_x^3 $. Finally, we state the theorem about the existence and stability of the time-periodic problem \eqref{G1.10}.

\begin{thm} \label{Th1.3}
Let $T > 0$ and let $N\geq 4$ be an integer. There exists a small constant $\delta > 0$ such that if the time-periodic force term $E$ with the time period $T > 0$ satisfies
\begin{equation}\label{cond.E}
\|E (t)\|_{\mathcal{C} (\mathbb{R}; \dot B_{2,\infty}^{-\frac{3}{2}}\cap \dot H^N )}\leq \delta,
\end{equation}
then we have the following time-periodic results:
\begin{itemize}
    \item (Existence of time-periodic solutions) There exists a unique time-periodic solution $f_T$ of the same time period $T$ satisfying $F_T = M+\sqrt{M}f_{T}\geq 0$ and
    \begin{equation}\label{fT.bdd}
    \sup_{t\in \R}\|f_T(t)\|_{\CE^{\frac{1}{2},N}}\leq C_2\delta.
     \end{equation}
    \item (Stability of time-periodic solutions) If initial data $f_0(x,v)$ satisfy that $F_0 = M+\sqrt{M}f_0\geq 0$, 
    \begin{align*}
\|f_0\|_{\CE^{\frac{1}{2},N}}\leq \delta,
    \end{align*}
     with $\delta>0$ sufficiently small, and
    \begin{align*}
    f_0 - f_T(0) \in L^2_vL^p,
    \end{align*}
    with $1< p\leq 2$, then the Cauchy problem \eqref{G1.5} admits a unique global solution $f$ satisfying $F = M+\sqrt{M}f\geq 0$,
     \begin{align*}
    \|f(t)\|_{\mathcal{C} ([0,\infty);L_v^2(\dot B_{2,\infty}^{\frac{1}{2}}\cap \dot H^N))}+\|(\langle v\rangle f(t),\nabla_v f(t ))\|_{\mathcal{C} ([0,\infty);L_v^2( \dot H^1\cap \dot H^{N-1}))}   \leq C_2\delta,
    \end{align*} 
    and the time decay estimate:
    \begin{align}\label{G1.15}
&\,\|(f - f_T)(t)\|_{L_v^2(\dot H^{s})}\nonumber\\
\leq&\, C_2 (1 + t)^{-\frac{s}{2}-\frac{3}{2}(\frac{1}{p}-\frac{1}{2})}\Big(\|(f - f_T)(0)\|_{L_v^2( L^p\cap\dot H^N)}  +\|\langle v\rangle (f - f_T)(0)\|_{L_v^2(\dot H^1\cap \dot H^{N-2})}   \nonumber\\
&+ \|\{\mathbf{I}-\mathbf{P}\}(f - f_T)(0)\|_{L_{x,v}^2}+\sum_{\substack{ 1\leq |\beta|\leq N-1 \\|\alpha|+|\beta| \leq N-1}}\|\partial^\alpha_{x}\partial^\beta_{v}\{\mathbf{I}-\mathbf{P}\}(f - f_T)(0)\|_{L_{x,v}^2 }\Big),
    \end{align}
    for any $s\in\big(-\frac{3}{2}+\varepsilon,1-\varepsilon\big)$ with $\varepsilon>0$ and $\frac{s}{2}+\frac{3}{2}\big(\frac{1}{p}-\frac{1}{2}\big)>0$.
   Here $C_2$ is a positive constant independent of $t$. 
\end{itemize} 
\end{thm}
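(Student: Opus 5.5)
We follow Serrin's approach and use Theorems \ref{Th1} and \ref{Th2} as the two main inputs.

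\textbf{Existence.} Let $\delta\le\delta_0/2$. Let $f$ be the global solution of \eqref{G1.5} with $f_0=0$, given by Theorem \ref{Th1}. By \eqref{G1.7}, $\sup_{t\ge0}\|f(t)\|_{\CE^{\frac12,N}}\le C_0\delta$. For $k\in\mathbb N$ set $f_k(t):=f(t+kT)$. Since $E$ is $T$-periodic, each $f_k$ solves \eqref{G1.5} with initial datum $f(kT)$.

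We apply Theorem \ref{Th2} to the pair $f_{k}$, $f_{k+m}$, with $s_0$ slightly less than $\frac12$ (we may replace $s_0$ by a smaller value if needed). The initial differences $f(kT)-f((k+m)T)$ must lie in $L^2_v(\dot B^{s_0}_{2,\infty})$ with a uniform bound. The cleanest way to arrange this is to first estimate $f(T)-f(0)=f(T)$ in $\CE^{s_0,N-1}$ for some $s_0\in(-\frac32,\frac12)$, using Duhamel's formula on $[0,T]$ together with the assumption $E\in\dot B^{-3/2}_{2,\infty}$, which puts the source $E\cdot v\sqrt M$ at the level $s_0=-\frac32+$. We then telescope:
\[
f_{k+1}-f_k=(f_1-f_0)\ \text{evolved from time }kT .
\]
Theorem \ref{Th2}, applied to the pair $(f_1,f_0)$ and evaluated at time $kT$, gives
\[
\|f((k+1)T)-f(kT)\|_{L^2_v(\dot B^s_{2,\infty}\cap\dot H^{N-1})}\lesssim (1+kT)^{-\frac{s-s_0}{2}} .
\]
Choosing $s_0$ close to $-\frac32$ and $s$ near $1-\varepsilon$ makes the exponent exceed $1$, so the series is summable. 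The same summability holds for the weighted and micro norms, since their rate $(1-\varepsilon-s_0)/2>1$. Hence $\{f(kT)\}$ is Cauchy in $L^2_v(\dot B^s_{2,\infty}\cap\dot H^{N-1})$ and in the other weighted/micro norms of Theorem \ref{Th2}.

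Let $f_*$ be the limit. By weak-$*$ lower semicontinuity and the uniform bound $C_0\delta$, we get $\|f_*\|_{\CE^{\frac12,N}}\le C_0\delta$, and $F_*=M+\sqrt Mf_*\ge0$ as a pointwise limit. Let $f_T$ be the solution of \eqref{G1.5} with datum $f_*$ from Theorem \ref{Th1}. Continuous dependence from Theorem \ref{Th2}, applied to the data $f(kT)$ and $f_*$, gives that $f_{k}\to f_T$ on $[0,T]$ in the lower norm. Since $f_{k}(T)=f_{k+1}(0)$, passing to the limit yields $f_T(T)=f_*=f_T(0)$. Extending $f_T$ periodically to all of $\R$ gives a solution by periodicity of $E$, and \eqref{fT.bdd} holds with $C_2=C_0$.

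\textbf{Uniqueness.} If $g_T$ is another periodic solution with small norm, then Theorem \ref{Th2} forces $\|f_T(kT)-g_T(kT)\|\to0$. Periodicity makes this quantity constant in $k$, so it must vanish.

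\textbf{Stability.} Let $f$ be the solution of \eqref{G1.5} from Theorem \ref{Th1}, whose bounds give the stated uniform estimate. The $\nabla_vf$ part of that estimate is controlled by the micro $v$-derivative norms in $\CE$ together with the smooth dependence of $\mathbf Pf$ on $v$. Next we use the embedding $L^p\hookrightarrow\dot B^{s_0}_{2,\infty}$ with $s_0=-3(\frac1p-\frac12)\in(-\frac32,0]$ for $1<p\le2$. Applying Theorem \ref{Th2} to $f$ and $f_T$ then gives \eqref{G1.15}, since
\[
\frac{s-s_0}{2}=\frac s2+\frac32\Big(\frac1p-\frac12\Big),
\]
and we use $\dot H^s=\dot B^s_{2,2}$ together with interpolation between Besov indices $s\pm\eta$ to pass from $\dot B^s_{2,\infty}$ to $\dot H^s$. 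This is why $s$ ranges over an open interval.

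\textbf{Main obstacle.} The hardest step is the Cauchy-sequence argument: obtaining a uniform bound on $\|f(T)-f(0)\|_{\CE^{s_0,N-1}}$ with $s_0$ low enough that the decay rate in Theorem \ref{Th2} is summable in $k$. This requires tracking the forcing $E\cdot v\sqrt M$ in $\dot B^{-3/2}_{2,\infty}$ over a single period, including the velocity-weighted and derivative components. We would first try the choice $s_0=-\frac32+\varepsilon$, and control the low-frequency part through the Duhamel formula on the finite interval $[0,T]$.
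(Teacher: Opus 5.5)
\textbf{The gap: the Cauchy-sequence step.}
Your uniqueness argument and the stability part agree with the paper. Your interpolation between $\dot B^{s\pm\eta}_{2,\infty}$ is in fact cleaner than the paper's: the paper interpolates with $\dot B^{-\frac32+\varepsilon}_{2,\infty}$, an index that can lie below $s_0$, where Theorem \ref{Th2} gives nothing.

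The gap is in the Cauchy-sequence step. Telescoping forces you to sum $(1+kT)^{-\frac{s-s_0}{2}}$, so you need $s-s_0>2$ and hence $s_0$ close to $-\frac32$. That in turn requires $f(T)\in L^2_v(\dot B^{s_0}_{2,\infty})$ at a low-frequency level far below what Theorem \ref{Th1} controls, namely $\dot B^{\frac12}_{2,\infty}$. You flag this as the main obstacle but do not prove it, and it is not routine:
\begin{itemize}
\item With only the information from Theorem \ref{Th1}, the product law \eqref{newinterpolation} places $\nu^{-1}\Gamma(f,f)$ in $L^2_v(\dot B^{-\frac12}_{2,\infty})$ at best.
\item Since $\Gamma(f,f)$ is microscopic, the piece $e^{-\kappa_0t}\|\nu^{-1}\widehat{f_0}\|_{L^2_v}$ in \eqref{G2.13} gives no gain of $|\xi|$.
\item So Duhamel on $[0,T]$ yields only $s_0=-\frac12$. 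Then $s-s_0\le\frac32-\varepsilon<2$, and your series is not summable.
\end{itemize}
Reaching $s_0\approx-\frac32$ would need a separate bootstrap/Gr\"onwall argument that propagates $L^2_v(\dot B^{s_0}_{2,\infty})$ regularity of the nonlinear solution over $[0,T]$, including a justification that the solution lies in that space. Neither Theorem \ref{Th1} nor Theorem \ref{Th2} supplies this.

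\textbf{The missing idea: summability is unnecessary.}
Theorem \ref{Th1} bounds $f(t)$ in $\CE^{\frac12,N}$ uniformly in $t$. So for $m\ge k$, apply Theorem \ref{Th2} with $s_0=\frac12$ and $s=1-\varepsilon$ to the pair $f(\cdot+(m-k)T)$ (datum $f((m-k)T)$) and $f$ (datum $0$). The initial difference is bounded by $C\delta$ uniformly in $m-k$, and evaluating at $t=kT$ gives
\[
\|f(mT)-f(kT)\|_{L^2_v(\dot B^{1-\varepsilon}_{2,\infty}\cap\dot H^{N-1})}\lesssim\delta\,(1+kT)^{-\frac14+\frac{\varepsilon}{2}}
\]
uniformly in $m\ge k$. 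This is the Cauchy property directly; only positivity of the rate is used, and the same argument handles the weighted and micro norms.

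\textbf{Two smaller points.}
\begin{itemize}
\item To apply Theorem \ref{Th2} to the data $f(T)$ and $0$, you need $(1+C_0)\delta\le\delta_0$, not $\delta\le\delta_0/2$.
\item In the limit step, the convergence $f(kT)\to f_*$ is known only in $\dot B^{s}_{2,\infty}$ with $s>\frac12$ (with your telescoping, only for $s>s_0+2>\frac12$). Theorem \ref{Th2}, however, measures the data difference in $\CE^{s_0,N-1}$ with $s_0\le\frac12$. What is really needed there is a finite-time continuous-dependence estimate on $[0,T]$ in the norm where convergence holds. The paper's write-up of this step has the same feature.
\end{itemize}
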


\begin{rem}
Indeed, building on the existence and asymptotic stability of time-periodic solutions established in Theorem \ref{Th1.3}, and following a strategy similar to that used in \cite[Theorem 1.4]{DLN-2026-VFP}, we are able to establish the corresponding existence and asymptotic stability results for the linearized Boltzmann equation. As highlighted in \cite{Valli-1983}, our analysis contributes to a broader, global understanding of the dynamical behavior of the linearized Boltzmann equation.
\end{rem}

\begin{rem}
The $v$-growth arising from the nonlinear term $E\cdot v f$ implies that the method in our paper can only attain $\gamma = 1$ corresponding to the hard-sphere model if we consider the general Boltzmann collision kernel of the form $B(|v-v_\ast|,\omega)=|v-v_\ast|^\gamma b(\cos\theta)$, cf.~\cite{Glassey 1996}, where for cutoff potentials, we assume $-3<\gamma\leq 1$ and $0\leq b(\cos\theta)\leq C |\cos\theta|$ with $\cos\theta=\omega\cdot (v-v_\ast)/|v-v_\ast|$ for a constant $C>0$. However, the same method fails to treat all the non-hard-sphere cases $-3<\gamma< 1$; such a problem will be left for the future.
\end{rem}

We give more remarks whenever $E=E(x)$ is time-independent, satisfying \eqref{cond.E} as well. Indeed, in such case, the result of Theorem  \ref{Th1.3} directly yields the existence and stability of stationary solutions to the steady Boltzmann equation in the physically realistic three spatial dimensions:  
\[
v\cdot \nabla_{x} F+E(x)\cdot\nabla_{v}F = Q(F,F),\ (x,v)\in \R^3\times \R^3.
\]
Note that if $E(x)=-\nabla\phi(x)$ of the potential form, then $F(x,v)=e^{-\phi(x)}M$ is the unique stationary solution to the steady problem above. The importance of our result in the stationary case is that the force field $E(x)$ can be allowed to be non potential and so rotational with $\text{curl}\,E\neq 0$. For example, let \(m > 2\) and define
\begin{align*}
E_\epsilon(x)=\frac{\epsilon(-x_2,x_1,0)}{\langle x\rangle^{2m}},\qquad x=(x_1,x_2,x_3)\in\mathbb{R}^3,
\end{align*}
where \(0 < |\epsilon|\ll1\). Then \(E_\epsilon\) is time-independent and represents a rotational force field around the \(x_3\)-axis. First, it is straightforward to see \(\nabla_x\cdot E_\epsilon = 0\). Also, by direct calculation, we have
\begin{align*}
\nabla_x\times E_\epsilon = 2\epsilon\left( \frac{mx_1x_3}{\langle x\rangle^{2m + 2}}, \frac{mx_2x_3}{\langle x\rangle^{2m + 2}}, \frac{1 + x^2 - m(x_1^2 + x_2^2)}{\langle x\rangle^{2m + 2}} \right).
\end{align*}
In particular,
\((\nabla_x\times E_\epsilon)(0)=(0,0,2\epsilon)\neq0\).
Hence \(E_\epsilon\) is not a potential force field. Next, we verify that \(E_\epsilon\) belongs to the function space used in Theorem  \ref{Th1.3}. Since \(m > 2\), one has
\begin{align*}
|E_\epsilon(x)|\lesssim|\epsilon|(1 + |x|)^{1 - 2m},    
\end{align*}
and this leads to $
E_\epsilon\in L^1(\mathbb R^3)\cap\dot H^N(\mathbb R^3)$.
Consequently, by virtue of the embedding \(L^1(\mathbb R^3)\cap\dot H^N(\mathbb R^3)\hookrightarrow\dot B^{-3/2}_{2,\infty}(\mathbb R^3)\cap\dot H^N(\mathbb R^3)\), we finally obtain
\begin{align*}
E_\epsilon\in\dot B^{-3/2}_{2,\infty}(\mathbb R^3)\cap\dot H^N(\mathbb R^3),
\end{align*}
and
\begin{align*}
\|E_\epsilon\|_{\dot B^{-3/2}_{2,\infty}\cap\dot H^N}\leq C_{m,N}|\epsilon|.
\end{align*}
We choose \(|\epsilon|\) to be sufficiently small so that \(C_{m,N}|\epsilon|\leq \delta_0\), which indicates that our framework is applicable to a genuinely rotational, non-conservative external force field.

In what follows, we give the proof strategies for the main results presented before. First of all, to prove the global existence of the  Boltzmann system \eqref{G1.5} in Theorem \ref{Th1}, we mainly adopt a method that combines the application of semi-group theory at low frequencies with the application of energy methods at high frequencies. 
First, we decompose $f$ as $f = f_{L} + f_{H}$. Then, according to the Duhamel principle, we have
\begin{align}\label{QQ1}
f_{L}(t)=&\, e_{L}^{t\mathcal B}f_{0}+\int_0^t e_L^{(t-\tau)\mathcal{B}} \Big[ \Gamma(f,f)(\tau)-[E \cdot \nabla_v f](\tau)+\frac{1}{2}[E\cdot v f](\tau)+E(\tau)\cdot v\sqrt{M}    \Big]{\rm d}\tau\nonumber\\
=&\,  e_{L}^{t\mathcal B}f_{0}+ \int_{0}^t e_{L}^{(t-\tau)\mathcal B} E(\tau)\cdot v\sqrt{M}  {\rm d}\tau+ \int_{0}^t e_{L}^{(t-\tau)\mathcal B} \Gamma(f,f)(\tau) {\rm d}\tau \nonumber\\
&+\int_{0}^t e_{L}^{(t-\tau)\mathcal B} \Big[  \frac{1}{2}[E\cdot v f](\tau) -[E \cdot \nabla_v f](\tau)  \Big] {\rm d}\tau \nonumber\\
\equiv:&\,  e_{L}^{t\mathcal B}f_{0}+\mathcal{I}_1+\mathcal{I}_2+\mathcal{I}_3.
\end{align}
Let's discuss the treatment of the nonlinear terms $\mathcal{I}_1$, $\mathcal{I}_2$, and $\mathcal{I}_3$ in \eqref{QQ1} one by one. On the one hand, similar to the approach in \cite{DLN-2026}, for the terms $\mathcal I_1$ and $\mathcal{I}_2$, by applying the duality argument, Fourier analysis, and semi-group theory, we can prove the following inequalities (see the proof in Lemma \ref{L3.1}):
\begin{align}\label{QQ2}
\|\mathcal{I}_1\|_{L_v^2(\dot B^{\frac{1}{2}}_{2,\infty})}\lesssim&\, \sup_{t\geq 0}\|E(t)\|_{\dot B_{2,\infty}^{-\frac{3}{2}}},  \\ \label{QQ3}
\|\mathcal{I}_2\|_{L_v^2(\dot B^{\frac{1}{2}}_{2,\infty})}\lesssim&\, \sup_{t\geq 0}\|\nu^{-1}\Gamma(f,f)(t)\|_{L_v^2(\dot B_{2,\infty}^{-\frac{1}{2}})}.
\end{align}
In fact, \eqref{QQ2} indicates that through the utilization of the second-order derivative loss, the $L^1$ integral over time is eliminated. This is a crucial situation. The reason lies in the fact that the second derivative is equivalent to the decay rate of $(1 + t)^{-1}$, which is non-integrable in the $L^1$ sense. This case is non-trivial within the framework of the usual semi-group theory. The estimate \eqref{QQ3} indicates that the microscopic component $\Gamma(f,f)(t)$ has a faster first-order derivative than the solution. Nevertheless, this characteristic can only be uncovered during spectral analysis because it is of zeroth order regarding spatial derivatives. 

On the other hand, the processing of $\mathcal I_3$ is more complicated, since it involves $v$-weighted growth and the first-order velocity derivative $\nabla_v$ loss. Additionally, our semi-group theory is only applicable to the physical space and offers no useful information for the velocity space. Thus, if, as in  \cite{DLN-2026}, the solution is restricted to the framework of $L_v^2(\dot B^{1/2}_{2,\infty}\cap \dot H^N)$ with $N\geq 3$, it is inadequate.
Fortunately, even though we are unable to obtain the regularity propagation of $\langle v\rangle f$ and $\nabla_{v} f$ at low frequencies, we can improve the regularity of $\langle v\rangle f$ and $\nabla_{v} f$ by sacrificing the partial regularity of $E(t,x)$. Namely, $\mathcal{I}_3$ can be estimated by
\begin{align*} 
\|\mathcal{I}_3\|_{L_v^2(\dot B_{2,\infty}^{\frac{1}{2}})}\lesssim \sup_{t\geq 0}\|E(t)\|_{\dot B^{-\frac{3}{2}}_{2,\infty}}  \|(\langle v\rangle f(t),\nabla_v f(t))\|_{L_v^2(\dot H^1\cap \dot H^{N-1})}.  
\end{align*}
This is because, in the high-frequency region, we are able to achieve the regularity propagation of $\langle v\rangle f$ and $\nabla_{v} f$ provided that we strengthen some additional initial data.
Our system here is more complex and challenging when compared to the general Navier-Stokes system \cite{Deguchi-2024-MathAnn} or the Navier-Stokes-Fourier system \cite{Deguchi-2025}. This is due to the fact that we need to conduct some refined energy estimates on the macro and micro parts and surmount the difficulties brought about by the $v$-weighted terms. 
With the help of the estimates for $\mathcal{I}_1$, $\mathcal{I}_2$, and $\mathcal{I}_3$, we can consequently obtain the estimate of $\|f_{L}(t)\|_{L_v^2(\dot B_{2,\infty}^{1/2})}$.

For the high-frequency part, we first establish the uniform {\it a priori estimates} of the solution $f(t,x,v)$ in the $L_v^2(\dot H^1\cap \dot H^N)$-norm, where $N\geq 3$, by utilizing the macro-micro decomposition method and the refined energy method. It is important to note that for $f$, we can only carry out interpolation between $L_v^2(\dot B_{2,\infty}^{1/2})$ and $L_v^2(\dot H^N)$ to obtain energy estimates. This differs from the interpolation estimates between $L_v^2(L^2)$ and $L_v^2(\dot H^N)$ for $f$ as described in \cite{GW-CPDE-2012}. We might require more precise estimates of $f(t,x,v)$. Moreover, we consequently obtain (See Corollary \ref{cor3.5}):
\begin{align}\label{QQ4}  
\|f(t)\|_{L_v^2(\dot H^1\cap \dot H^N)}\lesssim \sup_{0\leq t\leq T_1}\Big\{\|E(t)\|_{  H^N }+\|f_{L}(t)\|_{L_v^2(\dot B_{2,\infty}^{\frac{1}{2}})} \Big\}+\|f_0\|_{L_v^2(\dot H^1\cap \dot H^N)}.
\end{align}
Unlike \cite{DLN-2026}, since \eqref{QQ4} is not sufficient to cover assumption \eqref{G3.1}, considering that 
\begin{align*}
\|\langle v\rangle \mathbf{P}f\|_{L_{x,v}^2}+\|\nabla_v \mathbf{P}f\|_{L_{x,v}^2}\lesssim \|f\|_{L_{x,v}^2},    
\end{align*}
we need to further focus on estimating the micro components $v\{\mathbf{I}-\mathbf{P}\}f$ and $\nabla_{v}\{\mathbf{I}-\mathbf{P}\}f$ by regularity propagation.
This idea mainly originated from  \cite{Gy-IM-2003,Gy-CPAM-2006}. It is important to note that when handling the zero-order product term generated by $\Gamma(f,f)$ in the $L^2$ framework during the proof of Lemmas \ref{L3.7}--\ref{L3.8}, we need to estimate it through  using the square of the $L^4$ norm of $f$.
It is worth noting that there exists an embedding $\dot H^{3/4} \hookrightarrow  L^4$, since $L_v^2(\dot H^{3/4})$ lies between $L_v^2(\dot B^{1/2}_{2,\infty})$ and $L_v^2(\dot H^N)$. Therefore, we can consequently absorb it. By leveraging the refined energy method, we further obtain the following  Lyapunov-type inequality regarding the $\{\mathbf{I}-\mathbf{P}\}f$ (see Corollary \ref{cor3.9}):
\begin{align*}
&\frac{{\rm d}}{{\rm d}t}\bigg(\|\nu\{\mathbf{I}-\mathbf{P}\}f(t) \|_{L_v^2(\dot H^1\cap\dot H^{N-1})}^2 +\|\{\mathbf{I}-\mathbf{P}\}f(t)\|_{L_{x,v}^2}^2+ \sum_{\substack{ 1\leq |\beta|\leq N \\|\alpha|+|\beta| \leq N}}\|\partial^\alpha_{x}\partial^\beta_{v}\{\mathbf{I}-\mathbf{P}\}f(t) \|_{L_{x,v}^2 }^2\bigg)
 \nonumber\\
&+ \lambda_{13} \bigg(\|\nu^{\frac{3}{2}}\{\mathbf{I}-\mathbf{P}\}f(t)\|_{L_v^2(\dot H^1\cap\dot H^{N-1})}^2+ \|\{\mathbf{I}-\mathbf{P}\}f(t)\|_{\nu}^2+\sum_{\substack{ 1\leq |\beta|\leq N \\|\alpha|+|\beta| \leq N}} \| \partial^{\alpha}_{x}\partial^\beta_{v}\{\mathbf{I}-\mathbf{P}\}f(t)\|_{\nu}^2\bigg)\nonumber\\    
&\quad\lesssim \|f(t)\|_{L_v^2(\dot H^\frac{3}{4}\cap\dot H^N)}^2+\|\nu^\frac{1}{2}\{\mathbf{I}-\mathbf{P}\}f(t)\|_{L_v^2(\dot H^1\cap\dot H^{N-1})}^2+\sup_{0\leq t\leq T_1}\|E(t)\|_{H^N}^2,   
\end{align*}
which, together with the estimates of $f$ in the hybrid homogeneous space $L_v^2(\dot B^{1/2}_{2,\infty}\cap\dot H^N)$ established in Section 3.1, leads to the uniform estimate:
$$
\sup_{0\leq t\leq T_1}\|f(t)\|_{\CE^{\frac{1}{2},N}}\lesssim \|f_0\|_{\CE^{\frac{1}{2},N}}+ \sup_{0\leq t\leq T_1}\|E(t)\|_{L_v^2(\dot B_{2,\infty}^{-\frac{3}{2}}\cap \dot H^{N})}.
$$
Thanks to the standard continuity argument, we prove the global existence of the Cauchy problem \eqref{G1.5}. Therefore, we complete the proof of Theorem \ref{Th1}.

To prove the asymptotic stability of the system \eqref{G1.5}  in Theorem \ref{Th2}, we first present the error equation between two solutions $f^{(1)}(t,x,v)$ and $f^{(2)}(t,x,v)$ as follows:
\begin{align*} 
\partial_{t} \widetilde{f}+v\cdot\nabla_{x}\widetilde{f}+\mathcal{L}\widetilde{f}=\Gamma(f^{(1)}+f^{(2)},\widetilde{f})-E\cdot\nabla_{v}\widetilde{f}+\frac{1}{2} E\cdot v\widetilde{f},
\end{align*}
where $\widetilde f(t,x,v)=f^{(1)}(t,x,v)-f^{(2)}(t,x,v)$.
Our primary approach here is to employ the time-weighted estimate method at low and high frequencies, respectively.
Different from the Boltzmann equation with source term \cite{DLN-2026} and the Navier-Stokes-Fourier system \cite{Deguchi-2025}, the nonlinear terms, such as $E\cdot\nabla_{v} \widetilde f$ and $\frac{1}{2}E\cdot v\widetilde f$, contain the $v$-weighted or velocity derivative $\nabla_{v}$. This indicates that at low frequencies, they cannot be fully converted into energy without $v$ or $\nabla_{v}$, which prevents the obtainment of any time decay rates in the low-frequency range.
The key to overcoming this difficulty lies in initially establishing a time-weighted estimate of $\widetilde f$, $\langle v\rangle \widetilde f$, and $\nabla_{v}\widetilde f$ at high frequencies (See Theorem \ref{T4.6}):
\begin{align} \label{QQ5}
&\sup_{t\geq 0} (1+t)^{\frac{1-\varepsilon-s_0}{2}}  \Big( 
 \|\widetilde f(t)\|_{L_v^2(\dot H^1\cap \dot H^{N-1})}+\|\langle v\rangle \widetilde f(t) \|_{L_v^2(\dot H^1\cap\dot H^{N-2})} +\|\{\mathbf{I}-\mathbf{P}\}\widetilde f(t)\|_{L_{x,v}^2}  \Big)\nonumber\\
& +\sup_{t\geq 0} (1+t)^{\frac{1-\varepsilon-s_0}{2}}   \sum_{\substack{ 1\leq |\beta|\leq N-1 \\|\alpha|+|\beta| \leq N-1}}\|\partial^\alpha_{x}\partial^\beta_{v}\{\mathbf{I}-\mathbf{P}\}\widetilde f(t) \|_{L_{x,v}^2 }    \nonumber\\
\lesssim&\,   \widetilde{\mathcal{D}}(t)+\|\widetilde f_0\|_{L_v^2(\dot H^1\cap \dot H^{N-1})}+\|\langle v\rangle \widetilde f_0 \|_{L_v^2(\dot H^1\cap\dot H^{N-2})} +\|\{\mathbf{I}-\mathbf{P}\}\widetilde f_0\|_{L_{x,v}^2} \nonumber\\
&+\sum_{\substack{ 1\leq |\beta|\leq N-1 \\|\alpha|+|\beta| \leq N-1}}\|\partial^\alpha_{x}\partial^\beta_{v}\{\mathbf{I}-\mathbf{P}\}\widetilde f_0\|_{L_{x,v}^2 },
\end{align}
where
\begin{align*}
\widetilde{\mathcal{D}}(t):=  \sup_{t\geq 0}\,(1+t)^{\frac{1}{2}-\frac{\varepsilon+s_0}{2}} \|\widetilde f (t)\|_{L_v^2(\dot B_{2,\infty}^{1-\varepsilon} )} .   
\end{align*}    
Based on the faster time decay rates of $\langle v\rangle \widetilde f$ and $\nabla_{v}\widetilde f$ established in \eqref{QQ5} when compared to those in the low-frequency regime, and by applying the semi-group theory and low-high frequency decomposition, for the cases where $s$ lies in the intervals $\big[-\frac{3}{2}+\varepsilon, \frac{1}{2}\big)$, $s = \frac{1}{2}$, and $\big(\frac{1}{2}, 1-\varepsilon\big]$, we eventually derived the following low-frequency estimate (see Theorem \ref{T4.7}):
\begin{align}\label{QQ6}
&\sup_{t\geq 0} (1+t)^{\frac{s-s_0}{2}}\|\widetilde f\|_{L_v^2(\dot B_{2,\infty}^s)} \nonumber\\
\lesssim&\, \delta_0 \big(\widetilde{\mathcal{D}}_{\varepsilon}(t)+\widetilde{\mathcal{D}}(t)\big)+\|\widetilde f_0\|_{L_v^2(\dot B_{2,\infty}^{s_0}\cap \dot B_{2,\infty}^{1-\varepsilon})}+\|\widetilde f_0\|_{L_v^2(\dot H^1\cap \dot H^{N-1})}+\|\langle v\rangle\widetilde f_0 \|_{L_v^2(\dot H^1\cap\dot H^{N-2})}  \nonumber\\
&+\|\{\mathbf{I}-\mathbf{P}\}\widetilde f_0\|_{L_{x,v}^2}+\sum_{\substack{ 1\leq |\beta|\leq N-1 \\|\alpha|+|\beta| \leq N-1}}\|\partial^\alpha_{x}\partial^\beta_{v}\{\mathbf{I}-\mathbf{P}\}\widetilde f_0\|_{L_{x,v}^2 },       
\end{align}
where
\begin{align*}
\widetilde{\mathcal{D}}_{\varepsilon}(t):=\sup_{s_1\leq \bar s\leq 1-\varepsilon} \sup_{t\geq 0}\,(1+t)^{\frac{\bar s-s_0}{2}} \|\widetilde f (t)\|_{L_v^2(\dot B_{2,\infty}^{\bar s}\cap \dot H^{N-1})},\quad s_1=\max\{0,{s_0}\}.  
\end{align*}
Then, by combining \eqref{QQ5} and \eqref{QQ6}, we prove the stability of the Cauchy problem \eqref{G1.5}.

With Theorem \ref{Th1} and Theorem \ref{Th2} at hand, we are able to prove the existence and stability of the time-periodic problem \eqref{G1.10} in Theorem \ref{Th1.3}. Our proof is based on Serrin's method (cf. \cite{Serrin-1959-ARMA,Mp-1991-Nonlinearity}). First, we construct a Cauchy sequence $\{f_{*}(nT)\}_{n\geq 1}$ in $L_v^2(\dot B_{2,\infty}^{1 - \varepsilon}\cap\dot H^{N - 1})$. Owing to the Banach Completeness Theorem and Fatou’s lemma, we find the limit $f^{\infty}_{*}$. By selecting the initial data $f_{T}(0)=f_{*}^\infty$ for our global solution $f_{T}(t)$ and using the {\it local in time} uniqueness, we can deduce that $f_{T}(t)$ is a unique time-periodic solution to the system \eqref{G1.10}. By using basic interpolation inequalities, we further obtain the time-asymptotic stability of the global solution $f(t)$ around $f_{T}(t)$. Hence, we complete the proof of Theorem \ref{Th1.3}.

The rest of this paper is organized as follows. In Section 2, we provide preliminaries that include some notations, the Littlewood-Paley decomposition, the macro-micro decomposition, decay estimates of the semi-group, and some useful lemmas. In Section 3, we first establish {\it a priori estimates} of $f(t,x,v)$ in the hybrid homogeneous space $L_v^2(\dot B^{1/2}_{2,\infty}\cap\dot H^N)$ using the semi-group theory and the refined energy method. By estimating the micro part  $\{\mathbf{I}-\mathbf{P}\}f$, we further derive the regularity propagation of $\langle v\rangle f$ and $\nabla_{v}f$. Consequently, we obtain the uniform estimate of $f$, and thus complete the proof of Theorem \ref{Th1}. In Section 4, by applying the time-weighted method at low and high frequencies respectively, we obtain the asymptotic stability of the Cauchy problem \eqref{G1.5}. Finally, using Serrin’s approach (cf. \cite{Serrin-1959-ARMA,Mp-1991-Nonlinearity}), we prove the existence and asymptotic stability of the time-periodic solution to the periodic problem \eqref{G1.10}.

\section{Preliminaries}
In this section, we present notations, basic estimates of the operator in the Boltzmann equation, Littlewood-Paley decomposition, some properties of homogeneous Besov spaces, time decay estimates of the semi-group, and some useful lemmas that are frequently used throughout the paper.

\subsection{Notations}
The letter $C$ represents a generic positive constant. The notation $A \lesssim B$ (resp., $A \gtrsim B$) is used to denote $A \leq CB$ (resp., $A \geq CB$). The expression $A \backsim B$ means that both $A \lesssim B$ and $A \gtrsim B$ hold. $\mathbf 1_E$ denotes the characteristic function of a set $E$. 
We denote the $L^2$ inner product in $\mathbb{R}^3_v$ by $\langle \cdot, \cdot \rangle$ 
and in $\mathbb{R}^3_x$ by $(\cdot, \cdot)$, with the corresponding $L^2$ norms 
$|\cdot|_2$ and $\|\cdot\|_{L^2}$, respectively. 
Furthermore, we use $\langle \cdot, \cdot \rangle_{x,v}$ to represent the $L^2$ inner product 
on $\mathbb{R}^3_x \times \mathbb{R}^3_v$, with the associated norm $\|\cdot\|_{L^2_{x,v}}$.
For functions that depend on both spatial and velocity variables, we define $L^2_{x,v} = L^2(\mathbb{R}^3_x \times \mathbb{R}^3_v)$, which is equipped with the norm $\|\cdot\|_{L^2_{x,v}}$. The $L^p$ norm on $\mathbb{R}^3_x$ is denoted as $\|\cdot\|_{L^p}$. For mixed spaces, we express $L^2_v(L^p) = L^2(\mathbb{R}^3_v; L^p(\mathbb{R}^3_x))$ along with the corresponding norm $\|\cdot\|_{L^2_v(L^p )}$. Similarly, we adopt the notations $L_v^2(H^s )$, $L_v^2(\dot H^s)$, and $L_v^2(\dot B^s_{p,r})$ for $s\in \mathbb R$ and $1\leq p,r\leq\infty$.
For the hard-sphere Boltzmann operator, we define the collision frequency as follows:
\begin{align}\label{defn:nu}
\nu(v)=\int_{\mathbb R^3}\!\int_{\mathbb S^2} |(v-v_\ast)\cdot\omega| M_{*}{\rm d}\omega{\rm d}v_{*},
\end{align}
which behaves as $\langle v\rangle :=\sqrt{1 + |v|^2} $. We define the velocity-weighted $L^2$-norms as
\begin{align*}
|g|_{\nu}=|\nu^\frac{1}{2}g|_{2},\quad \|g\|_{\nu}=\|\nu^\frac{1}{2}g\|_{L^2_{x,v}}.
\end{align*}

For a Banach space $X$, we define $\|(g,h)\|_{X}:=\|g\|_{X}+\|h\|_{X}$, where $g$ and $h$ are elements of $X$. For an interval $I$ of $\mathbb R$, we define $\mathcal C(I; X)$ as the set of continuous functions on $I$ that take values in $X$. For any time $T > 0$ and $1 \leq p \leq \infty$, $L^p(0,T ;X)$ denotes the space of measurable functions $f: [0,T ] \to X$ such that the mapping $t \mapsto \|f(t)\|_{X}$ belongs to $L^p(0, T )$, equipped with the norm $\|\cdot\|_{L^p(0, T ;X)} := \|\cdot\|_{L_{ T }^p(X)}$. We denote $\mathcal{S}$ as the set of all Schwartz functions on $\mathbb{R}^3_{x}$, and $\mathcal{S}^\prime$ as the set of all tempered distributions on $\mathbb{R}^3_{x}$. For a function $g(t,x,v)\in \mathcal{S}(\mathbb{R}^3_{x})$, its Fourier transform is defined as
\begin{align*} 
\widehat{g}(t,\xi,v)=\mathcal{F}g(t,\xi,v) =\int_{\mathbb{R}^3}e^{-i x\cdot\xi}  g(t,x,v){\rm d}x,\quad \text{where } x\cdot \xi=\sum_{j = 1}^3x_{j}\xi_{j},
\end{align*}
for all $\xi\in \mathbb{R}^3$, where $i = \sqrt{-1}$ represents the imaginary unit. Moreover, we use $\mathcal{F}^{-1}$ to denote the inverse Fourier transform.

Finally, for the multi-indices $\alpha=(\alpha_1,\alpha_2,\alpha_3)$ and $\beta=(\beta_1,\beta_2,\beta_3)$, we denote 
\begin{align*}
\partial_{x}^\alpha\partial_{v}^\beta = \partial_{x_1}^{\alpha_1}\partial_{x_2}^{\alpha_2} \partial_{x_3}^{\alpha_3} \partial_{v_1}^{\beta_1} \partial_{v_2}^{\beta_2}\partial_{v_3}^{\beta_3}.
\end{align*} 
The lengths of $\alpha$ and $\beta$ are defined as $|\alpha| = \alpha_1+\alpha_2+\alpha_3$ and $|\beta| = \beta_1+\beta_2+\beta_3$. 
For simplicity, for each $i = 1, 2, 3$, we use $\partial_{i}$ to represent $\partial_{x_{i}}$. 
To be concise, for each $i = 1, 2, 3$, we also write $\partial_i$ instead of $\partial_{x_i}$. Moreover, we use the symbol $\nabla$ to represent $\nabla_x$.

\subsection{Basic estimates of the operator in the Boltzmann equation}
As mentioned in \cite{UY-AA-2006,CIP-1994}, the operator $\mathcal{L}$ defined in \eqref{G1.3} can be decomposed as
\begin{align*}
\mathcal{L}=\nu - \mathcal{K},  
\end{align*}
where $\nu$ is defined by \eqref{defn:nu}.
The  operator $\mathcal{K}$ can also be decomposed as
\begin{align*}
\mathcal{K}=\mathcal{K}_2-\mathcal{K}_1,    
\end{align*}
with 
\begin{align*}
\mathcal{K}_2g(v):=\int_{\mathbb R^3}\!\int_{\mathbb S^2} |(v-v_\ast)\cdot\omega|\sqrt{M(v_*)}\Big\{\sqrt{M(v_*^\prime)}g(v^\prime)+\sqrt{M(v^\prime)}g(v^\prime_{*})\Big\}{\rm d}\omega {\rm d}v_{*},
\end{align*}
and 
\begin{align*}
\mathcal{K}_1g(v):=\int_{\mathbb R^3}\!\int_{\mathbb S^2} |(v-v_\ast)\cdot\omega|\sqrt{M(v_*)}\sqrt{M(v)}g(v_{*}){\rm d}\omega {\rm d}v_{*}.
\end{align*}
Then, by performing some basic computations on $\mathcal{K}_1$ and $\mathcal{K}_2$, we can obtain the following estimate.

\begin{prop}[{\!\!\cite[Lemma 2.2]{Gy-CPAM-2002}}]
Let $|\alpha| = k$. Then, for any   small $\eta > 0$, there exists a constant $C_{k,\eta} > 0$ such that, for any $g(x,v) \in H^k(\mathbb{R}^3_{x} \times \mathbb{R}^3_{v})$, the following inequality holds:
\begin{align*}
\left\|\partial^{\alpha}_{v}\left[ \mathcal{K}g\right]\right\|_{L_{x,v}^2}^2 \leq \eta \sum_{|\beta| = k}\left\|\partial^\beta_{v}g\right\|_{L_{x,v}^2}^2 + C_{k,\eta}\|g\|_{L_{x,v}^2}^2.
\end{align*}    
\end{prop}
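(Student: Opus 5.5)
Since $\mathcal K=\mathcal K_2-\mathcal K_1$ and the inequality is pointwise in $x$, it suffices to prove, for each fixed $x$ and each $|\alpha|=k$,
\begin{align*}
|\partial_v^\alpha \mathcal K_i g|_2^2\leq \eta\sum_{|\beta|=k}|\partial_v^\beta g|_2^2+C_{k,\eta}|g|_2^2,\qquad i=1,2,
\end{align*}
and then integrate over $\mathbb R^3_x$.

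\textbf{The term $\mathcal K_1$.} This part is harmless. We have $\mathcal K_1 g(v)=\sqrt{M(v)}\int k_1(v,v_*)g(v_*)\,{\rm d}v_*$ with $k_1(v,v_*)=\pi|v-v_*|\sqrt{M(v_*)}$. All $v$-derivatives fall on the smooth, Gaussian-decaying factor $\sqrt{M(v)}|v-v_*|$. The singularity of $|v-v_*|$ at $v=v_*$ is mild, and derivatives of order $k$ produce at worst $|v-v_*|^{1-k}$, which is locally integrable in three dimensions for $k\le 3$. For higher $k$, I would first change variables $v_*\mapsto v_*+v$ so that the derivatives act on $\sqrt{M(v_*+v)}\sqrt{M(v)}$ instead. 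Cauchy--Schwarz then gives $|\partial^\alpha_v\mathcal K_1 g|_2\le C_k|g|_2$.

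\textbf{The term $\mathcal K_2$: change of variables.} Use the standard Carleman/Grad representation $\mathcal K_2 g(v)=\int k_2(v,v_*)g(v_*)\,{\rm d}v_*$, with
\begin{align*}
|k_2(v,v_*)|\lesssim |v-v_*|^{-1}e^{-\frac18|v-v_*|^2-\frac18\frac{(|v|^2-|v_*|^2)^2}{|v-v_*|^2}}.
\end{align*}
Derivatives of $k_2$ are singular, so instead substitute $v_*=v+u$ and write $\mathcal K_2 g(v)=\int k_2(v,v+u)g(v+u)\,{\rm d}u$. Now $\partial_v^\alpha$ distributes by the Leibniz rule over $k_2(v,v+u)$ and $g(v+u)$.

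\textbf{The term $\mathcal K_2$: cutoff and splitting.} Introduce a smooth cutoff $\chi_\epsilon(u)$ that vanishes for $|u|\le\epsilon$ and for $|u|\ge 1/\epsilon$, and split $k_2=\chi_\epsilon k_2+(1-\chi_\epsilon)k_2$.
\begin{itemize}
\item For the part with $(1-\chi_\epsilon)k_2$, put all derivatives on $g$. This gives $\int(1-\chi_\epsilon)|k_2(v,v+u)|\,|\partial^\alpha g(v+u)|\,{\rm d}u$. The kernel bound above implies $\sup_v\int(1-\chi_\epsilon)|k_2|\,{\rm d}u\to0$ as $\epsilon\to0$, since the $|u|^{-1}$ singularity integrated over a small ball gives $O(\epsilon^2)$ and the tail is Gaussian. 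The symmetric bound in $v_*$ holds as well, so the Schur test yields an operator norm at most $\eta$ for $\epsilon$ small.
\item For the part with $\chi_\epsilon k_2$, the kernel is smooth in the region where it is supported. Rewriting back in the original variables, we integrate by parts in $v_*$ to move every derivative that landed on $g$ onto the smooth kernel $\chi_\epsilon(v_*-v)k_2(v,v_*)$. Derivatives of $k_2$ away from the diagonal still carry exponential decay with polynomial growth of order $C_\epsilon\langle v\rangle^{m}$ in the transverse direction. The Schur test then bounds this part by $C_{k,\eta}|g|_2$.
\end{itemize}

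\textbf{Main obstacle.} The delicate step is verifying the uniform Schur bounds for derivatives of $\chi_\epsilon k_2$. The factor $\exp\big(-\tfrac18(|v|^2-|v_*|^2)^2/|v-v_*|^2\big)$ produces polynomial weights in $v$ when differentiated, and these must be controlled. I would handle this with Grad's explicit formula. After differentiation, the kernel is still dominated by $C_\epsilon(1+|v|)^{k}e^{-c|v-v_*|^2-c(|v|^2-|v_*|^2)^2/|v-v_*|^2}$. On $|v-v_*|\le1/\epsilon$, the second exponential forces $||v|-|v_*||\lesssim\langle v\rangle^{-1}$ apart from a thin shell. Integrating over that shell gains a factor $\langle v\rangle^{-1}$, but this does not by itself compensate the polynomial growth. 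So I would instead use the standard fact, from Guo's Lemma 2.2 computation, that differentiating in the variables $(v,u)$ only produces terms $\partial^\gamma k_2$ whose dependence is through $u$ and $v\cdot u$. Each such term retains a Schur bound of the form $C_\epsilon$, uniform in $v$.

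Finally, summing over $|\alpha|=k$ and integrating in $x$ gives the stated estimate.
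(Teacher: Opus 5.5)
The paper does not prove this statement; it only quotes \cite[Lemma 2.2]{Gy-CPAM-2002}. Your argument therefore has to stand on its own, and it breaks exactly at the step you flag as the main obstacle.

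\textbf{The off-diagonal bound is false.} You claim the piece $\int\chi_\epsilon(v_*-v)k_2(v,v_*)\,\partial_v^\alpha g(v_*)\,{\rm d}v_*$ is bounded by $C_{k,\eta}|g|_2$. This is not merely hard to verify by Schur's test: it fails for every $k\geq 2$.
\begin{itemize}
\item Write $v_*=v+u$. Grad's formula becomes $k_2=C|u|^{-1}e^{-|u|^2/8-s^2/2}$, with $s=v\cdot\hat u+|u|/2$ and $|v_*|^2-|v|^2=2|u|s$.
\item Hence for $|v|\sim R$ the kernel lives in a shell of width $\sim R^{-1}$ around the sphere $|v_*|=|v|$. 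It does not average out radial oscillations of frequency $\lambda\ll R$.
\item Test with $g=e^{i\lambda|v|}a(v)$, where $a\geq0$ is a unit bump centred at $v_0$, $|v_0|=R$, $\hat v_0^\alpha\neq0$. Positivity of $k_2$ gives an output of $L^2_v$-size $\gtrsim\lambda^kR^{-1}$, while $|g|_2\sim1$. With $\lambda=R^{2/3}$ the ratio is unbounded.
\end{itemize}
Your proposed remedy does not change this. It is true that $k_2(v,v+u)$ depends on $v$ only through $v\cdot\hat u$, so $v$-derivatives \emph{at fixed $u$} are uniformly harmless. But removing derivatives from $g(v+u)$ requires integrating by parts in $u$ at fixed $v$ (equivalently in $v_*$). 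Since $\partial_u(v\cdot\hat u)=(v-(v\cdot\hat u)\hat u)/|u|$, each such derivative brings back a factor $|v|$.

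\textbf{What is missing.} You need the large-velocity smallness of $\mathcal{K}_2$: $\int|k_2(v,v_*)|\,{\rm d}v_*\lesssim\langle v\rangle^{-1}$, so by Schur $\|\mathbf{1}_{|v|\geq R}\mathcal{K}_2\|_{L^2_v\to L^2_v}\lesssim R^{-1/2}$. One correct bookkeeping goes as follows. In the $(v,u)$ variables,
\[
\partial_v^\alpha\mathcal{K}_2 g=\sum_{\gamma\leq\alpha}\binom{\alpha}{\gamma}\int\partial_v^\gamma[k_2(v,v+u)]\,(\partial^{\alpha-\gamma}g)(v+u)\,{\rm d}u .
\]
\begin{itemize}
\item The terms with $\gamma\neq0$ are bounded by $C|\partial_v^{\alpha-\gamma}g|_2$; your observation about $v\cdot\hat u$ is exactly what is needed here. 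They are then closed by interpolation, $|\nabla_v^jg|_2\leq\eta|\nabla_v^kg|_2+C_\eta|g|_2$ for $j<k$, a step you do not mention.
\item The top term $\mathcal{K}_2(\partial^\alpha g)$ splits into three parts. Your near/far-diagonal part has norm $O(\epsilon)$. The part with $|v|\geq R$ has norm $O(R^{-1/2})$.
\item The remaining part is cut off smoothly to $|v|\leq 2R$, $\epsilon\leq|v-v_*|\leq1/\epsilon$. Its kernel is smooth and compactly supported in $(v,v_*)$, so integrating by parts in $v_*$ really does give $C_{\epsilon,R}|g|_2$.
\end{itemize}
Alternatively, $\partial_v^\alpha\mathcal{K}:H^k_v\to L^2_v$ is compact, and an Ehrling-type contradiction argument gives the estimate.

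\textbf{The $\mathcal{K}_1$ step has a similar flaw.} After $v_*=v+u$ the derivatives also fall on $g(v+u)$, not only on the Gaussians. In fact $|\partial_v^\alpha\mathcal{K}_1g|_2\leq C_k|g|_2$ fails for $k\geq5$. Indeed $\mathcal{K}_1g=2\pi\sqrt{M}\,\big(|\cdot|*(\sqrt{M}g)\big)$ and $\Delta_v^2|v|=-8\pi\delta_0$, so $\mathcal{K}_1$ gains exactly four derivatives. The fix is the same near-/off-diagonal splitting. Off the diagonal the kernel $|v-v_*|\sqrt{M(v)}\sqrt{M(v_*)}$ is smooth with Gaussian decay in both variables, so it can be differentiated directly.
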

Next, let's collect some useful estimates of the linear collision operator $\mathcal{L}$.

\begin{prop}[{\!\!\cite[Lemmas 3.2--3.3]{Gy-CPAM-2006}}]\label{prop2.2}
It holds that $\langle \mathcal{L} h_1, h_2\rangle=\langle h_1,\mathcal{L} h_2\rangle$, and $\langle\mathcal{L}h,h\rangle\geq 0 $, with $\mathcal{L}h=0$ if and only if $h=\mathbf{P}h$. Moreover, there exists a constant $\kappa_0>0$, such that
\begin{align}\label{G2.2}
\langle \mathcal{L}h,h\rangle\geq \kappa_0|\{\mathbf{I}-\mathbf{P}\}h|_{\nu}^2,    
\end{align}
and
\begin{align}\label{G2.3}
\langle \nu^{2l} \partial^\alpha_{x}\partial^\beta_{v} \mathcal{L}h,\partial^\alpha_{x}\partial^\beta_{v}h\rangle\geq \frac{1}{2}|\nu^{l} \partial^\alpha_{x}\partial^\beta_{v}h|_{\nu}^2-C|h|_{\nu}^2,
\end{align}
 for any $l\geq 0$.
\end{prop}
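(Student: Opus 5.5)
The plan is to separate the statement into its structural part (symmetry, nonnegativity, null space), the coercivity estimate \eqref{G2.2}, and the weighted derivative estimate \eqref{G2.3}. Throughout we use the splitting $\mathcal{L}=\nu-\mathcal{K}$ recalled above.

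\emph{Structural part.} We write $h=\sqrt{M}\,q$ and use the standard collision symmetries: the involutive change of variables $(v,v_*)\mapsto(v',v_*')$ at fixed $\omega$, which has unit Jacobian and preserves $|(v-v_*)\cdot\omega|$, together with the exchange $v\leftrightarrow v_*$. Since $MM_*=M'M'_*$, these give
\begin{align*}
\langle \mathcal{L}h_1,h_2\rangle=\frac14\int\!\!\int\!\!\int |(v-v_*)\cdot\omega|\,MM_*\,(q_1'+q_{1*}'-q_1-q_{1*})(q_2'+q_{2*}'-q_2-q_{2*})\,{\rm d}\omega{\rm d}v_*{\rm d}v .
\end{align*}
This formula is symmetric in $h_1,h_2$, and for $h_1=h_2$ it is nonnegative. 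It vanishes exactly when $q$ is a collision invariant. By the classical characterization of collision invariants, $q\in\mathrm{span}\{1,v,|v|^2\}$, i.e.\ $h=\mathbf{P}h$.

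\emph{Coercivity \eqref{G2.2}.} We use Grad's estimates: $\mathcal{K}$ is compact on $L^2_v$, and its kernel has Gaussian-type decay. First we prove an $L^2$ spectral gap $\langle\mathcal{L}h,h\rangle\ge c|\{\mathbf{I}-\mathbf{P}\}h|_2^2$ by contradiction. Take a normalized sequence $h_n\perp\mathcal{N}$ with $\langle\mathcal{L}h_n,h_n\rangle\to0$. It has a weak limit $h$, and compactness gives $\mathcal{K}h_n\to\mathcal{K}h$ strongly. Then weak lower semicontinuity of $|\cdot|_\nu$ gives $\langle \mathcal{L}h,h\rangle\le 0$ while $|h|_\nu\ge 1$ in the limit, and $h\perp\mathcal{N}$; this contradicts the null space characterization. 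We then upgrade to the $\nu$-norm: for $g=\{\mathbf{I}-\mathbf{P}\}h$,
\[
\langle\mathcal{L}g,g\rangle=|g|_\nu^2-\langle\mathcal{K}g,g\rangle\ge|g|_\nu^2-C|g|_2^2 .
\]
A convex combination of this with the $L^2$ gap yields $\kappa_0|g|_\nu^2$. Finally $\langle\mathcal{L}h,h\rangle=\langle\mathcal{L}g,g\rangle$ because $\mathcal{L}\mathbf{P}=0$ and $\mathcal{L}$ is symmetric.

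\emph{Weighted derivative estimate \eqref{G2.3}.} Spatial derivatives commute with $\mathcal{L}$, so only $\partial^\beta_v$ matters. Leibniz gives
\[
\partial_v^\beta(\nu h)=\nu\,\partial^\beta_v h+\sum_{0<\beta_1\le\beta}C_{\beta}^{\beta_1}\,\partial^{\beta_1}_v\nu\,\partial^{\beta-\beta_1}_v h .
\]
The leading term produces $|\nu^l\partial^\alpha_x\partial^\beta_v h|_\nu^2$. For hard spheres $|\partial^{\beta_1}_v\nu|\lesssim\langle v\rangle^{1-|\beta_1|}\lesssim 1$, so each commutator term carries at most the weight $\nu^{2l}$ instead of $\nu^{2l+1}$ and involves fewer $v$-derivatives. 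We bound it by Cauchy--Schwarz followed by a weighted interpolation in $v$ between $\partial^\beta_v h$ and $h$. This gives $\eta|\nu^l\partial^\beta_v h|_\nu^2+C_\eta(\text{lower order})$. For the $\mathcal{K}$ part we use the analogue of the first proposition in this subsection, applied pointwise in $x$ and with the weight $\nu^{2l}$. The Gaussian decay of the kernel of $\mathcal{K}$, after moving derivatives onto the kernel via the change of variables $v_*\mapsto v_*+v$, lets us absorb any polynomial weight. This yields
\[
|\nu^{l}\partial^\beta_v\mathcal{K}h|_2\le\eta|\partial^\beta_v h|_2+C_\eta|h|_2 .
\]
Choosing $\eta$ small then gives the factor $\frac12$ on the leading term.

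\emph{Main obstacle.} The delicate step is the lower-order bookkeeping in \eqref{G2.3}. The commutator and interpolation terms naturally generate intermediate derivatives $|\nu^{l}\partial^{\beta'}_v h|^2$ with $|\beta'|<|\beta|$, and these carry polynomial weights larger than $\nu^{1/2}$ when $l>0$. Reducing them to the single term $C|h|_\nu^2$ stated above is where care is required. I would try to run the weighted interpolation so that the remainders carry a negative power of $\nu$ against the derivative, or else exploit the smoothing of $\mathcal{K}$. If that fails, I would fall back on the form in \cite{Gy-CPAM-2006}, where the right-hand side is $C\sum_{|\beta'|<|\beta|}|\nu^{l}\partial^\alpha_x\partial^{\beta'}_vh|_\nu^2$. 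That version suffices for the induction on $|\beta|$ used in the energy estimates later in the paper.
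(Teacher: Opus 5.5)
Your arguments for symmetry, nonnegativity, the null space and \eqref{G2.2} are the standard ones: Grad's compactness of $\mathcal{K}$, a contradiction argument, then a convex combination. The paper proves nothing here and only cites Guo. There is one slip: weak lower semicontinuity does not give $|h|_\nu\ge 1$. Instead, normalize $|h_n|_\nu=1$ and obtain $h\neq0$ from $\langle\mathcal{K}h,h\rangle=\lim_n\langle\mathcal{K}h_n,h_n\rangle=1$.

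The genuine gap is \eqref{G2.3}, and it is more than bookkeeping. Your bound $|\nu^{l}\partial^\beta_v\mathcal{K}h|_2\le\eta|\partial^\beta_v h|_2+C_\eta|h|_2$ fails for $l>1$. The Gaussian factors in the kernel of $\mathcal{K}_2$ involve $v-v_*$ and $(|v|^2-|v_*|^2)/|v-v_*|$, not $v$ itself. So they only let you trade $\langle v\rangle^m$ for $\langle v_*\rangle^m$ and gain one power of $\langle v\rangle^{-1}$. A unit bump centred at $|v_0|=R$ gives a left side of order $R^{l-1}$ against a right side of order one. What is true is $|\nu^{l}\partial^\beta_v\mathcal{K}h|_2\lesssim\sum_{\beta'\le\beta}|\nu^{l-1}\partial^{\beta'}_vh|_2$. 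This reintroduces weighted lower-order derivatives, and these cannot in general be reduced to $C|h|_\nu^2$. For $\beta=e_1$,
\begin{align*}
\langle\nu^{2l}\partial_{v_1}(\nu h),\partial_{v_1}h\rangle=|\nu^l\partial_{v_1}h|_\nu^2-\frac12\int_{\mathbb{R}^3}\partial_{v_1}\big(\nu^{2l}\partial_{v_1}\nu\big)h^2\,{\rm d}v .
\end{align*}
Since $\nu=c|v|+O(|v|^{-1})$, the last integrand is about $2l\,c^{2l+1}v_1^{2l-1}h^2$ near the positive $v_1$-axis. Take $h=v_1^{-l}\chi(v_1)\psi(v_2,v_3)$ with $\psi\in C_c^\infty(\mathbb{R}^2)$ and $\chi$ a cutoff to $[A,B]$. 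This is the near-extremal profile of the Hardy inequality $\int_0^\infty t^{2l+1}|g'|^2{\rm d}t\ge l^2\int_0^\infty t^{2l-1}g^2{\rm d}t$. Then:
\begin{itemize}
\item $|\nu^l\partial_{v_1}h|_\nu^2\approx c^{2l+1}l^2\log(B/A)\|\psi\|_{L^2}^2$;
\item the commutator is $\approx c^{2l+1}l\log(B/A)\|\psi\|_{L^2}^2$;
\item for $l>1$, both $|h|_\nu^2$ and the $\mathcal{K}$-contribution stay bounded as $B\to\infty$.
\end{itemize}
Hence the left side minus the right side of \eqref{G2.3} equals $c^{2l+1}(\frac{l^2}{2}-l)\log(B/A)\|\psi\|_{L^2}^2+O(1)$, which tends to $-\infty$ for $1<l<2$. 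So \eqref{G2.3} as written is false in that range.

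Two further points. For mixed $\beta$, a single $\partial^\beta_v$ does not control $\partial^{\beta-e_i}_v$; compare $\partial_{v_1}\partial_{v_2}$ with $\partial_{v_2}$. So even at $l=0$ you cannot interpolate between $\partial^\beta_vh$ and $h$ alone; you can only do so after summing over all $|\beta|=k$. Also, the remainder has to read $C|\partial^\alpha_xh|_\nu^2$, as your reduction tacitly assumes; with $C|h|_\nu^2$ literally it fails already at $\beta=0$, $\alpha\neq0$.

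Your fallback, with weighted lower-order (or small same-order) derivatives on the right and an induction or summation in $|\beta|$, is therefore the correct general statement rather than a concession. The two cases the paper actually uses do follow from your ingredients.
\begin{itemize}
\item Case $\beta=0$, $l=1$, used in \eqref{G3.31}: there is no commutator. The bound $|\langle\nu^{2l}\mathcal{K}g,g\rangle|\le\eta|\nu^lg|_\nu^2+C_\eta|g|_2^2$ follows from the one-power gain after splitting into $|v|\le R$ and $|v|>R$.
\item Case $l=0$ summed over $|\beta|=k$, as used in \eqref{G3.38}: combine $|\partial^{\beta_1}_v\nu|\lesssim 1$, the interpolation $|\nabla^j_vg|_2\le\eta|\nabla^k_vg|_2+C_\eta|g|_2$ for $j<k$, and the first proposition of this subsection applied in $v$ only. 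Together these give the bound with remainder $C|g|_\nu^2$.
\end{itemize}
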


To handle the nonlinear collision operator $\mathbf{\Gamma}$, we need to obtain some velocity-weighted and velocity-derivative estimates as follows.

\begin{prop}[{\!\!{\cite[Lemma 2.3]{Gy-CPAM-2002}}} and  {\cite[Lemma 2.7]{UY-AA-2006}}] \label{prop2.3}
There exists $C>0$ such that
\begin{align*}
|\langle\Gamma(g_1,g_2),g_3\rangle|+|\langle\Gamma(g_2,g_1),g_3\rangle|\leq C\sup_{v}\{\nu^3g_3\} |g_1|_2|g_2|_2.   
\end{align*}
Moreover, for any $0\leq\eta\leq1$, we have
\begin{align}\label{G2.5}
|\nu^{-\eta}\Gamma(g,h)|_2\leq C \big(|\nu^{1-\eta}g|_{2}|h|_{2}+|\nu^{1-\eta }h|_{2}|g|_{2}\big).   
\end{align}
\end{prop}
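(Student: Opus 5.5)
Both inequalities come from splitting $\Gamma=\Gamma_{\rm gain}-\Gamma_{\rm loss}$, where
\[
\Gamma_{\rm loss}(g_1,g_2)(v)=g_1(v)\int_{\mathbb R^3}\!\int_{\mathbb S^2}|(v-v_*)\cdot\omega|\sqrt{M(v_*)}\,g_2(v_*)\,{\rm d}\omega{\rm d}v_*,
\]
and $\Gamma_{\rm gain}$ is the same integral with $g_1(v')g_2(v_*')$ in place of $g_1(v)g_2(v_*)$. The same argument applies to $\Gamma(g_2,g_1)$ because every bound below is symmetric in the two slots.

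\textbf{Loss part, both estimates.} Use $|(v-v_*)\cdot\omega|\le\langle v\rangle\langle v_*\rangle$ and Cauchy--Schwarz in $v_*$ against $\langle v_*\rangle\sqrt{M(v_*)}\in L^2$. This gives the pointwise bound $|\Gamma_{\rm loss}(g_1,g_2)(v)|\lesssim \langle v\rangle|g_1(v)|\,|g_2|_2$.
\begin{itemize}
\item Multiplying by $\nu^{-\eta}\sim\langle v\rangle^{-\eta}$ and taking $L^2_v$ gives the loss contribution $|\nu^{1-\eta}g_1|_2|g_2|_2$ in \eqref{G2.5}.
\item For the trilinear bound, write $|g_3|\le \sup_v|\nu^3g_3|\,\langle v\rangle^{-3}$. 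Then $\int\langle v\rangle^{-2}|g_1|\,{\rm d}v\le |g_1|_2\big(\int\langle v\rangle^{-4}{\rm d}v\big)^{1/2}$, and the last integral is finite in $\mathbb R^3$.
\end{itemize}

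\textbf{Gain part.} Test against $g_3$, or, for \eqref{G2.5}, against $\nu^{-\eta}\phi$ with $\phi\in L^2_v$ arbitrary. Then perform the pre/post-collisional change of variables $(v,v_*)\mapsto(v',v_*')$, which has unit Jacobian for fixed $\omega$ and preserves $|(v-v_*)\cdot\omega|$. The gain term becomes
\[
\int\!\!\int\!\!\int |(v-v_*)\cdot\omega|\,\sqrt{M(v_*')}\,g_1(v)g_2(v_*)\,\psi(v')\,{\rm d}\omega{\rm d}v_*{\rm d}v,
\]
where $\psi=g_3$ or $\psi=\nu^{-\eta}\phi$. The key inputs are energy conservation, $|v|^2+|v_*|^2=|v'|^2+|v_*'|^2$, and the rapid decay of $\sqrt{M(v_*')}$. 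Together they let me trade weights, $\langle v\rangle\langle v_*\rangle\lesssim\langle v'\rangle^2\langle v_*'\rangle^2$, so the Gaussian in $v_*'$ absorbs any power of $\langle v_*'\rangle$.

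\textbf{Trilinear bound.} Bound $|g_3(v')|\le\sup|\nu^3g_3|\,\langle v'\rangle^{-3}$ and convert the $\langle v'\rangle$ decay into decay in $(v,v_*)$ using the Gaussian factor. Then close with Cauchy--Schwarz in $v$ and $v_*$ separately.

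\textbf{Estimate \eqref{G2.5}.} Apply Cauchy--Schwarz in the form
\[
\big[|g_1(v)|^2|g_2(v_*)|^2w\big]^{1/2}\big[|\phi(v')|^2w^{-1}\big]^{1/2}
\]
with a suitable weight $w$. Then undo the change of variables on the $\phi$ factor, so that $\int|\phi(v')|^2(\cdots)$ is controlled by $|\phi|_2^2$ times a bounded kernel integral. This is Grad's/Ukai--Yang's standard splitting. Finally, $\nu^{1-\eta}$ is distributed onto whichever of $g_1,g_2$ carries the larger velocity, using $\langle v\rangle^{1-\eta}\lesssim\langle v'\rangle^{1-\eta}+\langle v_*'\rangle^{1-\eta}$.

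\textbf{Main obstacle.} The delicate step is the weight bookkeeping in the gain term for the trilinear bound. A crude use of $\langle v'\rangle^{-3}$ and $\langle v\rangle\langle v_*\rangle\lesssim\langle v'\rangle^2\langle v_*'\rangle^2$ only yields decay $(\langle v\rangle\langle v_*\rangle)^{-1/2}$ after cancelling $|v-v_*|$, and this is not square-integrable in $\mathbb R^3$. To fix this I would first exploit the $\omega$-integration: for $v'$ near $v$ the Gaussian $\sqrt{M(v_*')}$ forces $v_*'$ bounded, so $|v|\approx|v'|$ on the relevant set. I would then split the $\omega$-region into where $|v'|\gtrsim|v|$ and its complement, and use the Gaussian there. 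If this proves too lengthy, I would simply invoke \cite[Lemma 2.3]{Gy-CPAM-2002} for the trilinear bound and \cite[Lemma 2.7]{UY-AA-2006} for \eqref{G2.5}, as the statement indicates.
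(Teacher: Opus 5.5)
\textbf{Assessment.} The paper itself offers no argument beyond citing the lemmas of Guo and Ukai--Yang, so your fallback coincides with what the paper does. Your loss-term bounds are sound, and so is your sketch for the gain part of \eqref{G2.5}. For instance, the weight $w=|v-v_*|^{1-2\eta}$, which is invariant under $(v,v_*)\mapsto(v',v_*')$, closes the weighted Cauchy--Schwarz for every $0\le\eta\le1$.

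\textbf{The gap.} The genuine gap is your self-contained argument for the gain part of the trilinear bound. Once $g_3$ sits at $v'$, your proposed remedy is to confine $v_*'$ by the Gaussian so that $\langle v'\rangle\gtrsim\langle v\rangle$; energy conservation even gives $\langle v'\rangle\gtrsim\max\{\langle v\rangle,\langle v_*\rangle\}$ there. This improves only the weight. The factor $|(v-v_*)\cdot\omega|\le|v-v_*|$ is left untouched, so the best kernel bound you obtain is of size $\max\{\langle v\rangle,\langle v_*\rangle\}^{-2}$. That kernel does not give a bounded bilinear form on $L^2_v\times L^2_v$: testing with the indicator of the shell $\{R\le|v|\le 2R\}$ in both slots gives a ratio of order $R$.

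To rescue that route you would also need the angular estimate $\int_{\mathbb{S}^2}|(v-v_*)\cdot\omega|\sqrt{M(v_*')}\,{\rm d}\omega\lesssim1$, uniformly in $(v,v_*)$. Proving it requires computing how little of the collision sphere lies near the origin. Even then the resulting kernel $\max\{\langle v\rangle,\langle v_*\rangle\}^{-3}$ is borderline: it is not Hilbert--Schmidt. You would need a Schur test with weight $\langle v\rangle^{-3/2}$, i.e.\ a Hardy-type inequality.

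\textbf{A simple repair.} Do not move $g_3$ at all. Apply Cauchy--Schwarz on $\mathbb{R}^3_v\times\mathbb{R}^3_{v_*}\times\mathbb{S}^2$ to the gain term in its original variables, splitting $\sqrt{M(v_*)}=M(v_*)^{1/4}M(v_*)^{1/4}$:
\[
|\langle\Gamma_{\rm gain}(g_1,g_2),g_3\rangle|\le\Big(\int|(v-v_*)\cdot\omega|^2M(v_*)^{\frac12}|g_3(v)|^2\Big)^{\frac12}\Big(\int M(v_*)^{\frac12}|g_1(v')|^2|g_2(v_*')|^2\Big)^{\frac12}.
\]
Only the second factor needs the unit-Jacobian change of variables. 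After it, that factor is at most $C|g_1|_2|g_2|_2$, because $M^{1/2}\le C$. The first factor is at most
\[
C\Big(\int\langle v\rangle^2|g_3|^2{\rm d}v\Big)^{\frac12}\le C\sup_v|\nu^3g_3|\,\Big(\int\langle v\rangle^{-4}{\rm d}v\Big)^{\frac12},
\]
and $\int_{\mathbb{R}^3}\langle v\rangle^{-4}{\rm d}v<\infty$. Swapping $g_1$ and $g_2$ handles $\Gamma(g_2,g_1)$. No weight bookkeeping or angular analysis is needed.
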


\begin{prop}[{\!\!\cite[Lemma 3.3]{Gy-CPAM-2006}}]\label{prop2.4}
Let $g_{i}(x,v)$, $i=1,2,3,$ be smooth functions. Then we have
\begin{align}\label{NJKGnew}
&|\langle \partial^\alpha_{x}\partial^\beta_{v} \Gamma (g_1,g_2), \partial^\alpha_{x}\partial^\beta_{v}  g_3\rangle |\nonumber\\ 
&\quad \leq    C\sum_{\substack{\alpha_1\leq\alpha,\,\beta_1+\beta_{2}\leq\beta \\|\alpha|+|\beta| \leq N}}\big ( |\partial_{x}^{\alpha_1}\partial^{\beta_1}_v g_1|_2|\partial^{\alpha_2}_{x}\partial^{\beta_2}_{v}g_2|_{\nu} + |\partial_{x}^{\alpha_1}\partial^{\beta_1}_v g_2|_2|\partial^{\alpha_2}_{x}\partial^{\beta_2}_{v}g_1|_{\nu}\big)|\partial^{\alpha}_{x}\partial^{\beta}_{v}g_3|_{\nu} .
\end{align}
\end{prop}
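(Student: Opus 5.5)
We will prove \eqref{NJKGnew} by first moving all derivatives onto the arguments of $\Gamma$, which produces a finite sum of collision-type operators whose weight is a derivative of $\sqrt{M}$. Each such operator is then bounded separately: the loss part directly, and the gain part by Cauchy--Schwarz together with the pre/post-collisional change of variables. Throughout, $x$ is frozen and only $v$-integrals are taken, consistent with the $|\cdot|_2,|\cdot|_\nu$ norms in the statement.

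\textbf{Step 1 (reduction).} Since the kernel does not depend on $x$, Leibniz' rule gives $\partial_x^\alpha\Gamma(g_1,g_2)=\sum_{\alpha_1+\alpha_2=\alpha}C_\alpha\,\Gamma(\partial_x^{\alpha_1}g_1,\partial_x^{\alpha_2}g_2)$. For $v$-derivatives we write
\begin{align*}
\Gamma(g_1,g_2)(v)=\int_{\mathbb R^3}\!\int_{\mathbb S^2}|u\cdot\omega|\sqrt{M(v+u)}\big[g_1(v+u-(u\cdot\omega)\omega)\,g_2(v+(u\cdot\omega)\omega)-g_1(v+u)g_2(v)\big]{\rm d}\omega{\rm d}u,
\end{align*}
using the substitution $u=v_*-v$. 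In these coordinates every $v$-dependence is a translate of $v$. Hence
\begin{align*}
\partial_v^\beta\Gamma(g_1,g_2)=\sum_{\beta_0+\beta_1+\beta_2=\beta}C_\beta\,\Gamma_{\beta_0}(\partial_v^{\beta_1}g_1,\partial_v^{\beta_2}g_2),
\end{align*}
where $\Gamma_{\beta_0}$ is $\Gamma$ with $\sqrt{M(v_*)}$ replaced by $\partial^{\beta_0}\sqrt{M}(v_*)$. This replacement satisfies $|\partial^{\beta_0}\sqrt M(v_*)|\le C e^{-|v_*|^2/8}$. It therefore suffices to bound $|\langle\Gamma_{\beta_0}(h_1,h_2),h_3\rangle|$ by $C(|h_1|_2|h_2|_\nu+|h_2|_2|h_1|_\nu)|h_3|_\nu$, uniformly over the finitely many $\beta_0$. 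We then apply this with $h_1=\partial_x^{\alpha_1}\partial_v^{\beta_1}g_1$, $h_2=\partial_x^{\alpha_2}\partial_v^{\beta_2}g_2$ and $h_3=\partial^\alpha_x\partial^\beta_v g_3$.

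\textbf{Step 2 (loss term).} The loss part of the pairing is $\int h_2(v)h_3(v)\big(\int|(v-v_*)\cdot\omega|\,\partial^{\beta_0}\sqrt M(v_*)h_1(v_*)\big){\rm d}v$. By Cauchy--Schwarz in $v_*$, the inner integral is at most $C\langle v\rangle|h_1|_2\le C\nu(v)|h_1|_2$. Splitting $\nu=\nu^{1/2}\nu^{1/2}$ between $h_2$ and $h_3$ then gives the bound $C|h_1|_2|h_2|_\nu|h_3|_\nu$.

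\textbf{Step 3 (gain term, the main point).} We apply Cauchy--Schwarz to $\int|(v-v_*)\cdot\omega|e^{-|v_*|^2/8}|h_1(v_*')h_2(v')h_3(v)|$, putting the Gaussian factor in both halves. The first factor, $\int|v-v_*|e^{-|v_*|^2/8}|h_3(v)|^2$, is bounded by $C|h_3|_\nu^2$. For the second factor we drop the Gaussian (it is at most $1$) and change variables $(v,v_*)\mapsto(v',v_*')$. This map has unit Jacobian for fixed $\omega$ and preserves $|(v-v_*)\cdot\omega|$. The second factor is therefore controlled by $\int\!\!\int|v-v_*||h_1(v_*)|^2|h_2(v)|^2\le C\int\!\!\int(\nu(v)+\nu(v_*))|h_1(v_*)|^2|h_2(v)|^2$, which equals $C(|h_1|_2^2|h_2|_\nu^2+|h_1|_\nu^2|h_2|_2^2)$.

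The delicate point is that the Gaussian weight sits at $v_*$, which is \emph{not} the post-collisional variable. This is why we discard it in the second factor instead of trying to transport it through the change of variables. Having discarded it, we must make sure the $v_*$-integral still converges, and it does: the squared functions $|h_1|^2,|h_2|^2$ are integrable and the kernel grows only linearly, which $\nu$ absorbs. The symmetric structure of the right-hand side of \eqref{NJKGnew}, with both $|g_1|_2|g_2|_\nu$ and $|g_2|_2|g_1|_\nu$, comes exactly from this step.

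Summing over the Leibniz indices $\alpha_1\le\alpha$, $\beta_1+\beta_2\le\beta$ gives \eqref{NJKGnew}.
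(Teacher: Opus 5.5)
Your argument is correct and is essentially the standard proof of the cited lemma; the paper itself only quotes \cite[Lemma 3.3]{Gy-CPAM-2006}. The substitution $u=v_*-v$ lets $\partial_v^\beta$ fall on $g_1$, $g_2$ and $\sqrt M$ by Leibniz, the loss part is bounded directly, and the gain part is handled by Cauchy--Schwarz (discarding the Gaussian in the second factor), the unit-Jacobian map $(v,v_*)\mapsto(v',v_*')$, and $|v-v_*|\lesssim\nu(v)+\nu(v_*)$. One minor point: your integral formula places $g_1$ at $v_*,v_*'$ and $g_2$ at $v,v'$, so it is $\Gamma(g_2,g_1)$ in the paper's convention; this is harmless because the right-hand side of \eqref{NJKGnew} is symmetric in $g_1,g_2$.
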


\begin{prop}[{\!\!\cite[Lemma 3.3]{Gy-CPAM-2006}}]\label{NJKP2.5}
There exists $C>0$ such that
\begin{align}\label{NJKG2.6}
&|\langle\nu^2\partial^\alpha_{x} \Gamma(g_1,g_2),\partial^\alpha_x g_3\rangle|\nonumber\\
&\quad \leq  C \big (
|\nu   \partial^{\alpha_1}g_1|_{2} |\nu   \partial^{\alpha_2}g_2  |_{\nu}+ |\nu   \partial^{\alpha_1}g_1|_{\nu} |\nu   \partial^{\alpha_2}g_2  |_{2} \big) | \nu\partial^\alpha g_3|_{\nu}.
\end{align}
\end{prop}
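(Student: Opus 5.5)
The estimate \eqref{NJKG2.6} is to be read with the Leibniz sum over $\alpha_1+\alpha_2=\alpha$ implicit. Since $\Gamma$ acts only in $v$ and is bilinear, $\partial^\alpha_x$ commutes with it and
\[
\partial^\alpha_x\Gamma(g_1,g_2)=\sum_{\alpha_1+\alpha_2=\alpha}C^{\alpha_1}_{\alpha}\,\Gamma(\partial^{\alpha_1}g_1,\partial^{\alpha_2}g_2).
\]
It therefore suffices to prove, for fixed $(t,x)$, the pointwise-in-$x$ weighted trilinear bound
\[
|\langle \nu^2\Gamma(h_1,h_2),h_3\rangle|\le C\big(|\nu h_1|_2|\nu h_2|_\nu+|\nu h_1|_\nu|\nu h_2|_2\big)|\nu h_3|_\nu ,
\]
with $h_1=\partial^{\alpha_1}g_1$, $h_2=\partial^{\alpha_2}g_2$ and $h_3=\partial^\alpha g_3$. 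To prove it, I write $\Gamma=\Gamma_{\rm gain}-\Gamma_{\rm loss}$, where
\[
\Gamma_{\rm gain}(h_1,h_2)=\int\!\!\int|(v-v_*)\cdot\omega|\sqrt{M_*}\,h_1(v'_*)h_2(v')\,{\rm d}\omega{\rm d}v_*,\qquad
\Gamma_{\rm loss}(h_1,h_2)=h_2(v)\int\!\!\int|(v-v_*)\cdot\omega|\sqrt{M_*}\,h_1(v_*)\,{\rm d}\omega{\rm d}v_*,
\]
and I treat the two pieces separately.

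\emph{Loss term.} By Cauchy--Schwarz in $v_*$, the inner integral is bounded by $C\langle v\rangle|h_1|_2\lesssim \nu(v)|h_1|_2$, because $|v-v_*|\le\langle v\rangle\langle v_*\rangle$ and $\langle v_*\rangle\sqrt{M_*}\in L^2$. Hence
\[
|\langle\nu^2\Gamma_{\rm loss},h_3\rangle|\lesssim |h_1|_2\int\nu^3|h_2||h_3|\,{\rm d}v\le |\nu h_1|_2\,|\nu h_2|_\nu|\nu h_3|_\nu .
\]
This is the first term on the right-hand side.

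\emph{Gain term.} The key inputs are energy conservation $|v|^2+|v_*|^2=|v'|^2+|v_*'|^2$, which gives $\nu(v)\lesssim\nu(v')+\nu(v'_*)$ and in particular $\nu(v)\lesssim\nu(v')\nu(v'_*)$, and the fact that the pre/post-collisional change of variables $(v,v_*)\mapsto(v',v'_*)$ has unit Jacobian and preserves $|(v-v_*)\cdot\omega|$. I put one factor $\nu(v)$ on $h_3$ and the other on $h_1(v'_*)h_2(v')$, bounding it by $\nu(v')\nu(v'_*)$. I then apply Cauchy--Schwarz in $(v,v_*,\omega)$, splitting the kernel $B=|(v-v_*)\cdot\omega|$ as $B^{1/2}B^{1/2}$. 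On the $h_3$ side, $\int\!\!\int B\sqrt{M_*}\,{\rm d}\omega{\rm d}v_*\lesssim\nu(v)$ produces $|\nu h_3|_\nu$. On the other side I have $\int\!\!\int\!\!\int B\sqrt{M_*}\,\nu(v)\,|\nu h_1(v'_*)|^2|\nu h_2(v')|^2$. Here I change variables to post-collisional velocities, use $\sqrt{M_*}\,\nu(v)\lesssim \nu(v)e^{-|v_*|^2/4}$, and dominate $B\,\nu(v)\lesssim \nu(v')\nu(v'_*)\cdot(\text{bounded Gaussian-controlled factor})$. This places one extra $\nu^{1/2}$-type weight on either $h_1$ or $h_2$, according to which of $|v'|$, $|v'_*|$ is larger. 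The result is $|\nu h_1|_2|\nu h_2|_\nu+|\nu h_1|_\nu|\nu h_2|_2$.

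The main obstacle is this gain-term weight bookkeeping: the Gaussian $\sqrt{M_*}$ sits on $v_*$, not on the arguments $v',v'_*$ of $h_1,h_2$. So I must check carefully that after the change of variables only one extra factor $\nu$ (not $\nu^{3/2}$) lands on $h_1$ or $h_2$. The argument splits into the regions $|v'|\ge|v'_*|$ and $|v'|<|v'_*|$, and it follows Guo's proof of \cite[Lemma 3.3]{Gy-CPAM-2006}. Summing over $\alpha_1\le\alpha$ and integrating in $x$ then concludes the proof.
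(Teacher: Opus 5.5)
The paper does not prove this proposition; it quotes it from \cite[Lemma 3.3]{Gy-CPAM-2006}. Your overall route is the standard one for such weighted estimates: the Leibniz rule in $x$ (your reading of the implicit sum over $\alpha_1+\alpha_2=\alpha$ is right), a pointwise-in-$x$ trilinear bound, a gain/loss split, and Cauchy--Schwarz with the pre/post-collisional change of variables. Your reduction and your loss-term estimate are correct. You place $h_2$ at $v$ and $h_1$ at $v_*$, which is the reverse of the paper's convention $\Gamma(g_1,g_2)=M^{-1/2}Q(\sqrt{M}g_1,\sqrt{M}g_2)$. Since the right-hand side is symmetric in $(h_1,h_2)$, this is harmless.

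\textbf{The gap is in the gain term.} This is the only nontrivial step; you leave it unverified, and its bookkeeping is wrong as written. Once one factor $\nu(v)$ goes on $h_3$ and the other is bounded by $\nu(v')\nu(v'_*)$, the second Cauchy--Schwarz factor is
\[
\int_{\mathbb{R}^3}\!\int_{\mathbb{R}^3}\!\int_{\mathbb{S}^2}|(v-v_*)\cdot\omega|\sqrt{M_*}\,|\nu h_1(v'_*)|^2\,|\nu h_2(v')|^2\,{\rm d}\omega{\rm d}v_*{\rm d}v ,
\]
with no further $\nu(v)$. The factor $\nu(v)$ you wrote there is spurious.

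Your device for absorbing it, $B\nu(v)\lesssim\nu(v')\nu(v'_*)$ times a bounded factor, fails for two reasons:
\begin{itemize}
\item It is false pointwise. Take $v_*=0$ and $\omega=v/|v|$, so that $v'=0$, $v'_*=v$, $B=|v|$ and the Gaussian is constant. The left side is then of order $|v|^2$, while the right side is of order $|v|$.
\item Even granted, it would put a weight on \emph{both} $h_1$ and $h_2$. That produces $|\nu h_1|_\nu|\nu h_2|_\nu|\nu h_3|_\nu$, which does not imply the stated estimate.
\end{itemize}

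The correct computation is elementary and needs neither a splitting into regions nor the Gaussian:
\begin{itemize}
\item For fixed $\omega$, substitute $(u,u_*)=(v',v'_*)$. This is an involution with unit Jacobian, and $|(v-v_*)\cdot\omega|=|(u-u_*)\cdot\omega|$.
\item Bound $\sqrt{M}\le 1$ and $|(u-u_*)\cdot\omega|\le\langle u\rangle+\langle u_*\rangle\lesssim\nu(u)+\nu(u_*)$.
\item This gives $C\big(|\nu h_1|_2^2|\nu h_2|_\nu^2+|\nu h_1|_\nu^2|\nu h_2|_2^2\big)$ at once.
\end{itemize}

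A stray factor $\nu(v)=\nu(u')$ could in fact still be absorbed, but only through the averaging bound $\int_{\mathbb{S}^2}|(u-u_*)\cdot\omega|\sqrt{M(u'_*)}\,{\rm d}\omega\lesssim 1$ (uniform in $u,u_*$), combined with $\nu(u')\lesssim\nu(u)+\nu(u_*)$. Your sketch does not supply this.
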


\subsection{The Littlewood-Paley decomposition and certain properties of homogeneous Besov spaces.}
Now, we  recall the Littlewood-Paley decomposition theory and
some properties of
homogeneous Besov spaces. The reader can refer to \cite[Chapter 2]{BCD-Book-2011} for more details.
Let $\chi(\xi)$ be a smooth, radial, non-increasing function supported in the ball $B\big(0,\frac{4}{3}\big)$ and satisfying $\chi(\xi) = 1$ for all $\xi \in B\big(0,\frac{3}{4}\big)$. Define
\begin{align*}
 \varphi(\xi) := \chi\Big(\frac{\xi}{2}\Big) - \chi(\xi).   
\end{align*}
Then $\varphi$ satisfies
\begin{align*}
\sum_{k \in \mathbb{Z}} \varphi(2^{-k} \xi) = 1 \quad \text{for all } \quad\xi \neq 0,   
\end{align*}
and its support is contained in
\begin{align*}
\mathrm{supp}\,\varphi \subset \Big\{ \xi \in \mathbb{R}^3 \;\Big|\; \frac{3}{4} \le |\xi| \le \frac{8}{3} \Big\}.    
\end{align*}
For any $j\in \mathbb{Z}$, the homogeneous dyadic block $\dot{\Delta}_j$ is defined as 
\begin{align*}
\dot{\Delta}_j g := \mathcal{F}^{-1}\left( \varphi(2^{-j}\cdot)\mathcal{F}(g) \right) = 2^{3j} h(2^j\cdot) * g, 
\end{align*}
where $h := \mathcal{F}^{-1}\varphi$.

Let $\mathcal{P}$ denote the set of all polynomials on $\mathbb{R}^3$, and define
\begin{align*}
\mathcal{S}'_h (\mathbb R^3):= \mathcal{S}' / \mathcal{P}  (\mathbb R^3)  
\end{align*}
as the space of tempered distributions on $\mathbb{R}^3$ modulo polynomials. 
Then, for any $g \in \mathcal{S}'_h (\mathbb R^3)$, one has the homogeneous Littlewood--Paley decomposition
\begin{align*}
g= \sum_{j \in \mathbb{Z}} \dot{\Delta}_j g,    
\end{align*}
where the dyadic blocks satisfy
\begin{align*}
\dot{\Delta}_j \dot{\Delta}_k g = 0 \quad \text{whenever }\quad |j - k| \ge 2.    
\end{align*}
 
For any $j \in \mathbb{Z}$, we define the low-frequency cut-off operator $\dot{S}_j$ by
\begin{align*}
\dot{S}_j g := \sum_{j' < j} \dot{\Delta}_{j'} g, \quad \forall g \in \mathcal{S}'(\mathbb{R}^3),    
\end{align*}
which will be used in subsequent analysis. 
Moreover, for any $g \in \mathcal{S}'(\mathbb{R}^3)$, we denote its low- and high-frequency parts by
\begin{align*}
g_L := \dot{S}_{j_0} g, \qquad g_H := g - \dot{S}_{j_0} g,    
\end{align*}
for some fixed $j_0 \in \mathbb{Z}$.

With the assistance of those dyadic blocks, we   now present the definition of homogeneous Besov spaces. 
\begin{defn}
For any $ s \in \mathbb{R} $ and $ 1 \leq p, q\leq \infty $, the homogeneous Besov spaces $ \dot{B}_{p,q}^s$ are defined as
\begin{align*}
\dot B_{p,q}^s:=\big\{ g\in \mathcal{S}^\prime_h \,\big{|}\, \|g\|_{\dot{B}_{p,q}^s}:=\big\|\{2^{ks}\|\dot\Delta_k g\|_{L^p}\}_{k\in\mathbb Z}\big\|_{l^q}<\infty\big\}. 
\end{align*}
\end{defn}

Then, we present a class of mixed space-velocity homogeneous Besov spaces as described below.

\begin{defn}
For any $ s \in \mathbb{R} $ and $ 1 \leq p, q\leq \infty $, we use the notation $L_v^2(\dot B_{p,q}^s)$ to denote the space $L^2( \mathbb R^3_{v} ; \dot B_{p,q}^s(\mathbb R^3_{x}))$, and it is defined by
\begin{align*}
 L_v^2(\dot B_{p,q}^s):=\bigg\{ g\in L_v^2(\mathcal{S}^\prime_h) \,\bigg{|}\, \|g\|_{L_v^2(\dot{B}_{p,q}^s)}:=\bigg(\int_{\mathbb R^3} \|g\|_{\dot B_{p,q}^s}^2{\rm d}v\bigg)^\frac{1}{2} <\infty\bigg\}.    
\end{align*}
\end{defn}

Next, we state some fundamental properties of the Besov space.

\begin{prop}{\rm(\!\!\cite[Chapter 2]{BCD-Book-2011})}\label{prop2.5}
The following properties hold:
\begin{itemize}
\item {}For any non-negative integer $k\in\mathbb{Z}$, it follows that
\begin{align*}
\|\nabla^k g\|_{\dot B_{2,q}^s}\backsim \|g\|_{\dot B_{2,q}^{s+k}} .
\end{align*}
\item{} For $s\in\mathbb{R}$, $1\leq p_{1}\leq p_{2}\leq \infty$ and $1\leq q_{1}\leq q_{2}\leq \infty$, it holds that
\begin{equation}\notag
\begin{aligned}
\dot{B}^{s}_{p_{1},q_{1}}\hookrightarrow \dot{B}^{s-3 (\frac{1}{p_1} -\frac{1}{p_2})}_{p_{2},q_{2}}.
\end{aligned}
\end{equation}
\item{} For $1\leq p\leq q\leq\infty$, one has the   chain of continuous embedding  
\begin{equation}\nonumber
\begin{aligned}
  \dot{B}^{0}_{p,1}\hookrightarrow L^{p}\hookrightarrow \dot{B}^{0}_{p,\infty}\hookrightarrow \dot B_{q,\infty}^{\varsigma},\quad {\text{where}}\quad\varsigma=-3\Big(\frac{1}{p}-\frac{1}{q}\Big).
\end{aligned}
\end{equation}
\item{} If $p < \infty$, then the Besov space $\dot{B}^{\frac{3}{p}}_{p,1}$ is continuously embedded in the set of continuous functions that decay to 0 at infinity.

\item{}  The following real interpolation property is satisfied for $1\leq p,q\leq \infty$, $s_1<s_2$, and $\theta\in (0,1)$:
\begin{align}\label{interpolation}
\|g\|_{\dot B_{p,q}^{\theta s_1+(1-\theta)s_2}}\lesssim\|g\|_{\dot B_{p,1}^{\theta s_1+(1-\theta)s_2}}\lesssim \frac{1}{\theta(1-\theta)(s_2-s_1)}\|g\|_{\dot B_{p,\infty}^{s_1}}^{\theta} \|g\|_{\dot B_{p,\infty}^{s_2}}^{1-\theta}  .  
\end{align}

\item{} Let $s\in \mathbb{R}$ and $1\leq q\leq \infty$. Let $q^\prime$ denote the conjugate exponent of $q$. Then, the following duality estimates hold:
\begin{align} \label{duality}
\langle g,h\rangle\lesssim \|g\|_{\dot B_{2,q}^s} \|h\|_{\dot B_{2,q}^{-s}}, \quad \text{and} \quad \|g\|_{\dot B_{2,q}^s}\lesssim \sup_{\phi\in \mathcal{S}}\langle g,\phi\rangle.
\end{align}
where the supremum is taken over $\phi$ such that $\|\phi\|_{\dot B_{2,q^\prime}^{-s}}\leq 1$ and $0 \notin {\rm Supp}\mathcal{F}\phi$.

\item{}  For any $\epsilon>0$, it holds that
\begin{equation}\nonumber
\begin{aligned}
H^{s+\epsilon}\hookrightarrow \dot{B}^{s}_{2,1}\hookrightarrow \dot{H}^{s}.
\end{aligned}
\end{equation}
\item{}
Let $\Lambda^{\sigma}$ be defined by $\Lambda^{\sigma}:=(-\Delta )^{\frac{\sigma}{2}}g:=\mathcal{F}^{-1}\big{(} |\xi|^{\sigma}\mathcal{F}(g) \big{)}$ for $\sigma\in \mathbb{R}$ and $g\in{\mathcal S}^{'}_{h}(\mathbb{R}^3)$, then $\Lambda^{\sigma}$ is an isomorphism from $\dot{B}^{s}_{p,q}$ to $\dot{B}^{s-\sigma}_{p,q}$.
\item{} Let $1\leq p_{1},p_{2},q_{1},q_{2}\leq \infty$, $s_{1}\in\mathbb{R}$ and $s_{2}\in\mathbb{R}$ satisfy
\begin{align*}
 s_{2}<\frac{3}{2}\quad\text{\text{or}}\quad s_{2}=\frac{3}{2}~\text{and}~q_{2}=1.
\end{align*}
  Then, the space $\dot{B}^{s_{1}}_{2,q_{1}}\cap \dot{B}^{s_{2}}_{2,q_{2}}$ endowed with the norm $\|\cdot \|_{\dot{B}^{s_{1}}_{2,q_{1}}}+\|\cdot\|_{\dot{B}^{s_{2}}_{2,q_{2}}}$ is a Banach space and possesses the weak compactness and Fatou properties. If $\{g_{n}\}$ is a uniformly bounded sequence in $\dot{B}^{s_{1}}_{2,q_{1}}\cap \dot{B}^{s_{2}}_{2,q_{2}}$, then there exist an element $g$ of $\dot{B}^{s_{1}}_{2,q_{1}}\cap \dot{B}^{s_{2}}_{2,q_{2}}$ and a subsequence $\{g_{n_{k}}\}$ such that $g_{n_{k}}\rightarrow g$ in $\mathcal{S}'$, and 
    \begin{align*} 
    \begin{aligned}
    \|g\|_{\dot{B}^{s_{1}}_{2,q_{1}}\cap \dot{B}^{s_{2}}_{2,q_{2}}}\lesssim \liminf_{n_{k}\rightarrow \infty} \|g_{n_{k}}\|_{\dot{B}^{s_{1}}_{2,q_{1}}\cap \dot{B}^{s_{2}}_{2,q_{2}}}.
    \end{aligned}
    \end{align*}
\end{itemize}
\end{prop}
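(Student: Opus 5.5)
Since Proposition \ref{prop2.5} collects standard facts from \cite[Chapter 2]{BCD-Book-2011}, I would argue each item directly from the dyadic definition, using two tools throughout. The first is Bernstein's inequality: for $g$ with $\mathrm{supp}\,\widehat g\subset 2^k\mathcal{C}$ (an annulus) and $1\le p_1\le p_2\le\infty$,
\[
\|\nabla^m g\|_{L^{p_2}}\backsim 2^{km+3k(\frac1{p_1}-\frac1{p_2})}\|g\|_{L^{p_1}},
\]
where the upper bound holds with only $\lesssim$ when $p_1<p_2$. The second is the almost-orthogonality $\dot\Delta_j\dot\Delta_k=0$ for $|j-k|\ge2$.

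\textbf{Derivatives, embeddings, $\Lambda^\sigma$.} Bernstein on each block gives $\|\dot\Delta_j\nabla^kg\|_{L^2}\backsim 2^{jk}\|\dot\Delta_jg\|_{L^2}$ and $\|\dot\Delta_j g\|_{L^{p_2}}\lesssim 2^{3j(\frac1{p_1}-\frac1{p_2})}\|\dot\Delta_jg\|_{L^{p_1}}$. Combining with $\ell^{q_1}\subset\ell^{q_2}$ yields the first two items. The same multiplier argument, since $|\xi|^\sigma\varphi(2^{-j}\xi)$ is a rescaled smooth symbol on the annulus, gives the isomorphism $\Lambda^\sigma:\dot B^s_{p,q}\to\dot B^{s-\sigma}_{p,q}$.

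\textbf{The chain $\dot B^0_{p,1}\hookrightarrow L^p\hookrightarrow\dot B^0_{p,\infty}$.} The first inclusion is the triangle inequality applied to $g=\sum_j\dot\Delta_j g$. The second follows from Young's inequality, since $\|h\|_{L^1}$ is scale invariant. The last arrow is a special case of the second item. The continuity embedding $\dot B^{3/p}_{p,1}\hookrightarrow C_0$ follows from Bernstein ($L^p\to L^\infty$ on each block), because the series then converges uniformly and each block is continuous and decays at infinity. The inclusions $H^{s+\epsilon}\hookrightarrow\dot B^s_{2,1}\hookrightarrow\dot H^s$ come from Plancherel. For the first, I would split into $j<0$, where $2^{js}\|\dot\Delta_jg\|_{L^2}\lesssim 2^{js}\|g\|_{L^2}$, and $j\ge0$, where Cauchy--Schwarz is used with the weight $2^{-j\epsilon}$. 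I note that the low-frequency sum converges only when $s>0$, so I would state that restriction explicitly.

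\textbf{Interpolation and duality.} For \eqref{interpolation}, set $s=\theta s_1+(1-\theta)s_2$ and split the sum $\sum_j 2^{js}\|\dot\Delta_jg\|_{L^p}$ at an index $J$. For $j\le J$ bound each term by $2^{j(s-s_1)}\|g\|_{\dot B^{s_1}_{p,\infty}}$, and for $j>J$ by $2^{-j(s_2-s)}\|g\|_{\dot B^{s_2}_{p,\infty}}$. Summing the two geometric series produces the factors $\big((1-\theta)(s_2-s_1)\big)^{-1}$ and $\big(\theta(s_2-s_1)\big)^{-1}$. Optimizing $J$ so that $2^{J(s_2-s_1)}\approx\|g\|_{\dot B^{s_2}}/\|g\|_{\dot B^{s_1}}$ then gives the product bound. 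For the duality estimate, write
\[
\langle g,h\rangle=\sum_{|j-k|\le1}\langle\dot\Delta_jg,\dot\Delta_kh\rangle
\]
and apply Hölder in $x$ and then in $\ell^q\times\ell^{q'}$. For the converse, I would test against $\phi=\sum_j c_j2^{js}\dot\Delta_j g/\|\dot\Delta_jg\|_{L^2}$, truncated to finitely many $j$, with $(c_j)$ in the unit ball of $\ell^{q'}$ chosen to nearly realize the $\ell^q$ norm. These truncated $\phi$ have Fourier support away from $0$, and I would reach Schwartz test functions by density.

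\textbf{Completeness and Fatou, the main obstacle.} The delicate item is the last one. The key point is that when $s_2<3/2$ (or $s_2=3/2$ with $q_2=1$), the low-frequency series $\sum_{j<0}\dot\Delta_jg$ converges in $L^\infty$, or at least in $\mathcal S'$, because
\[
\|\dot\Delta_jg\|_{L^\infty}\lesssim 2^{j(3/2-s_2)}\,2^{js_2}\|\dot\Delta_jg\|_{L^2}.
\]
Hence elements of the intersection are genuine tempered distributions rather than classes modulo polynomials. Given a bounded sequence $(g_n)$, I would extract by a diagonal argument a subsequence with $\dot\Delta_jg_{n_k}\rightharpoonup g_j$ weakly in $L^2$ for every $j$. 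I would then set $g=\sum_jg_j$, which converges in $\mathcal S'$ by the bound above at low frequencies and by the $\dot B^{s_1}$ bound at high frequencies. Weak lower semicontinuity of the $L^2$ norm blockwise, followed by Fatou's lemma in $\ell^q$, gives the norm inequality. Completeness follows by the same construction applied to a Cauchy sequence. I expect the main care to be needed in justifying the $\mathcal S'$ convergence of the low frequencies, and in checking that $\dot\Delta_jg=g_j$ up to the overlap of neighboring blocks, which causes no harm since $\dot\Delta_j\dot\Delta_k=0$ for $|j-k|\ge2$.
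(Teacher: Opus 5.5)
The paper gives no proof here and simply cites \cite[Chapter 2]{BCD-Book-2011}. Your item-by-item reconstruction (Bernstein, dyadic summation, diagonal extraction) is the standard route, and most items are fine. Your remark that $H^{s+\epsilon}\hookrightarrow\dot B^s_{2,1}$ requires $s>0$ is correct.

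\textbf{Main gap: the last item.} You construct $g=\sum_j g_j$ and prove the norm bound, but you never prove the asserted convergence $g_{n_k}\to g$ in $\mathcal S'$.

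Blockwise weak convergence only gives $\langle\dot\Delta_j g_{n_k},\psi\rangle\to\langle g_j,\psi\rangle$ for each fixed $j$. To reach $\langle g_{n_k},\psi\rangle=\sum_j\langle\dot\Delta_j g_{n_k},\psi\rangle$ you must interchange $\lim_k$ and $\sum_{j<0}$. That requires low-frequency tails that are small uniformly in $k$.

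When $s_2<\frac32$, your own Bernstein bound provides this. For $j<0$ one has $|\langle\dot\Delta_j g_n,\psi\rangle|\lesssim 2^{j(\frac32-s_2)}\sup_m\|g_m\|_{\dot B^{s_2}_{2,\infty}}\|\psi\|_{L^1}$, so dominated convergence closes the step. High frequencies are harmless because $\psi$ is Schwartz.

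At the endpoint $s_2=\frac32$, $q_2=1$ there is no uniform tail, and the claim actually fails for the genuine representatives you chose. Here is a counterexample.
\begin{itemize}
\item Take $\widehat\rho\ge0$, $\widehat\rho\not\equiv0$, supported in $\{\frac43<|\xi|<\frac32\}$. There $\varphi\equiv1$ and all other $\varphi(2^{-j}\cdot)$ vanish.
\item Set $g_n=\rho(2^{-n}\cdot)$. Then $\dot\Delta_{-n}g_n=g_n$, and $(g_n)_{n\ge0}$ is bounded in $\dot B^{\frac32}_{2,1}\cap\dot B^{s_1}_{2,q_1}$ for any $s_1\ge\frac32$.
\item Every $g_j=0$, yet $g_n\to\rho(0)\neq0$ in $\mathcal S'$.
\end{itemize}
So at the endpoint one only gets convergence up to an additive constant, unless also $s_1<\frac32$. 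This is harmless in the quotient $\mathcal S'/\mathcal P$ of the paper's definition, but not with your choice of representatives. The paper only uses this item with subcritical indices ($\frac12$ and $1-\varepsilon$), where the added step suffices.

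Also, $\dot\Delta_j g=g_j$ holds exactly, not just up to neighbouring blocks. Indeed, $\dot\Delta_j g=\sum_{|\ell-j|\le1}\dot\Delta_j g_\ell$ is the weak limit of $\sum_{|\ell-j|\le1}\dot\Delta_j\dot\Delta_\ell g_{n_k}=\dot\Delta_j g_{n_k}$, whose weak limit is $g_j$.

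\textbf{Second issue: the duality item.} Your H\"older step in $\ell^q\times\ell^{q'}$ proves $\langle g,h\rangle\lesssim\|g\|_{\dot B^s_{2,q}}\|h\|_{\dot B^{-s}_{2,q'}}$. This is the correct inequality, and it is the one the paper actually uses in Lemma \ref{L3.1}, with $q=\infty$, $q'=1$.

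The statement, however, prints $\|h\|_{\dot B^{-s}_{2,q}}$, which is false for $q>2$. Let $\widehat{g_k}$ be supported in $2^{3k}\{\frac43<|\xi|<\frac32\}$ with $\|g_k\|_{L^2}=2^{-3ks}$. Then $g=\sum_{k=1}^K g_k$ and $h=\sum_{k=1}^K 2^{6ks}g_k$ satisfy $\|g\|_{\dot B^{s}_{2,\infty}}=\|h\|_{\dot B^{-s}_{2,\infty}}=1$, but $\langle g,h\rangle=K$. You should say explicitly that you are proving the corrected version.

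For the converse inequality, record why your test function works: $\langle g,\dot\Delta_j g\rangle\ge\|\dot\Delta_j g\|_{L^2}^2$ by Plancherel, since $0\le\varphi\le1$ gives $\varphi\ge\varphi^2$.
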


To control the nonlinear terms in the Boltzmann equation \eqref{G1.5}$_1$, we require the following product estimates.

 \begin{prop}\label{prop2.6}{\rm(\!\!\cite[Chapter 2]{BCD-Book-2011})}
The following statements hold:
\begin{itemize}
\item{} Let $s>0$, $1\leq  q\leq \infty$.  Then $\dot B^{s}_{2,q}\cap L^\infty$ is an algebra and
 \begin{align*} 
\|g_1g_2\|_{\dot B_{2,q}^s}\lesssim \|g_1\|_{L^\infty}\|g_2\|_{\dot B_{2,q}^s}+\|g_2\|_{L^\infty}\|g_1\|_{\dot B_{2,q}^s},    
 \end{align*}
which, in particular, gives rise to
\begin{align*}
\|g_1g_2\|_{\dot B_{2,1}^{\frac{3}{2}}}\lesssim \|g_1\|_{\dot B_{2,1}^{\frac{3}{2}}}\|g_2\|_{\dot B_{2,1}^{\frac{3}{2 }}} .    
\end{align*}

\item{} Let the real numbers $s_1$, $s_2$  and $r$ fulfill
\begin{align*}
 1\leq  q,q_1,q_2\leq\infty, \quad s_1<\frac{3}{2},\quad s_2<\frac{3}{2},\quad s_1+s_2>0,\quad \frac{1}{q}=\frac{1}{q_1}+\frac{1}{q_2}.
\end{align*}
  Then, it holds  that
 \begin{align} \label{newinterpolation}
\|g_1 g_2\|_{\dot B_{2,q}^{s_1+s_2-\frac{3}{2}}}\lesssim \|g_1\|_{ \dot B_{2,q_1}^{s_1 }}\|g_2\|_{\dot B_{2,q_2}^{s_2}} .    
 \end{align}

\item{} Let  the real numbers $s_1$, $s_2$, and $q$ fulfill
\begin{align*}
 s_1\leq \frac{3}{2},\quad s_2< \frac{3}{2},\quad s_1+s_2\geq 0.   
\end{align*}
Then, it holds that
 \begin{align} \label{inter}
\|g_1g_2\|_{\dot B_{2,\infty}^{s_1+s_2-\frac{3}{2}}}\lesssim \|g_1\|_{ \dot B_{2,1}^{s_1 }}\|g_2\|_{\dot B_{2,\infty}^{s_2}}.    
 \end{align}
    \end{itemize}
\end{prop}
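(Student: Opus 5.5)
All three items follow from Bony's paraproduct decomposition, as in \cite[Chapter 2]{BCD-Book-2011}. We write
\[
g_1g_2=T_{g_1}g_2+T_{g_2}g_1+R(g_1,g_2),\qquad T_{g_1}g_2:=\sum_{k}\dot S_{k-1}g_1\,\dot\Delta_k g_2,\qquad R(g_1,g_2):=\sum_{|k-k'|\le 1}\dot\Delta_k g_1\,\dot\Delta_{k'}g_2 .
\]
Two facts will be used throughout.
\begin{itemize}
\item Each term $\dot S_{k-1}g_1\,\dot\Delta_kg_2$ has Fourier support in an annulus of size $2^k$. Hence $\dot\Delta_j T_{g_1}g_2$ only involves indices with $|k-j|\le 4$.
\item Each term of $R$ has Fourier support in a ball of radius $\sim 2^{k}$. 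Hence $\dot\Delta_j R$ involves all $k\ge j-3$, and summing over these requires a strictly positive total regularity.
\end{itemize}

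\textbf{First item.} Use $\|\dot S_{k-1}g_1\|_{L^\infty}\lesssim\|g_1\|_{L^\infty}$. This gives
\[
2^{js}\|\dot\Delta_jT_{g_1}g_2\|_{L^2}\lesssim\|g_1\|_{L^\infty}\sum_{|k-j|\le4}2^{ks}\|\dot\Delta_kg_2\|_{L^2},
\]
and taking the $\ell^q$ norm in $j$ bounds $T_{g_1}g_2$. The term $T_{g_2}g_1$ is handled symmetrically. For the remainder, estimate
\[
\|\dot\Delta_j R\|_{L^2}\lesssim\|g_1\|_{L^\infty}\sum_{k\ge j-3}\|\dot\Delta_kg_2\|_{L^2}.
\]
Multiplying by $2^{js}$, the right-hand side becomes the convolution of $2^{ks}\|\dot\Delta_kg_2\|_{L^2}$ with the kernel $2^{(j-k)s}\mathbf 1_{k\ge j-3}$. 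This kernel lies in $\ell^1$ exactly when $s>0$, so Young's inequality closes the bound. The special case $s=\frac32$, $q=1$ then follows from the embedding $\dot B^{3/2}_{2,1}\hookrightarrow L^\infty$ in Proposition \ref{prop2.5}.

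\textbf{Second item, \eqref{newinterpolation}.} Here no $L^\infty$ bound is assumed, so we replace it by Bernstein's inequality:
\[
\|\dot S_{k-1}g_1\|_{L^\infty}\lesssim\sum_{k''\le k-2}2^{\frac32k''}\|\dot\Delta_{k''}g_1\|_{L^2}=2^{k(\frac32-s_1)}\sum_{k''\le k-2}2^{(k''-k)(\frac32-s_1)}\,2^{k''s_1}\|\dot\Delta_{k''}g_1\|_{L^2}.
\]
Since $s_1<\frac32$, the last sum is an $\ell^{q_1}$ sequence $c_k$ with $\|c\|_{\ell^{q_1}}\lesssim\|g_1\|_{\dot B^{s_1}_{2,q_1}}$, by Young's inequality. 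Multiplying by $\|\dot\Delta_kg_2\|_{L^2}$ and using Hölder's inequality in $\ell^q$ with $\frac1q=\frac1{q_1}+\frac1{q_2}$ gives the bound in $\dot B^{s_1+s_2-\frac32}_{2,q}$. The term $T_{g_2}g_1$ is treated in the same way, using $s_2<\frac32$.

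For the remainder we first estimate each block in $L^1$ and then use Bernstein's inequality $L^1\to L^2$, which costs a factor $2^{\frac32 j}$. This gives
\[
2^{j(s_1+s_2-\frac32)}\|\dot\Delta_jR\|_{L^2}\lesssim\sum_{k\ge j-3}2^{(j-k)(s_1+s_2)}\,a_kb_k,
\]
with $a_k=2^{ks_1}\|\dot\Delta_kg_1\|_{L^2}$ and $b_k=2^{ks_2}\|\dot\Delta_k g_2\|_{L^2}$. Because $s_1+s_2>0$, the kernel is summable, and Young's plus Hölder's inequalities finish the estimate.

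\textbf{Third item, \eqref{inter}.} This is the endpoint version, and I expect it to be the main obstacle, since $s_1=\frac32$ and $s_1+s_2=0$ are now allowed. The way around it is to use the $\ell^1$ summability of $g_1$ and only ask for $\ell^\infty$ output.
\begin{itemize}
\item For $T_{g_1}g_2$, the bound $\|\dot S_{k-1}g_1\|_{L^\infty}\lesssim 2^{k(\frac32-s_1)}\|g_1\|_{\dot B^{s_1}_{2,1}}$ holds uniformly in $k$ even when $s_1=\frac32$, because the sum over $k''$ is now controlled by the $\ell^1$ norm.
\item For $T_{g_2}g_1$, the strict inequality $s_2<\frac32$ gives $\|\dot S_{k-1}g_2\|_{L^\infty}\lesssim 2^{k(\frac32-s_2)}\|g_2\|_{\dot B^{s_2}_{2,\infty}}$. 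This is then paired with the $\ell^1$ sequence coming from $g_1$.
\item For the remainder when $s_1+s_2=0$, the kernel $2^{(j-k)(s_1+s_2)}$ no longer decays. Instead we bound $\sum_{k}a_kb_k\le\|a\|_{\ell^1}\|b\|_{\ell^\infty}$, which is uniform in $j$ and so suffices for the $\ell^\infty$ norm. When $s_1+s_2>0$, the argument of the second item applies directly.
\end{itemize}
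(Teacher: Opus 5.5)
Your proposal is correct and is the standard Bony paraproduct argument (paraproduct continuity, plus the remainder estimate including the endpoint $s_1+s_2=0$ via $\ell^1\times\ell^\infty$ summation) from \cite[Chapter 2]{BCD-Book-2011}, which the paper cites instead of giving its own proof. The only points you leave implicit are the usual convergence issues for the dyadic series in homogeneous spaces, and the cited reference handles these.
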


\subsection{Time decay estimates of semi-group}
To study the behavior of spectral properties, we first rewrite equation \eqref{G1.5}$_1$ by
\begin{align}\label{G2.10}
\partial_t f  - \mathcal{B}f=\Gamma (f,f)-E\cdot\nabla_v f+\frac{1}{2}E\cdot vf+E\cdot v\sqrt{M}, \quad (t,x,v)\in \mathbb R\times \mathbb R^3_{x}\times \mathbb R^3_{v},    
\end{align}
where $\mathcal B$ is the linearized Boltzmann operator, which is given by
\begin{align*}
\mathcal{B}=-v\cdot\nabla_x-\mathcal{L}.
\end{align*}
Now, let's turn to the linearized Cauchy problem of \eqref{G2.10} as follows:
 \begin{equation}\label{G2.11}
\left\{\begin{aligned}
&  \partial_t f-\mathcal{B}f=0, \quad (t,x,v)\in \mathbb R\times \mathbb R^3_{x}\times \mathbb R^3_{v},\\ 
& f(0,x,v)=f_0(x,v)=\frac{F_{0}(x,v)}{\sqrt{M}},\quad \quad\,   (x,v)\in    \mathbb R^3_{x}\times \mathbb R^3_{v}.
 \end{aligned}
 \right.
\end{equation} 

Then, similar to the approach in \cite{SK-HMJ-1985}, we establish sharp point-wise decay estimates of the solution $e^{t\mathcal{B}}f_0(x,v)$ to the linearized problem \eqref{G2.11}. In fact, based on the classical spectral analysis methods as described in \cite{EP-JMPA-1975,UY-AA-2006,Ukai-1974,Ukai-1976,D-2011-Nonlinearity}, it is straightforward to observe that the solutions of system \eqref{G2.11} exhibit the diffusion effect at low frequencies and a spectrum gap at high frequencies. Moreover, the following proposition is presented.

\begin{prop}[{\!\!{\cite[Proposition 2.3]{DLN-2026}}\!\!}  ] \label{prop2.7}
For any $t \geq 0$ and $\xi\in\mathbb{R}^3$, the solution $e^{t\mathcal{B}} f_0(x,v)$ to \eqref{G2.11} satisfies
\begin{align}\label{G2.12}
\|\widehat{e^{t\mathcal{B}}f_0}(\xi,v)\|_{L_{v}^2}\lesssim   e^{-\kappa_{1}\min\{t,|\xi|^2t\}}  {}\|\widehat{f_0}(\xi,v)\|_{L_v^2},
\end{align}
where $\kappa_1 > 0$ is a uniform constant. If $\mathbf{P}f_0 = 0$ is additionally fulfilled, there exists a constant $\xi_0>0$ such that if $|\xi|\leq \xi_0$, then
\begin{align}\label{G2.13}
\|\widehat{e^{t\mathcal{B}}f_0}(\xi,v)\|_{L_{v}^2}\lesssim  |\xi| e^{-\frac{\kappa_{1}}{2} |\xi|^2t }   \|\nu^{-1}\widehat{f_0}(\xi,v)\|_{L_v^2}+e^{-\kappa_0t}\|\nu^{-1}\widehat{f_0}(\xi,v)\|_{L_v^2},
\end{align}
for any $t\geq 0$, where $\kappa_0$ is given by \eqref{G2.2}.
\end{prop}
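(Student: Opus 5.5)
The plan is to combine the standard low-frequency spectral theory of $\widehat{\mathcal B}(\xi)=-iv\cdot\xi-\mathcal L$ on $L^2_v$ (Ellis--Pinsky, Ukai--Yang) with an energy/hypocoercivity bound in Fourier variables.

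\textbf{Estimate \eqref{G2.12}.} Take the Fourier transform in $x$. For each fixed $\xi$, $\widehat f(t,\xi,v)$ solves
\[
\partial_t\widehat f+iv\cdot\xi\,\widehat f+\mathcal L\widehat f=0 .
\]
First take the real part of the $L^2_v$ inner product with $\widehat f$. By \eqref{G2.2} this gives dissipation of the micro part: $\kappa_0|\{\mathbf I-\mathbf P\}\widehat f|_\nu^2$. The macro part is recovered by Kawashima's compensating function. I would add to $\tfrac12|\widehat f|_2^2$ a small multiple of a functional
\[
\mathrm{Re}\,\frac{\langle i\xi\,\cdot\,\mathcal R(\widehat f),\,\widehat{(a,b,c)}\rangle}{1+|\xi|^2},
\]
built from the thirteen moments of $\{\mathbf I-\mathbf P\}\widehat f$. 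This yields a Lyapunov functional $\mathcal E(\xi,t)\sim|\widehat f|_2^2$ satisfying
\[
\partial_t\mathcal E+\kappa\,\frac{|\xi|^2}{1+|\xi|^2}\,\mathcal E\le 0 .
\]
Gronwall's inequality then gives \eqref{G2.12} with $\min\{t,|\xi|^2t\}$, since $\frac{|\xi|^2}{1+|\xi|^2}\sim\min\{1,|\xi|^2\}$.

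\textbf{Estimate \eqref{G2.13}: spectral splitting.} For $|\xi|\le\xi_0$, write $\xi=|\xi|\omega$. I will invoke the analytic perturbation result: there are five eigenvalues
\[
\lambda_j(|\xi|)=-i\eta_j|\xi|-a_j|\xi|^2+O(|\xi|^3),\qquad a_j>0,
\]
with eigenprojections $P_j(\xi)=P_j^{(0)}(\omega)+|\xi|P_j^{(1)}(\omega)+O(|\xi|^2)$, where $\sum_jP_j^{(0)}=\mathbf P$. This gives
\[
e^{t\widehat{\mathcal B}(\xi)}=\sum_{j=1}^5e^{\lambda_j t}P_j(\xi)+R(t,\xi),\qquad |R(t,\xi)|_{\mathcal L(L^2_v)}\lesssim e^{-\kappa t}.
\]
Shrinking $\xi_0$ ensures $\mathrm{Re}\,\lambda_j\le-\tfrac{\kappa_1}{2}|\xi|^2$.

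\textbf{Estimate \eqref{G2.13}: the fluid part.} When $\mathbf Pf_0=0$ we have $P_j^{(0)}\widehat f_0=0$, so
\[
P_j(\xi)\widehat f_0=|\xi|\,\widetilde P_j(\xi)\widehat f_0 .
\]
The operator $\widetilde P_j$ is a finite-rank combination of inner products with the eigenfunctions $\psi_j(\xi)$. These are built from $\mathbf P$-directions and $\mathcal L^{-1}\{\mathbf I-\mathbf P\}(v\cdot\omega\,\chi_k)$, and all of them decay like Gaussians times polynomials in $v$. Hence $\langle\widehat f_0,\psi_j\rangle=\langle\nu^{-1}\widehat f_0,\nu\psi_j\rangle$, which is bounded by $|\nu^{-1}\widehat f_0|_2$. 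This produces the term $|\xi|e^{-\frac{\kappa_1}{2}|\xi|^2t}|\nu^{-1}\widehat f_0|_2$.

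\textbf{Estimate \eqref{G2.13}: the remainder (main obstacle).} The remainder $R(t,\xi)$ must be bounded by $e^{-\kappa_0t}$ in the weaker norm $|\nu^{-1}\cdot|_2$. I would treat it through the resolvent on the contour $\mathrm{Re}\,\lambda=-\kappa_0$, using the factorisation
\[
(\lambda-\widehat{\mathcal B})^{-1}=(\lambda+\nu+iv\cdot\xi)^{-1}\big(I-\mathcal K(\lambda+\nu+iv\cdot\xi)^{-1}\big)^{-1}.
\]
The aim is to exploit that $\mathcal K$ is smoothing and gains weight, so that the piece acting on $\widehat f_0$ after one application of $\mathcal K$ is controlled by $|\nu^{-1}\widehat f_0|_2$. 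The free-streaming piece $e^{-(\nu+iv\cdot\xi)t}\widehat f_0$ is the delicate one. Here I would use the decay rate $\kappa_0\le\inf\nu$ and the structure of $\{\mathbf I-\mathbf P\}$ data, following the argument of \cite{DLN-2026}. This step is where I expect the real difficulty, since the weight $\nu^{-1}$ must be absorbed without losing the $e^{-\kappa_0t}$ rate.
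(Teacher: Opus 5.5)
Your argument for \eqref{G2.12} (a Kawashima-type compensating functional in Fourier variables) is standard and sound. So is your treatment of the fluid part of \eqref{G2.13}: $P_j(\xi)$ sees $\widehat f_0$ only through inner products with rapidly decaying eigenfunctions, so the weight $\nu^{-1}$ costs nothing there, and $\mathbf Pf_0=0$ gives the factor $|\xi|$.

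The gap is the step you flag as the main obstacle, and it is not a technical difficulty. The bound $|R(t,\xi)\widehat f_0|_2\lesssim e^{-\kappa_0 t}|\nu^{-1}\widehat f_0|_2$ is false, and therefore so is \eqref{G2.13} as stated. At $t=0$ it reads $|\widehat f_0(\xi)|_2\lesssim(1+\xi_0)|\nu^{-1}\widehat f_0(\xi)|_2$ whenever $\mathbf Pf_0=0$.

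Here is a counterexample. Take $f_0=\phi(x)h_R(v)$ with $\phi\in\mathcal S$ and $\widehat\phi(\xi)\neq0$. Let $h_R=\{\mathbf I-\mathbf P\}g_R$, where $g_R$ is an $L^2_v$-normalized bump centred at $v=Re_1$. Since $|\mathbf Pg_R|_2\lesssim e^{-cR^2}$ and $\nu\sim\langle v\rangle$, we get $|h_R|_2\to1$ but $|\nu^{-1}h_R|_2\lesssim R^{-1}$. This is a contradiction for large $R$, and by continuity in $t$ the inequality also fails for small $t>0$.

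In your decomposition the culprit is exactly the free-streaming term. Its norm is $|e^{-(\nu+iv\cdot\xi)t}\widehat f_0|_2=|e^{-\nu t}\widehat f_0|_2$. The best constant in $|e^{-\nu t}g|_2\le C(t)e^{-\kappa t}|\nu^{-1}g|_2$ is $\sup_v\nu(v)e^{-(\nu(v)-\kappa)t}\sim t^{-1}$ as $t\to0$. The condition $\mathbf Pf_0=0$ says nothing about the large-velocity tail of $f_0$, and the relation $\kappa_0\le\inf\nu$ does not help either.

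What your approach actually proves, for $|\xi|\le\xi_0$ and $\mathbf Pf_0=0$, is
\[
|\widehat{e^{t\mathcal B}f_0}(\xi)|_2\lesssim|\xi|e^{-\frac{\kappa_1}{2}|\xi|^2t}|\nu^{-1}\widehat f_0(\xi)|_2+e^{-\kappa t}|\widehat f_0(\xi)|_2
\]
for some $\kappa>0$. In other words, the $\nu^{-1}$ gain holds for the fluid part only.

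If you want some weight gain in the remainder, use the identity
\[
R(t)=\{\mathbf I-\Pi(\xi)\}e^{-(\nu+iv\cdot\xi)t}+\int_0^tR(t-s)\,\mathcal K e^{-(\nu+iv\cdot\xi)s}\,{\rm d}s,
\]
where $\Pi=\sum_jP_j$. Combined with the boundedness of $\mathcal K\nu$ on $L^2_v$, this gives the bound $(1+t^{-\theta})e^{-\kappa t}|\nu^{-\theta}\widehat f_0|_2$ for $0\le\theta\le1$. Its singularity at $t=0$ is integrable only for $\theta<1$.

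So the statement has to be weakened in one of these ways rather than proved as written. The same $t=0$ counterexample, with $\phi$ localized at low or high frequency, also rules out \eqref{G2.17}--\eqref{G2.18} in their stated form.
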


Inspired by \cite[Lemma 3.3]{Deguchi-2025} and \cite[Lemma 4.1]{Deguchi-2024-MathAnn}, and based on Proposition \ref{prop2.7}, we further establish the following $L^2$-type 
time decay rates in both the low-frequency and high-frequency regions in the homogeneous Besov space.

\begin{prop}[{\!\!{\cite[Proposition 2.4]{DLN-2026}}\!\!}  ] \label{prop2.8}
Let $t\geq 0$ and $1\leq q\leq \infty$. Let $s,s_0\in \mathbb{R}$, satisfying $s_0\leq s$. Then, there exists $j_0\in \mathbb{Z}$ such that, for any $1\leq q\leq\infty$ and $f_0\in L_v^2(\dot B_{2,q}^{s_0})$, the following inequality holds:
\begin{align}\label{G2.14}
\|\dot S_{j_0}e^{t\mathcal{B}} f_0\|_{L_v^2(\dot B_{2,q}^s)}\lesssim (1+t)^{-\frac{s - s_0}{2}} \|f_0\|_{L_v^2(\dot B_{2,q}^{s_0})}.
\end{align}
For any $1\leq q\leq\infty$ and $f_0\in L_v^2(\dot B_{2,q}^{s})$, we have
\begin{align}\label{G2.15}
\|(1 - \dot S_{j_0})e^{t\mathcal{B}} f_0\|_{L_v^2(\dot B_{2,q}^s)}\lesssim e^{-\kappa_2t} \|f_0\|_{L_v^2(\dot B_{2,q}^{s})},
\end{align}
where $\kappa_2>0$ is a constant.
For any $1\leq q\leq\infty$ and $f_0\in L_v^2(\dot B_{2,q}^{s-\frac{2}{q}})$, the following inequality is valid:
\begin{align}\label{G2.16}
\|\dot S_{j_0}e^{t\mathcal{B}}f_0\|_{L^q(L_v^2(\dot B_{2,q}^s))}\lesssim \|f_0\|_{L_v^2(\dot B_{2,q}^{s-\frac{2}{q}})}.
\end{align}
Furthermore, if the condition $\mathbf{P}f_0 = 0$ is also satisfied, we can obtain $\frac{1}{2}$-order faster decay estimates compared to those in \eqref{G2.14} and \eqref{G2.16}, as shown below:
\begin{align}\label{G2.17}
\|\dot S_{j_0}e^{t\mathcal{B}} f_0\|_{L_v^2(\dot B_{2,q}^s)}\lesssim&\, (1+t)^{-\frac{s - s_0 + 1}{2}} \|\nu^{-1}f_0\|_{L_v^2(\dot B_{2,q}^{s_0})},\\ \label{G2.18}
\|(1 - \dot S_{j_0})e^{t\mathcal{B}} f_0\|_{L_v^2(\dot B_{2,q}^s)}\lesssim&\, e^{-\kappa_2t} \|\nu^{-1}f_0\|_{L_v^2(\dot B_{2,q}^{s})},\\ \label{G2.19}
\|\dot S_{j_0}e^{t\mathcal{B}}f_0\|_{L^q(L_v^2(\dot B_{2,q}^s))}\lesssim&\, \|\nu^{-1}f_0\|_{L_v^2\big(\dot B_{2,q}^{s+1-\frac{2}{q}}\big)}.
\end{align}
The estimates \eqref{G2.14}, \eqref{G2.15}, \eqref{G2.16}, \eqref{G2.17}, \eqref{G2.18} and \eqref{G2.19} still remain valid when $\mathcal{B}$ is replaced by its adjoint $\mathcal B^*$.
\end{prop}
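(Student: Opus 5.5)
The plan is to reduce every estimate in Proposition \ref{prop2.8} to the pointwise-in-frequency bounds of Proposition \ref{prop2.7}, one dyadic block at a time.

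\emph{Block estimate.} Since $\mathcal{B}$ is translation invariant in $x$, the Fourier multiplier $\varphi(2^{-k}\cdot)$ commutes with $e^{t\mathcal{B}}$. Plancherel in $x$ together with \eqref{G2.12} then gives
\begin{align*}
\|\dot\Delta_k e^{t\mathcal{B}}f_0\|_{L^2_{x,v}}^2=\int_{\mathbb R^3}\varphi(2^{-k}\xi)^2\|\widehat{e^{t\mathcal{B}}f_0}(\xi,\cdot)\|_{L^2_v}^2\,{\rm d}\xi\lesssim e^{-2\kappa_1\min\{t,c2^{2k}t\}}\|\dot\Delta_k f_0\|_{L^2_{x,v}}^2 .
\end{align*}
On the support of $\varphi(2^{-k}\cdot)$ we have $|\xi|\sim 2^k$, which is what produces the factor $c2^{2k}$. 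All subsequent bounds come from weighting these block bounds by $2^{ks}$ and taking $\ell^q$ norms over $k$. The one delicate point is that the Besov norm sits inside $L^2_v$, so it must be compared with the ``$\ell^q$ of $L^2_{x,v}$ block norms'' structure. For $q=2$ the two coincide. For general $q$ I would use Minkowski's inequality to interchange $\ell^q_k$ and $L^2_v$, or, if that fails in one direction, carry out the argument directly at the level of block norms, which is the form actually used later. I expect this norm bookkeeping, rather than the decay itself, to be the main technical obstacle.

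\emph{Low frequencies, \eqref{G2.14} and \eqref{G2.16}.} Choose $j_0$ so small that $\frac83 2^{j_0}\le\min\{\xi_0,1\}$. Then for $k<j_0$ the minimum in the block estimate equals $c2^{2k}t$, and
\begin{align*}
2^{ks}\|\dot\Delta_k e^{t\mathcal{B}}f_0\|\lesssim 2^{k(s-s_0)}e^{-c2^{2k}t}\,2^{ks_0}\|\dot\Delta_k f_0\| .
\end{align*}
Because $s\ge s_0$ and $k<j_0$, we have $\sup_{k<j_0}2^{k(s-s_0)}e^{-c2^{2k}t}\lesssim(1+t)^{-\frac{s-s_0}{2}}$. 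For $t\le1$ this uses $2^{k(s-s_0)}\le 2^{j_0(s-s_0)}$; for $t\ge1$ it uses $\sup_{y>0}y^{a}e^{-cy^2t}\lesssim t^{-a/2}$. This yields \eqref{G2.14}.

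For \eqref{G2.16}, I take the $L^q$ norm in time inside the $\ell^q$ sum, which for $q<\infty$ is Fubini. The time integral gives $\int_0^\infty e^{-cq2^{2k}t}\,{\rm d}t\sim 2^{-2k}$, so
\begin{align*}
\sum_k 2^{kqs}\Big\|e^{-c2^{2k}t}\Big\|_{L^q_t}^q\|\dot\Delta_kf_0\|^q\sim\sum_k\big(2^{k(s-\frac2q)}\|\dot\Delta_kf_0\|\big)^q .
\end{align*}
For $q=\infty$ the estimate is immediate.

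\emph{High frequencies, \eqref{G2.15} and \eqref{G2.18}.} For $k\ge j_0-1$ the minimum is at least $c't$ with $c'=\kappa_1\min\{1,c2^{2(j_0-1)}\}$. This gives the factor $e^{-\kappa_2t}$ uniformly in $k$, hence \eqref{G2.15}.

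\emph{Micro data, \eqref{G2.17}--\eqref{G2.19}.} When $\mathbf{P}f_0=0$, I use \eqref{G2.13} instead of \eqref{G2.12}. At low frequency this produces the extra factor $|\xi|\sim2^k$, so the block weight becomes $2^{k(s-s_0+1)}e^{-c2^{2k}t}$. The same supremum computation gives the exponent $-\frac{s-s_0+1}{2}$ in \eqref{G2.17}, and the same time integration gives the index $s+1-\frac2q$ in \eqref{G2.19}. The remainder term $e^{-\kappa_0t}\|\nu^{-1}\widehat{f_0}\|$ in \eqref{G2.13} is harmless: it decays faster than any polynomial, and $2^{k(s-s_0+1)}\lesssim1$ for $k<j_0$. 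For \eqref{G2.18}, on high frequencies I invoke the high-frequency exponential decay with the weighted norm $\nu^{-1}f_0$. This is the analogue of \eqref{G2.15} for micro data, which I take to be part of the input from \cite{DLN-2026} behind Proposition \ref{prop2.7}.

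\emph{Adjoint.} Finally, $\mathcal{B}^*=v\cdot\nabla_x-\mathcal{L}$ is conjugate to $\mathcal{B}$ under $x\mapsto-x$ (equivalently $\xi\mapsto-\xi$ in Fourier). Hence Proposition \ref{prop2.7} holds verbatim for $\mathcal{B}^*$, and every estimate above transfers.
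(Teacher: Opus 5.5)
\emph{Summary.} There are genuine gaps. The block computations prove a different (Chemin--Lerner) statement, not the stated one for $q\ne2$, and the micro-data estimates have further problems.

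\emph{The norm issue.} Your dyadic computations for \eqref{G2.14}--\eqref{G2.16}, and the reflection $x\mapsto-x$ for $\mathcal B^*$, are correct. But what they prove is the estimate for the Chemin--Lerner quantity $\big\|\{2^{ks}\|\dot\Delta_k g\|_{L^2_{x,v}}\}_k\big\|_{\ell^q}$. The paper instead defines $\|g\|_{L^2_v(\dot B^s_{2,q})}$ with the $\ell^q_k$ norm \emph{inside} the $v$-integral, and the point you postponed is where the argument breaks.

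For $q\ge2$, Minkowski gives $\ell^q_k(L^2_v)\le L^2_v(\ell^q_k)$. This handles your input side but not your output side. For $q\le2$ the inequality reverses, and then the input side fails. So the argument closes only for $q=2$, while the paper uses $q=1$ and $q=\infty$ (Lemma \ref{L3.1}, Theorem \ref{T4.7}).

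Nor can \eqref{G2.12} alone ever suffice. Take $e_0,\dots,e_K$ orthonormal with disjoint supports in $v$. Consider the contraction semigroup whose fiber at $\xi$ is $e^{-t\kappa_1\min\{1,|\xi|^2\}}$, composed, for $\xi\in\{\xi:\varphi(2^{-k}\xi)=1\}$, with the rotation by angle $\frac{\pi}{2}t$ in ${\rm span}\{e_0,e_k\}$. It satisfies \eqref{G2.12}. Now take $\|\phi_k\|_{L^2}=1$ with $\widehat{\phi_k}$ supported in that set, for $K$ sufficiently negative indices $k$. At $t=1$ the semigroup maps $\sum_{k}\phi_k(x)e_0(v)$ from $L^2_v(\dot B^0_{2,\infty})$-norm $1$ to norm $\ge e^{-\kappa_1}\sqrt K$. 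This contradicts \eqref{G2.14} with $s=s_0=0$, $q=\infty$.

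So you need structure of $e^{t\mathcal B}$ beyond per-frequency $L^2_v$ bounds. One option is a low-frequency spectral decomposition whose fluid eigenfunctions are dominated by a fixed $L^2_v$ profile. The remainder would then be handled through $\mathcal B=-v\cdot\nabla_x-\nu+\mathcal K$, where the free part acts pointwise in $v$ and $\mathcal K$ is an integral operator in $v$. The other option is to restate the proposition in Chemin--Lerner norms and recheck every later use. Your fallback is the latter, i.e.\ a different statement.

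\emph{The micro-data estimates.} First, for \eqref{G2.18} you take as input exactly what is to be proved. Proposition \ref{prop2.7} contains no high-frequency bound in terms of $\nu^{-1}f_0$. In fact no such bound can hold pointwise in time. At $t=0$, \eqref{G2.18} says $\|(1-\dot S_{j_0})f_0\|\lesssim\|\nu^{-1}f_0\|$. This fails for $f_0=\phi(x)\{\mathbf I-\mathbf P\}h_R(v)$, with $\phi$ at high frequency and $h_R$ the normalized indicator of $\{R\le|v|\le R+1\}$. Indeed $\|\nu^{-1}\{\mathbf I-\mathbf P\}h_R\|_{L^2_v}\lesssim R^{-1}$, while $\|\{\mathbf I-\mathbf P\}h_R\|_{L^2_v}\to1$.

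The same test shows that \eqref{G2.13} as written fails at $t=0$, and so does \eqref{G2.17} at $t=0$, $s=s_0$. The $\nu^{-1}$-gain is a time-integrated effect (e.g.\ $\int_0^t\nu e^{-\nu(t-\tau)}\,{\rm d}\tau\le1$). That is how it enters the Duhamel terms of Lemma \ref{L3.1}.

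Second, your derivation of \eqref{G2.19} drops the exponentially decaying term of \eqref{G2.13}. Its $L^q_t$-contribution is $\|e^{-\kappa_0t}\|_{L^q_t}\,2^{ks}\|\dot\Delta_k\nu^{-1}f_0\|_{L^2_{x,v}}$, which sits at index $s$ rather than $s+1-\frac2q$. Since $2^{ks}\lesssim2^{k(s+1-2/q)}$ for $k<j_0$ only when $q\le2$, this step fails for $q>2$. For $q=\infty$ the claimed bound is false already at $t=0$. Take microscopic data at frequency $2^k$ with $k\to-\infty$: the left side is $\sim2^{ks}$ and the right side is $\sim2^{k(s+1)}$.
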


\subsection{Some useful lemmas}
Below, we list some useful lemmas that will be used throughout this paper.

\begin{lem}[{\!\!\cite[Lemma A.8]{GW-CPDE-2012}}]\label{LA.1}
Let $1\leq p<\infty$. Suppose $g $ is a measurable function defined on $\mathbb{R}_{y}^3\times \mathbb{R}_{z}^3$. Then, we have
   
\begin{align*}
\bigg(   \int_{\mathbb{R}_{z}^3} \bigg(\int_{\mathbb{R}_{y}^3} |g(y,z)|{\rm d}y\bigg)^p  {\rm d}z  \bigg)^\frac{1}{p}\leq \int_{\mathbb{R}_{y}^3}     \bigg(\int_{\mathbb{R}_{z}^3} |g(y,z)|^p{\rm d}z\bigg)^{\frac{1}{p}}  {\rm d}y. 
\end{align*}
In particular, for $1\leq p\leq q\leq\infty$, we have
\begin{align*}
\|g\|_{L^q_{z}L^p_{y}}\leq \|g\|_{L^p_{y}L^q_{z
}}.    
\end{align*}
\end{lem}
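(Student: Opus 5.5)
This is Minkowski's integral inequality. I will prove the first inequality directly and then obtain the mixed-norm comparison as a special case.

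\emph{First inequality.} By monotone convergence and truncation, it suffices to treat $g\geq 0$ measurable. Set
\[
G(z)=\int_{\mathbb R^3_y}g(y,z)\,{\rm d}y .
\]
When $p=1$ the claim is Tonelli's theorem, with equality. For $1<p<\infty$, write $G^p=G\cdot G^{p-1}$ and apply Tonelli:
\[
\int G(z)^p\,{\rm d}z=\int_{\mathbb R^3_y}\Big(\int_{\mathbb R^3_z}g(y,z)\,G(z)^{p-1}\,{\rm d}z\Big){\rm d}y .
\]
Hölder's inequality in $z$ with exponents $p$ and $p'$ bounds the inner integral by $\|g(y,\cdot)\|_{L^p_z}\,\|G\|_{L^p_z}^{p-1}$. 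Dividing by $\|G\|_{L^p}^{p-1}$ then gives the claim, provided $0<\|G\|_{L^p}<\infty$.

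To justify the division, run the same argument with $g$ replaced by $g\mathbf 1_{B_R}(y)\mathbf 1_{B_R}(z)\min\{g,R\}$. Then $G$ is bounded with compact support, so $\|G\|_{L^p}$ is finite. Letting $R\to\infty$ and using monotone convergence removes the truncation. The case $\|G\|_{L^p}=0$ is trivial.

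\emph{Second inequality.} For $1\leq p\leq q\leq\infty$, the goal is $\|g\|_{L^q_zL^p_y}\leq\|g\|_{L^p_yL^q_z}$. Let $r=q/p\geq 1$ and apply the case just proved to $|g|^p$ with exponent $r$:
\[
\Big\|\int|g|^p\,{\rm d}y\Big\|_{L^r_z}\leq\int\big\||g(y,\cdot)|^p\big\|_{L^r_z}\,{\rm d}y=\int\|g(y,\cdot)\|_{L^q_z}^p\,{\rm d}y .
\]
The left-hand side equals $\|g\|_{L^q_zL^p_y}^p$ and the right-hand side equals $\|g\|_{L^p_yL^q_z}^p$, so taking $p$-th roots gives the result.

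If $q=\infty$, the bound is elementary: $|g(y,z)|\leq\|g(y,\cdot)\|_{L^\infty_z}$ for a.e.\ $z$, and integrating the $p$-th power in $y$ gives it. The a.e.\ issue is handled by Fubini applied to the null set where the pointwise bound fails.

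\emph{Expected obstacle.} The only delicate point is the measure-theoretic bookkeeping: the truncation that ensures finiteness before dividing, and the handling of the $L^\infty$ null sets. Everything else is a direct use of Hölder and Tonelli.
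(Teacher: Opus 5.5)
Your proof is correct. The paper gives no argument of its own and simply quotes the lemma from Guo--Wang (Lemma A.8), i.e.\ Minkowski's integral inequality. Your H\"older--Tonelli argument with truncation, followed by applying the first inequality to $|g|^p$ with exponent $q/p$, is the standard proof behind that citation.

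The only loose ends are bookkeeping. In the $q=\infty$ case, note that $h(y)=\|g(y,\cdot)\|_{L^\infty_z}$ is measurable, since its superlevel sets $\{y:\,|\{z:|g(y,z)|>t\}|>0\}$ are measurable by Tonelli; then the exceptional set $\{|g(y,z)|>h(y)\}$ is measurable and Tonelli applies to it. The subcase $p=q=\infty$, which the hypothesis allows, needs an essential supremum in $y$ in place of integrating the $p$-th power.
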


\begin{lem} [{\!\!\cite[Theorem 1.38]{BCD-Book-2011}}]\label{LA.2}
If $s\in\big[0,\frac{3}{2}\big)$,  then the homogeneous Sobolev space $\dot H^s(\mathbb R^3)$ is continuously embedded in $L^{\frac{6}{3-2s}}(\mathbb R^3)$.
\end{lem}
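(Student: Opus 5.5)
The statement immediately above is Lemma \ref{LA.2}, the homogeneous Sobolev embedding $\dot H^s(\mathbb R^3)\hookrightarrow L^{\frac{6}{3-2s}}(\mathbb R^3)$ for $s\in[0,\frac32)$. For $s=0$ it is trivial, since $\dot H^0=L^2$ and $p=2$. For $0<s<\frac32$ I would pass through Fourier space and reduce to the Hardy--Littlewood--Sobolev inequality. Writing $g=\Lambda^{-s}\Lambda^s g$ and setting $h=\Lambda^s g\in L^2$ with $\|h\|_{L^2}=\|g\|_{\dot H^s}$, we have $g=I_s h=c_s|x|^{-(3-s)}*h$. This uses that $\mathcal F^{-1}(|\xi|^{-s})=c_s|x|^{s-3}$ for $0<s<3$, a standard computation with Gaussians and the Gamma function. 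The Hardy--Littlewood--Sobolev inequality with $\frac1p=\frac12-\frac s3$, i.e. $p=\frac{6}{3-2s}\in(2,\infty)$, then gives $\|g\|_{L^p}\lesssim\|h\|_{L^2}$.

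To keep the argument self-contained I would instead use the dyadic approach, which fits the Littlewood--Paley machinery already recalled. First, by Proposition \ref{prop2.5}, $\dot H^s=\dot B^s_{2,2}\hookrightarrow\dot B^0_{p,2}$ with $p=\frac{6}{3-2s}$, via Bernstein: $\|\dot\Delta_j g\|_{L^p}\lesssim 2^{3j(\frac12-\frac1p)}\|\dot\Delta_j g\|_{L^2}=2^{js}\|\dot\Delta_j g\|_{L^2}$. Second, for $2\le p<\infty$ I need $\dot B^0_{p,2}\hookrightarrow L^p$. This follows from the Littlewood--Paley square-function characterization of $L^p$ combined with Minkowski's inequality, since $p/2\ge1$:
\[
\Big\|\big(\textstyle\sum_j|\dot\Delta_j g|^2\big)^{1/2}\Big\|_{L^p}\le\big(\textstyle\sum_j\|\dot\Delta_j g\|_{L^p}^2\big)^{1/2}.
\]
Finally I need that $\sum_j\dot\Delta_j g$ converges to $g$ itself in $L^p$ and not merely modulo polynomials. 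For $s<\frac32$, membership in $\dot H^s$ forces $\hat g\in L^1_{loc}$, so $g$ is a genuine tempered distribution. The low-frequency partial sums $\dot S_j g$ tend to $0$ in $L^\infty$ as $j\to-\infty$, because $\|\dot S_jg\|_{L^\infty}\lesssim 2^{j(\frac32-s)}\|g\|_{\dot H^s}$. This is exactly where $s<\frac32$ enters.

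The main obstacle is the square-function (Littlewood--Paley) inequality in $L^p$, which needs Calderón--Zygmund theory. To avoid it, I would use the elementary Hedberg-type proof. Split $I_sh=\int_{|y|<R}+\int_{|y|\ge R}$. The near part is bounded pointwise by $R^{s}Mh(x)$, where $M$ is the maximal function. The far part is bounded by Cauchy--Schwarz by $R^{s-\frac32}\|h\|_{L^2}$, which is finite precisely because $s<\frac32$. Optimizing in $R$ gives $|I_sh(x)|\lesssim (Mh(x))^{1-\frac{2s}{3}}\|h\|_{L^2}^{\frac{2s}{3}}$. Taking $L^p$ norms and using the maximal theorem on $L^2$ finishes the proof. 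This is the route I would actually write, citing \cite[Theorem 1.38]{BCD-Book-2011} for the details.
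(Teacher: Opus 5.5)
Your proof is correct: in the Hedberg computation the near part needs $s>0$, the far part needs $s<\frac32$, and $p(1-\frac{2s}{3})=2$ is exactly what lets the maximal theorem on $L^2$ close. It is not, however, the argument behind the citation. The paper gives no proof of its own, and \cite[Theorem 1.38]{BCD-Book-2011} proves the embedding without Riesz potentials, maximal functions or square functions. Normalizing $\|g\|_{\dot H^s}=1$, one writes $\|g\|_{L^p}^p=p\int_0^\infty\lambda^{p-1}|\{|g|>\lambda\}|\,{\rm d}\lambda$ and splits $g=\mathcal F^{-1}(\mathbf 1_{|\xi|<A}\widehat g)+\mathcal F^{-1}(\mathbf 1_{|\xi|\ge A}\widehat g)$ at a level-dependent cutoff $A=A_\lambda\sim\lambda^{p/3}$. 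The low part is bounded in $L^\infty$ by $CA^{\frac32-s}$ via Cauchy--Schwarz; this is precisely your bound for $\dot S_jg$, and it is where $s<\frac32$ enters. With this choice of $A_\lambda$ the low part stays below $\lambda/2$. The high part is handled by Chebyshev and Plancherel, and Fubini turns $\int_0^\infty\lambda^{p-3}\int_{|\xi|\ge A_\lambda}|\widehat g|^2\,{\rm d}\xi\,{\rm d}\lambda$ into a multiple of $\int|\xi|^{3(1-\frac2p)}|\widehat g|^2\,{\rm d}\xi=\int|\xi|^{2s}|\widehat g|^2\,{\rm d}\xi$.

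This is the Fourier-side twin of your splitting: the cutoff is a frequency depending on the level $\lambda$ rather than a radius depending on $x$. It uses only Plancherel, Cauchy--Schwarz and Chebyshev, and needs neither the kernel formula $\mathcal F^{-1}(|\xi|^{-s})=c_s|x|^{s-3}$ nor the maximal theorem. Because its two pieces are genuinely in $L^\infty$ and $L^2$, it also disposes of the realization issue. In your route you must handle that separately, by checking $g=c_s|x|^{s-3}*\Lambda^sg$ on a dense class and identifying the limit in $\mathcal S'$, again using $s<\frac32$. Its drawback is that it is tied to $L^2$. Your pointwise Hedberg inequality, by contrast, yields the full Hardy--Littlewood--Sobolev bound $L^q\to L^p$ with the same computation. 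Your dyadic route is also valid but the heaviest, since $\dot B^0_{p,2}\hookrightarrow L^p$ rests on the Littlewood--Paley square-function theorem. Consequently, if you write the Hedberg proof, cite a source for Hedberg's inequality rather than \cite[Theorem 1.38]{BCD-Book-2011} ``for the details''.
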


\begin{lem} [{\!\!\cite[Lemma A.1]{GW-CPDE-2012}}]\label{LA.3}
Let \(2\leq p\leq \infty\) and \(0\leq m,\alpha\leq\ell\). When \(p = \infty\), we further require that \(m\leq \alpha + 1\) and \(\ell\geq \alpha + 2\). Then, for any \(g\in C_0^\infty(\mathbb{R}^3)\), we have
\begin{align*}
\|\nabla^\alpha g\|_{L^p}\lesssim \|\nabla^m g\|_{L^2}^{1 - \zeta}\|\nabla^\ell g\|_{L^2}^{\zeta},
\end{align*}
where \(0\leq \zeta\leq 1\) and \(\alpha\) satisfies
\begin{align*}
\frac{\alpha}{3}-\frac{1}{p}=\left(\frac{m}{3}-\frac{1}{2}\right)(1 - \zeta)+\left(\frac{\ell}{3}-\frac{1}{2}\right)\zeta.
\end{align*}
\end{lem}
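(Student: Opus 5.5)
This is a Gagliardo--Nirenberg interpolation inequality for $g\in C_0^\infty(\mathbb R^3)$. I would prove it by reducing to homogeneous Sobolev embedding plus interpolation between $\dot H^m$ and $\dot H^\ell$, working on the Fourier side.

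\textbf{Case $2\le p<\infty$.} Set $s:=\alpha+3\big(\frac12-\frac1p\big)$. The scaling relation in the statement can be rewritten as
\[
s=(1-\zeta)m+\zeta\ell .
\]
Since $0\le\zeta\le1$, this gives $m\le s\le\ell$.

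By Lemma \ref{LA.2} applied to $\nabla^\alpha g$ with index $3(\frac12-\frac1p)\in[0,\frac32)$, we obtain
\[
\|\nabla^\alpha g\|_{L^p}\lesssim \|g\|_{\dot H^{s}} .
\]
Then Plancherel and Hölder's inequality in $\xi$ give
\[
\int|\xi|^{2s}|\widehat g|^2\,d\xi=\int \big(|\xi|^{2m}|\widehat g|^2\big)^{1-\zeta}\big(|\xi|^{2\ell}|\widehat g|^2\big)^{\zeta}d\xi\le \|g\|_{\dot H^m}^{2(1-\zeta)}\|g\|_{\dot H^\ell}^{2\zeta}.
\]
Finally, $\|g\|_{\dot H^k}\sim\|\nabla^k g\|_{L^2}$ for integer $k$, which closes this case.

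\textbf{Case $p=\infty$.} This is the main obstacle, because $\dot H^{3/2}\not\hookrightarrow L^\infty$ and the Sobolev step above fails. Instead I would bound $\|\nabla^\alpha g\|_{L^\infty}\le(2\pi)^{-3}\int|\xi|^\alpha|\widehat g|\,d\xi$ and split the integral at a radius $R>0$.

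On $|\xi|\le R$, Cauchy--Schwarz gives
\[
\int_{|\xi|\le R}|\xi|^{\alpha}|\widehat g|\,d\xi\le \Big(\int_{|\xi|\le R}|\xi|^{2(\alpha-m)}d\xi\Big)^{1/2}\|\nabla^m g\|_{L^2}\lesssim R^{\alpha-m+3/2}\|\nabla^m g\|_{L^2}.
\]
This integral converges because $2(\alpha-m)+3>0$, which follows from the hypothesis $m\le\alpha+1$.

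On $|\xi|>R$, Cauchy--Schwarz similarly gives
\[
\int_{|\xi|>R}|\xi|^{\alpha}|\widehat g|\,d\xi\lesssim R^{\alpha-\ell+3/2}\|\nabla^\ell g\|_{L^2}.
\]
Convergence here needs $2(\alpha-\ell)+3<0$, which the hypothesis $\ell\ge\alpha+2$ guarantees.

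Optimizing the sum over $R$ gives the product $\|\nabla^m g\|_{L^2}^{1-\zeta}\|\nabla^\ell g\|_{L^2}^{\zeta}$, with
\[
\zeta=\frac{\alpha-m+3/2}{\ell-m}.
\]
This is exactly the exponent determined by the stated relation with $1/p=0$.

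Degenerate cases such as $m=\ell$, or one of the norms vanishing, are trivial since $g$ is compactly supported and smooth.
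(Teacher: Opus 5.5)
Your proof is correct. The paper itself gives no proof of this lemma; it only cites Guo--Wang \cite{GW-CPDE-2012}, and the statement is the classical Gagliardo--Nirenberg inequality in its $L^2$-based form.

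Your route is a self-contained Fourier-side proof:
\begin{itemize}
\item For $2\le p<\infty$ you rewrite the scaling relation as $s=(1-\zeta)m+\zeta\ell$ with $s=\alpha+3(\frac12-\frac1p)$. You then apply the Sobolev embedding of Lemma \ref{LA.2} to $\nabla^\alpha g$, and finish with H\"older's inequality in $\xi$, i.e.\ log-convexity of homogeneous $\dot H^s$ norms.
\item For $p=\infty$ you bound $\|\nabla^\alpha g\|_{L^\infty}\lesssim\int|\xi|^\alpha|\widehat g|\,d\xi$, split at a radius $R$, and optimize. This gives exactly $\zeta=(\alpha-m+\frac32)/(\ell-m)\in(0,1)$.
\end{itemize}

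Compared with quoting the general Gagliardo--Nirenberg theorem, your argument buys several things:
\begin{itemize}
\item It is elementary and exploits that both endpoint norms are $L^2$-based, so no exceptional cases of the general $L^q$--$L^r$ theorem need checking.
\item It shows the $p=\infty$ hypotheses are used only in the weaker form $m<\alpha+\frac32<\ell$. For integer indices this is equivalent to $m\le\alpha+1$, $\ell\ge\alpha+2$, and it covers the instance $\dot H^1\cap\dot H^2\hookrightarrow L^\infty$ used repeatedly in the paper.
\item It works verbatim for fractional indices if $\nabla^k$ is read as $\Lambda^k$.
\end{itemize}
The citation, by contrast, gives the full non-$L^2$ family at no cost.

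One small correction to your last remark. The degenerate cases are trivial not because $g$ is compactly supported, but because vanishing of either homogeneous norm forces $\widehat g=0$ a.e., hence $g\equiv 0$. Also, $m<\ell$ is automatic when $p=\infty$, so $m=\ell$ can only occur for $p<\infty$, where your H\"older step already handles it.
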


\section{Global well-posedness of Boltzmann equation}
In this section, we conduct an analysis of the reformulated Cauchy problem \eqref{G1.5}. Our objective is to demonstrate the global existence and uniqueness of strong solutions to this problem. To attain this objective, we mainly establish uniform-in-time {\it a priori estimates} for $f(t,x,v)$.

\subsection{A priori estimates at low and high frequencies}
Recall \eqref{norm.E}. Let $T_1>0$ be an arbitrary constant. Assume that $f(t,x,v) $ is a strong solution to the Cauchy problem \eqref{G1.5} defined on the interval $0 \leq t \leq T_1$, satisfying  
\begin{equation}\label{G3.1}
\sup_{0\leq t\leq T_1} \|f(t)\|_{\CE^{\frac{1}{2},N}}\leq \sigma,
\end{equation}
where $N\geq 3$ is an integer and $\sigma>0$ is a sufficiently small   constant independent of $T_1$. By Sobolev’s inequality and \eqref{G3.1}, we directly have
\begin{align*}
&\|f(t)\|_{L_v^2(L^\infty)}+\|(\langle v\rangle f(t),\nabla_v f(t))\|_{L_v^2(L^\infty)}\nonumber\\
&\quad\lesssim \|f(t)\|_{L_v^2(\dot B_{2,\infty}^{\frac{1}{2}}\cap\dot H^N)}+  \|(\langle v\rangle f(t),\nabla_{v}f(t))\|_{L_v^2(\dot  H^1\cap\dot H^{N-1})}\lesssim \sigma,
\end{align*}
for $0\leq t\leq T_1$.

Fix $j_0\in \mathbb{Z}$ and decompose $f$ as $f = f_L + f_H$, where $f_L = \dot{S}_{j_0}f$ and $f_{H}=f-\dot{S}_{j_0}f$. First, we focus on low-frequency analysis and provide an estimate of $f_L$ in the $L_v^2(\dot{B}_{2,\infty}^{1/2})$-norm.

\begin{lem}\label{L3.1}
For strong solutions of the problem   \eqref{G1.5}, it holds that
\begin{align}\label{G3.3}
 \|f_{L}(t)\|_{L_v^2(\dot B_{2,\infty}^{\frac{1 }{2}})}  \lesssim&\, \sigma  \sup_{t\geq 0} \|f(t)\|_{L_v^2(\dot B_{2,\infty}^{\frac{1}{2}} )}  + \sup_{t\geq 0}\|E(t)\|_{ \dot B_{2,\infty}^{-\frac{3}{2}} }+\|f
_0\|_{L_v^2(\dot B_{2,\infty}^{\frac{1 }{2}})},
\end{align}
for $0\leq t \leq T_1$.
\end{lem}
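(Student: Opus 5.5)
The plan is to use the Duhamel representation \eqref{QQ1} for $f_L=\dot S_{j_0}f$ and estimate each of the four pieces blockwise. For a dyadic index $k\le j_0$ I will use Minkowski's inequality in $\tau$ together with the pointwise Fourier bounds of Proposition \ref{prop2.7}. Choosing $j_0$ so that $2^{j_0}\cdot\frac{8}{3}\le\xi_0$, on $\mathrm{supp}\,\varphi(2^{-k}\cdot)$ we have $\min\{t,|\xi|^2t\}=|\xi|^2t\ge c2^{2k}t$. The linear term $e_L^{t\mathcal B}f_0$ is handled directly by \eqref{G2.14} with $s=s_0=\frac12$, $q=\infty$, which gives $\|f_0\|_{L^2_v(\dot B^{1/2}_{2,\infty})}$.

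\emph{The force term $\mathcal I_1$.} Since $v\sqrt M\in\mathcal N$ is not microscopic, only \eqref{G2.12} is available. It gives
\begin{align*}
\|\dot\Delta_k\mathcal I_1(t)\|_{L^2_{x,v}}\lesssim\int_0^t e^{-c\kappa_12^{2k}(t-\tau)}\|\dot\Delta_kE(\tau)\|_{L^2}\,{\rm d}\tau\lesssim 2^{-2k}\sup_{\tau}\|\dot\Delta_kE(\tau)\|_{L^2}.
\end{align*}
Multiplying by $2^{k/2}$ yields $2^{-3k/2}\|\dot\Delta_kE\|_{L^2}\le\sup_\tau\|E(\tau)\|_{\dot B^{-3/2}_{2,\infty}}$. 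This is exactly the mechanism in \eqref{QQ2}: the loss of two derivatives absorbs the time integral, which is not integrable in $L^1_t$.

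\emph{The collision term $\mathcal I_2$.} Since $\mathbf P\Gamma(f,f)=0$, I will use \eqref{G2.13}. The diffusive part contributes
\begin{align*}
\int_0^t2^ke^{-c2^{2k}(t-\tau)}{\rm d}\tau\,\sup_\tau\|\nu^{-1}\dot\Delta_k\Gamma\|\lesssim2^{-k}\sup_\tau\|\nu^{-1}\dot\Delta_k\Gamma\|.
\end{align*}
The exponentially damped part contributes $\sup_\tau\|\nu^{-1}\dot\Delta_k\Gamma\|$, and $2^{k/2}\lesssim_{j_0}2^{-k/2}$ for $k\le j_0$. Both contributions are therefore bounded by \eqref{QQ3}, namely by $\sup_\tau\|\nu^{-1}\Gamma(f,f)\|_{L^2_v(\dot B^{-1/2}_{2,\infty})}$. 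It then remains to show
\begin{align*}
\|\nu^{-1}\Gamma(f,f)\|_{L^2_v(\dot B^{-1/2}_{2,\infty})}\lesssim\|f\|_{L^2_v(\dot B^{1/2}_{2,\infty})}^2\le\sigma\|f\|_{L^2_v(\dot B^{1/2}_{2,\infty})}.
\end{align*}
I expect this step to be the main obstacle, because the product law \eqref{newinterpolation} (with $s_1=s_2=\frac12$, $q=q_1=q_2=\infty$, so $s_1+s_2-\frac32=-\frac12$) is stated for scalar functions of $x$, whereas $\Gamma$ is a bilinear form in $v$. My first attempt will be to redo the Bony paraproduct decomposition of $\Gamma(f,f)$ in $x$: splitting $f=\sum_j\dot\Delta_jf$ and using bilinearity of $\Gamma$ in $v$, I will estimate each block $\nu^{-1}\Gamma(\dot\Delta_jf,\dot\Delta_{j'}f)$ via \eqref{G2.5} with $\eta=1$, namely $|\nu^{-1}\Gamma(g,h)|_2\lesssim|g|_2|h|_2$. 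I will then apply Bernstein and Hölder in $x$ and Lemma \ref{LA.1} to exchange the $L^2_v$ and $L^p_x$ norms. This reproduces the scalar proof of \eqref{newinterpolation} with $L^2_v$-valued blocks. As a fallback, I will use the embedding $L^{3/2}\hookrightarrow\dot B^{-1/2}_{2,\infty}$ together with Lorentz-space bounds, $\dot B^{1/2}_{2,\infty}\hookrightarrow L^{3,\infty}$ and the product $L^{3,\infty}\cdot L^{3,\infty}\subset L^{3/2,\infty}\hookrightarrow\dot B^{-1/2}_{2,\infty}$, each applied after taking $|\cdot|_2$ in $v$.

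\emph{The transport-type term $\mathcal I_3$.} Again only \eqref{G2.12} applies, so exactly as for $\mathcal I_1$ the low-frequency contribution is bounded by $\sup_\tau\|(E\cdot vf,E\cdot\nabla_vf)(\tau)\|_{L^2_v(\dot B^{-3/2}_{2,\infty})}$. For each fixed $v$ I will apply \eqref{inter} with $g_1=vf$ or $\nabla_vf$ in $\dot B^{3/2}_{2,1}$ (taking $s_1=\frac32$, $q=1$) and $g_2=E$ in $\dot B^{-3/2}_{2,\infty}$; here $s_1+s_2=0$ is allowed. Integrating in $v$ then gives
\begin{align*}
\|E(\langle v\rangle f,\nabla_v f)\|_{L^2_v(\dot B^{-3/2}_{2,\infty})}\lesssim\|E\|_{\dot B^{-3/2}_{2,\infty}}\|(\langle v\rangle f,\nabla_vf)\|_{L^2_v(\dot H^1\cap\dot H^{N-1})},
\end{align*}
using $\dot H^1\cap\dot H^2\hookrightarrow\dot B^{3/2}_{2,1}$. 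The $\nabla_v$ norm is controlled by the $\CE^{\frac12,N}$ norm, since $|\nabla_v\mathbf Pf|_2\lesssim|f|_2$ and the $\nabla_v\{\mathbf I-\mathbf P\}f$ terms appear in \eqref{norm.E}. By \eqref{G3.1}, $\mathcal I_3$ is therefore bounded by $\sigma\sup_t\|E\|_{\dot B^{-3/2}_{2,\infty}}$, which is at most $\sup_t\|E\|_{\dot B^{-3/2}_{2,\infty}}$.

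Finally, taking the supremum over $k\le j_0$ of $2^{k/2}\|\dot\Delta_k\cdot\|$ and summing the four bounds gives \eqref{G3.3}.
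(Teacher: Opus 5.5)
Your estimates are correct, but your route differs from the paper's. The paper does not work block by block. It tests each Duhamel integral against $\phi\in L^2_v(\mathcal S)$, moves the semigroup onto the test function as $e_L^{(t-\tau)\mathcal B^*}\phi$ (for the collision term, after inserting $\Lambda$ via \eqref{G2.13}), and pairs the $\sup_\tau$-norm of the source with the time-integrated bound $\int_0^\infty\|e_L^{\tau\mathcal B^*}\phi\|_{L^2_v(\dot B^{3/2}_{2,1})}{\rm d}\tau\lesssim\|\phi\|_{L^2_v(\dot B^{-1/2}_{2,1})}$, which is \eqref{G2.16} with $q=1$. It then concludes by \eqref{duality}. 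Your bound $\int_0^t e^{-c2^{2k}(t-\tau)}{\rm d}\tau\lesssim 2^{-2k}$ is the primal form of the same mechanism, and it is more elementary and transparent. Your $L^2_v$-valued paraproduct for $\nu^{-1}\Gamma(f,f)$ is also useful, as is your check that $\nabla_v f\in L^2_v(\dot H^1\cap\dot H^{N-1})$ is controlled by the $\CE^{\frac12,N}$-norm. Both make explicit steps the paper passes over in one line.

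The one point where your argument does not reach the statement as written is the norm on the left-hand side. In the paper the norm is $\|g\|_{L^2_v(\dot B^{1/2}_{2,\infty})}=\big(\int\sup_k 2^{k}\|\dot\Delta_k g(\cdot,v)\|_{L^2_x}^2\,{\rm d}v\big)^{1/2}$, with the supremum inside the $v$-integral. Your final step instead controls $\sup_k 2^{k/2}\|\dot\Delta_k f_L(t)\|_{L^2_{x,v}}$, the norm of $\dot B^{1/2}_{2,\infty}(L^2_v)$, which is only bounded \emph{by} the former.

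Within your argument you cannot move the supremum inside. Proposition \ref{prop2.7} is an $L^2_v$-bound at fixed $\xi$, and $e^{t\mathcal B}$ couples different velocities through $\mathcal K$, so the block estimates cannot be run for each $v$ separately. Your linear term is fine, since there you use the mixed-norm bound \eqref{G2.14}. The paper reaches the stated norm for the Duhamel terms through duality: it tests against $L^2_v(\dot B^{-1/2}_{2,1})$, whose dual norm is $L^2_v(\dot B^{1/2}_{2,\infty})$. It also takes Proposition \ref{prop2.8} as given in the mixed norm, with the $\ell^1$-sum inside the $v$-integral.

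You therefore have two ways to close the gap. You can route the three Duhamel terms through that duality argument. Alternatively, you can state that you prove \eqref{G3.3} in the weaker norm $\dot B^{1/2}_{2,\infty}(L^2_v)$, and verify that this suffices wherever the lemma is used: the bootstrap \eqref{G3.1}, the low-frequency control in Corollary \ref{cor3.5}, and the interpolations into $\dot H^{3/4}$ and $L^\infty$. Your right-hand side is already dominated by that of \eqref{G3.3}, so only the left-hand side is at issue.
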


\begin{proof}
Applying the operator $\dot S_{j_0}$ to equation \eqref{G2.10} and using the Duhamel principle, we have
\begin{align*}
f_{L}(t)= e_{L}^{t\mathcal B}f_{0}+\int_0^t e_L^{(t-\tau)\mathcal{B}} \Big[ \Gamma(f,f)(\tau)-[E \cdot \nabla_v f](\tau)+\frac{1}{2}[E\cdot v f](\tau)+E(\tau)\cdot v\sqrt{M}    \Big]{\rm d}\tau, 
\end{align*}
where $e_{L}^{t\mathcal{B}}=\dot S_{j_0}e^{t\mathcal{B}}$.
Taking the $L_v^2(\dot B_{2,\infty}^{\frac{1}{2}})$-norm of the above inequality and then applying Minkowski's inequality, we   derive that
\begin{align}\label{G3.4}
\|f_{L}(t)\|_{L_v^2(\dot B_{2,\infty}^{\frac{1}{2}})}\lesssim&\, \|e_{L}^{t\mathcal{B}} f_0\| _{L_v^2(\dot B_{2,\infty}^{\frac{1}{2}})} +\bigg\|\int_0^t  e_L^{(t-\tau)\mathcal{B}} \big[ \Gamma(f,f)(\tau)+E(\tau) \cdot v\sqrt{M}   \big]{\rm d}\tau     \bigg\|_{L_v^2(\dot B_{2,\infty}^{\frac{1}{2}})}  \nonumber\\
&+\bigg\|\int_0^t  e_L^{(t-\tau)\mathcal{B}} \Big[  \frac{1}{2}[E\cdot vf](\tau)-[E  \cdot\nabla_{v}f](\tau)   \Big]{\rm d}\tau     \bigg\|_{L_v^2(\dot B_{2,\infty}^{\frac{1}{2}})}\nonumber\\
\lesssim&\, \bigg\|\int_0^t  e_L^{(t-\tau)\mathcal{B}}  \Gamma(f,f)(\tau) {\rm d}\tau     \bigg\|_{L_v^2(\dot B_{2,\infty}^{\frac{1}{2}})}+\bigg\|\int_0^t  e_L^{(t-\tau)\mathcal{B}} E(\tau)\cdot v\sqrt{M}    {\rm d}\tau     \bigg\|_{L_v^2(\dot B_{2,\infty}^{\frac{1}{2}})}\nonumber\\
&+\bigg\|\int_0^t  e_L^{(t-\tau)\mathcal{B}} [E \cdot vf](\tau)   {\rm d}\tau     \bigg\|_{L_v^2(\dot B_{2,\infty}^{\frac{1}{2}})}+\bigg\|\int_0^t  e_L^{(t-\tau)\mathcal{B}} [E  \cdot \nabla_{v}f](\tau)   {\rm d}\tau     \bigg\|_{L_v^2(\dot B_{2,\infty}^{\frac{1}{2}})}\nonumber\\
&+\|f_0\|_{L_v^2(\dot B_{2,\infty}^{\frac{1}{2}})}\nonumber\\
\equiv:&\, \sum_{j=1}^4 I_{j}+\|f_0\|_{L_v^2(\dot B_{2,\infty}^{\frac{1}{2}})}.
\end{align}
We handle the terms $I_1$, $I_2$, $I_3$, and $I_4$ individually by employing the duality argument. For the term $I_1$, similar to the approach in \cite[Lemma 3.1]{DLN-2026}, given that $\mathbf{P}\Gamma(f,f)=0$, by applying the Fourier-Plancherel Theorem, \eqref{G2.13}, and \eqref{G2.16}, for any test function $\phi\in L_v^2(\mathcal{S})$, one obtains
\begin{align*}
\bigg\langle \int_0^t e^{(t-\tau)\mathcal{B}}_{L} \Gamma(f,f)(\tau){\rm d}\tau, \phi      \bigg\rangle_{x,v}   =&\, \int_0^t\big\langle  e^{(t-\tau)\mathcal{B}}_{L} \Gamma(f,f)(\tau), \phi      \big\rangle_{{x,v}} {\rm d}\tau \nonumber\\
=&\,\int_0^t \big \langle  \nu^{-1}\widehat{\Gamma}(f,f)(\tau), 
  \mathbf{1}_{|\xi|\leq\xi_0}e^{-|\xi|^2(t-\tau)}|\xi|\widehat{\phi}        \big\rangle_{\xi,v} {\rm d}\tau\nonumber\\
  =&\,  \int_0^t \big \langle\nu^{-1}{\Gamma}(f,f)(\tau), e_{L}^{(t-\tau )\mathcal{ B}^*} \Lambda \phi \big\rangle_{{x,v}} {\rm d}\tau \nonumber\\
  \lesssim&\, \sup_{0\leq t\leq T_1} \|\nu^{-1}\Gamma(f,f)(t)\|_{L_v^2(\dot B_{2,\infty}^{-\frac{1}{2}})} \int_0^\infty \|e_{L}^{\tau \mathcal{B}^*}\Lambda\phi\|_{L_v^2(\dot B_{2,1}^{\frac{1}{2}})} {\rm d}\tau \nonumber\\
   \lesssim&\,  \sup_{0\leq t\leq T_1} \|\nu^{-1}\Gamma(f,f)(t)\|_{L_v^2(\dot B_{2,\infty}^{-\frac{1}{2}})}    \| \phi\|_{L_v^2(\dot B_{2,1}^{-\frac{1}{2}})}, 
\end{align*}
which combined with \eqref{G2.5} in Proposition \ref{prop2.3} and \eqref{duality} in Proposition \ref{prop2.5} gives
\begin{align}\label{G3.6}
I_1\lesssim    \sup_{0\leq t\leq T_1} \|\nu^{-1}\Gamma(f,f)(t)\|_{L_v^2(\dot B_{2,\infty}^{-\frac{1}{2}})}   
\lesssim  \sup_{0\leq t\leq T_1} \||f(t)|_2^2\|_{\dot B_{2,\infty}^{-\frac{1}{2}}}
\lesssim&\, \sup_{0\leq t\leq T_1} \|f(t)\|_{L_v^2(\dot B_{2,\infty}^{\frac{1}{2}})}^2 \nonumber\\
\lesssim&\,\sigma \sup_{0\leq t\leq T_1} \|f(t)\|_{L_v^2(\dot B_{2,\infty}^{\frac{1}{2}})}.
\end{align}
Similar to the process of estimating the term $I_1$, when dealing with the term $I_2$, for any test function $\Psi\in L_v^2(\mathcal{S})$, we have
\begin{align}\label{G3.7}
\bigg\langle \int_0^t e^{(t-\tau)\mathcal{B}}_{L} E(\tau)\cdot v\sqrt{M}{\rm d}\tau, \Psi      \bigg\rangle_{{x,v}}   =&\, \int_0^t \big \langle  e^{(t-\tau)\mathcal{B}}_{L} E(\tau)\cdot v\sqrt{M}, \Psi      \big \rangle_{{x,v}}{\rm d}\tau \nonumber\\
=&\, \int_0^t \big \langle  E(\tau)\cdot v\sqrt{M}, e^{(t-\tau)\mathcal{B}^*}_{L} \Psi      \big \rangle_{{x,v}}{\rm d}\tau \nonumber\\
\lesssim&\, \int_0^t \big( \big|E(\tau)\cdot v\sqrt{M}\big|_{2}  ,\big|e_{L}^{(t-\tau)\mathcal{B}^*}\Psi\big|_{2}  \big)  {\rm d}\tau \nonumber\\
\lesssim&\, \sup_{0\leq t\leq T_1}\|E(t)\cdot v\sqrt{M}\|_{L_v^2(\dot B_{2,\infty}^{-\frac{3}{2}})} \int_0^{\infty}\| e_{L}^{\tau \mathcal{B}^*}\Psi\|_{L_v^2(\dot B_{2,1}^{\frac{3}{2}})} {\rm d}\tau \nonumber\\
\lesssim &\, \sup_{0\leq t\leq T_1}\|E(t)\|_{ \dot B_{2,\infty}^{-\frac{3}{2}}}  \|\Psi\|_{L_v^2(\dot B_{2,1}^{-\frac{1}{2}})},
\end{align}
where we used the fact that $v \sqrt{M}$ is $L^2$-integrable  and the fact from \eqref{G2.16}, which states that
\begin{align*}
  \int_0^{\infty}\| e_{L}^{\tau \mathcal{B}^*}\Psi\|_{L_v^2(\dot B_{2,1}^{\frac{3}{2}})} {\rm d}\tau\lesssim    \|\Psi\|_{L_v^2(\dot B_{2,1}^{-\frac{1}{2}})}. 
\end{align*}
Then, it can be deduced from  \eqref{duality} and \eqref{G3.7} that
\begin{align}\label{G3.8}
I_2\lesssim     \sup_{0\leq t\leq T_1}\|E(t)\|_{\dot B_{2,\infty}^{-\frac{3}{2}}}.     
\end{align}
Similarly as in the process of dealing with the term $I_2$, for the terms $I_3$ and $I_4$, by applying  \eqref{inter} and \eqref{interpolation}, it can be directly concluded that
\begin{align}\label{G3.9}
I_3+I_4\lesssim &\,     \sup_{0\leq t\leq T_1}\|[E\cdot vf](t)\|_{L_v^2(\dot B_{2,\infty}^{-\frac{3}{2}})} +\sup_{0\leq t\leq T_1}\|[E\cdot \nabla_vf](t)\|_{L_v^2(\dot B_{2,\infty}^{-\frac{3}{2}})}\nonumber\\
\lesssim&\,\sup_{0\leq t\leq T_1} \|E(t)\|_{\dot B_{2,\infty}^{-\frac{3}{2}}} \|(v f(t),\nabla_v f(t))\|_{L_v^2(\dot B^{\frac{3}{2}}_{2,1})}\nonumber\\
\lesssim&\,\sup_{0\leq t\leq T_1} \|E(t)\|_{\dot B_{2,\infty}^{-\frac{3}{2}}} \|(v f(t),\nabla_v f(t))\|_{L_v^2(\dot H^1\cap \dot H^{N-1})}\nonumber\\
\lesssim&\, \sigma \sup_{0\leq t\leq T_1} \|E(t)\|_{\dot B_{2,\infty}^{-\frac{3}{2}}}.
\end{align}
By inserting the estimates \eqref{G3.6}, \eqref{G3.8}, and \eqref{G3.9} into \eqref{G3.4}, we   obtain \eqref{G3.3}. 
\end{proof}

Next, we turn to the higher regularity estimates of $f(t,x,v)$ in the $L_v^2(\dot H^1\cap \dot H^N)$ framework.

\begin{lem}\label{L3.2}
For strong solutions of the problem \eqref{G1.5}, there exists a positive constant  $\lambda_1 >0$ such that  
\begin{align}\label{G3.10}
&\frac{{\rm d}}{{\rm d}t}\|  f(t)\|_{L_v^2(\dot H^1\cap \dot H^N)}^2+\lambda_1\sum_{1\leq k\leq N}\|\nabla^k\{\mathbf{I}-\mathbf{P}\}f(t)\|_{\nu}^2    \nonumber\\
&\quad\leq C \sum_{|\beta|\leq k} \big\|  \big|\nabla^{|\beta|}f \big|_2 \big|  \nabla^{k-|\beta|}f \big|_{\nu} \big \|_{L^2}^2+{C\sup_{t\geq 0}\|E(t)\|_{     H^N}^2}+ \kappa \sum_{2\leq k\leq N}\|\nabla^k\mathbf{P}f(t)\|_{L_{x,v}^2}^2,
\end{align}
for $0\leq t \leq T_1$. Here and in the sequel, $0 < \kappa< 1$ is a constant that can be sufficiently small.
\end{lem}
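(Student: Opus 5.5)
We will prove \eqref{G3.10} by the standard pure spatial-derivative energy estimate. For each $1\leq k\leq N$ and each multi-index $\alpha$ with $|\alpha|=k$, we apply $\partial^\alpha_x$ to \eqref{G1.5}$_1$, take the $L^2_{x,v}$ inner product with $\partial^\alpha_x f$, and sum over $|\alpha|=k$ and then over $k$.

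\textbf{Linear part.} The transport term $v\cdot\nabla_x$ is skew-adjoint, so it drops out. By \eqref{G2.2} in Proposition \ref{prop2.2}, and since $\partial^\alpha_x$ commutes with $\mathbf{P}$, we obtain the dissipation $\kappa_0\|\nabla^k\{\mathbf{I}-\mathbf{P}\}f\|_\nu^2$. This gives the left-hand side with $\lambda_1\leq \kappa_0/2$; the remaining half is kept for absorption.

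\textbf{Collision term.} We use $\mathbf{P}\Gamma=0$, so that
\[
\langle\partial^\alpha\Gamma(f,f),\partial^\alpha f\rangle_{x,v}=\langle\partial^\alpha\Gamma(f,f),\partial^\alpha\{\mathbf{I}-\mathbf{P}\}f\rangle_{x,v}.
\]
Proposition \ref{prop2.4} with $\beta=0$ and the Leibniz rule then bound this by
\[
\sum_{\alpha_1\leq\alpha}\big\|\,|\partial^{\alpha_1}f|_2\,|\partial^{\alpha-\alpha_1}f|_\nu\big\|_{L^2}\,\|\nabla^k\{\mathbf{I}-\mathbf{P}\}f\|_\nu .
\]
Cauchy--Schwarz with a small weight absorbs the last factor into the dissipation. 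What remains is exactly the first term on the right of \eqref{G3.10}.

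\textbf{Force terms.} The source gives $(\partial^\alpha E\cdot v\sqrt M,\partial^\alpha f)$. Since $v\sqrt M\in\mathcal N$, this pairs only with $\mathbf{P}f$, i.e. with the momentum component $b$. We integrate by parts once in $x$ to move one derivative onto $E$:
\[
|(\partial^\alpha E,\partial^\alpha b)|\lesssim \|\nabla^{k}E\|_{L^2}\|\nabla^{k}b\|_{L^2}.
\]
For $k\geq 2$ we then use Young's inequality, which gives $C_\kappa\|E\|_{H^N}^2+\kappa\|\nabla^k\mathbf{P}f\|^2$. For $k=1$, Young's inequality would leave $\|\nabla b\|$, which is not among the allowed terms. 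Instead we write $(\partial E,\partial b)=-(\partial^2 E, b)$ and estimate it through the a priori smallness \eqref{G3.1} and the $L^\infty$/Sobolev bounds on $f$. This part of the bookkeeping is delicate, and the $\sup_t\|E\|_{H^N}^2$ term on the right of \eqref{G3.10} is designed to absorb it.

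For $E\cdot\nabla_v f$ we use that $\nabla_v\cdot E=0$ and integrate by parts in $v$. The top-order term $(E\cdot\nabla_v\partial^\alpha f,\partial^\alpha f)$ vanishes, leaving commutator terms $(\partial^{\alpha_1}E\cdot\nabla_v\partial^{\alpha-\alpha_1}f,\partial^\alpha f)$ with $\alpha_1\neq0$. Similarly $\tfrac12(E\cdot v\,\partial^\alpha f,\partial^\alpha f)$ and its commutators appear. In all of these we split $f=\mathbf{P}f+\{\mathbf{I}-\mathbf{P}\}f$. The $\mathbf{P}$ parts carry Gaussian weights, so $v$ and $\nabla_v$ are harmless there. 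The micro parts are controlled by $|\cdot|_\nu$ since $\nu\sim\langle v\rangle$. The $\nabla_v\{\mathbf{I}-\mathbf{P}\}$ factors are moved back onto the test function by another integration by parts. Each product is bounded with $\|E\|_{L^\infty\cap H^N}\lesssim\|E\|_{H^N}$ (which is small) times energy quantities. These are then absorbed into the dissipation or into $\kappa\|\nabla^k\mathbf{P}f\|^2$, or else bounded by $C\|E\|_{H^N}^2$ using \eqref{G3.1}.

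\textbf{Main obstacle.} The hard step is the commutator terms involving $v\,\partial^{\alpha-\alpha_1}\{\mathbf{I}-\mathbf{P}\}f$ at the lowest order, where no macroscopic dissipation is available. There we would first try to use the smallness $\sigma$ from \eqref{G3.1}, which controls $\langle v\rangle f$ in $L^2_v(\dot H^1\cap\dot H^{N-1})$. This lets us bound each such term by $\sigma\|E\|_{H^N}\times(\text{dissipation})$, and Young's inequality then yields $C\|E\|_{H^N}^2$ plus an absorbable piece.
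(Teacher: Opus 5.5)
\textbf{There is a genuine gap at $k=1$.} Your handling of the transport and collision terms is fine. For $2\le k\le N$ your treatment of the force terms can also be made to work, since every product then contains a factor with at least two $x$-derivatives that is either admissible dissipation or $\kappa\|\nabla^k\mathbf{P}f\|^2_{L^2_{x,v}}$. The top-order $E\cdot\nabla_v$ term cancels, as in the paper's $k=N$ step. The trouble is at $k=1$:
\begin{itemize}
\item \emph{Source term.} You integrate by parts in the wrong direction. $(\partial_jE_i,\partial_jb_i)=-(\partial_j^2E_i,b_i)$ leaves $b$ with no derivatives, but $\|b\|_{L^2}$ is not part of the energy (only $L^2_v(\dot B^{1/2}_{2,\infty}\cap\dot H^N)$ is). Any use of \eqref{G3.1} there, e.g.\ $|(\partial^2E,b)|\lesssim\|E\|_{\dot B^{3/2}_{2,1}}\|b\|_{\dot B^{1/2}_{2,\infty}}$, produces a term $\sigma\|E\|$. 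This is linear in $E$ and is not bounded by $C\sup_t\|E\|^2_{H^N}$, so your claim that this term is ``designed to absorb it'' is unjustified.
\item \emph{The terms $\tfrac{1}{2}E\cdot vf$ and $E\cdot\nabla_vf$.} The same obstruction appears here, unnoticed. With $\mathbf{P}f=(a+b\cdot v+c\tfrac{|v|^2-3}{2})\sqrt M$, the top-order piece $\tfrac{1}{2}\langle E\cdot v\,\partial_j\mathbf{P}f,\partial_j\mathbf{P}f\rangle_{x,v}=\int E_i\,\partial_jb_i(\partial_ja+\partial_jc)\,dx$ does not vanish. The commutator $\langle\partial_jE\cdot\nabla_v\mathbf{P}f,\partial_j\mathbf{P}f\rangle_{x,v}$ is of the form $\int\partial E\,\mathbf{P}f\,\partial\mathbf{P}f\,dx$. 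With your splitting, both are bounded only by $C\|E\|_{H^N}\|\nabla\mathbf{P}f\|^2_{L^2_{x,v}}$ (using $\|\mathbf{P}f\|_{L^2_v(L^6)}\lesssim\|\nabla\mathbf{P}f\|_{L^2_{x,v}}$). Neither factor is admissible, because \eqref{G3.10} only allows $\|\nabla^k\mathbf{P}f\|^2$ for $k\ge2$.
\end{itemize}
So your device ``$\sigma\|E\|\times$(dissipation), then Young'' has nothing to pair with here. The micro commutators you flag as the main obstacle are in fact handled by that device; the macro--macro interactions at $k=1$ are the real issue.

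\textbf{The missing idea.} The paper uses it for all three force terms whenever $1\le k\le N-1$: integrate by parts in $x$ the other way, moving one derivative from the forcing onto the test function. For $G\in\{E\cdot v\sqrt M,\tfrac{1}{2}E\cdot vf,E\cdot\nabla_vf\}$, write $\langle\partial^\alpha G,\partial^\alpha f\rangle_{x,v}=-\langle\partial^{\alpha-e_j}G,\partial^{\alpha+e_j}f\rangle_{x,v}$. This gives $\kappa\|\nabla^{k+1}f\|^2_{L^2_{x,v}}+C\|G\|^2_{L^2_v(\dot H^{k-1})}$, where $\|\nabla^{k+1}f\|^2_{L^2_{x,v}}\le\|\nabla^{k+1}\mathbf{P}f\|^2_{L^2_{x,v}}+\|\nabla^{k+1}\{\mathbf{I}-\mathbf{P}\}f\|^2_\nu$ and $k+1\ge2$. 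The $G$-term is now quadratic in $E$:
\begin{itemize}
\item at $k=1$ it is $\|E\|^2_{L^2}$ for the source;
\item for the other two, $\|(E\cdot vf,E\cdot\nabla_vf)\|_{L^2_{x,v}}\lesssim\|E\|_{L^3}\|(vf,\nabla_vf)\|_{L^2_v(L^6)}\lesssim\|E\|_{\dot H^{1/2}}\|(\langle v\rangle f,\nabla_vf)\|_{L^2_v(\dot H^1)}\lesssim\sigma\|E\|_{H^N}$.
\end{itemize}
The smallness from \eqref{G3.1} is thus spent on the whole weighted or differentiated factor, including its macroscopic part where $\nabla\mathbf{P}f$ lives. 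The dissipation comes from a test function carrying two derivatives. Only at $k=N$, where no derivative can be added, does one estimate directly with $\kappa\|\nabla^N\mathbf{P}f\|^2$ and the cancellation $\langle E\cdot\nabla_v\nabla^Nf,\nabla^Nf\rangle_{x,v}=0$.

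One further caution at the level $k=N$: do not move $\nabla_v$ onto $\partial^\alpha\{\mathbf{I}-\mathbf{P}\}f$ with $|\alpha|=N$. That creates $N+1$ derivatives, which the energy does not control; use the a priori bound on $\nabla_vf$ in $L^2_v(\dot H^1\cap\dot H^{N-1})$ instead.
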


\begin{proof}
Applying the derivative $\partial^\alpha$ ($|\alpha|=k$) with $1\leq k\leq N$ to \eqref{G1.5}$_1$, multiplying the result by $\partial^\alpha f$, taking integration and summation, and then using \eqref{G2.2}, one has
\begin{align}\label{G3.11}
 \frac{1}{2}\frac{{\rm d}}{{\rm d}t}\|\partial^\alpha f\|_{L_{x,v}^2}^2+\lambda_2 \|\partial^\alpha \{\mathbf{I}-\mathbf{P}\}f\|_{\nu}^2\lesssim&\, |\langle\partial^\alpha\Gamma(f,f) ,\partial^\alpha f\rangle_{{x,v}} |+ |\langle\partial^\alpha E\cdot v\sqrt{M},\partial^\alpha f\rangle_{{x,v}} |\nonumber\\
 &+|\langle\partial^ \alpha(E\cdot vf) ,\partial^\alpha f\rangle_{{x,v}} |+|\langle\partial^ \alpha(E\cdot \nabla_vf) ,\partial^\alpha f\rangle_{{x,v}} |\nonumber\\
 \equiv:&\,\sum_{j=5}^8I_j,
\end{align}
for some constant $\lambda_2>0$.

For the first term $I_5$,
by leveraging \eqref{G2.5} in Proposition \ref{prop2.3} and Lemma \ref{LA.1}, utilizing the collision invariant property, and applying Cauchy’s and Young’s inequalities, we get
\begin{align}\label{G3.12}
I_5\lesssim &\, \sum_{|\beta|\leq |\alpha|}\big| \big\langle \Gamma(\partial^{\alpha-\beta }f,\partial^\beta f)   , \partial^\alpha \{\mathbf{I}-\mathbf{P}\}f\big\rangle_{{x,v}}  \big|   \nonumber\\
\lesssim&\, \sum_{|\beta|\leq |\alpha|}\int_{\mathbb R^3_x
}\big| \nu^{-\frac{1}{2}}  \Gamma(\partial^{\alpha-\beta }f,\partial^\beta f)\big|_{2}   \big|\nu^{\frac{1}{2}} \partial^\alpha \{\mathbf{I}-\mathbf{P}\}f   \big|_2 {\rm d}x \nonumber\\
\lesssim&\, \sum_{|\beta|\leq |\alpha|} \int_{\mathbb R^3_x}  |\partial^{\alpha-\beta}f|_{\nu}|\partial^\beta f|_{2} |\partial^\alpha \{\mathbf{I}-\mathbf{P}\}f    |_{\nu} {\rm d}x \nonumber\\
\lesssim&\,  \kappa  \|\partial^\alpha \{\mathbf{I}-\mathbf{P}\}f\|_{\nu}^2+\sum_{|\beta|\leq k}\big\|  |\nabla^{|\beta|} f|_{2} |\nabla^{k-|\beta|}f|_{\nu} \big \|_{L^2}^2,
\end{align}
for any $k=1,2,3\dots,N$.

For the term $I_6$, by applying integration by parts and Young’s inequality, we have
\begin{align}\label{G3.13}
I_6\leq &\, \sum_{|\beta|=1}\|\partial^{\alpha+\beta}f\|_{L_{x,v}^2} \|\partial^{\alpha-\beta}E\cdot v\sqrt{M}\|_{L_{x,v}^2}\nonumber\\
\leq&\, \kappa \sum_{|\beta|=1}\|\nabla^{|\alpha|+|\beta|} f\|_{L_{x,v}^2}^2+C\sup_{0\leq t\leq T_1}\|E\|_{\dot H^{k-1}}^2,    
\end{align}
for $k=1,2,3,\dots,N-1$. Moreover, when $k = N$, we have
\begin{align}\label{G3.14}
I_6\leq  \kappa     \|\nabla^{N} f\|_{L_{x,v}^2}^2+C\sup_{0\leq t\leq T_1}\|E\|_{ \dot H^{N}}^2. 
\end{align}

Similarly, for the terms $I_7$ and $I_8$, we  have
\begin{align}\label{G3.15}
I_7+I_8\leq&\,      \kappa \sum_{|\beta|=1}\|\nabla^{|\alpha|+|\beta|} f\|_{L_{x,v}^2}^2+C\sup_{0\leq t\leq T_1}\|(E\cdot v f,E\cdot\nabla_vf)\|_{L_v^2(\dot H^{k-1})}^2  \nonumber\\
\leq&\,   \kappa \sum_{|\beta|=1}\|\nabla^{|\alpha|+|\beta|} f\|_{L_{x,v}^2}^2+C\sup_{0\leq t\leq T_1}\|(E\cdot v f,E\cdot\nabla_vf)\|_{L_{x,v}^2 }^2\nonumber\\
&+C\sup_{0\leq t\leq T_1}\|(E\cdot v f,E\cdot\nabla_vf)\|_{L_v^2(\dot H^1\cap \dot H^{N-2}) }^2 \nonumber\\
\leq&\, \kappa \sum_{|\beta|=1}\|\nabla^{|\alpha|+|\beta|} f\|_{L_{x,v}^2}^2+C\sup_{0\leq t\leq T_1} \|E\|_{\dot H^{\frac{1}{2}}}^2 \|(vf,\nabla_v f)\|_{L_v^2(\dot H^1)}^2\nonumber\\
&+C\sup_{0\leq t\leq
 T_1}\|\nabla E\|_{H^{N-3}}^2\|(vf,\nabla_v f)\|_{L_v^2(\dot H^1\cap \dot H^{N-2})}^2 \nonumber\\
 \leq&\,\kappa \sum_{|\beta|=1}\|\nabla^{|\alpha|+|\beta|} f\|_{L_{x,v}^2}^2+C\sigma \sup_{0\leq t\leq T_1}\|E\|_{ H^{N-2}}^2,
\end{align}
for $k=1,2,3,\dots, N-1$.
For $k = N$, by applying the decomposition \eqref{G1.4} and integration by parts, we derive
\begin{align}\label{G3.16}
I_7+I_8\lesssim&\,  \|Ef\|_{L_v^2(\dot H^N)}\Big( \|\nabla
^N\{\mathbf{I}-\mathbf{P}\}f\|_{\nu}+\|\mathbf{P}f\|_{L_v^2(\dot H^N)}  \Big)\nonumber\\
&+ \|\nabla^N(E\cdot\nabla_{v}f)-E\nabla^N\nabla_vf\|_{L_{x,v}^2} \|\nabla^N f\|_{L_{x,v}^2} \nonumber\\
\leq&\, \kappa    \Big( \|\nabla^{N} f\|_{L_{x,v}^2}^2+\|\nabla^N\{\mathbf{I}-\mathbf{P}\}f\|_{\nu}^2\Big)+C\sup_{0\leq t\leq T_1} \|E\|_{\dot H^{\frac{1}{2}}}^2 \|\nabla_v f\|_{L_v^2(\dot H^1)}^2\nonumber\\
&+ C{\sup_{0\leq t\leq
 T_1}\|\nabla E\|_{H^{N-1}}^2\| \nabla_v f\|_{L_v^2(\dot H^1\cap \dot H^{N-1})}^2} \nonumber\\
 \leq&\,  \kappa    \Big( \|\nabla^{N} f\|_{L_{x,v}^2}^2+\|\nabla^N\{\mathbf{I}-\mathbf{P}\}f\|_{\nu}^2\Big)+C\sigma \sup_{0\leq t\leq T_1}\|E\|_{  H^{N }}^2,
\end{align}
where we used the fact that
\begin{align*}
\int_{\mathbb{R}^3_{x}} \langle E\cdot\nabla^N_{v}f,\nabla^Nf\rangle{\rm d}x=-\frac{1}{2}\int_{\mathbb R^3_{x}}\!\int_{\mathbb R^3_{v}}\nabla_{v}E |\nabla^N f|^2{\rm d}x{\rm d}v=0.
\end{align*}

Putting the estimates \eqref{G3.12}, \eqref{G3.13}, \eqref{G3.14}, \eqref{G3.15} and \eqref{G3.16} into \eqref{G3.11} gives rise to
\begin{align}\label{G3.17}
 &\frac{{\rm d}}{{\rm d}t}\|\nabla^k  f \|_{L_{x,v}^2 }^2+\lambda_3 \|\nabla^k\{\mathbf{I}-\mathbf{P}\}f \|_{\nu}^2    \nonumber\\
&\quad\lesssim \sum_{|\beta|\leq k} \big\|  \big|\nabla^{|\beta|}f \big|_2 \big|  \nabla^{k-|\beta|}f \big|_{\nu} \big \|_{L^2}^2+\sup_{0\leq t\leq T_1}\|E\|_{ H^{N-2}}^2+ \kappa  \|\nabla^{k+1}f \|_{L_{x,v}^2}^2,   
\end{align}
for some constant $\lambda_3>0$ and for any $k=1,2,3,\dots,N-1$,  and 
\begin{align}\label{G3.18}
 &\frac{{\rm d}}{{\rm d}t}\|\nabla^N  f \|_{L_{x,v}^2 }^2+\lambda_4 \|\nabla^N\{\mathbf{I}-\mathbf{P}\}f \|_{\nu}^2    \nonumber\\
&\quad\lesssim \sum_{|\beta|\leq N} \big\|  \big|\nabla^{|\beta|}f \big|_2 \big|  \nabla^{N-|\beta|}f \big|_{\nu} \big \|_{L^2}^2+\sup_{0\leq t\leq T_1}\|E\|_{ H^{N}}^2+ \kappa  \|\nabla^{N}f \|_{L_{x,v}^2}^2,   
\end{align}
for some constant $\lambda_4>0$. Combining \eqref{G3.17} and \eqref{G3.18}, we obtain \eqref{G3.10}.
\end{proof}

To estimate the first nonlinear term on the right-hand side of \eqref{G3.10}, we follow the  approach similar to that in \cite[Lemma 3.3]{DLN-2026}, and we can establish the following lemma. The proof is omitted here for brevity.

\begin{lem}   \label{L3.3}
For strong solutions of the problem \eqref{G1.5}, it holds that
\begin{align}\label{G3.19}
  \sum_{|\beta|\leq k} \big\|  \big|\nabla^{|\beta|}f \big|_2 \big|  \nabla^{k-|\beta|}f \big|_{\nu} \big \|_{L^2}^2 \lesssim \sigma \Big(\sum_{2\leq\ell\leq N} \|\nabla^{\ell} f\|_{L_{x,v}^2}^2+\sum_{1\leq \ell\leq N}\|\nabla^\ell
  \{\mathbf{I}-\mathbf{P}\}f\|_{\nu}^2            \Big),   
\end{align}
for any $0\leq t \leq  T_1$.
\end{lem}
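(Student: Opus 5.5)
The estimate is pointwise in $x$ through the velocity norms, so I will first reduce it to products of spatial norms of the scalar functions $|\nabla^{j}f|_2$ and $|\nabla^{m}f|_\nu$, with $j+m=k$ and $1\le k\le N$. I will split each factor into macro and micro parts using $f=\mathbf{P}f+\{\mathbf{I}-\mathbf{P}\}f$. Since $|\nabla^m\mathbf{P}f|_\nu\lesssim|\nabla^m\mathbf{P}f|_2\lesssim |\nabla^m f|_2$, the $\nu$-weight matters only on the micro part. Hence $|\nabla^m f|_\nu\lesssim |\nabla^m f|_2+|\nabla^m\{\mathbf{I}-\mathbf{P}\}f|_\nu$. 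For each pair $(j,m)$ I will put one factor in $L^\infty_x$ (or $L^3_x$, $L^6_x$) and the other in $L^2_x$ (resp.\ $L^6_x$, $L^3_x$), using Lemma \ref{LA.1} to exchange the $v$- and $x$-norms.

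\emph{Case $1\le j$ and $m\ge1$.} I will use Hölder together with Lemma \ref{LA.3} (Gagliardo--Nirenberg). The lower-order factor goes into $L^\infty$ or $L^3$ and is bounded by $\sigma$ through \eqref{G3.1}. The higher-order factor is then controlled by $\|\nabla^\ell f\|_{L^2_{x,v}}$ with $2\le\ell\le N$, or by $\|\nabla^\ell\{\mathbf{I}-\mathbf{P}\}f\|_\nu$. The point is that every term with a single derivative must be paired so that the $L^2$-norm falls on at least two derivatives. For example, when $k=1$, $j=1$, $m=0$, I bound $\||\nabla f|_2|f|_\nu\|_{L^2}\le \|\nabla f\|_{L^2_vL^3}\,\|f\|_{L^6_x(L^2_\nu)}$. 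Here $\|f\|_{L^6}\lesssim\|\nabla f\|_{L^2}$ and $\|\nabla f\|_{L^3}\lesssim \|\nabla f\|^{1/2}\|\nabla^2 f\|^{1/2}$, so the product is $\lesssim\sigma\|\nabla^2 f\|$ after absorbing $\|\nabla f\|$ into $\sigma$. That absorption is fine only because $\|\nabla f\|_{L^2}\lesssim\sigma$ by interpolation between $\dot B^{1/2}_{2,\infty}$ and $\dot H^N$.

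\emph{Case $j=0$ or $m=0$.} These are the borderline terms, where one factor carries no derivatives and $f$ itself is known only in $\dot B^{1/2}_{2,\infty}$ at low frequency. I will place $f$ in $L^\infty_x$ using \eqref{G3.1}: the Sobolev bound stated after \eqref{G3.1} gives $\|f\|_{L^2_v(L^\infty)}\lesssim\sigma$, and similarly $\|\langle v\rangle f\|_{L^2_v(L^\infty)}\lesssim\sigma$, which also handles the $\nu$-weighted zero-order factor, including its macro part. The full $k$-th derivative, $|\nabla^k f|_2$ or $|\nabla^k f|_\nu$, then goes into $L^2_x$. If $k\ge2$ this is directly admissible on the right side of \eqref{G3.19}. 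If $k=1$, I will use the $L^6$--$L^3$ pairing above instead of $L^\infty$--$L^2$, so that $\|\nabla f\|$ is never left alone on the right.

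\emph{The $\nu$-weighted factor.} For $|\nabla^m f|_\nu$ I will decompose it into $|\nabla^m\mathbf{P}f|_2$, which is admissible via $\|\nabla^m f\|_{L^2_{x,v}}$, plus $|\nabla^m\{\mathbf{I}-\mathbf{P}\}f|_\nu$, which is admissible via the dissipation term when $m\ge1$. When $m=0$ and the weight falls on the micro part, I will instead use $\|\langle v\rangle f\|_{L^2_v(L^\infty)}\lesssim\sigma$ from \eqref{G3.1} and put $\nabla^k$ on the unweighted factor.

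The main obstacle is this bookkeeping at $k=1$ (and $k=2$): every resulting term must carry either at least two derivatives in $L^2_{x,v}$ or a micro dissipation factor. Where interpolation produces a fractional power, such as $\|\nabla f\|^{1/2}\|\nabla^2 f\|^{1/2}$, the leftover lower-order power must be absorbed into $\sigma$ rather than into a norm that is unavailable on the right side, following the scheme of \cite[Lemma 3.3]{DLN-2026}.
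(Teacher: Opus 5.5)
There is a genuine gap, and it sits at the step you yourself call the main obstacle. Your overall scheme is sound: the macro--micro split of the $\nu$-weighted factor, Lemma \ref{LA.1}, H\"older and Gagliardo--Nirenberg, and the undifferentiated factor in $L^\infty$ when $k\ge 2$. The $k\ge 2$ bookkeeping goes through.

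The failure is in the $k=1$ terms with an undifferentiated macroscopic factor, such as $\|\,|\mathbf{P}f|_2\,|\nabla f|_2\|_{L^2}$. Your pairing gives $\|\nabla f\|^{3/2}\|\nabla^2 f\|^{1/2}$, with norms in $L^2_{x,v}$. Absorbing $\|\nabla f\|$ into $\sigma$ through $\|\nabla f\|\lesssim\sigma$ then produces $\sigma^{3/2}\|\nabla^2 f\|^{1/2}$, not $\sigma\|\nabla^2 f\|$. After squaring you have $\sigma^3\|\nabla^2 f\|$, which is linear in the dissipation and is not bounded by the right side of \eqref{G3.19}.

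Your closing rule, absorbing the leftover lower-order power into $\sigma$, fails for the same reason: it destroys the quadratic homogeneity in the dissipative norms. Two natural repairs also fail.
\begin{itemize}
\item Interpolating $\|\nabla f\|$ against $\dot H^N$, as you suggest, leaves the total power of the dissipative norms strictly below one when $N\ge 3$.
\item Putting the undifferentiated factor in $L^3$ instead needs $\|\mathbf{P}f\|_{L^2_v(L^3)}$, which \eqref{G3.1} does not control. Indeed $\dot B^{1/2}_{2,\infty}\cap\dot H^N\not\hookrightarrow L^3$ (think of $\langle x\rangle^{-1}$), and such $|x|^{-1}$ tails are exactly what this framework is built to allow.
\end{itemize}

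The missing idea is to interpolate the lower-order norm against the endpoint $\dot B^{1/2}_{2,\infty}$ and exactly $\dot H^2$. By \eqref{interpolation} with $(s_1,s_2,\theta)=(\frac12,2,\frac23)$ and H\"older in $v$, you get $\|\nabla f\|_{L^2_{x,v}}\lesssim\|f\|_{L^2_v(\dot B^{1/2}_{2,\infty})}^{2/3}\|\nabla^2 f\|_{L^2_{x,v}}^{1/3}$. This turns $\|\nabla f\|^{3/2}\|\nabla^2 f\|^{1/2}$ into $\sigma\|\nabla^2 f\|$.

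Equivalently, use the endpoint product law \eqref{newinterpolation} with $s_1=\frac12$, $s_2=1$, $q_1=\infty$, $q_2=q=2$, namely $\|g_1g_2\|_{L^2}\lesssim\|g_1\|_{\dot B^{1/2}_{2,\infty}}\|g_2\|_{\dot H^1}$. Apply it velocity by velocity: $\|\,|f|_2\,|\nabla\mathbf{P}f|_2\|_{L^2}^2=\int\|f(\cdot,v)\,|\nabla\mathbf{P}f|_2\|_{L^2_x}^2\,{\rm d}v\lesssim\|f\|^2_{L^2_v(\dot B^{1/2}_{2,\infty})}\|\nabla^2\mathbf{P}f\|^2_{L^2_{x,v}}$. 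This is the zeroth-order estimate the paper singles out in the remark preceding Lemma \ref{L4.3}.

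This step applies only to unweighted factors, since \eqref{G3.1} gives no $\nu$-weighted low-frequency control. In your example the $L^6$ factor is really $\|\nu^{1/2}\nabla f\|_{L^2_{x,v}}$. So first split $|f|_\nu\lesssim|\mathbf{P}f|_2+|\{\mathbf{I}-\mathbf{P}\}f|_\nu$ and treat $\mathbf{P}f$ as above. Bound the microscopic piece by $\|\nabla f\|_{L^2_v(L^3)}\|\{\mathbf{I}-\mathbf{P}\}f\|_{L^6_x(L^2_\nu)}\lesssim\|f\|_{L^2_v(\dot H^{3/2})}\|\nabla\{\mathbf{I}-\mathbf{P}\}f\|_\nu\lesssim\sigma\|\nabla\{\mathbf{I}-\mathbf{P}\}f\|_\nu$.
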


To absorb the dissipation of the hydrodynamic part $\mathbf{P}f$ on the right-hand side of \eqref{G3.10}, we use the similar method in \cite[Lemma 6.1]{Gy-CPAM-2006} and \cite[Lemma 4.2]{GW-CPDE-2012} and define the following temporal energy functional $\mathfrak{E}_{k}(t)$:
 \begin{align}\label{G3.20}
 \mathfrak{E}_{k}(t):=&\,\sum_{|\alpha|=k}\int_{\mathbb R^3_x} \big(  \langle\{\mathbf{I}-\mathbf{P}\}\partial^\alpha f,\zeta_{a}\rangle\cdot\nabla \partial^\alpha a +\langle\{\mathbf{I}-\mathbf{P}\}\partial^\alpha f, \zeta_{ij}\rangle\cdot\partial_j\partial^\alpha b_{i}  \big) {\rm d}x\nonumber\\
 &+\sum_{|\alpha|=k}\int_{\mathbb R^3_x} \big(\langle\{\mathbf{I}-\mathbf{P}\}\partial^\alpha f,\zeta_{c}\rangle\cdot\nabla \partial^\alpha c+\partial^\alpha b\cdot\nabla \partial^\alpha a\big) {\rm d}x.
 \end{align}
Here, $\zeta,\zeta_{a}(v),\zeta_{ij}(v)$ and $\zeta_{c}(v)$ are some fixed linear combinations of the basis:
\begin{align*}
\big\{ \sqrt{M},\quad  v_{i}\sqrt{M},\quad v_iv_{j}\sqrt{M},\quad v_{i}|v|^2\sqrt{M},\quad 1\leq i,j\leq 3  \big\}.
\end{align*}
By using Young's inequality, we arrive at
\begin{align}\label{G3.21}
|\mathfrak{E}_{k}(t)|\lesssim \|\nabla^{k}f\|_{L_{x,v}^2}^2+\|\nabla^{k+1}f\|_{L_{x,v}^2}^2.
\end{align}
Then, we provide an estimate of $\mathbf{P}f$.

\begin{lem}\label{L3.4}
For strong solutions of the problem \eqref{G1.5}, there exists a positive constant $\lambda_5>0$    such that 
\begin{align}\label{G3.22}
&\frac{{\rm d}}{{\rm d}t}\mathfrak{E}_{ k}(t)+\lambda_5 \|\nabla^{k+1}\mathbf{P}f(t)\|_{L_{x,v}^2}^2\nonumber\\
&\quad\lesssim \|\nabla^k \{\mathbf{I}-\mathbf{P}\}f(t)\|_{L_{x,v}^2}^2+\|\nabla^{k+1} \{\mathbf{I}-\mathbf{P}\}f(t)\|_{L_{x,v}^2}^2+\sup_{t\geq 0}\|E(t)\|_{  H^N}^2,
\end{align}
for  any $k=1,2,3,\dots N-1$ and  any $0\leq t\leq T_1$,  where $\mathfrak{E}_{k}(t)$ is defined by \eqref{G3.20}. 
\end{lem}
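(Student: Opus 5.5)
The estimate \eqref{G3.22} is the standard macroscopic dissipation (``thirteen-moment'') estimate of Guo, applied to the $k$-th spatial derivative. The forcing terms are treated as sources.

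\textbf{Step 1: local fluid-type system.} Write $\mathbf{P}f=\{a+b\cdot v+c(|v|^2-3)\}\sqrt{M}$. Take the velocity moments of \eqref{G1.5}$_1$ against $\sqrt{M}$, $v_i\sqrt{M}$ and $(|v|^2-3)\sqrt{M}$. Taking moments of the equation for $\{\mathbf{I}-\mathbf{P}\}f$ against the basis functions $v_iv_j\sqrt{M}$ and $v_i|v|^2\sqrt{M}$ then gives the system
\begin{align*}
&\partial_t a+\nabla\cdot b=0,\qquad \partial_t b+\nabla(a+2c)+\nabla\cdot\langle \{\mathbf{I}-\mathbf{P}\}f,\zeta\rangle = \langle S, v\sqrt{M}\rangle,\\
&\partial_t c+\tfrac13\nabla\cdot b+\nabla\cdot\langle\{\mathbf{I}-\mathbf{P}\}f,\zeta'\rangle=\langle S,\cdot\rangle,\\
&\partial_j b_i+\partial_i b_j-\tfrac23\delta_{ij}\nabla\cdot b=-\partial_t\langle\{\mathbf{I}-\mathbf{P}\}f,\zeta_{ij}\rangle+\ell_{ij}+h_{ij},\\
&\nabla c=-\partial_t\langle\{\mathbf{I}-\mathbf{P}\}f,\zeta_c\rangle+\ell_c+h_c.
\end{align*}
Here $S=\Gamma(f,f)-E\cdot\nabla_vf+\frac12E\cdot vf+E\cdot v\sqrt{M}$. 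The terms $\ell$ are linear in $\nabla\{\mathbf{I}-\mathbf{P}\}f$ and $\mathcal{L}\{\mathbf{I}-\mathbf{P}\}f$ tested against Gaussian weights, and the terms $h$ are moments of $S$.

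\textbf{Step 2: derivative estimates.} Apply $\partial^\alpha$ with $|\alpha|=k$ to this system.

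For $c$, multiply the $c$-equation by $\nabla\partial^\alpha c$. The time derivative is moved onto the corresponding term of $\mathfrak{E}_k$, and $\partial_t c$ and $\partial_t b$ are substituted back from the conservation laws. This yields
\[
\frac{\rm d}{{\rm d}t}(\cdots)+\|\nabla^{k+1}c\|^2\lesssim \|\nabla^{k}\{\mathbf{I}-\mathbf{P}\}f\|^2+\|\nabla^{k+1}\{\mathbf{I}-\mathbf{P}\}f\|^2+\kappa\|\nabla^{k+1}b\|^2+\text{source terms}.
\]

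For $b$, use the identity $\Delta b_i=\sum_j\partial_j(\partial_jb_i+\partial_ib_j-\frac23\delta_{ij}\nabla\cdot b)$ plus a $\nabla(\nabla\cdot b)$ correction controlled by the $c$-equation, and test against $\partial_j\partial^\alpha b_i$.

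For $a$, test the $b$-equation against $\nabla\partial^\alpha a$. This is where the term $\partial^\alpha b\cdot\nabla\partial^\alpha a$ in $\mathfrak{E}_k$ comes from. The contribution $\partial_t\partial^\alpha b\cdot\nabla\partial^\alpha a$ becomes $\|\nabla\cdot\partial^\alpha b\|^2$ after using $\partial_t a=-\nabla\cdot b$ and integrating by parts.

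Finally, sum the three estimates with weights $1\gg\epsilon_1\gg\epsilon_2$ chosen to absorb the cross terms. This produces $\lambda_5\|\nabla^{k+1}\mathbf{P}f\|^2$ on the left and the micro terms on the right.

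\textbf{Step 3 (main obstacle): the source terms.} Every source term appears as $\|\partial^\alpha h\|_{L^2}\|\nabla^{k+1}\mathbf{P}f\|$, or as a moment tested against a Gaussian after the $v$-derivative is moved off $f$. In the latter case, $\langle E\cdot\nabla_vf,\zeta\rangle=-\langle E f,\nabla_v\zeta\rangle$, so no velocity derivative of $f$ is lost; likewise $E\cdot vf$ tested against $\zeta$ is harmless. The source terms are then controlled as follows.

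\begin{itemize}
\item The linear force term gives $\|\nabla^kE\|_{L^2}\le\|E\|_{H^N}$, since $k\le N-1$.
\item The bilinear force terms are bounded, via Hölder and Sobolev, by
\[
\|E\|_{W^{k,\infty}}\,\|f\|_{L_v^2(\dot H^1\cap\dot H^{N})}\lesssim\sigma\|E\|_{H^N}
\]
when $k$ derivatives fall on $E$ (using $N\ge3$). When the derivatives fall on $f$, they are bounded by $\|E\|_{L^\infty}\|\nabla^kf\|$, which can be absorbed or bounded by $\sigma\|E\|_{H^N}$.
\item The collision term is handled by Proposition~\ref{prop2.3} and Lemma~\ref{L3.3}, in the form
\[
\sigma\Big(\sum_\ell\|\nabla^\ell f\|^2+\|\nabla^\ell\{\mathbf{I}-\mathbf{P}\}f\|_\nu^2\Big).
\]
\end{itemize}

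The delicate point is that the collision contributions do not appear on the right-hand side of \eqref{G3.22} as stated. I would either absorb them after Young's inequality into $\kappa\|\nabla^{k+1}\mathbf{P}f\|^2$ plus micro terms, or note that they are absorbed later when \eqref{G3.22} is combined with \eqref{G3.10}. I would also bound squared source terms such as $\sigma^2\|E\|_{H^N}^2$ simply by $\sup_t\|E(t)\|_{H^N}^2$.
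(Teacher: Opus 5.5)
Your Steps 1--2 are the standard argument from Guo's Lemma~6.1, which the paper simply invokes. The overall structure also matches the paper: the macroscopic system for $(a,b,c)$, the interaction functional $\mathfrak{E}_k$, and sources tested against Gaussian weights.

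\textbf{The gap: the collision source.} Bounding $\|\langle\nabla^k\Gamma(f,f),\zeta\rangle\|_{L^2}^2$ through Lemma~\ref{L3.3} leaves
$\sigma\big(\sum_{2\le\ell\le N}\|\nabla^\ell f\|_{L^2_{x,v}}^2+\sum_{1\le\ell\le N}\|\nabla^\ell\{\mathbf{I}-\mathbf{P}\}f\|_\nu^2\big)$.
This contains $\|\nabla^\ell\mathbf{P}f\|_{L^2_{x,v}}^2$ for every $\ell\neq k+1$, and $\nu$-weighted micro norms at all levels. Homogeneous norms at different levels are not comparable, so none of these terms can be absorbed into $\lambda_5\|\nabla^{k+1}\mathbf{P}f\|_{L^2_{x,v}}^2$ or bounded by the right-hand side of \eqref{G3.22}. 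Consequently:
\begin{itemize}
\item your first option does not follow from the bound you actually derived;
\item your second option proves a weaker inequality, which closes only after summing over $k$ and adding \eqref{G3.10}. That suffices for Corollary~\ref{cor3.5}, but it is not Lemma~\ref{L3.4}.
\end{itemize}

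\textbf{What is needed.} The missing ingredient is the single-level estimate \eqref{G3.24}, namely $\|\langle\nabla^k\Gamma(f,f),\zeta\rangle\|_{L^2}^2\lesssim\sigma\|\nabla^{k+1}f\|_{L^2_{x,v}}^2$.
\begin{itemize}
\item Since $\zeta$ is Gaussian, Proposition~\ref{prop2.3} reduces this to bounding $\| |\nabla^jf|_2\,|\nabla^{k-j}f|_2 \|_{L^2}$ for $0\le j\le k$. By scaling, the factor multiplying $\|\nabla^{k+1}f\|_{L^2_{x,v}}$ must be the critical norm $\|f\|_{L^2_v(\dot B^{1/2}_{2,\infty})}$.
\item For $j\in\{0,k\}$, use \eqref{newinterpolation} with $s_1=\frac12$, $s_2=1$, $q_1=\infty$, $q_2=2$, as in the remark preceding Lemma~\ref{L4.3}. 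Putting $|f|_2$ in $L^\infty_x$ instead leaves $\sigma\|\nabla^k f\|$. For $k=1$ this is $\|\nabla\mathbf{P}f\|$, which is dissipated nowhere in the scheme.
\item For $1\le j\le k-1$, use H\"older with $\dot H^{3/4}\hookrightarrow L^4$. Then interpolate both factors between $\dot B^{1/2}_{2,\infty}$ and $\dot H^{k+1}$; the exponents sum to one.
\end{itemize}
Finally, $\sigma\|\nabla^{k+1}f\|^2\le\sigma\|\nabla^{k+1}\mathbf{P}f\|^2+\sigma\|\nabla^{k+1}\{\mathbf{I}-\mathbf{P}\}f\|^2$ is absorbed for small $\sigma$, as in the paper.

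\textbf{A smaller slip in the force terms.} When all $k$ derivatives fall on $E$, the pairing $\|E\|_{W^{k,\infty}}\|f\|$ requires $f\in L^2_x$, which is not controlled here. Moreover, $\|\nabla^{N-1}E\|_{L^\infty}$ is not bounded by $\|E\|_{H^N}$. Use $\|\nabla^kE\|_{L^2}\|f\|_{L^2_v(L^\infty)}\lesssim\sigma\|E\|_{H^N}$ instead.
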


\begin{proof}
Using a similar argument as presented in \cite[Lemma 6.1]{Gy-CPAM-2006}, we can deduce that
\begin{align}\label{G3.23}
&\frac{{\rm d}}{{\rm d}t} \mathfrak{E}_{k}(t)+\lambda_{6} \|\nabla^{k+1} \mathbf{P}f\|_{L_{x,v}^2}^2 \nonumber\\
&\quad\lesssim   \|\nabla^k \{\mathbf{I}-\mathbf{P}\}f \|_{L_{x,v}^2}^2+\|\nabla^{k+1} \{\mathbf{I}-\mathbf{P}\}f \|_{L_{x,v}^2}^2+\|\langle\nabla^k\Gamma(f,f),\zeta\rangle\|_{L^2}^2+\|\langle\nabla^k E\cdot v\sqrt{M},\zeta\rangle\|_{L^2}^2\nonumber\\
&\quad\quad +\|\langle\nabla^k (E\cdot vf),\zeta\rangle\|_{L^2}^2+\|\langle\nabla^k (E\cdot \nabla_vf),\zeta\rangle\|_{L^2}^2\nonumber\\
&\quad\equiv:\|\nabla^k \{\mathbf{I}-\mathbf{P}\}f \|_{L_{x,v}^2}^2+\|\nabla^{k+1} \{\mathbf{I}-\mathbf{P}\}f \|_{L_{x,v}^2}^2+\sum_{j=9}^{12}I_{j},
\end{align}
for some constant $\lambda_{6}>0$.

Since the estimate of $I_9$ is the same as in \cite[Lemma 3.4]{DLN-2026}, we can consequently obtain
\begin{align}\label{G3.24}
I_9\lesssim \sigma \|\nabla^{k+1} f\|_{L_{x,v}^2}^2, 
\end{align}
for any $k=1,2,3,\dots,N-1$.
Notice that $\zeta$ decays exponentially with respect to $v$ and can absorb any $v$-weighted term and the derivative $\nabla_v$. We directly compute that
\begin{align}\label{G3.25}
I_{10}+I_{11}+I_{12}\lesssim &\,    \sup_{0\leq t\leq T_1} \|E\|_{\dot H^k}^2 +  \sup_{0\leq t\leq T_1} \|E\cdot f\|_{L_{v}^2(\dot H^k)}^2 \nonumber\\
\lesssim&\,  \sup_{0\leq t\leq T_1} \|E\|_{\dot H^k}^2  +\sup_{0\leq t\leq T_1} \|  E\|_{  \dot H^{1}\cap \dot H^k}^2 \|f\|_{L_v^2(\dot H^1\cap\dot H^k)}^2\nonumber\\
\lesssim&\, \sup_{0\leq t\leq T_1} {\|E\|_{  H^N}^2}, 
\end{align}
  for any $k
=1,2,3,\dots,N-1$.
Substituting the estimates \eqref{G3.24}--\eqref{G3.25} into \eqref{G3.23}, we reach \eqref{G3.22}.
\end{proof}

With Lemmas \ref{L3.2}, \ref{L3.3} and \ref{L3.4} at our disposal, we establish the estimate of $f(t,x,v)$ in the $L_v^2(\dot H^1\cap \dot H^N)$-norm below.

\begin{cor}\label{cor3.5}
For strong solutions of the problem \eqref{G1.5}, it holds that
\begin{align} \label{G3.26}
\|f(t)\|_{L_v^2(\dot H^1\cap \dot H^N)}\lesssim \sup_{0\leq t\leq T_1}\Big\{\|E(t)\|_{  H^N }+\|f_{L}(t)\|_{L_v^2(\dot B_{2,\infty}^{\frac{1}{2}})} \Big\}+\|f_0\|_{L_v^2(\dot H^1\cap \dot H^N)},
\end{align}
for any $0\leq t \leq T_1$.
\end{cor}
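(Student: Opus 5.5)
The plan is to combine the energy inequality \eqref{G3.10} of Lemma \ref{L3.2} with the macroscopic dissipation functional of Lemma \ref{L3.4}, and then close by a Gronwall-type argument. The low-frequency piece $f_L$ will supply the missing lowest-order control through \eqref{G3.3}.

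First, I insert Lemma \ref{L3.3} into \eqref{G3.10}. Since $\sigma$ is small, the term $\sigma\sum_{1\le \ell\le N}\|\nabla^\ell\{\mathbf{I}-\mathbf{P}\}f\|_\nu^2$ is absorbed into the left-hand side. What remains on the right is $(\sigma+\kappa)\sum_{2\le\ell\le N}\|\nabla^\ell f\|^2_{L^2_{x,v}}$ plus $\sup\|E\|_{H^N}^2$. Writing $\|\nabla^\ell f\|\le \|\nabla^\ell\mathbf{P}f\|+\|\nabla^\ell\{\mathbf{I}-\mathbf{P}\}f\|_\nu$ reduces the bad terms to $(\sigma+\kappa)\sum_{2\le\ell\le N}\|\nabla^\ell\mathbf{P}f\|^2$.

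Next, I form the Lyapunov functional
\[
\mathcal{H}(t)=\|f(t)\|^2_{L^2_v(\dot H^1\cap\dot H^N)}+\eta\sum_{k=1}^{N-1}\mathfrak{E}_k(t),
\]
with $\eta>0$ small. By \eqref{G3.21}, $\mathcal{H}\sim\|f\|^2_{L^2_v(\dot H^1\cap\dot H^N)}$. Adding $\eta$ times \eqref{G3.22} over $k=1,\dots,N-1$ produces the dissipation $\eta\lambda_5\sum_{2\le\ell\le N}\|\nabla^\ell\mathbf{P}f\|^2$. The error terms from \eqref{G3.22} are $\eta\sum_{1\le\ell\le N}\|\nabla^\ell\{\mathbf{I}-\mathbf{P}\}f\|^2$, which $\lambda_1$ absorbs once $\eta$ is small. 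Then $\kappa,\sigma\ll\eta$ lets the $\mathbf{P}$-terms be absorbed. The result is
\[
\frac{{\rm d}}{{\rm d}t}\mathcal{H}+\lambda\Big(\sum_{2\le\ell\le N}\|\nabla^\ell f\|^2+\|\nabla\{\mathbf{I}-\mathbf{P}\}f\|_\nu^2\Big)\lesssim\sup\|E\|_{H^N}^2 .
\]

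The main obstacle is that the dissipation does not contain $\|\nabla\mathbf{P}f\|^2$, so it does not dominate $\mathcal{H}$. To handle this, I add $\lambda\|\nabla \mathbf{P} f\|^2$ to both sides and use the frequency splitting $f=f_L+f_H$. For the high part, $\|\nabla f_H\|\lesssim 2^{-j_0}\|\nabla^2 f\|$ is controlled by the dissipation. For the low part, interpolation on frequencies $\le 2^{j_0}$ gives $\|\nabla f_L\|_{L^2_{x,v}}\lesssim 2^{j_0/2}\|f_L\|_{L^2_v(\dot B^{1/2}_{2,\infty})}$. Indeed, $\sum_{j<j_0}2^{2j}\|\dot\Delta_j f\|^2\le\sum_{j<j_0}2^{j}\sup_j(2^{j/2}\|\dot\Delta_jf\|)^2$ converges. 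This gives
\[
\frac{{\rm d}}{{\rm d}t}\mathcal{H}+\lambda'\mathcal{H}\lesssim\sup_{0\le t\le T_1}\big(\|E\|^2_{H^N}+\|f_L\|^2_{L^2_v(\dot B^{1/2}_{2,\infty})}\big).
\]
Gronwall's inequality then yields $\mathcal{H}(t)\le e^{-\lambda' t}\mathcal{H}(0)+C\sup(\cdots)$. Taking square roots and using $\mathcal{H}\sim\|f\|^2_{L^2_v(\dot H^1\cap\dot H^N)}$ gives \eqref{G3.26}.

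One point to check is the $\nabla^N\mathbf{P}$ term. It is dissipated by $\mathfrak{E}_{N-1}$, which involves $\nabla^N f$, and \eqref{G3.21} with $k=N-1$ stays within $\dot H^1\cap\dot H^N$, so it is fine.
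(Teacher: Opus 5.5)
Your proposal is correct and follows the paper's route: combine \eqref{G3.10}, \eqref{G3.19} and a small multiple of \eqref{G3.22}, use \eqref{G3.21} for the equivalence of the functional, add the low-frequency term, and close with Gr\"onwall's inequality. Your frequency-splitting bound $\|\nabla\mathbf{P}f\|_{L^2_{x,v}}^2\lesssim 2^{j_0}\|f_L\|_{L^2_v(\dot B^{1/2}_{2,\infty})}^2+2^{-2j_0}\|\nabla^2 f\|_{L^2_{x,v}}^2$ makes explicit the step the paper compresses into ``adding $\|f_L(t)\|^2_{L_v^2(\dot B_{2,\infty}^{1/2})}$ to both sides of \eqref{G3.27}.''
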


\begin{proof}
The sum of \eqref{G3.10}, \eqref{G3.19}, and $\tau_1\times$ \eqref{G3.22} leads to 
\begin{align}\label{G3.27}
&\frac{{\rm d}}{{\rm d}t} \Big( \|f(t)\|_{L_v^2(\dot H^1\cap \dot H^N)}^2+\tau_1 \sum_{1\leq k\leq N-1}\mathfrak{E}_{k}(t)        \Big)\nonumber\\
&\quad+\lambda_{7}    \Big(\sum_{2\leq\ell\leq N}\|\nabla^\ell\mathbf{P}f\|_{L_{x,v}^2}^2+\sum_{1\leq
\ell\leq N}\|\nabla^\ell \{\mathbf{I}-\mathbf{P}\}f\|_{\nu}^2         \Big) \lesssim \sup_{0\leq t\leq T_1}\|E(t)\|_{  H^N}^2,
\end{align}
for some constant $\lambda_{7}>0$.  
Here, $\tau_1>0$ is a small enough constant chosen such that 
\begin{equation}\label{add.Esim}
 \|f(t)\|_{L_v^2(\dot H^1\cap \dot H^N)}^2+\tau_1 \sum_{1\leq k\leq N-1}\mathfrak{E}_{k}(t)\backsim \|f(t)\|_{L_v^2(\dot H^1\cap \dot H^N)}^2,
\end{equation}
owing to \eqref{G3.21}. Adding the term $\|f_{L}(t)\|_{L_v^2(\dot B_{2,\infty}^{\frac{1}{2}})}^2$ to both sides of \eqref{G3.27}, we further obtain the following Lyapunov inequality:
\begin{align*}
&\frac{{\rm d}}{{\rm d}t} \Big( \|f(t)\|_{L_v^2(\dot H^1\cap \dot H^N)}^2+\tau_1 \sum_{1\leq k\leq N-1}\mathfrak{E}_{k}(t)        \Big)\\
&\quad+\lambda_{8}\|f(t)\|_{L_v^2(\dot H^1\cap \dot H^N)}^2   \lesssim \sup_{0\leq t\leq T_1}\Big\{\|E(t)\|_{H^N}^2+\|f_{L}(t)\|_{L_v^2(\dot B_{2,\infty}^{\frac{1}{2}})}^2 \Big\}, 
\end{align*}
for some constant $\lambda_{8}>0$. In terms of \eqref{add.Esim}, according to Gr\"{o}nwall’s inequality, it then follows that there is a constant $\lambda_{9}>0$ such that
\begin{align*}
 \|f(t)\|_{L_v^2(\dot H^1\cap \dot H^N)}\lesssim &\, \sup_{0\leq t\leq T_1}\Big\{\|E(t)\|_{ H^N } +\|f_{L}(t)\|_{L_v^2(\dot B_{2,\infty}^{\frac{1}{2}})}  \Big\}\Big(\int_0^t e^{-\frac{\lambda_{9} \tau}{2} }{\rm d}\tau \Big)^\frac{1}{2}\nonumber\\
 &+e^{-\frac{\lambda_{9} t}{2}  } \|f_0\|_{L_v^2(\dot H^1\cap \dot H^N)}\nonumber\\
 \lesssim&\, \sup_{0\leq t\leq T_1}\Big\{\|E(t)\|_{H^N} +\|f_{L}(t)\|_{L_v^2(\dot B_{2,\infty}^{\frac{1}{2}})}  \Big\}+\|f_0\|_{L_v^2(\dot H^1\cap \dot H^N)}.
\end{align*}
This proves \eqref{G3.26}. Thus, we complete the proof of Corollary \ref{cor3.5}.
\end{proof}

\subsection{A priori estimates for the microscopic part}
Note the inequality 
\begin{align*}
\|\langle v\rangle \mathbf{P}f\|_{L_{x,v}^2}+\|\nabla_v \mathbf{P}f\|_{L_{x,v}^2}\lesssim \|f\|_{L_{x,v}^2}.
\end{align*}
In order to obtain the estimates of $\langle v\rangle f $ and $\nabla_v f $, we only concentrate on the estimates for
\begin{center}
$\|\nu\{\mathbf{I}-\mathbf{P}\} f\|_{L_v^2(\dot H^1\cap \dot H^{N - 1})}$, $\sum\limits_{\substack{ 1\leq |\beta|\leq N \\|\alpha|+|\beta| \leq N}}\|\partial^\alpha_{x}\partial^\beta_{v}\{\mathbf{I}-\mathbf{P}\}f\|_{L_v^2 (L^2)}$.
\end{center}

\begin{lem}\label{L3.6}
For strong solutions of the problem   \eqref{G1.5}, there exists a positive constant $\lambda_{10}>0$ such that
\begin{align} \label{G3.28}
&\frac{{\rm d}}{{\rm d}t}\|\nu\{\mathbf{I}-\mathbf{P}\}f(t) \|_{L_v^2(\dot H^1\cap\dot H^{N-1})}^2+ \lambda_{10} \|\nu^{\frac{3}{2}}\{\mathbf{I}-\mathbf{P}\}f(t)\|_{L_v^2(\dot H^1\cap\dot H^{N-1})}^2 \nonumber\\
\lesssim&\,  \|f(t)\|_{L_v^2(\dot H^2\cap\dot H^N)}^2+\sigma \|\nu^{\frac{1}{2}}\nabla_v\{\mathbf{I}-\mathbf{P}\}f(t)\|_{L_v^2(\dot H^1\cap \dot H^{N-1})}^2+\sup_{0\leq t\leq T_1}\|E(t)\|_{H^N}^2,
\end{align}
for any $0\leq t \leq T_1$.
\end{lem}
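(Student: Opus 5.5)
Apply $\{\mathbf{I}-\mathbf{P}\}$ to \eqref{G1.5}$_1$ and write the resulting equation for $g:=\{\mathbf{I}-\mathbf{P}\}f$:
\begin{align*}
\partial_t g+v\cdot\nabla_x g+\mathcal{L}g
=&\,\Gamma(f,f)+\mathbf{P}(v\cdot\nabla_x f)-v\cdot\nabla_x\mathbf{P}f\\
&+\{\mathbf{I}-\mathbf{P}\}\Big[-E\cdot\nabla_v f+\tfrac12E\cdot vf+E\cdot v\sqrt{M}\Big],
\end{align*}
using $\mathcal{L}\mathbf{P}f=0$ and $\{\mathbf{I}-\mathbf{P}\}\Gamma=\Gamma$. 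For each $1\leq|\alpha|=k\leq N-1$, apply $\partial^\alpha$, take the $L^2_{x,v}$ inner product with $\nu^2\partial^\alpha g$, and sum over $k$.

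The transport term vanishes because $\nu$ depends only on $v$. For the linear term we use \eqref{G2.3} with $l=1$ and $\beta=0$. This gives the lower bound $\frac12\|\nu^{3/2}\partial^\alpha g\|_{L^2_{x,v}}^2-C\|\partial^\alpha g\|_\nu^2$. The last piece is bounded by $\|f\|^2_{L^2_v(\dot H^1\cap\dot H^{N-1})}$. That quantity is not directly on the right-hand side, so I plan to keep the $\dot H^1$ part and control it by interpolation with the $\dot H^2\cap\dot H^N$ norm. Alternatively, I will keep it as stated, since the statement presumably allows $\|f\|_{L^2_v(\dot H^2\cap \dot H^N)}$ together with the $\dot H^1$ piece absorbed elsewhere. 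I also expect to use the structure $\mathcal{L}=\nu-\mathcal{K}$ with $\mathcal{K}$ bounded against $\nu^{-1}$.

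The macro-micro commutator terms $\mathbf{P}(v\cdot\nabla\partial^\alpha f)-v\cdot\nabla\partial^\alpha\mathbf{P}f$ are smooth and rapidly decaying in $v$. By Cauchy--Schwarz they are bounded by $\eta\|\nu^{3/2}\partial^\alpha g\|^2+C\|\nabla^{k+1}f\|_{L^2_{x,v}}^2$, and the last norm lies in $L^2_v(\dot H^2\cap\dot H^N)$ for $1\leq k\leq N-1$. The source $E\cdot v\sqrt{M}$ is bounded by $\|E\|_{H^N}^2$ after Young's inequality.

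For the collision term, Proposition \ref{NJKP2.5} together with the Leibniz rule gives bounds of the form $\|\nu\partial^{\alpha_1}f\|\,\|\nu\partial^{\alpha_2}f\|_\nu\|\nu\partial^\alpha g\|_\nu$. The factor carrying fewer derivatives is put in $L^\infty_x$ and controlled by $\sigma$ via \eqref{G3.1}; note that $\langle v\rangle f$ lies in $L^2_v(L^\infty)$. The weighted factor $\nu^{3/2}\partial f$ is split into $\mathbf{P}f$, controlled by $\|f\|_{\dot H^1\cap\dot H^{N}}$, and $g$, which is absorbed into the dissipation since $\sigma$ is small.

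The main obstacle is the force terms. First consider $\frac12 E\cdot v f$ paired with $\nu^2\partial^\alpha g$. This produces a weight $\nu^{3}$ against $\nu^{3/2}\cdot\nu^{3/2}$. For the $g$ part, this is handled via $\|E\|_{L^\infty}\lesssim\delta$, giving $\delta\|\nu^{3/2}\partial^\alpha g\|^2$, which is absorbed. The $\mathbf{P}f$ part is harmless. Next consider $E\cdot\nabla_v f$. When all $x$-derivatives fall on $f$, integrate by parts in $v$: $\langle E\cdot\nabla_v\partial^\alpha g,\nu^2\partial^\alpha g\rangle=-\langle E\cdot\nabla_v(\nu^2)\partial^\alpha g,\partial^\alpha g\rangle/2$. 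Since $|\nabla_v\nu^2|\lesssim\nu$, this is $\lesssim\|E\|_{L^\infty}\|\nu^{1/2}\partial^\alpha g\|^2$ and is absorbable. When derivatives fall on $E$, Cauchy--Schwarz gives $\|\partial^{\alpha_1}E\|_{L^{p}}\|\nu^{1/2}\nabla_v\partial^{\alpha_2}g\|_{L^{q}}\|\nu^{3/2}\partial^\alpha g\|$. Using Sobolev embedding with $E\in H^N$ and $\|E\|\lesssim\delta\lesssim\sigma$, this yields the term $\sigma\|\nu^{1/2}\nabla_v g\|^2_{L^2_v(\dot H^1\cap\dot H^{N-1})}$ on the right side of \eqref{G3.28}. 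The top-order case $k=N-1$ with one $E$-derivative needs $\nabla^{N-1}E\in L^2$ paired with $\nabla_v g\in L^2_v(L^\infty)$, which is available.

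Choosing $\eta,\kappa$ small and summing over $k$ then gives \eqref{G3.28} with some $\lambda_{10}>0$.
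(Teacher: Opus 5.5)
Your route is the paper's. You apply $\{\mathbf{I}-\mathbf{P}\}$, test $\partial^\alpha$ of the micro equation against $\nu^2\partial^\alpha\{\mathbf{I}-\mathbf{P}\}f$, use \eqref{G2.3} with $l=1$ and Proposition~\ref{NJKP2.5} for $\Gamma$, apply Cauchy--Schwarz to the rapidly decaying commutators, and pay for the force terms with $\sigma\|\nu^{1/2}\nabla_v\{\mathbf{I}-\mathbf{P}\}f\|^2_{L^2_v(\dot H^1\cap\dot H^{N-1})}$. Integrating by parts in $v$ on the top-order $E\cdot\nabla_v$ piece is a harmless variant of the paper's direct Cauchy--Schwarz. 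The source $E\cdot v\sqrt M$ drops out, since $\{\mathbf{I}-\mathbf{P}\}(E\cdot v\sqrt M)=0$.

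The genuine gap is the remainder $C\|\partial^\alpha\{\mathbf{I}-\mathbf{P}\}f\|_\nu^2$ from \eqref{G2.3}. Because of the weight, it is not bounded by $\|f\|^2_{L^2_v(\dot H^1\cap\dot H^{N-1})}$ as you claim. You first need $\nu\le\eta\nu^3+C_\eta$ to split it into $\eta\|\nu^{3/2}\partial^\alpha\{\mathbf{I}-\mathbf{P}\}f\|^2_{L^2_{x,v}}+C_\eta\|\partial^\alpha f\|^2_{L^2_{x,v}}$. For $2\le|\alpha|\le N-1$ the last term is controlled by $\|f\|^2_{L^2_v(\dot H^2\cap\dot H^N)}$. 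For $|\alpha|=1$ it is $\|\nabla f\|^2_{L^2_{x,v}}$, and your proposed interpolation cannot control it. $\dot H^1$ lies below both $\dot H^2$ and $\dot H^N$, and nothing of lower order sits on the right of \eqref{G3.28} to interpolate against. Your fallback ``keep it as stated'' does not close this.

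The paper's own proof has exactly the same hole. The remainder in \eqref{G3.31} does not appear on the right of \eqref{G3.28} and is simply dropped when the estimates are assembled. What both arguments actually prove is \eqref{G3.28} with an extra term $C\|\nu^{1/2}\{\mathbf{I}-\mathbf{P}\}f\|^2_{L^2_v(\dot H^1\cap\dot H^{N-1})}$ on the right. This is the term carried in Corollary~\ref{cor3.9}; compare the analogous Lemma~\ref{L4.3}. It is harmless downstream: after multiplication by the small $\tau_2$ it is absorbed by the dissipation $\lambda_7\sum_{1\le\ell\le N}\|\nabla^\ell\{\mathbf{I}-\mathbf{P}\}f\|_\nu^2$ in \eqref{G3.27}. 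So state and prove the lemma in that corrected form rather than trying to remove the term.

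There is a smaller related issue in the collision term. Putting the undifferentiated factor in $L^\infty_x$ produces, at $|\alpha|=1$, a macro term $\sigma\|\nabla\mathbf{P}f\|^2_{L^2_{x,v}}$, which even the corrected right-hand side does not contain. The paper's bound for $J_6$ keeps this at the level $\sigma\|\nabla^2 f\|^2_{L^2_{x,v}}$. It does so by putting that factor in $L^2_v(\dot B^{1/2}_{2,\infty})$ and using \eqref{newinterpolation} with $s_1=\frac12$, $s_2=1$, $q_1=\infty$, $q_2=q=2$, that is, $\dot B^{1/2}_{2,\infty}\cdot\dot H^1\hookrightarrow L^2$.
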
 

\begin{proof}
Applying the operator $\{\mathbf{I}-\mathbf{P}\}$ to   \eqref{G1.5}$_1$ yields
\begin{align}\label{G3.29}
&\partial_t \{\mathbf{I}-\mathbf{P}\}f+v\cdot\nabla \{\mathbf{I}-\mathbf{P}\}f+E\cdot\nabla_v \{\mathbf{I}-\mathbf{P}\}f-\frac{1}{2}E\cdot v\{\mathbf{I}-\mathbf{P}\}f
+\mathcal{L}\{\mathbf{I}-\mathbf{P}\}f\nonumber\\
=&\,\Gamma(f,f)-v\cdot \nabla \mathbf{P}f+\mathbf{P}(v\cdot\nabla f)-E\cdot\nabla_v\mathbf{P}f+\frac{1}{2}E\cdot v\mathbf{P}f+ \mathbf{P}\Big[ \frac{1}{2}E\cdot v f-E\cdot\nabla_v f\Big],
\end{align}
where we have used the following facts:
\begin{align*}
\{\mathbf{I}-\mathbf{P}\} E\cdot v\sqrt{M}=0,\quad \{\mathbf{I}-\mathbf{P}\}\mathcal{L}f=\mathcal{L} \{\mathbf{I}-\mathbf{P}\}f.   
\end{align*}

Applying the operator $\partial^{\alpha}$ with $1\leq|\alpha|\leq N-1$ to \eqref{G3.29} and then taking the $L^2$ inner product of the resulting identity with $\nu^2\partial^\alpha \{\mathbf{I}-\mathbf{P}\}f$ leads to
\begin{align}\label{G3.30}
&\frac{1}{2}\frac{{\rm d}}{{\rm d}t}\|\nu\{\mathbf{I}-\mathbf{P}\}\partial^\alpha f\|_{L_{x,v}^2}^2+ \langle \nu^2 \mathcal{L}\{\mathbf{I}-\mathbf{P}\}\partial^\alpha f,  \{\mathbf{I}-\mathbf{P}\} \partial^\alpha f \rangle_{x,v} \nonumber\\
=&\,\langle \nu\partial^\alpha\Gamma(f,f), \nu \{\mathbf{I}-\mathbf{P}\}\partial^\alpha f \rangle_{x,v}+\big\langle \partial^\alpha\big[\mathbf{P}(v\cdot\nabla f)-v\cdot\nabla \mathbf{P}f\big],\nu^2 \{\mathbf{I}-\mathbf{P}\} \partial^\alpha f\rangle_{x,v}\nonumber\\
&+  \Big\langle  \partial^\alpha\mathbf{P}\Big[ \frac{1}{2}E\cdot vf-E\cdot\nabla_v f  \Big], \nu^2\{\mathbf{I}-\mathbf{P}\}\partial^\alpha f  \Big\rangle_{x,v}\nonumber\\
&+\Big\langle  \partial^\alpha \Big[ \frac{1}{2}E\cdot v\mathbf{P}f-E\cdot\nabla_v \mathbf{P}f  \Big], \nu^2\{\mathbf{I}-\mathbf{P}\}\partial^\alpha f  \Big\rangle_{x,v}\nonumber\\
&+\Big\langle  \partial^\alpha \Big[ \frac{1}{2}E\cdot v 
\{\mathbf{I}- \mathbf{P}\}f-E\cdot\nabla_v\{ \mathbf{I}- \mathbf{P} \}f  \Big], \nu^2\{\mathbf{I}-\mathbf{P}\}\partial^\alpha f  \Big\rangle_{x,v}\nonumber\\
\equiv:&\, \sum_{i=1}^5 J_{i}.
\end{align}
Thanks to \eqref{G2.3} in Proposition \ref{prop2.2}, we have
\begin{align}\label{G3.31}
 \langle \nu^2 \mathcal{L}\partial^{\alpha}\{\mathbf{I}-\mathbf{P}\}f
,\partial^{\alpha}\{\mathbf{I}-\mathbf{P}\}f \rangle_{x,v}\geq \frac{1}{2} \|\nu\partial^\alpha\{\mathbf{I}-\mathbf{P}\}f\|_{\nu}^2-C\|\partial^\alpha \{\mathbf{I}-\mathbf{P}\}f\|_{\nu}^2.
\end{align}
For the first term $J_1$, by using \eqref{NJKG2.6} in Proposition \ref{NJKP2.5}, H\"{o}lder's and Sobolev's inequalities and Lemma \ref{LA.1}, we arrive at
\begin{align*}
J_1\lesssim&\, \sum_{|\beta|\leq |\alpha|} \int_{\mathbb R^3_{x}}|\nu \partial^\beta f|_{2} |\nu \partial^{\alpha-\beta}f|_{\nu}  |\nu \partial^{\alpha}\{\mathbf{I}-\mathbf{P}\}f|_{\nu} {\rm d}x \nonumber\\
\lesssim&\, \sum_{|\beta|\leq |\alpha|}  \big\| \nu  \partial^\beta f|_{2} |\nu  \partial^{\alpha-\beta} f|_{\nu} \big\|_{L^2}^2+\frac{1}{9}\|\nu \partial^{\alpha}\{\mathbf{I}-\mathbf{P}\}f\|_{\nu}^2.   
\end{align*}
Owing to the macro-micro decomposition \eqref{G1.4}, one gets
\begin{align*}
 &\sum_{|\beta|\leq |\alpha|}  \big\| \nu   \partial^\beta f|_{2} |\nu  \partial^{\alpha-\beta} f|_{\nu} \big\|_{L^2}^2\nonumber\\
\lesssim&\, \sum_{|\beta|\leq |\alpha|}  \big\|  \big|\nu \partial^{\beta}f \big|_2 \big|  \partial^{\alpha-\beta}f \big|_{2} \big \|_{L^2}^2+\sum_{|\beta|\leq |\alpha|}\big\|  \big|\nu \partial^{\beta}f \big|_2 \big|  \nu \partial^{\alpha-\beta}\{\mathbf{I}-\mathbf{P}\}f \big|_{2} \big \|_{L^2}^2  \nonumber\\
\lesssim&\, \sum_{|\beta|\leq |\alpha|}  \big\|  \big| \partial^{\beta}f \big|_2 \big|  \partial^{\alpha-\beta}f \big|_{2} \big \|_{L^2}^2 + \sum_{|\beta|\leq |\alpha|}  \big\|  \big|\nu  \partial^{\beta}\{\mathbf{I}-\mathbf{P}\} f \big|_2 \big|  \partial^{\alpha-\beta}f \big|_{2} \big \|_{L^2}^2\nonumber\\
&+\sum_{|\beta|\leq |\alpha|}\big\|  \big|\nu \partial^{\beta}f \big|_2 \big|  \nu \partial^{\alpha-\beta}\{\mathbf{I}-\mathbf{P}\}f \big|_{2} \big \|_{L^2}^2  \nonumber\\
\equiv:&\, \sum_{i=6}^8J_{i}.
\end{align*}

Using a similar argument as in \cite[Lemma 3.3]{DLN-2026}, we can infer that
\begin{align*}
J_6\lesssim  \sigma \Big(\sum_{2\leq\ell\leq N-1} \|\nabla^{\ell} f\|_{L_{x,v}^2}^2+\sum_{1\leq \ell\leq N-1}\|\nabla^\ell
  \{\mathbf{I}-\mathbf{P}\}f\|_{\nu}^2            \Big).   
\end{align*}
For the term $J_8$, applying Lemma \ref{LA.1}, one has
\begin{align*}
J_8\lesssim &\, \|\nu f\|_{L_v^2(L^\infty)}^2 \|\nu \{\mathbf{I}-\mathbf{P}\}f\|_{L_v^2(\dot H^{N-1})}^2+  \| f\|_{L_v^2(\dot H^{N-1})}^2 \|\nu\{\mathbf{I}-\mathbf{P}\}f\|_{L_v^2(L^\infty)}^2 \nonumber\\
&+\|\nu  f\|_{L_v^2(\dot H^1\cap\dot H^{N-1})}^2  \|\nu\{\mathbf{I}-\mathbf{P}\}f\|_{L_v^2(\dot H^1\cap\dot H^{N-1})}^2 \nonumber\\
\lesssim&\,  \|\nu f\|_{L_v^2(\dot H^1\cap \dot H^2)}^2 \|\nu \{\mathbf{I}-\mathbf{P}\}f\|_{L_v^2(\dot H^{N-1})}^2+  \|\nu f\|_{L_v^2(\dot H^{N-1})}^2 \|\nu\{\mathbf{I}-\mathbf{P}\}f\|_{L_v^2(\dot H^1\cap\dot H^{2})}^2 \nonumber\\
&+\|\nu  f\|_{L_v^2(\dot H^1\cap\dot H^{N-1})}^2  \|\nu\{\mathbf{I}-\mathbf{P}\}f\|_{L_v^2(\dot H^1\cap\dot H^{N-1})}^2 \nonumber\\
\lesssim&\, \sigma \|\nu\{\mathbf{I}-\mathbf{P}\}f\|_{L_v^2(\dot H^1\cap\dot H^{N-1})}^2,
\end{align*}
which implies
\begin{align*}
J_7\lesssim J_8\lesssim  \sigma \|\nu\{\mathbf{I}-\mathbf{P}\}f\|_{L_v^2(\dot H^1\cap\dot H^{N-1})}^2.   
\end{align*}
Consequently, collecting the estimates $J_6$, $J_7$ and $J_8$ above, we have
\begin{align}\label{G3.32}
J_1\lesssim     \sigma  \sum_{2\leq\ell\leq N-1} \|\nabla^{\ell} f\|_{L_{x,v}^2}^2 +\Big( \sigma+\frac{1}{9}\Big) \|\nu^{\frac{3}{2}}\{\mathbf{I}-\mathbf{P}\}f\|_{L_v^2(\dot H^1\cap\dot H^{N-1})}^2.
\end{align}
Since $\langle v\rangle^k\sqrt{M}\lesssim 1$ for any $k\geq0$, by direct calculation, we have
\begin{align}\label{G3.33}
J_2\leq       C\|f\|_{L_v^2(\dot H^2\cap\dot H^{N})}^2 +\frac{1}{9}\|\nu \partial^{\alpha}\{\mathbf{I}-\mathbf{P}\}f\|_{\nu}^2.
\end{align}
Similarly, we also derive
\begin{align}\label{G3.34}
J_3+J_4\leq&\,   C\sup_{0\leq t\leq T_1}\|Ef\|_{L_v^2(\dot H^1\cap\dot H^{N-1})}^2 +\frac{1}{9}\|\nu \partial^{\alpha}\{\mathbf{I}-\mathbf{P}\}f\|_{\nu}^2  \nonumber\\
\leq&\,C\sup_{0\leq t\leq T_1}\|E\|_{\dot H^1\cap \dot H^{N-1}}^2 \|f\|_{L_v^2(\dot H^1\cap \dot H^{N-1})}^2+\frac{1}{9}\|\nu \partial^{\alpha}\{\mathbf{I}-\mathbf{P}\}f\|_{\nu}^2 \nonumber\\
\leq &\, C  \sigma\sup_{0\leq t\leq T_1} \|E\|_{H^N}^2+\frac{1}{9}\|\nu \partial^{\alpha}\{\mathbf{I}-\mathbf{P}\}f\|_{\nu}^2 ,
\end{align}
and 
\begin{align}\label{G3.35}
 J_5\leq&\,   C\|E\cdot {\nu}^{\frac{3}{2}}\{\mathbf{I}-\mathbf{P}\}f\|_{L_v^2(\dot H^1\cap \dot H^{N-1})}^2+ C\|E\cdot {\nu}^{\frac{1}{2}} \nabla_v\{\mathbf{I}-\mathbf{P}\}f\|_{L_v^2(\dot H^1\cap \dot H^{N-1})}^2\nonumber\\
& +\frac{1}{9}\|\nu \partial^{\alpha}\{\mathbf{I}-\mathbf{P}\}f\|_{\nu}^2 \nonumber\\
 \leq&\, C\|E\|_{L_v^2(\dot H^1\cap H^{N-1})}^2 \Big( \|\nu^{\frac{3}{2}}\{\mathbf{I}-\mathbf{P}\}f\|_{L_v^2(\dot H^1\cap H^{N-1})}^2 +\|\nu^{\frac{1}{2}}\nabla_v\{\mathbf{I}-\mathbf{P}\}f\|_{L_v^2(\dot H^1\cap \dot H^{N-1})}^2\Big) \nonumber\\
 &+\frac{1}{9}\|\nu \partial^{\alpha}\{\mathbf{I}-\mathbf{P}\}f\|_{\nu}^2\nonumber\\
 \leq&\,  C  \Big( \sigma+\frac{1}{9}\Big) \|\nu^\frac{3}{2}\{\mathbf{I}-\mathbf{P}\}f\|_{L_v^2(\dot H^1\cap\dot H^{N-1})}^2+\sigma \|\nu^{\frac{1}{2}}\nabla_v\{\mathbf{I}-\mathbf{P}\}f\|_{L_v^2(\dot H^1\cap \dot H^{N-1})}^2.
\end{align}
Putting the estimates \eqref{G3.31}, \eqref{G3.32}, \eqref{G3.33}, \eqref{G3.34} and \eqref{G3.35} into \eqref{G3.30}, we end up with \eqref{G3.28}.
\end{proof}

\begin{lem}\label{L3.7}
For strong solutions of the problem   \eqref{G1.5}, there exists a positive constant $\lambda_{11}>0$ such that
\begin{align} \label{G3.36}
&\frac{{\rm d}}{{\rm d}t} \sum_{\substack{ 1\leq |\beta|\leq N \\|\alpha|+|\beta| \leq N}}\|\partial^\alpha_{x}\partial^\beta_{v}\{\mathbf{I}-\mathbf{P}\}f(t) \|_{L_{x,v}^2 }^2+ \lambda_{11}\sum_{\substack{ 1\leq |\beta|\leq N \\|\alpha|+|\beta| \leq N}} \| \partial^{\alpha}_{x}\partial^\beta_{v}\{\mathbf{I}-\mathbf{P}\}f(t)\|_{\nu}^2 \nonumber\\
\lesssim&\, \|\nu^{\frac{1}{2}}\{\mathbf{I}-\mathbf{P}\}f(t)\|_{L_v^2(   H^{N-1})}^2+ \|f(t)\|_{L_v^2(\dot H^{\frac{3}{4}}\cap\dot H^N)}^2,
\end{align}
for any $0\leq t \leq T_1$.
\end{lem}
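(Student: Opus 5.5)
We take mixed derivatives of the microscopic equation \eqref{G3.29}, the standard strategy of \cite{Gy-IM-2003,Gy-CPAM-2006}. Fix $\alpha,\beta$ with $1\le|\beta|\le N$ and $|\alpha|+|\beta|\le N$. Apply $\partial^\alpha_x\partial^\beta_v$ to \eqref{G3.29} and take the $L^2_{x,v}$ inner product with $\partial^\alpha_x\partial^\beta_v\{\mathbf{I}-\mathbf{P}\}f$.

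\emph{Linear terms.} The transport term $v\cdot\nabla$ does not commute with $\partial_v$. Its commutator is $\sum_{|\beta_1|=1}\partial_{x}^{\alpha+\beta_1}\partial_v^{\beta-\beta_1}\{\mathbf{I}-\mathbf{P}\}f$, which has the same total order but one fewer $v$-derivative. Writing $\mathcal L=\nu-\mathcal K$, the term $\partial_v^\beta(\nu g)$ produces the good term $\|\partial^\alpha_x\partial^\beta_v\{\mathbf{I}-\mathbf{P}\}f\|_\nu^2$. Since derivatives of $\nu$ are bounded, it also produces lower-order terms with fewer $v$-derivatives. For $\partial^\beta_v\mathcal K$ we use \cite[Lemma 2.2]{Gy-CPAM-2002}, which bounds it by $\eta\sum_{|\beta'|=|\beta|}\|\partial^{\alpha}_x\partial^{\beta'}_v\{\mathbf{I}-\mathbf{P}\}f\|^2+C_\eta\|\partial^\alpha\{\mathbf{I}-\mathbf{P}\}f\|^2$. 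Summing over $|\beta|=m$ with weights $C_m$ that decrease rapidly in $m$ (induction on $|\beta|$) absorbs all commutator and lower-order terms. The leftover terms are those with $\beta=0$, namely $\|\nu^{1/2}\partial^\alpha\{\mathbf{I}-\mathbf{P}\}f\|^2$ with $|\alpha|\le N-1$, which gives the first term on the right of \eqref{G3.36}. The sources $v\cdot\nabla\mathbf{P}f$ and $\mathbf{P}(v\cdot\nabla f)$ become smooth Maxwellian-decaying functions of $v$ after differentiation in $v$. They are therefore bounded by $\|\nabla^{|\alpha|+1}f\|^2_{L^2_{x,v}}$ with $1\le|\alpha|+1\le N$, hence by $\|f\|^2_{L_v^2(\dot H^{1}\cap\dot H^N)}\lesssim\|f\|^2_{L_v^2(\dot H^{3/4}\cap\dot H^N)}$ after interpolation.

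\emph{Force terms.} The top-order part of $E\cdot\nabla_v\{\mathbf{I}-\mathbf{P}\}f$ vanishes after integration by parts in $v$, because $E$ is independent of $v$. For $\frac12E\cdot v\{\mathbf{I}-\mathbf{P}\}f$, the top part is bounded by $\|E\|_{L^\infty}\|\partial^\alpha_x\partial^\beta_v\{\mathbf{I}-\mathbf{P}\}f\|_\nu^2\lesssim\delta_0\|\cdot\|_\nu^2$ and is absorbed. The remaining force terms have derivatives of $E$ (bounded in $L^\infty$, or in $L^3$, $L^6$ via $\|E\|_{H^N}$), or fewer $v$-derivatives. Handling them with H\"older and Sobolev gives small multiples of dissipation norms plus the lower-order pieces already discussed. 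The terms involving $\mathbf{P}f$ and $\mathbf{P}[\cdots]$ are Maxwellian-decaying. They are bounded by $\sigma\|f\|^2_{L^2_v(\dot H^1\cap\dot H^N)}$ when $|\alpha|\ge1$, and by a zeroth-order quantity when $\alpha=0$.

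\emph{Main obstacle: the collision term.} The expected difficulty is $\partial^\alpha_x\partial^\beta_v\Gamma(f,f)$, estimated with Proposition \ref{prop2.4}, when no $x$-derivative falls on either factor, i.e.\ the term $\int|f|_2|f|_\nu|\partial^\beta_v\{\mathbf{I}-\mathbf{P}\}f|_\nu\,{\rm d}x$ with $\alpha=0$. Here $L^2_x$ control of $f$ is not available; we only have $\dot B^{1/2}_{2,\infty}\cap\dot H^N$. We will bound it by $\kappa\|\partial_v^\beta\{\mathbf{I}-\mathbf{P}\}f\|_\nu^2+C\|\,|f|_2|\nu^{1/2} f|_2\|_{L^2_x}^2$. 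By H\"older the last term is at most $\|f\|_{L^2_v(L^4)}^2\|\nu^{1/2}f\|_{L^2_v(L^4)}^2$. Splitting $f=\mathbf{P}f+\{\mathbf{I}-\mathbf{P}\}f$ and using $\dot H^{3/4}\hookrightarrow L^4$ (Lemma \ref{LA.2}), together with the smallness \eqref{G3.1} for one factor, this becomes $\sigma(\|f\|^2_{L^2_v(\dot H^{3/4})}+\|\nu^{1/2}\{\mathbf{I}-\mathbf{P}\}f\|^2_{L^2_v(H^{1})})$. Here $\dot H^{3/4}$ lies between $\dot B^{1/2}_{2,\infty}$ and $\dot H^N$, so it is a legitimate norm, and the $\nu$-weighted micro piece is covered by the first term on the right of \eqref{G3.36}. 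The general case, with derivatives split between the factors, is handled as in Lemma \ref{L3.3}: put the lower-order factor in $L^\infty_x$ or $L^3_x$ and the other in $L^2_x$ or $L^6_x$. This gives $\sigma$ times the dissipation plus $\|f\|^2_{L^2_v(\dot H^{3/4}\cap\dot H^N)}$.

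Summing over all $\alpha,\beta$ with the chosen weights and taking $\kappa,\sigma,\delta_0$ small then yields \eqref{G3.36}.
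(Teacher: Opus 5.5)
Your proposal is correct and follows essentially the same route as the paper's proof of Lemma~\ref{L3.7}. Both apply $\partial^\alpha_x\partial^\beta_v$ to \eqref{G3.29}, use the coercivity of $\mathcal{L}$ up to lower-order remainders, bound $\partial^\alpha_x\partial^\beta_v\Gamma(f,f)$ through Proposition~\ref{prop2.4} with the embedding $\dot H^{3/4}\hookrightarrow L^4$ for the underived $\mathbf{P}f$ products, and treat the force terms by the $v$-integration-by-parts cancellation plus H\"older--Sobolev.

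You are in fact more careful than the paper about the commutator $[\partial^\beta_v, v\cdot\nabla_x]$, which does not appear in \eqref{G3.37}. Your weighted summation over $|\beta|$ is the right fix, though it yields \eqref{G3.36} with $\frac{{\rm d}}{{\rm d}t}$ of an equivalent weighted functional rather than the unweighted sum; that is all Corollary~\ref{cor3.9} needs. The one point to state precisely is the $\alpha=0$ force terms, where you say only ``a zeroth-order quantity''. They must be bounded as $\|E\|_{L^3}\|f\|_{L^2_v(L^6)}\lesssim\|E\|_{\dot H^{1/2}}\|f\|_{L^2_v(\dot H^1)}$, and never through $\|\mathbf{P}f\|_{L^2_{x,v}}$, which is not controlled in this framework.
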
 

\begin{proof}
We apply a similar method used in \cite[Theorem 3.1]{Gy-CPAM-2002}. We first fix $k$ such that $1\leq k \leq N$ and choose $|\alpha|+|\beta|\leq N$. By applying the derivative $\partial_{x}^{\alpha}\partial^{\beta}_v$  to \eqref{G3.29}, multiplying the result by $\partial^{\alpha}_{x}\partial^{\beta}_v\{\mathbf{I}-\mathbf{P}\}f$, and taking the integration of the resulting identity, we finally arrive at
\begin{align}\label{G3.37}
&\frac{1}{2}\frac{{\rm d}}{{\rm d}t} \|\partial^\alpha_{x}\partial^\beta_{v}\{\mathbf{I}-\mathbf{P}\}f\|_{L_{x,v}^2}^2+  \langle \partial^\alpha_{x} \partial^\beta_{v} \mathcal{L}\{\mathbf{I}-\mathbf{P}\}f, \partial^\alpha_{x} \partial^\beta_{v}  \{\mathbf{I}-\mathbf{P}\}f   \rangle _{x,v} \nonumber\\
=&\, \langle  \partial^\alpha_{x} \partial^\beta_{v} \Gamma(f,f),  \partial^\alpha_{x} \partial^\beta_{v} \{\mathbf{I-\mathbf{P}}\}f \rangle_{x,v}+\big\langle \partial^\alpha_{x}\partial^\beta_{v}\big[\mathbf{P}(v\cdot\nabla f)-v\cdot\nabla \mathbf{P}f\big],\partial^\alpha_{x}\partial^\beta_{v} \{\mathbf{I}-\mathbf{P}\} f\rangle_{x,v} \nonumber\\
 &+  \Big\langle  \partial^\alpha_{x}\partial^\beta_{v}\Big[\mathbf{P}\Big( \frac{1}{2}E\cdot vf-E\cdot\nabla_v f  \Big)\Big], \partial^\alpha_{x}\partial^\beta_{v}\{\mathbf{I}-\mathbf{P}\} f  \Big\rangle_{x,v}\nonumber\\
&+\Big\langle \partial^\alpha_{x}\partial^\beta_{v}\Big[ \frac{1}{2}E\cdot v\mathbf{P}f-E\cdot\nabla_v \mathbf{P}f  \Big], \partial^\alpha_{x}\partial^\beta_{v}\{\mathbf{I}-\mathbf{P}\}  f  \Big\rangle_{x,v}\nonumber\\
&+\Big\langle  \partial^\alpha_{x}\partial^\beta_{v} \Big[ \frac{1}{2}E\cdot v 
\{\mathbf{I}- \mathbf{P}\}f-E\cdot\nabla_v\{ \mathbf{I}- \mathbf{P} \}f  \Big],  \partial^\alpha_{x}\partial^\beta_{v}\{\mathbf{I}-\mathbf{P}\} f  \Big\rangle_{x,v} \nonumber\\
\equiv:&\, \sum_{i=9}^{13}J_{i}.
\end{align}
Due to \eqref{G2.3} in Proposition \ref{prop2.2}, we reach
\begin{align}\label{G3.38}
 \langle  \partial^\alpha_{x}\partial^\beta_{v}\mathcal{L} \{\mathbf{I}-\mathbf{P}\}f
,\partial^\alpha_{x}\partial^\beta_{v}\{\mathbf{I}-\mathbf{P}\}f \rangle_{x,v}\geq \frac{1}{2} \| \partial^\alpha_{x}\partial^\beta_{v}\{\mathbf{I}-\mathbf{P}\}f\|_{\nu}^2-C\|\partial^\alpha_{x} \{\mathbf{I}-\mathbf{P}\}f\|_{\nu}^2.
\end{align}
For the term $J_9$, by virtue of \eqref{NJKGnew} in Proposition \ref{prop2.4}, one  derives
\begin{align}\label{G3.39}
J_9\leq&\,\sum_{\substack{\alpha_1\leq\alpha,\,\beta_1+\beta_{2}\leq\beta \\|\alpha|+|\beta| \leq N}} \int_{\mathbb R^3_{x}} | \partial^{\alpha_1}_{x}\partial^{\beta_1}_{v} f |_{2} |\partial^{\alpha_2}_{x}\partial^{\beta_2}_{v} f   |_{\nu} |\partial^\alpha_{x}\partial^\beta_{v}\{\mathbf{I}-\mathbf{P}\}f|_{\nu} {\rm d}x   \nonumber\\
\leq&\,\sum_{\substack{\alpha_1\leq\alpha,\,\beta_1+\beta_{2}\leq\beta \\|\alpha|+|\beta| \leq N}}  \big\| |\partial^{\alpha_1}_{x}\partial^{\beta_1}_{v} f|_{2} |\partial^{\alpha_2}_{x}\partial^{\beta_2}_{v} f|_{\nu} \big\|_{L^2}^2+\frac{1}{9}\| \partial^{\alpha}_{x}\partial^\beta_{v}\{\mathbf{I}-\mathbf{P}\}f\|_{\nu}^2\nonumber\\
\leq&\, \sum_{\substack{\alpha_1\leq\alpha,\,\beta_1+\beta_{2}\leq\beta \\|\alpha|+|\beta| \leq N}} \Big( \big\||\partial^{\alpha_1}_{x}\partial^{\beta_1}_{v} f|_{2} |\partial^{\alpha_2}_{x}\partial^{\beta_2}_{v} f|_{2} \big\|_{L^2}^2+  \big\||\partial^{\alpha_1}_{x}\partial^{\beta_1}_{v} f|_{2} |\partial^{\alpha_2}_{x}\partial^{\beta_2}_{v} \{\mathbf{I}-\mathbf{P}\}f|_{\nu} \big\|_{L^2}^2\Big)\nonumber\\
&+\frac{1}{9}\| \partial^{\alpha}_{x}\partial^\beta_{v}\{\mathbf{I}-\mathbf{P}\}f\|_{\nu}^2 \nonumber\\
\leq&\,C\sigma\|\mathbf{P}f\|_{L_v^2(L^4)}^2+C\sigma\|\mathbf{P}f\|_{L_v^2(\dot H^1\cap\dot H^N)} ^2 +\Big( C\sigma+\frac{1}{9}\Big) \sum_{\substack{1\leq|\beta |\leq N\\|\alpha|+|\beta |\leq N}}\| \partial^{\alpha}_{x}\partial^\beta_{v}\{\mathbf{I}-\mathbf{P}\}f\|_{\nu}^2\nonumber\\
 \leq&\,C\| f\|_{L_v^2(\dot H^\frac{3}{4}\cap\dot H^N)}^2  +\Big( C\sigma+\frac{1}{9}\Big) \sum_{\substack{1\leq|\beta |\leq N\\|\alpha|+|\beta |\leq N}}\| \partial^{\alpha}_{x}\partial^\beta_{v}\{\mathbf{I}-\mathbf{P}\}f\|_{\nu}^2,
\end{align}
where we utilized the embedding $\dot H^{\frac{3}{4}}\hookrightarrow  L^4$ in Lemma \ref{LA.2}.

For the terms $J_{10}$, $J_{11}$, and $J_{12}$, noting the fact that $\|\partial^{\alpha}_{x}\partial^\beta_{v} \mathbf{P}f\|_{L_{x,v}^2}\lesssim \|\partial^\alpha_{x}\mathbf{P}f\|_{L_{x,v}^2}$, we have
\begin{align}\label{G3.40}
J_{10}+J_{11}+J_{12}\leq &\, C\sum_{ |\alpha|\leq N-1}\|\nabla\partial^\alpha f\|_{L_{x,v}^2}^2+  C\sup_{0\leq t\leq T_1}\|Ef\|_{L_v^2(H^{N-1})}^2+  \frac{1}{9}  \| \partial^{\alpha}_{x}\partial^\beta_{v}\{\mathbf{I}-\mathbf{P}\}f\|_{\nu}^2\nonumber\\
\leq&\, C\|f\|_{L_v^2(\dot H^1\cap \dot H^N)}^2+C\sup_{0\leq t\leq T_1} \|E\|_{\dot H^\frac{1}{2}}^2 \|f\|_{L_v^2(\dot H^1)}^2\nonumber\\
&+C\sup_{0\leq t\leq T_1}\|E\|_{\dot H^1\cap\dot H^{N-1}}^2 \|f\|_{L_v^2(\dot H^1\cap\dot H^{N-1})}^2+\frac{1}{9}  \| \partial^{\alpha}_{x}\partial^\beta_{v}\{\mathbf{I}-\mathbf{P}\}f\|_{\nu}^2\nonumber\\
\leq&\, C\|f\|_{L_v^2(\dot H^1\cap \dot H^N)}^2+\Big(\frac{1}{9} + C\sigma\Big)\| \partial^{\alpha}_{x}\partial^\beta_{v}\{\mathbf{I}-\mathbf{P}\}f\|_{\nu}^2.
\end{align}
For the remaining term $J_{13}$, by using Lemmas \ref{LA.1}--\ref{LA.3}, we obtain 
\begin{align}\label{G3.41}
J_{13} \lesssim&\, \sum_{|\alpha^\prime|<|\alpha|}\int_{\mathbb{R}^3\times{\mathbb{R}^3}}|\partial^{\alpha-\alpha^{\prime}}E||\nabla_v\partial_x^{\alpha^{\prime}}\partial^\beta_v\{\mathbf{I}-\mathbf{P}\}f|_2|\partial_x^{\alpha}\partial^\beta_v\{\mathbf{I}-\mathbf{P}\}f|_2\mathrm{d}x\mathrm{d}v\nonumber\\
&+\sum_{|\alpha^\prime|<|\alpha|}\int_{\mathbb{R}^3\times{\mathbb{R}^3}} |\partial^{\alpha-\alpha^{\prime}}E||\partial_x^{\alpha^{\prime}}\partial^\beta_v (v\{\mathbf{I}-\mathbf{P}\}f)|_2|\partial_x^{\alpha}\partial^\beta_v\{\mathbf{I}-\mathbf{P}\}f|_2\mathrm{d}x  {\mathrm{d}v} \nonumber\\
\lesssim&\, \sup_{0\leq t\leq T_1}\|E\|_{H^N}\sum_{\substack{1\leq|\beta^{\prime}|\leq N\\|\alpha^{\prime}+|\beta^{\prime}|\leq N}}\|\partial^{\alpha^{\prime}}_x\partial^{\beta^{\prime}}_v\{\mathbf{I}-\mathbf{P}\}f\|_{\nu}^2,\nonumber\\
\lesssim&\,\sigma\sum_{\substack{1\leq|\beta^{\prime}|\leq N\\|\alpha^{\prime}|+|\beta^{\prime}|\leq N}}\|\partial^{\alpha^{\prime}}_x\partial^{\beta^{\prime}}_v\{\mathbf{I}-\mathbf{P}\}f\|_{\nu}^2.   
\end{align}
Putting together all the estimates \eqref{G3.38}, \eqref{G3.39}, \eqref{G3.40} and \eqref{G3.41} back to \eqref{G3.37}, and taking the sum over $|\beta| = k$ and $|\alpha| + |\beta| \leq N$ yields \eqref{G3.36}. This completes the proof of Lemma \ref{L3.7}.
\end{proof}

Finally, to absorb the zero-th order term $\|  \{\mathbf{I}-\mathbf{P}\}f\|_{\nu} $ on the right-hand side of \eqref{G3.36}, we need the following estimate.

 \begin{lem}\label{L3.8}
For strong solutions of the problem   \eqref{G1.5}, there exists a positive constant $\lambda_{12}>0$ such that
\begin{align} \label{G3.42}
&\frac{{\rm d}}{{\rm d}t}\| \{\mathbf{I}-\mathbf{P}\}f(t) \|_{L_{x,v}^2 }^2+ \lambda_{12} \| \{\mathbf{I}-\mathbf{P}\}f(t)\|_{\nu}^2  \lesssim \|f(t)\|_{L_v^2(\dot H^{\frac{3}{4}}\cap\dot H^1)}^2,
\end{align}
for any $0\leq t \leq T_1$.
\end{lem}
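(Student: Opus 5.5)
We would prove \eqref{G3.42} by a zeroth-order energy estimate on the microscopic equation \eqref{G3.29}. Take the $L^2_{x,v}$ inner product of \eqref{G3.29} with $\{\mathbf{I}-\mathbf{P}\}f$. Several terms drop out or are harmless:
\begin{itemize}
\item The transport term $\langle v\cdot\nabla\{\mathbf{I}-\mathbf{P}\}f,\{\mathbf{I}-\mathbf{P}\}f\rangle_{x,v}$ vanishes after integration by parts in $x$.
\item The force term $\langle E\cdot\nabla_v\{\mathbf{I}-\mathbf{P}\}f,\{\mathbf{I}-\mathbf{P}\}f\rangle_{x,v}$ vanishes after integration by parts in $v$, since $E$ is independent of $v$.
\item By \eqref{G2.2}, the linear part gives the dissipation $\kappa_0\|\{\mathbf{I}-\mathbf{P}\}f\|_\nu^2$.
\end{itemize}
The weighted term $-\frac12\langle E\cdot v\{\mathbf{I}-\mathbf{P}\}f,\{\mathbf{I}-\mathbf{P}\}f\rangle_{x,v}$ is bounded by $\|E\|_{L^\infty}\|\{\mathbf{I}-\mathbf{P}\}f\|_\nu^2$, because $|v|\lesssim\nu$. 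Since $\|E\|_{L^\infty}\lesssim\|E\|_{\dot B^{-3/2}_{2,\infty}\cap\dot H^N}\lesssim\sigma$, this term is absorbed into the dissipation.

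The remaining source terms are handled as follows. The macroscopic term $\langle\mathbf{P}(v\cdot\nabla f)-v\cdot\nabla\mathbf{P}f,\{\mathbf{I}-\mathbf{P}\}f\rangle_{x,v}$ is bounded by $C\|\nabla f\|_{L^2_{x,v}}^2+\frac{\kappa_0}{8}\|\{\mathbf{I}-\mathbf{P}\}f\|_\nu^2$, using the Gaussian decay of the basis of $\mathcal N$. This accounts for the $\dot H^1$ part of the right-hand side.

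The field terms $\mathbf{P}[\frac12E\cdot vf-E\cdot\nabla_vf]$ and $\frac12E\cdot v\mathbf{P}f-E\cdot\nabla_v\mathbf{P}f$ involve only moments of $f$ against Gaussians, or the macroscopic part. They are bounded pointwise in $x$ by $|E||f|_2$, so their contribution is at most $\|E\|_{L^\infty}\|f\|_{L^2_{x,v}}\|\{\mathbf{I}-\mathbf{P}\}f\|_\nu$. This is not admissible, because $\|f\|_{L^2_{x,v}}$ is not controlled. Instead we use Hölder with $E\in L^{4}$ and $f\in L^2_v(L^4)$:
\begin{align*}
\int|E|\,|f|_2\,|\{\mathbf{I}-\mathbf{P}\}f|_2\,{\rm d}x\lesssim\|E\|_{L^4}\|f\|_{L^2_v(L^4)}\|\{\mathbf{I}-\mathbf{P}\}f\|_{L^2_{x,v}}.
\end{align*}
Here $\|E\|_{L^4}\lesssim\|E\|_{\dot H^{3/4}}$, which is controlled by $\|E\|_{\dot B^{-3/2}_{2,\infty}\cap\dot H^N}$ through interpolation \eqref{interpolation}. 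Also $\|f\|_{L^2_v(L^4)}\lesssim\|f\|_{L^2_v(\dot H^{3/4})}$ by Lemma \ref{LA.2}. Young's inequality then gives $C\|f\|_{L^2_v(\dot H^{3/4})}^2+\frac{\kappa_0}{8}\|\{\mathbf{I}-\mathbf{P}\}f\|_\nu^2$.

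The nonlinear term $\langle\Gamma(f,f),\{\mathbf{I}-\mathbf{P}\}f\rangle_{x,v}$ is treated with \eqref{G2.5} ($\eta=\frac12$) and Lemma \ref{LA.1}. It is bounded by
\begin{align*}
\int|f|_2\,|f|_\nu\,|\{\mathbf{I}-\mathbf{P}\}f|_\nu\,{\rm d}x.
\end{align*}
Splitting $f=\mathbf{P}f+\{\mathbf{I}-\mathbf{P}\}f$ inside $|f|_\nu$ gives two pieces:
\begin{itemize}
\item The macroscopic piece is bounded by $\||f|_2\|_{L^4}^2\lesssim\|f\|_{L^2_v(\dot H^{3/4})}^2$, using $|\mathbf{P}f|_\nu\lesssim|f|_2$ and $\dot H^{3/4}\hookrightarrow L^4$, as in \eqref{G3.39}.
\item The microscopic piece is bounded by $\|f\|_{L^2_v(L^\infty)}\|\{\mathbf{I}-\mathbf{P}\}f\|_\nu^2\lesssim\sigma\|\{\mathbf{I}-\mathbf{P}\}f\|_\nu^2$ and is absorbed.
\end{itemize}

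\textbf{Main obstacle.} The main delicacy is that the right-hand side may contain only $\dot H^{3/4}$ and $\dot H^1$ norms, never $L^2_{x,v}$. Every zeroth-order product must therefore be routed through the $L^4$ embedding, with the small factor supplied by $\sigma$ or by $\|E\|_{L^4\cap L^\infty}$. Once this is arranged, collecting the estimates and absorbing the $\frac{\kappa_0}{8}$ and $\sigma$ terms yields \eqref{G3.42} with $\lambda_{12}=\kappa_0$. If the statement is meant to carry no $E$-term on the right-hand side, this absorption into $\|f\|_{L^2_v(\dot H^{3/4})}^2$ through the smallness of $E$ is what accomplishes it.
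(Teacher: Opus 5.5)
Your proposal is correct and follows the paper's proof essentially step by step. It uses the same zeroth-order energy identity for \eqref{G3.29} and the same treatment of $\Gamma(f,f)$ via \eqref{G2.5}: split $|f|_\nu$, use $L^\infty$-smallness for the micro part, and use $\dot H^{3/4}\hookrightarrow L^4$ for the macro part. The $E\cdot v\{\mathbf{I}-\mathbf{P}\}f$ term is likewise absorbed through $\|E\|_{L^\infty}$. One small gap in your macro-part bound: it actually carries a factor $\|\{\mathbf{I}-\mathbf{P}\}f\|_\nu$, so a Young step is needed, either producing $\|f\|_{L^2_v(\dot H^{3/4})}^4$ or using $\||f|_2\|_{L^4}\lesssim\sigma$. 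The only real difference is cosmetic. For the field terms you pair $E\in L^4$ with $f\in L^2_v(L^4)$ and land in $\dot H^{3/4}$, whereas the paper pairs $E\in L^3$ with $f\in L^2_v(L^6)$ and lands in $\dot H^1$. Both are admissible, since the right-hand side of \eqref{G3.42} contains both norms.
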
 

\begin{proof}
Multiplying \eqref{G3.29} by $\{\mathbf{I}-\mathbf{P}\}f$, and then integrating the resulting identity over $\mathbb R^3_{x}\times \mathbb R^3_{v}$, we have
\begin{align}\label{G3.43}
&\frac{1}{2}\frac{{\rm d}}{{\rm d}t} \|\{\mathbf{I}-\mathbf{P}\}f\|_{L_{x,v}^2}^2+ \langle\mathcal{L}\{\mathbf{I}-\mathbf{P}\}f, \{\mathbf{I}-\mathbf{P}\}f\rangle _{x,v} \nonumber\\
=&\,\langle \Gamma(f,f),\{\mathbf{I}-\mathbf{P}\}f \rangle_{x,v}+ \langle  \mathbf{P}(v\cdot\nabla f)-v\cdot\nabla \mathbf{P}f ,  \{\mathbf{I}-\mathbf{P}\}   f\rangle_{x,v} \nonumber\\
&+  \Big\langle   \mathbf{P}\Big[ \frac{1}{2}E\cdot vf-E\cdot\nabla_v f  \Big],  \{\mathbf{I}-\mathbf{P}\}  f  \Big\rangle_{x,v} +\Big\langle     \frac{1}{2}E\cdot v\mathbf{P}f-E\cdot\nabla_v \mathbf{P}f  ,  \{\mathbf{I}-\mathbf{P}\}  f  \Big\rangle_{x,v}\nonumber\\
&+\Big\langle    \frac{1}{2}E\cdot v 
\{\mathbf{I}- \mathbf{P}\}f    ,  \{\mathbf{I}-\mathbf{P}\}  f  \Big\rangle_{x,v}\nonumber\\
\equiv:&\, \sum_{i=14}^{18}J_{i}.
\end{align}
Making use of \eqref{G2.2} in Proposition \ref{prop2.2}, one has
\begin{align} \label{G3.44}
 \langle\mathcal{L}\{\mathbf{I}-\mathbf{P}\}f, \{\mathbf{I}-\mathbf{P}\}f\rangle _{x,v}    \geq \kappa_0 \|\{\mathbf{I}-\mathbf{P}\}f\|_{\nu}^2.
\end{align}
It follows from the decomposition \eqref{G1.4}, \eqref{G2.5} in Proposition \ref{prop2.3}, Lemma \ref{LA.1} and Young's inequality that
\begin{align}\label{G3.45}
J_{14}\leq&\, C\int_{\mathbb R^3_{x}} |\nu^\frac{1}{2} f|_{2}|f|_{2} |\nu^\frac{1}{2}\{\mathbf{I}-\mathbf{P}\}f|_{2} {\rm d}x   \nonumber\\
\leq&\,C \int_{\mathbb R^3_{x}} \big( |\nu^\frac{1}{2}\{\mathbf{I}-\mathbf{P}\}f|_{2}+|\mathbf{P}f|_{2}\big) |f|_{2} |\nu^\frac{1}{2}\{\mathbf{I}-\mathbf{P}\}f|_{2} {\rm d}x  \nonumber\\
\leq&\, C \|f\|_{L^\infty_{x}L_v^2} \|\{\mathbf{I}-\mathbf{P}\}f\|_{\nu}^2+C\|f\|_{L^4_{x}(L^2_{v})}^2\|\{\mathbf{I}-\mathbf{P}\}f\|_{\nu} \nonumber\\
\leq&\, C\|f\|_{L_v^2(\dot H^1\cap \dot H^2)}\|\{\mathbf{I}-\mathbf{P}\}f\|_{\nu}^2+\frac{\kappa_0}{6}\|\{\mathbf{I}-\mathbf{P}\}f\|_{\nu}^2+C\|f\|_{L_v^2(\dot H^{\frac{3}{4}})}^4\nonumber\\
\leq&\, \Big(C\sigma+\frac{\kappa_0}{6}\Big) \|\{\mathbf{I}-\mathbf{P}\}f\|_{\nu}^2+C\|f\|_{L_v^2(\dot H^\frac{3}{4}\cap\dot H^1)}^2.
\end{align}
By direct calculation, we have
\begin{align}\label{G3.46}
J_{15}\leq C\|\nabla f\|_{L_{x,v}^2}^2+\frac{\kappa_0}{6}\|\{\mathbf{I}-\mathbf{P}\}f\|_{\nu}^2.    
\end{align}
Similarly, 
\begin{align}\label{G3.47}
J_{16}+J_{17} \leq&\, C\sup_{0\leq t\leq{T_1}}\|Ef\|_{L_{x,v}^2}\|\{\mathbf{I}-\mathbf{P}\}f\|_{\nu}\nonumber\\
\leq&\,C \sup_{0\leq t\leq{T_1}} {\|E\|_{L^3}}\|f\|_{L_{v}^2(L^6)}\|\{\mathbf{I}-\mathbf{P}\}f\|_{\nu}\nonumber\\
\leq&\, C \|f\|_{L_v^2( \dot H^1)}^2+\frac{\kappa_0}{6}\|\{\mathbf{I}-\mathbf{P}\}f\|_{\nu}^2,
\end{align}
and
\begin{align}\label{G3.48}
J_{18}\lesssim    \sup_{0\leq t\leq T_1}\|E\|_{L^\infty} \|\{\mathbf{I}-\mathbf{P}\}f\|_{\nu}^2 \lesssim \sigma \|\{\mathbf{I}-\mathbf{P}\}f\|_{\nu}^2.  
\end{align}
Inserting all the estimates \eqref{G3.44}, \eqref{G3.45}, \eqref{G3.46}, \eqref{G3.47} and \eqref{G3.48} into \eqref{G3.43}, we obtain the desired \eqref{G3.42}. This completes the proof of Lemma \ref{L3.8}.
\end{proof}

With Lemmas \ref{L3.6}--\ref{L3.8} at hand, by combining \eqref{G3.28}, \eqref{G3.36} and \eqref{G3.42}, we present the estimate of the microscopic part $\{\mathbf{I}-\mathbf{P}\}f$ as follows.

\begin{cor}\label{cor3.9}
For strong solutions of the problem   \eqref{G1.5}, there exists a positive constant $\lambda_{13}>0$ such that
\begin{align}\label{G3.49}
&\frac{{\rm d}}{{\rm d}t}\bigg(\|\nu\{\mathbf{I}-\mathbf{P}\}f(t) \|_{L_v^2(\dot H^1\cap\dot H^{N-1})}^2 +\|\{\mathbf{I}-\mathbf{P}\}f(t)\|_{L_{x,v}^2}^2+ \sum_{\substack{ 1\leq |\beta|\leq N \\|\alpha|+|\beta| \leq N}}\|\partial^\alpha_{x}\partial^\beta_{v}\{\mathbf{I}-\mathbf{P}\}f(t) \|_{L_{x,v}^2 }^2\bigg)
 \nonumber\\
&+ \lambda_{13} \bigg(\|\nu^{\frac{3}{2}}\{\mathbf{I}-\mathbf{P}\}f(t)\|_{L_v^2(\dot H^1\cap\dot H^{N-1})}^2+ \|\{\mathbf{I}-\mathbf{P}\}f(t)\|_{\nu}^2+\sum_{\substack{ 1\leq |\beta|\leq N \\|\alpha|+|\beta| \leq N}} \| \partial^{\alpha}_{x}\partial^\beta_{v}\{\mathbf{I}-\mathbf{P}\}f(t)\|_{\nu}^2\bigg)\nonumber\\    
&\quad\lesssim \|f(t)\|_{L_v^2(\dot H^\frac{3}{4}\cap\dot H^N)}^2+\|\nu^\frac{1}{2}\{\mathbf{I}-\mathbf{P}\}f(t)\|_{L_v^2(\dot H^1\cap\dot H^{N-1})}^2+\sup_{0\leq t\leq T_1}\|E(t)\|_{H^N}^2,
\end{align}
for any $0\leq t \leq T_1$.
\end{cor}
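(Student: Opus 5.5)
The plan is to form a suitable linear combination of \eqref{G3.28}, \eqref{G3.36} and \eqref{G3.42}, so that each micro-dissipation term appearing on a right-hand side is absorbed by the dissipation produced on a left-hand side. Concretely, I would add \eqref{G3.28} to $\tau_2\times$\eqref{G3.36} and $\tau_3\times$\eqref{G3.42}. The weights $\tau_2,\tau_3>0$ are fixed constants chosen by the bookkeeping below; since only the form of \eqref{G3.49} matters, the energy in the time derivative may carry these weights, with the constants hidden in $\lambda_{13}$ and $\lesssim$. If one wants exactly the unweighted sum displayed in \eqref{G3.49}, one can instead rescale so that the troublesome right-hand side terms are kept rather than absorbed.

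\textbf{Bookkeeping of the cross terms.}

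First, the term $\sigma\|\nu^{1/2}\nabla_v\{\mathbf{I}-\mathbf{P}\}f\|_{L_v^2(\dot H^1\cap\dot H^{N-1})}^2$ on the right of \eqref{G3.28} involves $\partial_x^\alpha\partial_v^\beta\{\mathbf{I}-\mathbf{P}\}f$ with $|\beta|=1$ and $1\le|\alpha|\le N-1$, so $|\alpha|+|\beta|\le N$. It is therefore bounded by $\sigma\sum\|\partial^\alpha_x\partial^\beta_v\{\mathbf{I}-\mathbf{P}\}f\|_\nu^2$, which is absorbed by the $\lambda_{11}$-dissipation in \eqref{G3.36} once $C\sigma\ll \tau_2\lambda_{11}$. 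This uses the smallness of $\sigma$.

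Second, the term $\tau_2\|\nu^{1/2}\{\mathbf{I}-\mathbf{P}\}f\|_{L_v^2(H^{N-1})}^2$ on the right of \eqref{G3.36} splits into two pieces:
\begin{itemize}
\item the zeroth-order piece $\|\{\mathbf{I}-\mathbf{P}\}f\|_\nu^2$, which is absorbed by $\tau_3\lambda_{12}\|\{\mathbf{I}-\mathbf{P}\}f\|_\nu^2$ from \eqref{G3.42} by choosing $\tau_3$ large relative to $\tau_2$;
\item the derivative piece $\|\nu^{1/2}\{\mathbf{I}-\mathbf{P}\}f\|_{L_v^2(\dot H^1\cap\dot H^{N-1})}^2$, which is exactly the second term allowed on the right of \eqref{G3.49}, so it can simply be kept there.
\end{itemize}

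Third, the remaining right-hand terms $\|f\|_{L_v^2(\dot H^2\cap\dot H^N)}^2$, $\|f\|_{L_v^2(\dot H^{3/4}\cap \dot H^N)}^2$, $\|f\|_{L_v^2(\dot H^{3/4}\cap\dot H^1)}^2$ and $\sup\|E\|_{H^N}^2$ are all dominated by $\|f\|_{L_v^2(\dot H^{3/4}\cap\dot H^N)}^2+\sup\|E\|_{H^N}^2$. Indeed, the intermediate $\dot H^k$ norms with $3/4\le k\le N$ are controlled by the endpoints through interpolation.

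\textbf{Order of choices.} I would first fix $\tau_2=1$, then take $\tau_3$ large, and finally take $\sigma$ small. The energy functional in the time derivative is then comparable to the unweighted sum in \eqref{G3.49}, with constants depending only on the $\tau$'s.

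\textbf{Main obstacle.} The main issue is that the derivative piece $\|\nu^{1/2}\{\mathbf{I}-\mathbf{P}\}f\|_{\dot H^1\cap\dot H^{N-1}}$ cannot be absorbed by the $\nu^{3/2}$-dissipation in \eqref{G3.28} with a small constant. It could be absorbed with a large constant, but that would conflict with the weighting already fixed. So it is left on the right-hand side, as the statement permits. In the later step it is controlled by the $\lambda_1$-dissipation of Lemma \ref{L3.2}, because $\|\nabla^k\{\mathbf{I}-\mathbf{P}\}f\|_\nu$ for $1\le k\le N$ appears there.
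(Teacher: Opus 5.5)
Your proposal is correct and is the paper's argument: the paper obtains \eqref{G3.49} simply by combining \eqref{G3.28}, \eqref{G3.36} and \eqref{G3.42}. Your weighted combination absorbs $\sigma\|\nu^{1/2}\nabla_v\{\mathbf{I}-\mathbf{P}\}f\|^2$ into the mixed-derivative dissipation, and the zeroth-order part of $\|\nu^{1/2}\{\mathbf{I}-\mathbf{P}\}f\|_{L_v^2(H^{N-1})}^2$ into a large multiple of \eqref{G3.42]}; this gives the inequality for an equivalent weighted energy, which is all the paper uses later, since it defines $\mathcal{E}^H(t)$ only up to equivalence.

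Two minor remarks. First, your suggested alternative for getting the literally unweighted sum does not work, because the zeroth-order term $\|\{\mathbf{I}-\mathbf{P}\}f\|_\nu^2$ is not among the admissible right-hand terms. Second, your ``main obstacle'' is not one: since $\nu\gtrsim 1$, you can absorb the derivative piece into the $\nu^{3/2}$-dissipation of \eqref{G3.28} by taking the multiplier of \eqref{G3.36} small and then $\sigma$ smaller still (keeping it, as the statement allows, is also fine).
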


\subsection{Proof of Theorem \ref{Th1} }
To establish the uniform {\it a priori estimates} of $f(t,x,v)$, we first define the temporal energy functional $\mathcal{E}^H(t)$ and its corresponding dissipation rate $\mathcal{D}^H(t)$ as follows:
\begin{align}\label{G3.50}
\mathcal{E}^H(t)\backsim&\, \|\nu\{\mathbf{I}-\mathbf{P}\}f(t) \|_{L_v^2(\dot H^1\cap\dot H^{N-1})}^2 +\|\{\mathbf{I}-\mathbf{P}\}f(t)\|_{L_{x,v}^2}^2+\|f(t)\|_{L_{v}^2(\dot H^1\cap \dot H^N)}^2\nonumber\\
&+ \sum_{\substack{ 1\leq |\beta|\leq N \\|\alpha|+|\beta| \leq N}}\|\partial^\alpha_{x}\partial^\beta_{v}\{\mathbf{I}-\mathbf{P}\}f(t) \|_{L_{x,v}^2 }^2,\\\label{G3.51}
\mathcal{D}^H(t)=&\,\|\nu^{\frac{3}{2}}\{\mathbf{I}-\mathbf{P}\}f(t)\|_{L_v^2(\dot H^1\cap\dot H^{N-1})}^2 +\sum_{
\ell\leq N} \|\nabla^\ell\{\mathbf{I}-\mathbf{P}\}f(t)\|_{\nu}^2\nonumber\\
&+\sum_{\substack{ 1\leq |\beta|\leq N \\|\alpha|+|\beta| \leq N}} \| \partial^{\alpha}_{x}\partial^\beta_{v}\{\mathbf{I}-\mathbf{P}\}f(t)\|_{\nu}^2,
\end{align}
where the explicit expression of $\mathcal{E}^H(t)$ can be determined from the later proof. 

We shall combine the macroscopic estimates with the microscopic estimates. By adding \eqref{G3.27} and $\tau_2\times$ \eqref{G3.49} together, we obtain
\begin{align*}
 \frac{{\rm d}}{{\rm d}t} \mathcal{E}^H(t)
 + \lambda_{14} \Big(\mathcal{D}^H(t)+\sum_{2\leq\ell\leq N} \|\nabla^\ell\mathbf{P}f\|_{L_{x,v}^2}^2-\tau_2 \|f\|_{L_v^2(\dot H^{\frac{3}{4}}\cap\dot H^2)}^2\Big)     
 \lesssim  \sup_{0\leq t\leq T_1}\|E\|_{H^N}^2.
\end{align*}
for some constant $\lambda_{14}>0$. Here, $0<\tau_2\ll 1$ is a sufficiently small constant.
Adding the term $\lambda_{14} \|f_{L}(t)\|_{\dot B_{2,\infty}^{\frac{1}{2}}}^2$ to both sides of \eqref{G3.25} yields
\begin{align} \label{G3.53}
 \frac{{\rm d}}{{\rm d}t} \mathcal{E}^H(t)
 + \lambda_{15} \Big(\mathcal{D}^H(t)+\|f(t)\|_{L_v^2(\dot B_{2,\infty}^\frac{1}{2} \cap\dot H^N)}^2\Big)     
 \lesssim  \sup_{0\leq t\leq T_1}\|E\|_{H^N}^2+\|f_{L}(t)\|_{L_v^2(\dot B^\frac{1}{2}_{2,\infty})}^2,
\end{align}
for some constant $\lambda_{15}>0$.
By the definitions of $\mathcal{E}^H(t)$ in \eqref{G3.50} and $\mathcal{D}^H(t)$ in \eqref{G3.51}, we have
\begin{align*}
\mathcal{E}^H(t)\lesssim \mathcal{D}^H(t)+\|f(t)\|_{L_v^2(\dot B^\frac{1}{2}_{2,\infty}\cap\dot H^N)}^2,  
\end{align*}
which together with \eqref{G3.53} yields
\begin{align}\label{G3.54}
  \frac{{\rm d}}{{\rm d}t} \mathcal{E}^H(t)
 + \lambda_{16}  \mathcal{E}^H(t)     
 \lesssim  \sup_{0\leq t\leq T_1}\|E\|_{H^N}^2+\|f_{L}\|_{L_v^2(\dot B^\frac{1}{2}_{2,\infty})}^2,   
\end{align}
for some constant $\lambda_{16}>0$.
Multiplying  \eqref{G3.54} by $e^{-\lambda_{16}t}$, we further obtain
\begin{align*}
\mathcal{E}^H(t)\lesssim&\, e^{-\lambda_{16}t}\mathcal{E}^H(0)+\sup_{0\leq t\leq T_1}\bigg\{ \|E(t)\|_{H^N}^2+\|f_{L}(t)\|_{L_v^2(\dot B^\frac{1}{2}_{2,\infty})}^2 \bigg\}\int _0^{\infty} e^{-\lambda_{16}\tau} {\rm d}\tau \nonumber\\
\lesssim&\,  \mathcal{E}^H(0)+\sup_{0\leq t\leq T_1}\bigg\{ \|E(t)\|_{H^N}^2+\|f_{L}(t)\|_{L_v^2(\dot B^\frac{1}{2}_{2,\infty})}^2 \bigg\},
\end{align*}
which combined with \eqref{G3.3} gives rise to
\begin{align}\label{G3.56}
 &\|f(t)\|_{L_v^2(\dot B^{\frac{1}{2}}_{2,\infty}\cap \dot H^N)}+ \|\langle v\rangle f(t)\|_{L_v^2(\dot H^1\cap \dot H^{N-1})} +\|\{\mathbf{I}-\mathbf{P}\}f(t)\|_{L_{v}^2(L^2)}
\nonumber\\
&+ \sum_{\substack{ 1\leq |\beta|\leq N \\|\alpha|+|\beta| \leq N}}\|\partial^\alpha_{x}\partial^\beta_{v}\{\mathbf{I}-\mathbf{P}\}f(t) \|_{L_{v}^2(L^2) }\nonumber\\
 \lesssim&\,  \sup_{0\leq t\leq T_1}\|E(t)\|_{L_v^2(\dot B_{2,\infty}^{-\frac{3}{2}}\cap \dot H^{N})}+\|f_0\|_{L_v^2(\dot B^{\frac{1}{2}}_{2,\infty}\cap \dot H^N)}+ \|\langle v\rangle f_0\|_{L_v^2(\dot H^1\cap \dot H^{N-1})} \nonumber\\
& +\|\{\mathbf{I}-\mathbf{P}\}f_0\|_{L_{v}^2(L^2)}+\sum_{\substack{ 1\leq |\beta|\leq N \\|\alpha|+|\beta| \leq N}}\|\partial^\alpha_{x}\partial^\beta_{v}\{\mathbf{I}-\mathbf{P}\}f_0 \|_{L_{v}^2(L^2) },
\end{align}
for all $0\leq t\leq T_1$.

Similar to \cite[Section 6]{Gy-IUMJ-2004}, by a continuity argument, \eqref{G3.1} can be ensured if we choose $\delta_0$ in \eqref{G1.6} sufficiently small. By applying the Banach contraction mapping principle \cite{Gy-IM-2003}, we can establish the local existence and uniqueness of the Cauchy problem \eqref{G1.5}; the detailed proof is omitted here. Thanks to the standard continuity argument and {\it a priori} estimates \eqref{G3.56}, we obtain the global well-posedness of the strong solution $f(t,x,v)$. The estimate \eqref{G1.7} follows from \eqref{G3.56}. For the hard-sphere model,  the proof of local existence also ensures the non-negativity of solutions as 
\begin{align*}
F(t,x,v)=M+\sqrt{M}f(t,x,v)\geq 0.  
\end{align*}
Thus, the proof of Theorem \ref{Th1} is completed. \qed

\section{Asymptotic  stability of Boltzmann equation }
This section is dedicated to proving the asymptotic stability of solutions to the Cauchy problem \eqref{G1.5}. First, we present the error equation between $f^{(1)}(t,x,v)$ and $f^{(2)}(t,x,v)$ as follows:
\begin{align}\label{G4.1}
\partial_{t} \widetilde{f}+v\cdot\nabla_{x}\widetilde{f}+\mathcal{L}\widetilde{f}=\Gamma(f^{(1)}+f^{(2)},\widetilde{f})-E\cdot\nabla_{v}\widetilde{f}+\frac{1}{2} E\cdot v\widetilde{f},
\end{align}
where $\widetilde f(t,x,v)=f^{(1)}(t,x,v)-f^{(2)}(t,x,v)$ with the initial data
\begin{align*}
\widetilde{f}(t,x,v)|_{t=0}={\widetilde{f}_0(x,v)=f_0^{(1)}(x,v)-f_0^{(2)}(x,v)},
\end{align*}
satisfying \eqref{G1.8}. Here we utilized the bilinearity and symmetry of $\Gamma$: \begin{align*}
\Gamma(f^{(1)},f^{(1)}) - \Gamma(f^{(2)},f^{(2)}) = \Gamma(f^{(1)} + f^{(2)},\widetilde{f}).
\end{align*}

Next, we establish the time decay estimates of $\widetilde{f}(t,x,v)$. Unlike the approaches in \cite{DLN-2026,Deguchi-2025}, the semi-group method loses its efficacy here because the nonlinear terms $E\cdot\nabla_v \widetilde f$ and $\frac{1}{2}E\cdot v\widetilde f$ in \eqref{G4.1} contain $\nabla_v$ and weighted $v$, which cannot provide any decay rates in the low-frequency range. To overcome this difficulty, we first need to conduct the energy method at high frequencies to obtain the estimate of $\nabla_v\widetilde f$ and $\langle v\rangle \widetilde f$.

\subsection{Refined energy method at high frequencies}
 
We initially provide the estimate of $\widetilde f(t,x,v)$ in the $L_v^2(\dot H^1\cap \dot H^{N-1})$-norm with $N\geq 4$.
\begin{lem}\label{L4.1}
For strong solutions of equation \eqref{G4.1}, there exists a positive constant  $\widetilde \lambda_{1} >0$ such that 
\begin{align} \label{G4.2}
&\frac{{\rm d}}{{\rm d}t}\| \widetilde  f(t)\|_{L_v^2(\dot H^1\cap \dot H^{N-1})}^2+\widetilde \lambda_1\sum_{1\leq k\leq N-1}\|\nabla^k\{\mathbf{I}-\mathbf{P}\}\widetilde f(t)\|_{\nu}^2  \nonumber\\
\leq&\,  (\kappa+C\delta_0)  \| \mathbf{P}\widetilde f(t)\|_{L_{v}^2(\dot H^1\cap\dot H^{N-1})}^2 +C\delta_0\sum_{\substack{ 1\leq |\beta|\leq N -1\\|\alpha|+|\beta| \leq N-1}} \| \partial^{\alpha}_{x}\partial^\beta_{v}\{\mathbf{I}-\mathbf{P}\}\widetilde f(t)\|_{\nu}^2 ,
\end{align}
for any $t\geq 0$.   Here, $\delta_0$ is defined in \eqref{G1.6}, and $0 < \kappa < 1$ is a sufficiently small constant defined in \eqref{G3.10}. 
\end{lem}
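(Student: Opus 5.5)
We follow the proof of Lemma \ref{L3.2}, now applied to the difference equation \eqref{G4.1}. Note that \eqref{G4.1} has no source term $E\cdot v\sqrt{M}$, so no $\|E\|_{H^N}^2$ remainder should appear. For each $1\leq k\leq N-1$ and $|\alpha|=k$, we apply $\partial^\alpha$ to \eqref{G4.1}, take the $L^2_{x,v}$ inner product with $\partial^\alpha\widetilde f$, and use \eqref{G2.2}. This gives
\begin{align*}
\frac12\frac{{\rm d}}{{\rm d}t}\|\partial^\alpha\widetilde f\|_{L^2_{x,v}}^2+\kappa_0\|\partial^\alpha\{\mathbf{I}-\mathbf{P}\}\widetilde f\|_\nu^2\le |\langle\partial^\alpha\Gamma(f^{(1)}+f^{(2)},\widetilde f),\partial^\alpha\{\mathbf{I}-\mathbf{P}\}\widetilde f\rangle_{x,v}|+\widetilde I_1+\widetilde I_2,
\end{align*}
where $\widetilde I_1$ and $\widetilde I_2$ are the contributions of $\frac12\partial^\alpha(E\cdot v\widetilde f)$ and $\partial^\alpha(E\cdot\nabla_v\widetilde f)$. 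In the collision term we used $\mathbf{P}\Gamma=0$.

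For the collision term, we use \eqref{G2.5} and Leibniz's rule. In each product we put the lower-order factor in $L^\infty_x$ or $L^3_x$ and the other in $L^2_x$ or $L^6_x$, via Lemmas \ref{LA.1}--\ref{LA.3}. The bounds of Theorem \ref{Th1}, $\|f^{(i)}\|_{\CE^{1/2,N}}\lesssim\delta_0$, then give
\begin{align*}
C\delta_0\Big(\|\widetilde f\|_{L^2_v(\dot H^1\cap\dot H^{N-1})}^2+\sum_{1\le k\le N-1}\|\nabla^k\{\mathbf{I}-\mathbf{P}\}\widetilde f\|_\nu^2\Big)+\kappa\|\nabla^k\{\mathbf{I}-\mathbf{P}\}\widetilde f\|_\nu^2 .
\end{align*}
When the derivatives fall on $f^{(i)}$, we use their $\dot H^N$ bound; since only $k\le N-1$ derivatives appear, there is one spare derivative, which is what makes $N\ge4$ convenient for the $L^\infty$ embeddings. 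We then split $\widetilde f=\mathbf{P}\widetilde f+\{\mathbf{I}-\mathbf{P}\}\widetilde f$, so that this contribution is bounded by $C\delta_0\|\mathbf{P}\widetilde f\|_{L^2_v(\dot H^1\cap\dot H^{N-1})}^2$ plus a small multiple of the micro dissipation.

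The force terms are the delicate part. Neither \eqref{G4.1} nor the target estimate contains a source term, so the $E$-terms must be bounded by $\delta_0$ times quantities appearing on the right of \eqref{G4.2}. For the top-order part $E\cdot\nabla_v\partial^\alpha\widetilde f$, we integrate by parts in $v$; it vanishes because $E$ is independent of $v$. The commutator terms $\partial^{\alpha-\alpha'}E\cdot\nabla_v\partial^{\alpha'}\widetilde f$ with $|\alpha'|<|\alpha|$ are split by the macro-micro decomposition. The macro piece satisfies $\nabla_v\mathbf{P}$ bounded by $|\mathbf{P}\cdot|_2$ and is estimated through $\|E\|_{\dot B^{-3/2}_{2,\infty}\cap\dot H^N}\lesssim\delta_0$ and Sobolev embedding. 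The micro piece gives $\delta_0\|\partial^{\alpha'}_x\partial^{\beta}_v\{\mathbf{I}-\mathbf{P}\}\widetilde f\|_\nu\|\partial^\alpha\widetilde f\|$ with $|\beta|=1$, which is precisely the mixed-derivative sum on the right of \eqref{G4.2}. The weight term $E\cdot v\widetilde f$ is handled the same way: on the micro part $|v|\lesssim\nu$ is absorbed into $\|\cdot\|_\nu$ dissipation, and on the macro part the weight is harmless. After splitting $\partial^\alpha\widetilde f$ into its macro and micro parts, every resulting product is $\lesssim\delta_0(\|\mathbf{P}\widetilde f\|^2+\text{micro dissipation})$.

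The main obstacle is making sure no term requires $\|\widetilde f\|_{L^2_{x,v}}$ or $\|\mathbf{P}\widetilde f\|_{L^2}$ at zeroth order, which is not controlled. Since $k\ge1$, whenever all derivatives land on $E$ we place $\widetilde f$ in $L^6_x$, controlled by $\dot H^1$, and $\partial^\alpha E$ in $L^3_x$, controlled by $\dot H^{1/2}\cap\dot H^N\subset\dot B^{-3/2}_{2,\infty}\cap\dot H^N$ by interpolation. Finally, we sum over $|\alpha|=k$ and $1\le k\le N-1$ and choose $\kappa,\delta_0$ small to absorb the micro dissipation into the left-hand side, which yields \eqref{G4.2}.
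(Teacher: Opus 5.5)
Your argument is correct. It follows the paper's skeleton: the energy identity for $\partial^\alpha\widetilde f$, \eqref{G2.5} together with the bounds of Theorem \ref{Th1} for the collision term, macro--micro splitting, and the cancellation $\langle E\cdot\nabla_v\partial^\alpha\widetilde f,\partial^\alpha\widetilde f\rangle_{x,v}=0$. Where you differ is in the treatment of the force terms.

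The paper refers to \eqref{G3.15}--\eqref{G3.16}. For $k\le N-2$ it integrates by parts in $x$ and applies Young's inequality. This produces $\kappa\|\nabla^{k+1}\widetilde f\|_{L^2_{x,v}}^2$ plus the squared norm $C\|(E\cdot v\widetilde f,E\cdot\nabla_v\widetilde f)\|_{L^2_v(H^{N-3})}^2$. At $k=N-1$ it uses the commutator $\nabla^{N-1}(E\cdot\nabla_v\widetilde f)-E\cdot\nabla^{N-1}\nabla_v\widetilde f$. This route is shorter, but it has a weight problem:
\begin{itemize}
\item Squaring $E\cdot v\{\mathbf{I}-\mathbf{P}\}\widetilde f$ puts a weight $\nu^2$ on the micro part, i.e.\ it generates $\|\nu^{1/2}\nabla^\ell\{\mathbf{I}-\mathbf{P}\}\widetilde f\|_\nu^2$.
\item That quantity is not among those on the right of \eqref{G4.2}.
\item It only becomes controllable once the $\nu^{3/2}$-dissipation of Lemma \ref{L4.3} is added, as happens in Theorem \ref{T4.6}.
\end{itemize}

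You instead keep the trilinear form, split both factors, and distribute $|v|\lesssim\nu^{1/2}\cdot\nu^{1/2}$ between the two micro factors. This only ever produces $\nu$-dissipation and mixed derivatives with $|\beta|=1$, $|\alpha'|+1\le N-1$. So your route proves \eqref{G4.2} exactly as stated, even with $C\delta_0$ in place of $\kappa+C\delta_0$. The cost is checking the H\"older/Sobolev placement for every Leibniz piece.

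One small slip: the direction of your embedding for $\|\partial^\alpha E\|_{L^3}$ is reversed. You need $\dot B^{-3/2}_{2,\infty}\cap\dot H^N\hookrightarrow\dot H^{k+1/2}$, which follows from interpolation \eqref{interpolation}. The inclusion $\dot H^{1/2}\cap\dot H^N\subset\dot B^{-3/2}_{2,\infty}\cap\dot H^N$ that you wrote goes the wrong way.
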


\begin{proof}
 Applying the derivative $\partial^\alpha$ with $|\alpha| = k$, where $1\leq k\leq N - 1$, to \eqref{G4.1}, then multiplying the resulting identity by $\partial^\alpha{\widetilde f}$ and integrating over $\mathbb R^3_{x}\times \mathbb R^3_{v}$ yields
\begin{align}\label{G4.3}
&\frac{{\rm d}}{{\rm d}t}\|\partial^\alpha \widetilde f\|_{L_{x,v}^2}^2+\widetilde\lambda_2 \|\partial^\alpha \{\mathbf{I}-\mathbf{P}\}\widetilde f\|_{\nu}^2\nonumber\\
\leq&\, |\langle\partial^\alpha\Gamma(f^{(1)}+f^{(2)},\widetilde f) ,\partial^\alpha \widetilde f\rangle_{{x,v}} | +|\langle\partial^ \alpha(E\cdot v\widetilde f) ,\partial^\alpha \widetilde f\rangle_{{x,v}} |+|\langle\partial^ \alpha(E\cdot \nabla_v\widetilde f) ,\partial^\alpha \widetilde f\rangle_{{x,v}} |\nonumber\\
 \equiv:&\,\sum_{j=1}^3\widetilde I_j,
\end{align}
for some constant $\widetilde\lambda_2>0$.
Taking $\eta=\frac{1}{2}$ in \eqref{G2.5}, and by using \eqref{newinterpolation}, Young's inequality and the decomposition \eqref{G1.4},    we have
\begin{align} \label{G4.4}
\widetilde I_1 
\leq&\, \sum_{|\beta|\leq |\alpha|}\int_{\mathbb R^3_x
}\big| \nu^{-\frac{1}{2}}  \Gamma(\partial^{\alpha-\beta }f^{(1)},\partial^{\alpha-\beta }f^{(2)}),\partial^\beta \widetilde f)\big|_{2}   \big|\nu^{\frac{1}{2}} \partial^\alpha \{\mathbf{I}-\mathbf{P}\}\widetilde f   \big|_2 {\rm d}x \nonumber\\
\leq&\, \sum_{|\beta|\leq |\alpha|} \int_{\mathbb R^3_x}  |(\partial^{\alpha-\beta} f^{(1)},\partial^{\alpha-\beta} f^{(2)})|_{\nu}|\partial^\beta \widetilde f|_{2} |\partial^\alpha \{\mathbf{I}-\mathbf{P}\}\widetilde f    |_{\nu} {\rm d}x \nonumber\\
&+ \sum_{|\beta|\leq |\alpha|} \int_{\mathbb R^3_x}  |(\partial^{\alpha-\beta} f^{(1)},\partial^{\alpha-\beta} f^{(2)})|_{2}|\partial^\beta \widetilde f|_{\nu} |\partial^\alpha \{\mathbf{I}-\mathbf{P}\}\widetilde f    |_{\nu} {\rm d}x \nonumber\\
\leq&\,  \kappa  \|\partial^\alpha \{\mathbf{I}-\mathbf{P}\}\widetilde f\|_{\nu}^2+C\sum_{|\beta|\leq k}\big\|  |\nabla^{|\beta|} (f^{(1)},f^{(2)})|_{2} |\nabla^{k-|\beta|}\widetilde f|_{\nu} \big \|_{ L^2}^2\nonumber\\
&+C\sum_{|\beta|\leq k}\big\|  |\nabla^{|\beta|} (f^{(1)},f^{(2)})|_{\nu} |\nabla^{k-|\beta|}\widetilde f|_{2} \big \|_{L^2}^2\nonumber\\
\leq&\, \kappa  \|\partial^\alpha \{\mathbf{I}-\mathbf{P}\}\widetilde f\|_{\nu}^2+C\|(\nu f^{(1)},\nu f^{(2)})\|_{L_v^2(\dot H^1\cap \dot H^{N-1})}^2 \|\widetilde f\|_{L_v^2(\dot H^1\cap \dot H^{N-1})}^2\nonumber\\
&+ C\|(  f^{(1)},  f^{(2)})\|_{L_v^2(\dot B^{\frac{1}{2}}_{2,\infty}\cap \dot H^{{N}})}^2 \bigg(\sum_{1\leq\ell\leq {N-1}}\|\nabla^\ell\{\mathbf{I}-\mathbf{P}\}\widetilde f\|_{\nu}^2+\|\widetilde f\|_{L_v^2(\dot H^1\cap\dot H^{N-1})}^2\bigg)\nonumber\\
\leq &\, (\kappa+C\delta_0)\|\partial^\alpha \{\mathbf{I}-\mathbf{P}\}\widetilde f\|_{\nu}^2+C\delta_0 \bigg(\sum_{1\leq\ell\leq N-1}\|\nabla^\ell\{\mathbf{I}-\mathbf{P}\}\widetilde f\|_{\nu}^2+\|\mathbf{P}\widetilde f\|_{L_v^2(\dot H^1\cap H^{N-1})}^2\bigg).
\end{align}
Similar to the inequalities \eqref{G3.15}--\eqref{G3.16},   we have
\begin{align}\label{G4.5}
\widetilde I_2+\widetilde I_3\leq &\, \kappa \sum_{2\leq\ell\leq N-1}\|{\nabla^\ell}\mathbf{P}\widetilde f\|_{L_{x,v}^2}^2+ C\sup_{t\geq 0}\|(E\cdot v\widetilde f,E\cdot\nabla_{v}\widetilde f)\|_{L_v^2(H^{N-3})} ^2\nonumber\\
&+C\sup_{t \geq 0}\|E\widetilde f\|_{L_v^2(\dot H^{N-1})} \Big( \|\nabla
^{N-1}\{\mathbf{I}-\mathbf{P}\}\widetilde f\|_{\nu}+\|\mathbf{P}\widetilde f\|_{L_v^2(\dot H^{N-1})}  \Big)\nonumber\\
&+C\|\nabla^{N-1}(E\cdot\nabla_{v}\widetilde f)-E\nabla^{N-1}\nabla_v\widetilde f\|_{L_{x,v}^2} \|\nabla^{N-1}\widetilde f\|_{L_{x,v}^2} \nonumber\\
\leq&\, \kappa\|\mathbf{P}\widetilde f\|_{L_v^2(\dot H^1\cap \dot H^{N-1})}^2+(C\delta_0+\kappa)\bigg(\sum_{1\leq\ell\leq {N-1}}\|\nabla^\ell\{\mathbf{I}-\mathbf{P}\}\widetilde f\|_{\nu}^2+\|\mathbf{P}\widetilde f\|_{L_v^2(\dot H^1\cap H^{N-1})}^2\bigg)\nonumber\\
&+C\delta_0\sum_{\substack{ 1\leq |\beta|\leq {N-1} \\|\alpha|+|\beta| \leq {N-1}}} \| \partial^{\alpha}_{x}\partial^\beta_{v}\{\mathbf{I}-\mathbf{P}\}\widetilde f\|_{\nu}^2. 
\end{align}
Substituting the  estimates \eqref{G4.4}--\eqref{G4.5} into \eqref{G4.3} and then taking the summation over the range $1\leq  k\leq {N-1}$ results in \eqref{G4.2}.
\end{proof}

Similar to \eqref{G3.20}, we define the following temporal energy functional $\widetilde {\mathfrak{E}}_{k}(t)$ related to $\widetilde f(t,x,v)$:
 \begin{align} \label{G4.6}
 \widetilde{\mathfrak{E}}_{k}(t):=&\,\sum_{|\alpha|=k}\int_{\mathbb R^3_x} \big(  \langle\{\mathbf{I}-\mathbf{P}\}\partial^\alpha \widetilde f,\zeta_{a}\rangle\cdot\nabla \partial^\alpha\widetilde a +\langle\{\mathbf{I}-\mathbf{P}\}\partial^\alpha\widetilde f, \zeta_{ij}\rangle\cdot\partial_j\partial^\alpha\widetilde b_{i}  \big) {\rm d}x\nonumber\\
 &+\sum_{|\alpha|=k}\int_{\mathbb R^3_x} \big(\langle\{\mathbf{I}-\mathbf{P}\}\partial^\alpha \widetilde f,\zeta_{c}\rangle\cdot\nabla \partial^\alpha \widetilde c+\partial^\alpha \widetilde b\cdot\nabla \partial^\alpha \widetilde a\big) {\rm d}x,
 \end{align}
for any $k=1,\dots,N-2$.
It is direct to find that
\begin{align*}
\sum_{1\leq k\leq N-2} | \widetilde{\mathfrak{E}}_{k}(t)|\lesssim \sum_{1\leq k\leq N-2}\|\widetilde f\|_{L_v^2(\dot H^k)} \|\widetilde f\|_{L_v^2(\dot H^{k+1})} \lesssim \|\widetilde f\|_{L_v^2(\dot H^1\cap \dot H^{N-1})}^2.  
\end{align*}
Then, we provide the estimate of $\|\mathbf{P}\widetilde f\|_{L_v^2(\dot H^2\cap \dot H^{N - 1})}$.

\begin{lem}\label{L4.2}
For strong solutions of equation \eqref{G4.1}, there exists a positive constant $\widetilde \lambda_3>0$    such that 
\begin{align}\label{G4.7} 
&\frac{{\rm d}}{{\rm d}t}\sum_{1\leq k\leq N-2}\widetilde{\mathfrak{E}}_{ k}(t)+\widetilde\lambda_3 \|\mathbf{P}\widetilde f(t)\|_{L_v^2(\dot H^2\cap\dot H^{N-1})}^2 \nonumber\\
&\quad\lesssim \|  \{\mathbf{I}-\mathbf{P}\}\widetilde f(t)\|_{L_{v}^2(\dot H^1\cap \dot H^{N-1})}^2+\delta_0\|\mathbf{P}\widetilde
 f(t)\|_{L_v^2(\dot H^1)}^2,
\end{align}
for    any $t\geq 0$,  where $\widetilde {\mathfrak{E}}_{k}(t)$ is defined by \eqref{G4.6}.  Here, $\delta_0$ is defined in \eqref{G1.6}.
\end{lem}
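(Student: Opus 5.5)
We follow the same route as Lemma \ref{L3.4}, now applied to the error equation \eqref{G4.1}. The first step is to write the macroscopic fluid-type system for $\mathbf{P}\widetilde f=(\widetilde a+\widetilde b\cdot v+\widetilde c(|v|^2-3)/2)\sqrt{M}$. We take velocity moments of \eqref{G4.1} against the basis $\{\sqrt{M},v_i\sqrt{M},v_iv_j\sqrt{M},v_i|v|^2\sqrt{M}\}$, as in \cite[Lemma 6.1]{Gy-CPAM-2006}. This yields the conservation laws for $(\widetilde a,\widetilde b,\widetilde c)$ and the "moment" equations. Their source terms are $\partial_t\{\mathbf{I}-\mathbf{P}\}\widetilde f$ and $\nabla\{\mathbf{I}-\mathbf{P}\}\widetilde f$ tested against $\zeta$, together with the nonlinear remainders
\[
\widetilde h:=\Gamma(f^{(1)}+f^{(2)},\widetilde f)-E\cdot\nabla_v\widetilde f+\tfrac12E\cdot v\widetilde f .
\]
In contrast to \eqref{G1.5}, there is no linear source $E\cdot v\sqrt M$ here, which is why no $\sup_t\|E\|_{H^N}^2$ term appears in \eqref{G4.7}.

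The second step applies $\partial^\alpha$ with $|\alpha|=k$, $1\le k\le N-2$. We test the moment equations against $\nabla\partial^\alpha\widetilde a$, $\partial_j\partial^\alpha\widetilde b_i$ and $\nabla\partial^\alpha\widetilde c$, and test the $\widetilde b$ equation against $\nabla\partial^\alpha\widetilde a$. The time derivatives are collected into $\frac{\rm d}{{\rm d}t}\widetilde{\mathfrak E}_k$ as defined in \eqref{G4.6}, and the elliptic structure produces the dissipation $\lambda\|\nabla^{k+1}\mathbf{P}\widetilde f\|^2$. Summing over $1\le k\le N-2$ gives exactly $\|\mathbf{P}\widetilde f\|_{L_v^2(\dot H^2\cap\dot H^{N-1})}^2$. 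By the same bookkeeping as in \eqref{G3.23}, the linear error terms are bounded by $\|\nabla^k\{\mathbf{I}-\mathbf{P}\}\widetilde f\|^2+\|\nabla^{k+1}\{\mathbf{I}-\mathbf{P}\}\widetilde f\|^2$, which is at most $\|\{\mathbf{I}-\mathbf{P}\}\widetilde f\|_{L_v^2(\dot H^1\cap\dot H^{N-1})}^2$. What remains to control is $\sum_k\|\langle\nabla^k\widetilde h,\zeta\rangle\|_{L^2}^2$.

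The third step, which we expect to be the main work, is the nonlinear term. Since $\zeta$ decays like a Gaussian, we integrate by parts in $v$, which moves $\nabla_v$ and the weight $v$ onto $\zeta$. This gives
\[
\|\langle\nabla^k(E\cdot\nabla_v\widetilde f),\zeta\rangle\|_{L^2}+\|\langle\nabla^k(E\cdot v\widetilde f),\zeta\rangle\|_{L^2}\lesssim\|\nabla^k(E\,|\widetilde f\,\mu^{1/4}|_2)\|_{L^2}.
\]
We bound this product by Leibniz and Hölder, splitting according to where the derivatives fall:
\begin{itemize}
\item When all $k$ derivatives fall on $\widetilde f$ we use $\|E\|_{L^\infty}$. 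When all fall on $E$ we use $\|\nabla^kE\|_{L^3}\|\widetilde f\|_{L_v^2(L^6)}\lesssim\|E\|_{H^N}\|\widetilde f\|_{L_v^2(\dot H^1)}$. Intermediate splittings are handled by Lemma \ref{LA.3} together with the assumption $\|E\|_{\dot B^{-3/2}_{2,\infty}\cap\dot H^N}\le\delta_0$.
\item This yields the bound $\delta_0^2\|\widetilde f\|_{L_v^2(\dot H^1\cap\dot H^{N-2})}^2$.
\item For $\Gamma$, we apply Proposition \ref{prop2.3} with $g_3=\zeta$, giving a bound by $\sum\||\nabla^{j}(f^{(1)},f^{(2)})|_2|\nabla^{k-j}\widetilde f|_2\|_{L^2}$. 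We estimate this as in \eqref{G3.24}, using $\sup_t\|f^{(i)}\|_{\mathcal E^{1/2,N}}\lesssim\delta_0$ from Theorem \ref{Th1}, and obtain $\delta_0\|\widetilde f\|_{L_v^2(\dot H^1\cap\dot H^{N-1})}^2$.
\end{itemize}

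Finally, we split $\widetilde f=\mathbf{P}\widetilde f+\{\mathbf{I}-\mathbf{P}\}\widetilde f$. The microscopic part is absorbed into the first term on the right of \eqref{G4.7}. The $\dot H^2\cap\dot H^{N-1}$ portion of $\mathbf{P}\widetilde f$ is absorbed into the left-hand dissipation because $\delta_0$ is small. What is left is $\delta_0\|\mathbf{P}\widetilde f\|_{L_v^2(\dot H^1)}^2$, which is exactly the remaining term in \eqref{G4.7}.

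The delicate point is the lowest-order piece, where all derivatives fall on $E$ and $\widetilde f$ is taken with no derivative. It must be controlled only by $\|\widetilde f\|_{L_v^2(\dot H^1)}$, with no $L^2$ norm of $\widetilde f$. Using the $L^3\times L^6$ pairing with $E\in \dot H^{1/2}\cap\dot H^N\subset L^3$ makes this work. Here $E\in\dot H^{1/2}$ follows by interpolating between $\dot B^{-3/2}_{2,\infty}$ and $\dot H^N$.
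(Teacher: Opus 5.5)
Your proposal is correct and follows essentially the same route as the paper. You use the Guo-type interactive functional $\widetilde{\mathfrak E}_k$ built from the moment equations as in \eqref{G3.23}, and bound the nonlinear moments by $\delta_0\|\widetilde f\|_{L_v^2(\dot H^1\cap\dot H^{N-1})}^2$, using the Gaussian decay of $\zeta$ to absorb $v$ and $\nabla_v$. You then split $\widetilde f=\mathbf P\widetilde f+\{\mathbf I-\mathbf P\}\widetilde f$ to absorb everything except $\delta_0\|\mathbf P\widetilde f\|_{L_v^2(\dot H^1)}^2$. Your explicit $L^3\times L^6$ treatment of the pieces where all derivatives fall on $E$ or on $f^{(i)}$ is exactly what justifies the paper's bounds \eqref{G4.9}--\eqref{G4.10} without using any $L^2$ norm of $\widetilde f$.
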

\begin{proof}
Similar to \eqref{G3.23}, we can derive that
\begin{align}\label{G4.8}
&\frac{{\rm d}}{{\rm d}t} \sum_{1\leq k\leq N-2}\widetilde{\mathfrak{E}}_{k}(t)+\widetilde\lambda_{4} \|  \mathbf{P}\widetilde f\|_{L_{v}^2(\dot H^2\cap\dot H^{N-1})}^2 \nonumber\\
&\quad\lesssim   \|  \{\mathbf{I}-\mathbf{P}\}\widetilde f \|_{L_{v}^2(\dot H^1\cap \dot H^{N-1})}^2 +\|\langle\nabla^k\Gamma(f^{(1)}+f^{(2)},\widetilde f),\zeta\rangle\|_{L^2}^2 \nonumber\\
&\quad\quad +\|\langle\nabla^k (E\cdot v\widetilde f),\zeta\rangle\|_{L^2}^2+\|\langle\nabla^k (E\cdot \nabla_v\widetilde f),\zeta\rangle\|_{L^2}^2\nonumber\\
&\quad\equiv:\|  \{\mathbf{I}-\mathbf{P}\}\widetilde f \|_{L_{v}^2(\dot H^1\cap \dot H^{N-1})}^2 +\sum_{j=4}^{6}\widetilde I_{j},
\end{align}
for some constant $\widetilde\lambda_{4}>0$.
By direct calculation, 
we arrive at
\begin{align}\label{G4.9}
 \widetilde I_{4}\lesssim&\,   \sum_{1\leq k\leq N-2} \sum_{|\beta|\leq k} \big\|  |\nabla^{|\beta|}(f^{(1)},f^{(2)})|_{2}  |\nabla^{k-|\beta|}\widetilde f|_{2}  \big\|_{L^2}^2\nonumber\\
 \lesssim&\, \|(f^{(1)},f^{(2)})\|_{L_v^2(\dot B^{\frac{1}{2}}_{2,\infty}\cap\dot H^{N} )}^2 \|\widetilde f\|_{L_v^2(\dot H^1\cap\dot H^{N-1})}^2\lesssim \delta_0^2 \|\widetilde f\|_{L_v^2(\dot H^1\cap\dot H^{N-1})}^2,
\end{align}
and
\begin{align}\label{G4.10}
\widetilde I_5+\widetilde I_{6}\lesssim \sup_{t\geq 0} \|Ef\|_{L_v^2(\dot H^1\cap \dot H^{N-2})} ^2\lesssim \sup_{t\geq 0}\|E\|_{H^N}^2\|\widetilde f\|_{L_v^2(\dot H^1\cap \dot H^{N-1})}^2\lesssim \delta_0 \|\widetilde f\|_{L_v^2(\dot H^1\cap \dot H^{N-1})}^2.
\end{align}
Inserting the estimates \eqref{G4.9} and \eqref{G4.10} into \eqref{G4.8} yields the desired estimate \eqref{G4.7}.
\end{proof}

To absorb the estimate of the mixed space-velocity $\partial_{x}^{\alpha}\partial^\beta_{v}\{\mathbf{I}-\mathbf{P}\}\widetilde f$ on the right-hand side of \eqref{G4.2}, we consider the microscopic part of $\widetilde f(t,x,v)$. Similar to the arguments in Lemmas \ref{L3.6}--\ref{L3.8}, we can conclude Lemmas \ref{L4.3}--\ref{L4.5}. For brevity, we omit the proof here.

\begin{rem}
The most important point in the proof of Lemmas \ref{L4.3}--\ref{L4.5}
is that we need to notice the more precise zeroth-order estimate in \eqref{newinterpolation} in Proposition \ref{prop2.6}:
\begin{align*}
\big\|f^{(1)} |\widetilde f|_{2}\big\|_{L^2}^2+\big\|f^{(2)} |\widetilde f|_{2}\big\|_{L^2}^2\lesssim \bigg(\|f^{(1)}\|_{\dot B^{\frac{1}{2}}_{2,\infty}}^2+\|f^{(2)}\|_{\dot B^{\frac{1}{2}}_{2,\infty}}^2 \bigg)   \|\widetilde f\|_{L_{v}^2(\dot H^1)}^2\lesssim \delta_0 \|\widetilde f\|_{L_{v}^2(\dot H^1)}^2.  
\end{align*}
\end{rem}

\begin{lem} \label{L4.3}
For strong solutions of equation   \eqref{G4.1}, there exists a positive constant $\widetilde\lambda_{5}>0$ such that
\begin{align}  \label{G4.11}
&\frac{{\rm d}}{{\rm d}t}\|\nu\{\mathbf{I}-\mathbf{P}\}\widetilde f(t) \|_{L_v^2(\dot H^1\cap\dot H^{N-2})}^2+ \widetilde \lambda_{5} \|\nu^{\frac{3}{2}}\{\mathbf{I}-\mathbf{P}\}\widetilde f(t)\|_{L_v^2(\dot H^1\cap\dot H^{N-2})}^2 \nonumber\\
\lesssim&\, \|\nu^{\frac{1}{2}}\{\mathbf{I}-\mathbf{P}\}\widetilde f(t)\|_{L_v^2(\dot H^1\cap\dot H^{N-2})}^2+ \|\widetilde f(t)\|_{L_v^2(\dot H^1\cap\dot H^{N-1})}^2\nonumber\\
&+ \delta_0 \|\nu^{\frac{1}{2}}\nabla_v\{\mathbf{I}-\mathbf{P}\}\widetilde f(t)\|_{L_v^2(\dot H^1\cap \dot H^{N-2})}^2,
\end{align}
for any $t\geq 0$.   Here, $\delta_0$ is defined in \eqref{G1.6}.
\end{lem}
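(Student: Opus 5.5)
We would follow the proof of Lemma \ref{L3.6} almost verbatim, now for the error equation \eqref{G4.1} and with the derivative range $1\leq|\alpha|\leq N-2$ instead of $1\leq |\alpha|\leq N-1$. First apply $\{\mathbf{I}-\mathbf{P}\}$ to \eqref{G4.1}. This gives the analogue of \eqref{G3.29} with $\Gamma(f,f)$ replaced by $\Gamma(f^{(1)}+f^{(2)},\widetilde f)$. The source term $E\cdot v\sqrt{M}$ is absent, and the commutator terms $\mathbf{P}(v\cdot\nabla\widetilde f)-v\cdot\nabla\mathbf{P}\widetilde f$ and the $E$-terms split into their $\mathbf{P}$ and $\{\mathbf{I}-\mathbf{P}\}$ parts as before. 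Apply $\partial^\alpha$ and pair with $\nu^2\partial^\alpha\{\mathbf{I}-\mathbf{P}\}\widetilde f$. Then \eqref{G2.3} with $l=1$ gives the coercive lower bound $\frac12\|\nu\partial^\alpha\{\mathbf{I}-\mathbf{P}\}\widetilde f\|_\nu^2-C\|\partial^\alpha\{\mathbf{I}-\mathbf{P}\}\widetilde f\|_\nu^2$. The error term is exactly the first term $\|\nu^{1/2}\{\mathbf{I}-\mathbf{P}\}\widetilde f\|^2_{L_v^2(\dot H^1\cap\dot H^{N-2})}$ on the right of \eqref{G4.11}. Label the right-hand side terms $\widetilde J_1,\dots,\widetilde J_5$ as in \eqref{G3.30}.

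\emph{The linear and $\mathbf{P}$-terms.} The streaming commutator $\widetilde J_2$ is bounded by Cauchy--Schwarz and $\langle v\rangle^k\sqrt M\lesssim1$. This gives $C\|\widetilde f\|^2_{L_v^2(\dot H^2\cap\dot H^{N-1})}$ plus a small multiple of the dissipation $\|\nu\partial^\alpha\{\mathbf{I}-\mathbf{P}\}\widetilde f\|_\nu^2$. The $\mathbf{P}$-projected $E$-terms $\widetilde J_3,\widetilde J_4$ are estimated as in \eqref{G3.34}, using $\|E\widetilde f\|_{L_v^2(\dot H^1\cap\dot H^{N-2})}\lesssim\|E\|_{H^N}\|\widetilde f\|_{L_v^2(\dot H^1\cap\dot H^{N-2})}$. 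No $\sup\|E\|^2$ term appears here, since the equation is linear in $\widetilde f$. All of these are absorbed into $\|\widetilde f\|^2_{L_v^2(\dot H^1\cap\dot H^{N-1})}$. For the term $\widetilde J_5$, involving $E\cdot v\{\mathbf{I}-\mathbf{P}\}\widetilde f$ and $E\cdot\nabla_v\{\mathbf{I}-\mathbf{P}\}\widetilde f$, we proceed exactly as in \eqref{G3.35}. Moving one factor $\nu^{1/2}$ onto the test function gives a bound by $\|E\|_{H^N}$ times $\|\nu^{3/2}\{\mathbf{I}-\mathbf{P}\}\widetilde f\|^2+\|\nu^{1/2}\nabla_v\{\mathbf{I}-\mathbf{P}\}\widetilde f\|^2$ in $L_v^2(\dot H^1\cap\dot H^{N-2})$. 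The first piece is absorbed because $\|E\|_{H^N}\le\delta_0$ is small. The second produces the last term of \eqref{G4.11}.

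\emph{The collision term (main obstacle).} By Proposition \ref{NJKP2.5}, $\widetilde J_1$ is bounded by $\sum_{|\beta|\le|\alpha|}\big\||\nu\partial^\beta (f^{(1)},f^{(2)})|_2|\nu\partial^{\alpha-\beta}\widetilde f|_\nu\big\|_{L^2}^2$ plus the symmetric term, plus a small multiple of the dissipation. Unlike Lemma \ref{L3.6}, the product is now bilinear between the background solutions and $\widetilde f$. We must therefore put the smallness on $f^{(i)}$ and the $\widetilde f$-norm on the right side of \eqref{G4.11}. We split $\widetilde f=\mathbf{P}\widetilde f+\{\mathbf{I}-\mathbf{P}\}\widetilde f$, so that the weight $\nu$ falls either on $\mathbf{P}\widetilde f$, where it is harmless, or on $\nu\{\mathbf{I}-\mathbf{P}\}\widetilde f$.

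For intermediate derivatives we use Hölder, Sobolev and Lemma \ref{LA.3}. The $L^\infty_x$ factor goes on $\nu f^{(i)}$, which is controlled in $L_v^2(\dot H^1\cap\dot H^{N-1})$ by Theorem \ref{Th1}. The bound by $\delta_0$ requires $N-1\ge 2$; this is one place where $N\ge4$ enters, through $\dot H^1\cap\dot H^{N-2}\hookrightarrow L^\infty$ for the $\widetilde f$ factors.

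The delicate case is the zeroth-order one, where all $x$-derivatives fall on $\widetilde f$ and $f^{(i)}$ appears undifferentiated. Here $f^{(i)}$ is only in $\dot B^{1/2}_{2,\infty}$, not in $L^\infty_x$. We would invoke \eqref{newinterpolation}, as stressed in the preceding remark, to get $\|f^{(i)}|\partial^\alpha\widetilde f|_2\|_{L^2}\lesssim\|f^{(i)}\|_{\dot B^{1/2}_{2,\infty}}\|\partial^\alpha\widetilde f\|_{L_v^2(\dot H^1)}$. This costs one derivative on $\widetilde f$, which is admissible since $|\alpha|\le N-2$ and $\widetilde f$ is controlled in $\dot H^{N-1}$.

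For the velocity weight on $f^{(i)}$, the undifferentiated $\mathbf{P}f^{(i)}$ carries no difficulty, and $\nu\{\mathbf{I}-\mathbf{P}\}f^{(i)}$ is controlled by the $\langle v\rangle f$ norm in $\CE^{\frac12,N}$ together with the $L^2$ bound on $\{\mathbf{I}-\mathbf{P}\}f^{(i)}$. The outcome is a bound by $\delta_0(\|\widetilde f\|^2_{L_v^2(\dot H^1\cap\dot H^{N-1})}+\|\nu^{3/2}\{\mathbf{I}-\mathbf{P}\}\widetilde f\|^2_{L_v^2(\dot H^1\cap\dot H^{N-2})})$. Summing over $1\le|\alpha|\le N-2$ and absorbing the small dissipative pieces then yields \eqref{G4.11}.
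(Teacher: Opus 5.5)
Your overall plan is the one the paper intends: the paper omits this proof and refers to Lemma \ref{L3.6}. Your treatment of the coercive, streaming and $E$-terms is fine.

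\textbf{First gap: the zeroth-order case.} The step that fails is your ``delicate'' zeroth-order case, and its premise is wrong. Undifferentiated $f^{(i)}$ does lie in $L^2_v(L^\infty_x)$: indeed $\|\nu f^{(i)}\|_{L^2_v(L^\infty)}\lesssim\|\langle v\rangle f^{(i)}\|_{L^2_v(\dot H^1\cap\dot H^2)}\lesssim\delta_0$. This is the bound recorded right after \eqref{G3.1}, and exactly the one you use a paragraph earlier. Moreover, the detour through \eqref{newinterpolation} does not close once the velocity weights are kept.
\begin{itemize}
\item The factor produced by Proposition \ref{NJKP2.5} is $|\nu f^{(i)}|_2\,|\nu\partial^\alpha\widetilde f|_\nu$, whose micro piece is $|\nu^{3/2}\partial^\alpha\{\mathbf{I}-\mathbf{P}\}\widetilde f|_2$. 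Moving one derivative onto it produces $\|\nu^{3/2}\{\mathbf{I}-\mathbf{P}\}\widetilde f\|_{L^2_v(\dot H^{N-1})}$ at $|\alpha|=N-2$. That norm is not in the dissipation on the left of \eqref{G4.11}, which only goes up to $\dot H^{N-2}$. It is not on the right either, where $\dot H^{N-1}$ appears only unweighted.
\item The other factor becomes $\|\nu f^{(i)}\|_{L^2_v(\dot B^{1/2}_{2,\infty})}$, which $\CE^{\frac{1}{2},N}$ does not provide. Interpolating the unweighted $L^2$ bound on $\{\mathbf{I}-\mathbf{P}\}f^{(i)}$ with the $\langle v\rangle$-weighted $\dot H^1$ bound gives only a $\nu^{1/2}$ weight at regularity $\frac{1}{2}$.
\end{itemize}
The fix is to put $\nu f^{(i)}$ in $L^2_v(L^\infty)$ and keep all derivatives on $\widetilde f$, as in the bound for $J_8$ in Lemma \ref{L3.6}. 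The product estimate \eqref{newinterpolation} stressed in the paper's remark is what the $x$-derivative-free levels of Lemmas \ref{L4.4}--\ref{L4.5} require, not Lemma \ref{L4.3}.

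\textbf{Second gap: the symmetric term.} You dismiss this term as ``plus the symmetric term'', but it needs an argument.
\begin{itemize}
\item In Proposition \ref{NJKP2.5} it reads $|\nu\partial^{\beta}(f^{(1)},f^{(2)})|_\nu\,|\nu\partial^{\alpha-\beta}\widetilde f|_2$, so the weight $\nu^{3/2}$ now sits on the background. After the macro--micro split you need $\nu^{3/2}\{\mathbf{I}-\mathbf{P}\}f^{(i)}$ in norms like $L^2_v(\dot H^1\cap\dot H^2)$.
\item Theorem \ref{Th1} gives only $\langle v\rangle$-weighted control of $f^{(i)}$ pointwise in time. The quantity $\nu^{3/2}\{\mathbf{I}-\mathbf{P}\}f^{(i)}$ appears only in the dissipation $\mathcal{D}^H$ of \eqref{G3.51}.
\item In Lemma \ref{L3.6} this was invisible: both arguments of $\Gamma$ are $f$, so the heavy weight can always go on the factor whose micro part is dissipated. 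Here the loss part of $\Gamma$ contains $f^{(i)}(v)\int\!\!\int|(v-v_*)\cdot\omega|\sqrt{M_*}\,\widetilde f_*$. This grows like $\nu(v)f^{(i)}(v)$, so the weight cannot simply be shifted onto $\widetilde f$.
\end{itemize}
Your last paragraph only addresses the weight $\nu$ on $f^{(i)}$, so this term is left unbounded. To close it you must either find a sharper bound for this piece, or accept a modified lemma. For the latter, write $\mathcal{D}^H_i(t)$ for the dissipation \eqref{G3.51} evaluated at $f^{(i)}$, and carry an extra term $C\,\mathcal{D}^H_i(t)\,\widetilde{\mathcal{E}}^H(t)$ on the right. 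You then absorb it in the Gr\"onwall step of Theorem \ref{T4.6}, using $\int_t^{t+1}\mathcal{D}^H_i\,{\rm d}s\lesssim\delta_0^2$, which follows by integrating \eqref{G3.53}. This changes the form of \eqref{G4.11} but not the conclusion of Theorem \ref{T4.6}.
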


\begin{lem}\label{L4.4}
For strong solutions of equation   \eqref{G4.1}, there exists a positive constant $\widetilde \lambda_{6}>0$ such that
\begin{align}  \label{G4.12}
&\frac{{\rm d}}{{\rm d}t} \sum_{\substack{ 1\leq |\beta|\leq N-1 \\|\alpha|+|\beta| \leq N-1}}\|\partial^\alpha_{x}\partial^\beta_{v}\{\mathbf{I}-\mathbf{P}\}\widetilde f(t) \|_{L_{x,v}^2 }^2+ \widetilde\lambda_{6}\sum_{\substack{ 1\leq |\beta|\leq N -1\\|\alpha|+|\beta| \leq N-1}} \| \partial^{\alpha}_{x}\partial^\beta_{v}\{\mathbf{I}-\mathbf{P}\}\widetilde f(t)\|_{\nu}^2 \nonumber\\
\lesssim&\, \|\nu^{\frac{1}{2}}\{\mathbf{I}-\mathbf{P}\}\widetilde f(t)\|_{L_v^2(   H^{N-2})}^2+ \|\widetilde f(t)\|_{L_v^2(\dot H^{1}\cap\dot H^{N-1})}^2,
\end{align}
for any $t\geq 0$.
\end{lem}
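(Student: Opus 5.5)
We will mirror the proof of Lemma \ref{L3.7}, now for the difference equation \eqref{G4.1} at one lower order of regularity ($N-1$ in place of $N$). First apply $\{\mathbf{I}-\mathbf{P}\}$ to \eqref{G4.1}. This gives the analogue of \eqref{G3.29}:
\begin{align*}
&\partial_t \{\mathbf{I}-\mathbf{P}\}\widetilde f+v\cdot\nabla \{\mathbf{I}-\mathbf{P}\}\widetilde f+E\cdot\nabla_v \{\mathbf{I}-\mathbf{P}\}\widetilde f-\tfrac{1}{2}E\cdot v\{\mathbf{I}-\mathbf{P}\}\widetilde f+\mathcal{L}\{\mathbf{I}-\mathbf{P}\}\widetilde f\\
&\quad=\Gamma(f^{(1)}+f^{(2)},\widetilde f)-v\cdot \nabla \mathbf{P}\widetilde f+\mathbf{P}(v\cdot\nabla \widetilde f)-E\cdot\nabla_v\mathbf{P}\widetilde f+\tfrac{1}{2}E\cdot v\mathbf{P}\widetilde f+ \mathbf{P}\big[ \tfrac{1}{2}E\cdot v \widetilde f-E\cdot\nabla_v \widetilde f\big].
\end{align*}
No source term $E\cdot v\sqrt M$ appears here. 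We then apply $\partial^\alpha_x\partial^\beta_v$ with $1\le|\beta|\le N-1$ and $|\alpha|+|\beta|\le N-1$, take the $L^2_{x,v}$ inner product with $\partial^\alpha_x\partial^\beta_v\{\mathbf{I}-\mathbf{P}\}\widetilde f$, and split the right-hand side into terms $\widetilde J_9,\dots,\widetilde J_{13}$ exactly as in \eqref{G3.37}. For coercivity we use \eqref{G2.3} as in \eqref{G3.38}; it leaves the error $C\|\partial^\alpha_x\{\mathbf{I}-\mathbf{P}\}\widetilde f\|_\nu^2$ with $|\alpha|\le N-2$, which is exactly the first term on the right of \eqref{G4.12}. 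The transport commutator $[\partial_v^\beta,v\cdot\nabla_x]$ produces $\partial^{\alpha+e_i}_x\partial^{\beta-e_i}_v\{\mathbf{I}-\mathbf{P}\}\widetilde f$. As in Guo's scheme, we handle it by summing over $|\beta|=k$ with a weighted induction in $k$: larger constants go on lower $|\beta|$, so the commutator is absorbed either by the previous level or by $\|\nu^{1/2}\{\mathbf{I}-\mathbf{P}\}\widetilde f\|_{L^2_v(H^{N-2})}^2$ when $\beta-e_i=0$.

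The linear macro terms $\widetilde J_{10},\widetilde J_{11},\widetilde J_{12}$ are bounded as in \eqref{G3.40}. We use $\|\partial^\alpha_x\partial^\beta_v\mathbf{P}\widetilde f\|\lesssim\|\partial^\alpha_x\mathbf{P}\widetilde f\|$, the exponential decay of the $\mathbf{P}$-basis, and the smallness of $E$ from \eqref{G1.6}. This gives the bound $C\|\widetilde f\|_{L^2_v(\dot H^1\cap\dot H^{N-1})}^2+\frac19\|\partial^\alpha_x\partial^\beta_v\{\mathbf{I}-\mathbf{P}\}\widetilde f\|_\nu^2$. For the $E$-terms at zeroth $x$-order we need $\|E\widetilde f\|_{L^2_{x,v}}\lesssim\|E\|_{L^3}\|\widetilde f\|_{L^2_v(L^6)}\lesssim \delta_0\|\widetilde f\|_{L^2_v(\dot H^1)}$. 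The micro $E$-term $\widetilde J_{13}$ is treated as in \eqref{G3.41}. The top-order $x$-derivative falling on $\{\mathbf{I}-\mathbf{P}\}\widetilde f$ gives $\langle E\cdot\nabla_v g,g\rangle=0$ and $\langle E\cdot v g,g\rangle\lesssim\|E\|_{L^\infty}\|g\|_\nu^2$. The commutator terms, in which derivatives fall on $E$, are controlled with $\|E\|_{H^N}\le\delta_0$ by Lemmas \ref{LA.1}--\ref{LA.3}. All of this yields $C\delta_0\sum\|\partial^{\alpha'}_x\partial^{\beta'}_v\{\mathbf{I}-\mathbf{P}\}\widetilde f\|_\nu^2$ plus $C\delta_0\|\nu^{1/2}\{\mathbf{I}-\mathbf{P}\}\widetilde f\|_{L^2_v(H^{N-2})}^2$, which are absorbed.

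The main obstacle is the collision term $\widetilde J_9=\langle\partial^\alpha_x\partial^\beta_v\Gamma(f^{(1)}+f^{(2)},\widetilde f),\partial^\alpha_x\partial^\beta_v\{\mathbf{I}-\mathbf{P}\}\widetilde f\rangle$. We bound it by Proposition \ref{prop2.4}, using products $|\partial^{\alpha_1}_x\partial^{\beta_1}_v g|_2|\partial^{\alpha_2}_x\partial^{\beta_2}_v h|_\nu$ with one factor from $(f^{(1)},f^{(2)})$ and one from $\widetilde f$. We then split each factor into its macro and micro parts. When $\widetilde f$ carries mixed derivatives of the micro part, the background factor is placed in $L^\infty_x$ via $\dot H^1\cap\dot H^{2}$, or in $\dot H^{N}$; this gives $C\delta_0\sum\|\partial^{\alpha'}_x\partial^{\beta'}_v\{\mathbf{I}-\mathbf{P}\}\widetilde f\|_\nu^2$. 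The delicate case is the zeroth-order product in which no $x$-derivative falls on $\widetilde f$. There, instead of the $L^4$ argument of \eqref{G3.39}, which would produce $\|\widetilde f\|_{\dot H^{3/4}}$ (not on the right-hand side of \eqref{G4.12}), we use \eqref{newinterpolation} with $s_1=\frac12$, $s_2=1$, as in the remark preceding Lemma \ref{L4.3}:
\[
\big\||f^{(i)}|_2|\widetilde f|_2\big\|_{L^2}\lesssim\|f^{(i)}\|_{L^2_v(\dot B^{1/2}_{2,\infty})}\|\widetilde f\|_{L^2_v(\dot H^1)}\lesssim\delta_0\|\widetilde f\|_{L^2_v(\dot H^1)}.
\]
The $\nu$-weight is always placed on whichever factor has a dissipative norm available. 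For the background functions this is possible since $\langle v\rangle f^{(i)}$ is bounded in $L^2_v(\dot H^1\cap\dot H^{N-1})$ by Theorem \ref{Th1}, while $\{\mathbf{I}-\mathbf{P}\}f^{(i)}$ with its $v$-derivatives is bounded in $L^2$. Because $N\ge4$, all derivatives of order at most $N-1$ on $f^{(i)}$ leave room for the $L^\infty_x$ embedding, so this should close.

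Finally, we sum all these estimates over $(\alpha,\beta)$ with the weighted induction constants. We choose $\delta_0$ small so that the $\frac19$, $C\delta_0$ and $\kappa$ contributions are absorbed into the left-hand side, which yields \eqref{G4.12} with some $\widetilde\lambda_6>0$.
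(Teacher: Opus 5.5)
Your outline follows the route the paper indicates. The paper omits this proof, refers to Lemma~\ref{L3.7}, and singles out exactly your replacement of the $L^4$ bound by \eqref{newinterpolation} for the zeroth-order product.

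\textbf{The gap: placement of the $\nu$-weight in $\widetilde J_9$.} You claim the weight can always be put on a factor with a usable norm; this step fails, and the paper's sketch does not address it either.
\begin{itemize}
\item Proposition~\ref{prop2.4} puts $|\cdot|_\nu$ on either argument, and here you have no choice. $\Gamma(f^{(1)}+f^{(2)},\widetilde f)$ contains the loss term $(f^{(1)}+f^{(2)})(v)\int_{\mathbb R^3}\int_{\mathbb S^2}|(v-v_*)\cdot\omega|\sqrt{M_*}\,\widetilde f_*\,{\rm d}\omega{\rm d}v_*$. Its $\langle v\rangle$-growth multiplies the background.
\item The dissipation for $\partial^\alpha_x\partial^\beta_v\{\mathbf{I}-\mathbf{P}\}\widetilde f$ carries only $\nu^{1/2}$, so the weight cannot be moved onto the test function.
\item After $\partial^\alpha_x\partial^\beta_v$ you therefore need uniform-in-time bounds on $|\partial^{\alpha_1}_x\partial^{\beta_1}_v\{\mathbf{I}-\mathbf{P}\}f^{(i)}|_\nu$, with $1\le|\beta_1|\le N-1$, in some $L^p_x$.
\item Theorem~\ref{Th1} gives $\langle v\rangle f^{(i)}$ without $v$-derivatives, and $v$-derivatives of $\{\mathbf{I}-\mathbf{P}\}f^{(i)}$ without weight. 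Neither of these, nor an interpolation between them, yields the weighted norms at the top $v$-orders.
\item Those weighted mixed norms appear only in the dissipation \eqref{G3.51} for $f^{(i)}$. Since $E$ does not decay, its time integral grows linearly in $t$.
\end{itemize}
The same problem already occurs at zeroth order, because $\|\nu^{1/2}f^{(i)}\|_{L^2_v(\dot B^{1/2}_{2,\infty})}$ is not available. There it can be fixed: split off $\mathbf{P}f^{(i)}$, and put $\nu^{1/2}\{\mathbf{I}-\mathbf{P}\}f^{(i)}$ in $L^3_xL^2_v$ by interpolating $\|\{\mathbf{I}-\mathbf{P}\}f^{(i)}\|_{L^2_{x,v}}$ and $\|\nu\{\mathbf{I}-\mathbf{P}\}f^{(i)}\|_{L^2_v(\dot H^1)}$.

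\textbf{A workable repair.} Allow an extra term $C\big(\mathcal{D}^H[f^{(1)}]+\mathcal{D}^H[f^{(2)}]\big)(t)\,\widetilde{\mathcal{E}}^H(t)$ on the right-hand side. Here $\mathcal{D}^H[f^{(i)}]$ is \eqref{G3.51} evaluated at $f^{(i)}$, and $\widetilde{\mathcal{E}}^H$ is as in \eqref{G4.17}. To produce this term, place the weighted background factor in $L^2_x$, or in $L^\infty_x$ via $H^2_x$ when it carries at most $N-2$ derivatives. Put the $\widetilde f$ factor, split into macro and micro parts, in the complementary space. In the proof of Theorem~\ref{T4.6} you then run Gr\"onwall using $\int_s^t\mathcal{D}^H[f^{(i)}]\,{\rm d}\tau\lesssim\delta_0^2(1+t-s)$. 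This bound follows by integrating \eqref{G3.53} over $[s,t]$, and it lowers the exponential rate only by $O(\delta_0^2)$. Alternatively, propagate $\nu^{1/2}$-weighted mixed derivatives of $\{\mathbf{I}-\mathbf{P}\}f$ already in Theorem~\ref{Th1}.

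\textbf{A smaller slip.} For $|\alpha|=N-2$, $|\beta|=1$, the transport commutator produces $\nabla_x^{N-1}\{\mathbf{I}-\mathbf{P}\}\widetilde f$, which is not in $H^{N-2}$. It is controlled instead by the $\|\widetilde f\|_{L^2_v(\dot H^{N-1})}$ term on the right-hand side.
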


  \begin{lem} \label{L4.5}
For strong solutions of equation   \eqref{G4.1}, there exists a positive constant $\widetilde \lambda_{7}>0$ such that
\begin{align} \label{G4.13}
&\frac{{\rm d}}{{\rm d}t}\| \{\mathbf{I}-\mathbf{P}\}\widetilde f(t) \|_{L_{x,v}^2 }^2+ \widetilde\lambda_{7} \| \{\mathbf{I}-\mathbf{P}\}\widetilde f(t)\|_{\nu}^2  \lesssim \|\widetilde f\|_{L_v^2(\dot H^1)}^2,
\end{align}
for any  $t\geq 0$.
\end{lem}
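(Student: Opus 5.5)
We mimic the proof of Lemma \ref{L3.8}, now applied to the error equation \eqref{G4.1}. First we apply $\{\mathbf{I}-\mathbf{P}\}$ to \eqref{G4.1}. Using $\mathbf{P}\Gamma=0$ and $\{\mathbf{I}-\mathbf{P}\}\mathcal{L}=\mathcal{L}\{\mathbf{I}-\mathbf{P}\}$, this gives the analogue of \eqref{G3.29}. The right-hand side is
\[
\Gamma(f^{(1)}+f^{(2)},\widetilde f)+\mathbf{P}(v\cdot\nabla\widetilde f)-v\cdot\nabla\mathbf{P}\widetilde f,
\]
together with the $E$-terms: $\mathbf{P}[\frac12 E\cdot v\widetilde f-E\cdot\nabla_v\widetilde f]$, $\frac12E\cdot v\mathbf{P}\widetilde f-E\cdot\nabla_v\mathbf{P}\widetilde f$, and the purely micro transport terms $E\cdot\nabla_v\{\mathbf{I}-\mathbf{P}\}\widetilde f-\frac12E\cdot v\{\mathbf{I}-\mathbf{P}\}\widetilde f$. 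There is no source term, since $\{\mathbf{I}-\mathbf{P}\}(E\cdot v\sqrt M)=0$ and in any case it cancels in the difference.

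Next we take the $L^2_{x,v}$ inner product with $\{\mathbf{I}-\mathbf{P}\}\widetilde f$. The coercivity \eqref{G2.2} produces $\kappa_0\|\{\mathbf{I}-\mathbf{P}\}\widetilde f\|_\nu^2$. The transport terms $v\cdot\nabla$ and $E\cdot\nabla_v$ acting on $\{\mathbf{I}-\mathbf{P}\}\widetilde f$ vanish after integration by parts, because $\nabla_v\cdot E=0$. We then bound the remaining terms one at a time.

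\begin{itemize}
\item \emph{The commutator term.} $\mathbf{P}(v\cdot\nabla\widetilde f)-v\cdot\nabla\mathbf{P}\widetilde f$ is bounded exactly as in \eqref{G3.46}, by $C\|\nabla\widetilde f\|_{L^2_{x,v}}^2+\frac{\kappa_0}{6}\|\{\mathbf{I}-\mathbf{P}\}\widetilde f\|_\nu^2$.
\item \emph{The macroscopic $E$-terms.} The velocity weights and $\nabla_v$ fall on $\mathbf{P}$ or are absorbed by the exponential decay of the basis functions. As in \eqref{G3.47}, these terms are bounded by $\|E\|_{L^3}\|\widetilde f\|_{L^2_v(L^6)}\|\{\mathbf{I}-\mathbf{P}\}\widetilde f\|_\nu$. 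With $\dot H^1\hookrightarrow L^6$ and $\dot B^{-3/2}_{2,\infty}\cap\dot H^N\hookrightarrow L^3$ this gives $C\|\widetilde f\|_{L^2_v(\dot H^1)}^2+\frac{\kappa_0}{6}\|\{\mathbf{I}-\mathbf{P}\}\widetilde f\|_\nu^2$.
\item \emph{The micro term $\frac12E\cdot v|\{\mathbf{I}-\mathbf{P}\}\widetilde f|^2$.} This is bounded by $\|E\|_{L^\infty}\|\{\mathbf{I}-\mathbf{P}\}\widetilde f\|_\nu^2\lesssim\delta_0\|\{\mathbf{I}-\mathbf{P}\}\widetilde f\|_\nu^2$, as in \eqref{G3.48}.
\end{itemize}

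The main obstacle is the collision term $\langle\Gamma(f^{(1)}+f^{(2)},\widetilde f),\{\mathbf{I}-\mathbf{P}\}\widetilde f\rangle$. Unlike in \eqref{G3.45}, this term is linear in $\widetilde f$. We cannot use a quartic $L^4$ bound, and the right-hand side of \eqref{G4.13} contains only $\|\widetilde f\|_{L^2_v(\dot H^1)}$. We use \eqref{G2.5} with $\eta=\frac12$ to bound the term by
\[
\int\big(|f^{(i)}|_\nu|\widetilde f|_2+|f^{(i)}|_2|\widetilde f|_\nu\big)|\{\mathbf{I}-\mathbf{P}\}\widetilde f|_\nu\,{\rm d}x.
\]
We then split $\widetilde f=\mathbf{P}\widetilde f+\{\mathbf{I}-\mathbf{P}\}\widetilde f$.

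\begin{itemize}
\item \emph{Pieces containing $\{\mathbf{I}-\mathbf{P}\}\widetilde f$ twice.} These are bounded by $\|(\nu f^{(1)},\nu f^{(2)})\|_{L^2_v(L^\infty)}\|\{\mathbf{I}-\mathbf{P}\}\widetilde f\|_\nu^2\lesssim\delta_0\|\{\mathbf{I}-\mathbf{P}\}\widetilde f\|_\nu^2$, using Theorem \ref{Th1} and Sobolev embedding (the weighted norms are controlled since $\langle v\rangle f^{(i)}\in L^2_v(\dot H^1\cap\dot H^{N-1})$).
\item \emph{Pieces with $\mathbf{P}\widetilde f$.} Here the velocity weights are harmless. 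We apply Young's inequality and reduce to $\big\||f^{(i)}|_{\nu}|\widetilde f|_2\big\|_{L^2}^2$. By Lemma \ref{LA.1} and the product law \eqref{newinterpolation} with $s_1=\frac12$, $s_2=1$ (this is the zeroth-order estimate stressed in the Remark preceding Lemma \ref{L4.3}), it is bounded by
\[
\|(\nu f^{(1)},\nu f^{(2)})\|_{L^2_v(\dot B^{1/2}_{2,\infty})}^2\|\widetilde f\|_{L^2_v(\dot H^1)}^2\lesssim\delta_0\|\widetilde f\|_{L^2_v(\dot H^1)}^2.
\]
Strictly, \eqref{newinterpolation} requires $\frac1q=\frac1{q_1}+\frac1{q_2}$; with $\dot B^{1/2}_{2,\infty}$ and $\dot B^1_{2,2}=\dot H^1$ it yields $\dot B^0_{2,\infty}$ rather than $L^2$. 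We therefore expect to use the $\nu$-weighted $L^2_v(\dot B^{1/2}_{2,\infty}\cap\dot H^N)$ control of $f^{(i)}$, interpolated slightly into $\dot B^{1/2+\epsilon}_{2,2}$ and $\dot B^{1/2-\epsilon}_{2,2}$, as the step that needs care. The weight $\nu$ on $f^{(i)}$ is acceptable because Theorem \ref{Th1} controls $\langle v\rangle f^{(i)}$ in $L^2_v(\dot H^1\cap\dot H^{N-1})$, and $\langle v\rangle\mathbf{P}f^{(i)}$ is controlled by $f^{(i)}$ itself.
\end{itemize}

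Finally, we collect all the estimates. Choosing $\delta_0$ small absorbs every $\|\{\mathbf{I}-\mathbf{P}\}\widetilde f\|_\nu^2$ contribution into the dissipation $\kappa_0\|\{\mathbf{I}-\mathbf{P}\}\widetilde f\|_\nu^2$, which yields \eqref{G4.13} with $\widetilde\lambda_7=\kappa_0$.
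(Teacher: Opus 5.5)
Your proposal follows the route the paper intends. The paper omits this proof, points to Lemma \ref{L3.8}, and names your key step, the zeroth-order product estimate for the collision term, in the Remark before Lemma \ref{L4.3}. Two points need correcting before the argument closes.

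\textbf{The step you flagged is not a problem, and your fallback would fail.} In \eqref{newinterpolation} with $(s_1,q_1)=(\frac12,\infty)$ and $(s_2,q_2)=(1,2)$ one has $\frac1q=\frac1\infty+\frac12=\frac12$. So $q=2$, and the product lands in $\dot B^0_{2,2}=L^2$, not $\dot B^0_{2,\infty}$. This is the Hardy-type bound $\|u\,w\|_{L^2}\lesssim\|u\|_{\dot B^{1/2}_{2,\infty}}\|w\|_{\dot H^1}$; compare $u=1/|x|$. Use it directly and drop the interpolation into $\dot B^{1/2\pm\epsilon}_{2,2}$. That fallback is not available anyway: $\dot B^{1/2}_{2,\infty}\cap\dot H^N$ gives no control of $\dot B^{1/2-\epsilon}_{2,2}$, or of $L^3$, at low frequencies. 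A smoothed $1/|x|$ is a counterexample. So the endpoint product law is genuinely needed here, and it holds.

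\textbf{The velocity weight.} The factor in the collision bound is $|f^{(i)}|_\nu=|\nu^{1/2}f^{(i)}|_2$, so what you need is $\|\nu^{1/2}f^{(i)}\|_{L^2_v(\dot B^{1/2}_{2,\infty})}$, not $\|\nu f^{(i)}\|_{L^2_v(\dot B^{1/2}_{2,\infty})}$. The latter is not controlled by $\|f^{(i)}\|_{\mathcal{E}^{1/2,N}}$ at low frequencies. Your justification via $\langle v\rangle f^{(i)}\in L^2_v(\dot H^1\cap\dot H^{N-1})$ only covers high frequencies. The correct control is as follows:
\begin{itemize}
\item The macroscopic part $\nu^{1/2}\mathbf{P}f^{(i)}$ is bounded by $\|f^{(i)}\|_{L^2_v(\dot B^{1/2}_{2,\infty})}$.
\item For the micro part, interpolate frequency by frequency and then apply Cauchy--Schwarz in $v$:
\[
\|\nu^{1/2}\{\mathbf{I}-\mathbf{P}\}f^{(i)}\|_{L^2_v(\dot B^{1/2}_{2,\infty})}^2\lesssim \|\{\mathbf{I}-\mathbf{P}\}f^{(i)}\|_{L^2_{x,v}}\,\|\nu\{\mathbf{I}-\mathbf{P}\}f^{(i)}\|_{L^2_v(\dot H^1)}.
\]
Both factors on the right are part of the $\mathcal{E}^{1/2,N}$ norm.
\end{itemize}
The paper's Remark suppresses this weight altogether.

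To apply the product law cleanly, expand $\mathbf{P}\widetilde f$ in its finitely many macroscopic coefficients. For each fixed $v$, apply \eqref{newinterpolation} to $\nu^{1/2}f^{(i)}(\cdot,v)$ times a coefficient, then integrate in $v$. This avoids taking Besov norms of the scalar function $x\mapsto|f^{(i)}|_\nu$. With these two fixes the rest of your argument goes through as written.
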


Collecting all the estimates \eqref{G4.2}, \eqref{G4.7}, \eqref{G4.11}, \eqref{G4.12}, and \eqref{G4.13} in Lemmas \ref{L4.1}, \ref{L4.2}, \ref{L4.3}, \ref{L4.4} and \ref{L4.5}, we can deduce the following time-weighted estimate.

\begin{thm}\label{T4.6}
Under the assumptions of Theorem \ref{Th2}, we have
\begin{align} \label{G4.14}
&\sup_{t\geq 0} (1+t)^{\frac{1-\varepsilon-s_0}{2}}  \Big( 
 \|\widetilde f(t)\|_{L_v^2(\dot H^1\cap \dot H^{N-1})}+\|{\langle v\rangle}\widetilde f(t) \|_{L_v^2(\dot H^1\cap\dot H^{N-2})} +\|\{\mathbf{I}-\mathbf{P}\}\widetilde f(t)\|_{L_{x,v}^2}  \Big)\nonumber\\
& +\sup_{t\geq 0} (1+t)^{\frac{1-\varepsilon-s_0}{2}}   \sum_{\substack{ 1\leq |\beta|\leq N-1 \\|\alpha|+|\beta| \leq N-1}}\|\partial^\alpha_{x}\partial^\beta_{v}\{\mathbf{I}-\mathbf{P}\}\widetilde f(t) \|_{L_{x,v}^2 }    \nonumber\\
\lesssim&\,   \widetilde{\mathcal{D}}(t)+\|\widetilde f_0\|_{L_v^2(\dot H^1\cap \dot H^{N-1})}+\|\langle v\rangle\widetilde f_0 \|_{L_v^2(\dot H^1\cap\dot H^{N-2})} +\|\{\mathbf{I}-\mathbf{P}\}\widetilde f_0\|_{L_{x,v}^2} \nonumber\\
&+\sum_{\substack{ 1\leq |\beta|\leq N-1 \\|\alpha|+|\beta| \leq N-1}}\|\partial^\alpha_{x}\partial^\beta_{v}\{\mathbf{I}-\mathbf{P}\}\widetilde f_0\|_{L_{x,v}^2 } ,
\end{align}
for any $s\in\big[-\frac{3}{2}+\varepsilon,1-\varepsilon\big]$ with $s\geq s_0$ and $s_0\in\big(-\frac{3}{2},\frac{1}{2}\big]$, 
where  $\widetilde f_0(x,v)= f^{(1)}_0 (x,v)-f^{(2)}_0 (x,v)$
and $\widetilde{\mathcal{D}} (t)$ is given by
\begin{align*}
\widetilde{\mathcal{D}}(t):=  \sup_{t\geq 0}\,(1+t)^{\frac{1}{2}-\frac{\varepsilon+s_0}{2}} \|\widetilde f (t)\|_{L_v^2(\dot B_{2,\infty}^{1-\varepsilon} )} .   
\end{align*}    
\end{thm}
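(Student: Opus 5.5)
I would build a single Lyapunov functional for the high-order and micro part of $\widetilde f$, mirroring the argument for Theorem \ref{Th1} in Section 3.3. Concretely, I set
\begin{align*}
\widetilde{\mathcal E}^H(t):=&\,\|\widetilde f\|_{L_v^2(\dot H^1\cap\dot H^{N-1})}^2+\tau_1\sum_{1\leq k\leq N-2}\widetilde{\mathfrak E}_k(t)
+\tau_2\Big(\|\nu\{\mathbf{I}-\mathbf{P}\}\widetilde f\|_{L_v^2(\dot H^1\cap\dot H^{N-2})}^2+\|\{\mathbf{I}-\mathbf{P}\}\widetilde f\|_{L^2_{x,v}}^2\Big)\\
&+\tau_2\sum_{\substack{1\leq|\beta|\leq N-1\\|\alpha|+|\beta|\leq N-1}}\|\partial^\alpha_x\partial^\beta_v\{\mathbf{I}-\mathbf{P}\}\widetilde f\|_{L^2_{x,v}}^2 ,
\end{align*}
with $\tau_2\ll\tau_1\ll1$. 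Here $\widetilde{\mathcal E}^H$ is equivalent to the square of the bracket in \eqref{G4.14}, because $\langle v\rangle\mathbf{P}\widetilde f$ is controlled by $\mathbf{P}\widetilde f$ and $\nu\sim\langle v\rangle$.

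First I add \eqref{G4.2}, $\tau_1\times$\eqref{G4.7}, and $\tau_2\times$(\eqref{G4.11}+\eqref{G4.12}+\eqref{G4.13}). The micro dissipation from \eqref{G4.2} absorbs the $\tau_1$ micro terms of \eqref{G4.7}. The macro dissipation $\|\mathbf{P}\widetilde f\|_{\dot H^2\cap\dot H^{N-1}}$ absorbs the $\kappa+C\delta_0$ terms. The $\tau_2$-weighted dissipations absorb the $C\delta_0$ mixed-derivative terms of \eqref{G4.2} and the $\delta_0\nu^{1/2}\nabla_v$ term of \eqref{G4.11}. The remaining right-hand sides, $\tau_2\|\widetilde f\|_{L_v^2(\dot H^1\cap\dot H^{N-1})}^2$ and the $\nu^{1/2}$ micro norms, are absorbed by the $\widetilde\lambda_1$ dissipation, except at the lowest level. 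The only unabsorbed quantity is $\|\mathbf{P}\widetilde f\|_{L_v^2(\dot H^1)}^2$, since \eqref{G4.7} only dissipates from $\dot H^2$. This gives
\[
\frac{\rm d}{{\rm d}t}\widetilde{\mathcal E}^H+\lambda\,\widetilde{\mathcal D}^H\lesssim\|\widetilde f\|_{L_v^2(\dot H^1)}^2 ,
\]
and, as in \eqref{G3.53}--\eqref{G3.54}, after adding a low-frequency norm to both sides,
\[
\frac{\rm d}{{\rm d}t}\widetilde{\mathcal E}^H+\lambda\widetilde{\mathcal E}^H\lesssim\|\widetilde f(t)\|_{L_v^2(\dot B^{1-\varepsilon}_{2,\infty})}^2 .
\]
To get the second form I bound $\|\widetilde f\|_{L_v^2(\dot H^1)}$ by interpolating between $\dot B^{1-\varepsilon}_{2,\infty}$ and $\dot H^{N-1}$ using \eqref{interpolation}, then use Young to absorb the $\dot H^{N-1}$ piece into dissipation. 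For the low-frequency part $\widetilde f_L$ this is direct, since $\|\widetilde f_L\|_{\dot H^1}\lesssim\|\widetilde f_L\|_{\dot B^{1-\varepsilon}_{2,\infty}}$ by Bernstein.

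Next, Gr\"onwall gives
\[
\widetilde{\mathcal E}^H(t)\lesssim e^{-\lambda t}\widetilde{\mathcal E}^H(0)+\int_0^te^{-\lambda(t-\tau)}(1+\tau)^{-(1-\varepsilon-s_0)}{\rm d}\tau\,\widetilde{\mathcal D}(t)^2 ,
\]
using $\|\widetilde f(\tau)\|_{L_v^2(\dot B^{1-\varepsilon}_{2,\infty})}\le(1+\tau)^{-\frac{1-\varepsilon-s_0}{2}}\widetilde{\mathcal D}$. The convolution of an exponential with an algebraic rate is $\lesssim(1+t)^{-(1-\varepsilon-s_0)}$. Taking square roots and multiplying by $(1+t)^{\frac{1-\varepsilon-s_0}{2}}$ yields \eqref{G4.14}.

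The main obstacle is the closure in the first step. The lowest-order term $\|\mathbf{P}\widetilde f\|_{\dot H^1}$ and the $C\delta_0$ cross terms between Lemmas \ref{L4.1} and \ref{L4.3}--\ref{L4.4} must be absorbed consistently. The velocity-weighted and velocity-derivative dissipations feed back into each other via the $E\cdot v\widetilde f$ and $E\cdot\nabla_v\widetilde f$ terms, so $\tau_1,\tau_2$ and then $\delta_0$ must be chosen in the right order: first $\tau_1,\tau_2$ with all other constants fixed, then $\delta_0$ small depending on them.
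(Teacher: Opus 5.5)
Your proposal is correct and follows the paper's proof essentially step by step. You combine \eqref{G4.2}, \eqref{G4.7} and \eqref{G4.11}--\eqref{G4.13} into a Lyapunov inequality whose only unabsorbed term is $\|\mathbf{P}\widetilde f\|_{L_v^2(\dot H^1)}^2$, trade that term for $\|\widetilde f\|_{L_v^2(\dot B^{1-\varepsilon}_{2,\infty})}^2$, and close with Gr\"onwall and the exponential--algebraic convolution bound. Your interpolation/Bernstein argument for the trade supplies a justification the paper skips (the paper calls it ``direct''; the high-frequency part must be absorbed into the $\dot H^{N-1}$ dissipation, since $L_v^2(\dot B^{1-\varepsilon}_{2,\infty})$ does not embed into $L_v^2(\dot H^1)$). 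One bookkeeping adjustment is needed: give \eqref{G4.13} a larger weight than \eqref{G4.12}, so that the zeroth-order term $\|\{\mathbf{I}-\mathbf{P}\}\widetilde f\|_{\nu}^2$ on the right of \eqref{G4.12} is absorbed.
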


\begin{proof}
It follows from \eqref{G4.11}--\eqref{G4.13} that
\begin{align*}
&\frac{{\rm d}}{{\rm d}t}\bigg(\|\nu\{\mathbf{I}-\mathbf{P}\}\widetilde f(t) \|_{L_v^2(\dot H^1\cap\dot H^{N-2})}^2 +\|\{\mathbf{I}-\mathbf{P}\}\widetilde f(t)\|_{L_{x,v}^2}^2+ \sum_{\substack{ 1\leq |\beta|\leq N-1 \\|\alpha|+|\beta| \leq N-1}}\|\partial^\alpha_{x}\partial^\beta_{v}\{\mathbf{I}-\mathbf{P}\}\widetilde f(t) \|_{L_{x,v}^2 }^2\bigg)
 \nonumber\\
&+ \widetilde\lambda_{8} \bigg(\|\nu^{\frac{3}{2}}\{\mathbf{I}-\mathbf{P}\}\widetilde f(t)\|_{L_v^2(\dot H^1\cap\dot H^{N-2})}^2+ \|\{\mathbf{I}-\mathbf{P}\}\widetilde f(t)\|_{\nu}^2+\sum_{\substack{ 1\leq |\beta|\leq N -1\\|\alpha|+|\beta| \leq N-1}} \| \partial^{\alpha}_{x}\partial^\beta_{v}\{\mathbf{I}-\mathbf{P}\}\widetilde f(t)\|_{\nu}^2\bigg)\nonumber\\    
&\quad\lesssim \|\widetilde f\|_{L_v^2(\dot H^1\cap\dot H^{N-1})}^2+\|\nu^\frac{1}{2}\{\mathbf{I}-\mathbf{P}\}\widetilde f\|_{L_v^2(\dot H^1\cap\dot H^{N-2})}^2 ,
\end{align*}
for some constant $\widetilde\lambda_8>0$, which together with \eqref{G4.2} and \eqref{G4.7} gives 
\begin{align}\label{G4.16}
&\frac{{\rm d}}{{\rm d}t} \widetilde{\mathcal{E}}^H(t)
 + \widetilde\lambda_{8}\widetilde{\mathcal{D}}^H(t)  \leq  (C\delta_0+\tau_3)\|  \mathbf{P}\widetilde  f\|_{L_v^2(\dot H^1 )}^2 ,
\end{align}
for $0<\tau_3<1$. Here, $\tau_3$ is a sufficiently small constant, and $\widetilde{\mathcal{E}}^H(t)$ and $\widetilde{\mathcal{D}}^H(t)$ are defined as
\begin{align}\label{G4.17}
  \widetilde{\mathcal{E}}^H(t):=  &\, {\|\widetilde f(t)\|_{L_v^2(\dot H^1\cap \dot H^{N-1})}^2}+\|\nu\{\mathbf{I}-\mathbf{P}\}\widetilde f(t) \|_{L_v^2(\dot H^1\cap\dot H^{N-2})}^2 +\|\{\mathbf{I}-\mathbf{P}\}\widetilde f(t)\|_{L_{x,v}^2}^2\nonumber\\
& + \sum_{\substack{ 1\leq |\beta|\leq N-1 \\|\alpha|+|\beta| \leq N-1}}\|\partial^\alpha_{x}\partial^\beta_{v}\{\mathbf{I}-\mathbf{P}\}\widetilde f(t) \|_{L_{x,v}^2 }^2
\end{align}
and
\begin{align*}
 \widetilde{\mathcal{D}}^H(t):= &\,\|\nu^{\frac{3}{2}}\{\mathbf{I}-\mathbf{P}\}\widetilde f(t)\|_{L_v^2(\dot H^1\cap\dot H^{N-2})}^2+ \|\{\mathbf{I}-\mathbf{P}\}\widetilde f(t)\|_{\nu}^2+{\|\nu^\frac{1}{2}\{\mathbf{I}-\mathbf{P}\}\widetilde f(t)\|_{L_v^2(\dot H^{N-1})}^2}\nonumber\\
& +\sum_{\substack{ 1\leq |\beta|\leq N -1\\|\alpha|+|\beta| \leq N-1}} \| \partial^{\alpha}_{x}\partial^\beta_{v}\{\mathbf{I}-\mathbf{P}\}\widetilde f(t)\|_{\nu}^2+\|\mathbf{P}\widetilde
 f(t)\|_{L_v^2(\dot H^2\cap \dot H^{N-1})}^2.
\end{align*}
It is direct to see
\begin{align*}
  \widetilde{\mathcal{E}}^H(t)\lesssim  
  \widetilde{\mathcal{D}}^H(t)+\|\mathbf{P}\widetilde f\|_{L_v^2(\dot  H^1)}^2 \lesssim
   \widetilde{\mathcal{D}}^H(t)+\| \widetilde f\|_{L_v^2(\dot B^{1-\varepsilon}_{2,\infty})}^2.  
\end{align*}
Adding $\widetilde\lambda_8\| \widetilde f\|_{L_v^2(\dot B^{1-\varepsilon}_{2,\infty})}^2$ to both sides of \eqref{G4.16} leads to
\begin{align}\label{G4.18}
&\frac{{\rm d}}{{\rm d}t} \widetilde{\mathcal{E}}^H(t)
 + \widetilde\lambda_{9}\widetilde{\mathcal{E}}^H(t)  \lesssim  \|\widetilde f(t)\|_{L_v^2(\dot B^{1-\varepsilon}_{2,\infty})}^2 \lesssim  (1+t)^{-(1-\varepsilon-s_0)}{[\widetilde{\mathcal{D}}(t)]^2},
\end{align}
for some constant $\widetilde\lambda_9>0$.
By using Gr\"onwall's inequality and the definition of $\widetilde{\mathcal{E}}^H(t)$ in \eqref{G4.17}, we obtain \eqref{G4.14} from \eqref{G4.18}  and then complete the proof of Theorem \ref{T4.6}.
\end{proof}

\subsection{Semi-group method at low frequencies}
With the help of Theorem \ref{T4.6}, since we have obtained the decay of $\langle v\rangle \widetilde f$ and $\nabla_v \widetilde f$, we can further close our semi-group estimate.

\begin{thm}\label{T4.7}
Under the assumptions of Theorem \ref{Th2}, we have
\begin{align}\label{G4.19}
&\sup_{t\geq 0} (1+t)^{\frac{s-s_0}{2}}\|\widetilde f\|_{L_v^2(\dot B_{2,\infty}^s)} \nonumber\\
\lesssim&\, \delta_0 \big(\widetilde{\mathcal{D}}_{\varepsilon}(t)+\widetilde{\mathcal{D}}(t)\big)+\|\widetilde f_0\|_{L_v^2(\dot B_{2,\infty}^{s_0}\cap \dot B_{2,\infty}^{1-\varepsilon})}+\|\widetilde f_0\|_{L_v^2(\dot H^1\cap \dot H^{N-1})}+\|\langle v\rangle\widetilde f_0 \|_{L_v^2(\dot H^1\cap\dot H^{N-2})}  \nonumber\\
&+\|\{\mathbf{I}-\mathbf{P}\}\widetilde f_0\|_{L_{x,v}^2}+\sum_{\substack{ 1\leq |\beta|\leq N-1 \\|\alpha|+|\beta| \leq N-1}}\|\partial^\alpha_{x}\partial^\beta_{v}\{\mathbf{I}-\mathbf{P}\}\widetilde f_0\|_{L_{x,v}^2 } ,   
\end{align}
for any $s\in\big[-\frac{3}{2}+\varepsilon,1-\varepsilon\big]$ with $s\geq s_0$ and $s_0\in\big(-\frac{3}{2},\frac{1}{2}\big]$, 
where  $\widetilde f_0(x,v)= f_0^{(1)} (x,v)-f_0^{(2)} (x,v)$ and ${\widetilde{\mathcal{D}}_{\varepsilon}(t)}$ is given by
\begin{align*}
\widetilde{\mathcal{D}}_{\varepsilon}(t):=\sup_{s_1\leq \bar s\leq 1-\varepsilon} \sup_{t\geq 0}\,(1+t)^{\frac{\bar s-s_0}{2}} \|\widetilde f (t)\|_{L_v^2(\dot B_{2,\infty}^{\bar s}\cap \dot H^{N-1})},\quad s_1=\max\{0,{s_0}\},   
\end{align*}
and $\delta_0$ is defined by \eqref{G1.6}.
\end{thm}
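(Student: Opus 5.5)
We split $\widetilde f=\widetilde f_L+\widetilde f_H$ with $\widetilde f_L=\dot S_{j_0}\widetilde f$. The high-frequency part is handled directly by Theorem \ref{T4.6}. Since $s\le 1-\varepsilon<1$ and $\widetilde f_H$ is supported at frequencies $\gtrsim 2^{j_0}$, we have
\[
\|\widetilde f_H\|_{L_v^2(\dot B^s_{2,\infty})}\lesssim \|\widetilde f\|_{L_v^2(\dot H^1)}.
\]
Theorem \ref{T4.6} gives this a decay rate $(1+t)^{-\frac{1-\varepsilon-s_0}{2}}$, which dominates $(1+t)^{-\frac{s-s_0}{2}}$. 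This part therefore contributes $\widetilde{\mathcal D}(t)$ plus the initial-data norms in \eqref{G4.19}. The $\widetilde{\mathcal D}(t)$ contribution will in the end be multiplied by $\delta_0$ after substituting the low-frequency bound back; alternatively one keeps it and closes later with Theorem \ref{T4.6}.

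For the low-frequency part we use Duhamel's formula for \eqref{G4.1}:
\[
\widetilde f_L(t)=e_L^{t\mathcal B}\widetilde f_0+\int_0^t e_L^{(t-\tau)\mathcal B}\Big[\Gamma(f^{(1)}+f^{(2)},\widetilde f)-E\cdot\nabla_v\widetilde f+\tfrac12E\cdot v\widetilde f\Big](\tau)\,{\rm d}\tau .
\]
\textbf{Linear term.} By \eqref{G2.14}, the first term decays like $(1+t)^{-\frac{s-s_0}{2}}\|\widetilde f_0\|_{L_v^2(\dot B^{s_0}_{2,\infty})}$.

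\textbf{Collision term.} Since $\mathbf P\Gamma=0$, we use \eqref{G2.17}, gaining an extra factor $(1+t-\tau)^{-1/2}$. The nonlinearity is then estimated by \eqref{G2.5} and the product law \eqref{newinterpolation}, in the form
\[
\|\nu^{-1}\Gamma(f^{(i)},\widetilde f)\|_{L_v^2(\dot B^{\sigma}_{2,\infty})}\lesssim \|f^{(i)}\|_{L_v^2(\dot B^{1/2}_{2,\infty}\cap\dot H^N)}\,\|\widetilde f\|_{L_v^2(\dot B^{\bar s}_{2,\infty})},\qquad \sigma=\bar s-1 .
\]
The first factor is $\lesssim\delta_0$ by Theorem \ref{Th1}. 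The second factor, for $\bar s\in[s_1,1-\varepsilon]$, is controlled by $(1+\tau)^{-\frac{\bar s-s_0}{2}}\widetilde{\mathcal D}_\varepsilon$. We then split $\int_0^{t/2}+\int_{t/2}^t$: on $[0,t/2]$ we put the regularity loss on the semigroup, and on $[t/2,t]$ we use the decay of $\widetilde f$.

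\textbf{Force terms.} We treat $E\cdot v\widetilde f$ and $E\cdot\nabla_v\widetilde f$ by the product estimate \eqref{inter}, as in \eqref{G3.9}:
\[
\|E\cdot v\widetilde f\|_{L_v^2(\dot B^{-3/2}_{2,\infty})}\lesssim \|E\|_{\dot B^{-3/2}_{2,\infty}}\,\|\langle v\rangle\widetilde f\|_{L_v^2(\dot B^{3/2}_{2,1})}\lesssim \delta_0\,\|\langle v\rangle\widetilde f\|_{L_v^2(\dot H^1\cap\dot H^{N-2})}.
\]
The same bound holds with $\nabla_v\widetilde f$ in place of $v\widetilde f$. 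By Theorem \ref{T4.6} the right-hand side decays like $(1+\tau)^{-\frac{1-\varepsilon-s_0}{2}}$ times $\widetilde{\mathcal D}$ plus data. Using \eqref{G2.14} from index $-3/2$ to $s$ gives the kernel $(1+t-\tau)^{-\frac{s+3/2}{2}}$. We then need
\[
\int_0^t(1+t-\tau)^{-\frac{s+3/2}{2}}(1+\tau)^{-\frac{1-\varepsilon-s_0}{2}}\,{\rm d}\tau\lesssim(1+t)^{-\frac{s-s_0}{2}} .
\]
To get this, we split the range of $s$ into $[-\frac32+\varepsilon,\frac12)$, $s=\frac12$, and $(\frac12,1-\varepsilon]$.

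\textbf{Main obstacle.} This time integral is where the difficulty lies. The two exponents sum to $\frac{s+5/2-\varepsilon-s_0}{2}$, which exceeds $\frac{s-s_0}{2}+1$ only marginally, by $\frac{1-\varepsilon}{2}$. Moreover, for small $s$ the kernel exponent $\frac{s+3/2}{2}$ can be $<1$ and the source exponent $\frac{1-\varepsilon-s_0}{2}$ can be $<1$. On $[t/2,t]$ this yields $(1+t)^{1-\frac{s+3/2}{2}-\frac{1-\varepsilon-s_0}{2}}=(1+t)^{-\frac{s-s_0}{2}-\frac{1-\varepsilon}{2}+\frac14}$, fine only when $\varepsilon<\frac12$, which is exactly the hypothesis. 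On $[0,t/2]$ we get $(1+t)^{-\frac{s+3/2}{2}}\cdot(1+t)^{1-\frac{1-\varepsilon-s_0}{2}}$, which is acceptable for the same reason. When $s>\frac12$, the kernel at index $s$ may be replaced by the $L^q$-in-time estimate \eqref{G2.16}, or by interpolation between $\frac12$ and $1-\varepsilon$. The endpoint $s=\frac12$ is handled using the $\dot B_{2,1}$ duality as in Lemma \ref{L3.1}. In each regime we verify that exponents match and that no logarithmic loss appears.

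Finally, we sum the low- and high-frequency bounds and take the supremum in $t$ to obtain \eqref{G4.19}.
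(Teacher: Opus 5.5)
\paragraph{The gap: the high-frequency step.} Bounding $\|(1-\dot S_{j_0})\widetilde f\|_{L_v^2(\dot B^s_{2,\infty})}$ by $\|\widetilde f\|_{L_v^2(\dot H^1)}$ and then invoking Theorem~\ref{T4.6} produces $C\,\widetilde{\mathcal D}(t)$ with an order-one constant. But \eqref{G4.19} allows $\widetilde{\mathcal D}$ only with the small factor $\delta_0$.

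This is not cosmetic. Since $\frac12-\frac{\varepsilon+s_0}{2}=\frac{1-\varepsilon-s_0}{2}$, the left-hand side of \eqref{G4.19} at $s=1-\varepsilon$ is exactly $\widetilde{\mathcal D}$. Your bound then reads $\widetilde{\mathcal D}\lesssim\widetilde{\mathcal D}+\cdots$ and says nothing. Moreover, the proof of Theorem~\ref{Th2} needs precisely the factor $\delta_0$ in front of $\widetilde{\mathcal D}$ and $\widetilde{\mathcal D}_\varepsilon$ to absorb these terms after \eqref{G4.14} is inserted.

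Neither remedy you suggest repairs this:
\begin{itemize}
\item Substituting the low-frequency bound back into $\widetilde{\mathcal D}$ only treats $\dot S_{j_0}\widetilde f$. The high-frequency part of $\widetilde{\mathcal D}$ is again routed through Theorem~\ref{T4.6} with an $O(1)$ constant.
\item Closing later with Theorem~\ref{T4.6} is circular, because \eqref{G4.14} itself carries $\widetilde{\mathcal D}$ on its right-hand side with an $O(1)$ constant.
\end{itemize}

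\paragraph{The missing idea.} Apply Duhamel's formula \eqref{G4.20} to all of $\widetilde f$, not only to $\widetilde f_L$. Then treat the high-frequency part of each Duhamel term with the exponentially decaying bounds \eqref{G2.15} and \eqref{G2.18}. With this, only the free evolution of $\widetilde f_0$ comes without $\delta_0$.
\begin{itemize}
\item Free part: it gives $e^{-\kappa_2 t}\|\widetilde f_0\|_{L_v^2(\dot B^{s}_{2,\infty})}\lesssim e^{-\kappa_2 t}\|\widetilde f_0\|_{L_v^2(\dot B^{s_0}_{2,\infty}\cap\dot B^{1-\varepsilon}_{2,\infty})}$. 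This is why the $\dot B^{1-\varepsilon}_{2,\infty}$-norm of the data appears in \eqref{G4.19}.
\item Force terms: they lose no $x$-derivative, and \eqref{inter} gives $\|E\cdot(v\widetilde f,\nabla_v\widetilde f)\|_{L_v^2(\dot B^{1-\varepsilon}_{2,\infty})}\lesssim\|E\|_{\dot B^{3/2-\varepsilon}_{2,1}}\|(v\widetilde f,\nabla_v\widetilde f)\|_{L_v^2(\dot H^1)}$. By interpolation $\|E\|_{\dot B^{3/2-\varepsilon}_{2,1}}\lesssim\delta_0$, so Theorem~\ref{T4.6} now enters multiplied by $\delta_0$.
\item Collision term: it is treated the same way, bounding $\nu^{-1}\Gamma(f^{(1)}+f^{(2)},\widetilde f)$ in $L_v^2(\dot B^{1-\varepsilon}_{2,\infty})$ by $\delta_0\|\widetilde f\|_{L_v^2(\dot B^{1-\varepsilon}_{2,\infty}\cap\dot H^{N-1})}$. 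This is where the $\dot H^{N-1}$ part of $\widetilde{\mathcal D}_\varepsilon$ is used.
\end{itemize}

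\paragraph{The low-frequency terms.} Your low-frequency force computation is fine. A single convolution estimate actually covers every $s$, since the margin $\frac14-\frac{\varepsilon}{2}>0$ absorbs the logarithm at $s=\frac12$. The low-frequency collision term needs more care at the endpoint $s=1-\varepsilon$. There the pointwise kernel from \eqref{G2.17} on $[t/2,t]$ forces $\bar s\le s$, which costs either a logarithm or part of the decay rate. You need the $L^1$-in-time duality argument of Lemma~\ref{L3.1} instead.
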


\begin{proof}
By Duhamel's principle, the solution of \eqref{G4.1} can be written as
\begin{align}\label{G4.20}
\widetilde f(t)=e^{t\mathcal B} \widetilde f_0+\int_0^t e^{\tau\mathcal{B}}\Big[\Gamma(f^{(1)}+f^{(2)},\widetilde f)+\frac{1}{2}E\cdot v\widetilde{f}-E\cdot\nabla_{v}\widetilde{f}\Big](t-\tau){\rm d}\tau.  
\end{align}
For any $s\in \big[-\frac{3}{2}+\varepsilon,1-\varepsilon \big]$, using the same argument as presented in \cite[Lemma 4.1]{DLN-2026}, we can conclude that
\begin{align}\label{G4.21}
&\bigg\|e^{t\mathcal{B}}\widetilde{f}_0+\int_0^t e^{\tau\mathcal{B}} \Gamma(f^{(1)}+f^{(2)},\widetilde f)(t-\tau){\rm d}\tau\bigg\|_{L_v^2(\dot B^s_{2,\infty})}\nonumber\\
&\quad \lesssim     (1+t)^{-\frac{s-s_0}{2} }\|\widetilde f_0\|_{L_v^2(\dot B_{2,\infty}^{s_0}\cap \dot B_{2,\infty}^{1-\varepsilon})}+ \delta_0 \mathcal{D}_{\varepsilon}(t)(1+t)^{-\frac{s-s_0}{2}}.
\end{align}
Therefore, we only need to estimate the remaining inhomogeneous part of \eqref{G4.20}:
\begin{align}\label{G4.22}
  \widetilde{H}(t):=\int_0^t e^{\tau\mathcal{B}}\Big[ \frac{1}{2}E\cdot v\widetilde{f}-E\cdot\nabla_{v}\widetilde{f}\Big](t-\tau){\rm d}\tau.   
\end{align}

Let's focus on the estimate of $\widetilde H(t)$ in \eqref{G4.22} for the case $s\in\big[-\frac{3}{2}+\varepsilon,\frac{1}{2}\big)$.
Thanks to the decomposition $\widetilde f=\widetilde f^L+\widetilde f^H$, the semi-group estimates \eqref{G2.14}--\eqref{G2.15} in Proposition \ref{prop2.8}, the interpolation \eqref{interpolation}, and the product estimate \eqref{newinterpolation}, we arrive at
\begin{align}\label{G4.23}
\|\widetilde H(t)\|_{L_v^2(\dot B^{s}_{2,\infty})} 
\lesssim&\, \int_0^t \|e^{t\mathcal{B}}_{L} E\cdot (v\widetilde f,\nabla_v\widetilde f)(t-\tau)\|_{L_v^2(\dot B_{2,\infty}^{s})}{\rm d}\tau\nonumber\\
&+\int_0^t \|e^{t\mathcal{B}}_{H} E\cdot (v\widetilde f,\nabla_v\widetilde f)(t-\tau)\|_{L_v^2(\dot B_{2,\infty}^{s})}{\rm d}\tau  \nonumber\\
\lesssim&\, \int_0^t (1+\tau)^{-\frac{
3}{4}-\frac{s}{2}} \|   E\cdot (v\widetilde f,\nabla_v\widetilde f)(t-\tau)\|_{L_v^2(\dot B_{2,\infty}^{-\frac{3}{2}})} {\rm d}\tau \nonumber\\
&+\int_0^t e^{-\kappa_2\tau} \|   E\cdot (v\widetilde f,\nabla_v\widetilde f)(t-\tau)\|_{L_v^2(\dot B_{2,\infty}^{s+\frac{1}{2}})} {\rm d}\tau \nonumber\\
\lesssim&\, \int_0^t (1+\tau)^{-\frac{
3}{4}-\frac{s}{2}} \|   E\cdot (v\widetilde f,\nabla_v\widetilde f)(t-\tau)\|_{L_v^2(\dot B_{2,\infty}^{-\frac{3}{2}}\cap \dot B^{1-\varepsilon}_{2,\infty})} {\rm d}\tau \nonumber\\
\lesssim&\, \sup_{t\geq 0} \|E(t)\|_{\dot B^{-1}_{2,1}\cap \dot B^{\frac{3}{2}-\varepsilon}_{2,\infty}} \int_0^t(1+\tau)^{-\frac{
3}{4}-\frac{s}{2}} \|(v\widetilde f,\nabla_v\widetilde f)\|_{L_v^2(\dot H^1)}{\rm d}\tau\nonumber\\
\lesssim&\, \sup_{t\geq 0} \|E(t)\|_{\dot B^{-\frac{3}{2}}_{2,\infty}\cap  \dot H^N} \int_0^t(1+\tau)^{-\frac{
3}{4}-\frac{s}{2}} \|(v\widetilde f,\nabla_v\widetilde f)\|_{L_v^2(\dot H^1)}{\rm d}\tau.
\end{align}
Putting the estimates \eqref{G4.14} into \eqref{G4.23}, and noting that $-\frac{1}{4}+\frac{s_0}{2}>-\frac{1}{2}+\frac{\varepsilon}{2}+\frac{s_0}{2}$, we find that
\begin{align}\label{G4.24}
 \|\widetilde H(t)\|_{L_v^2(\dot B^{s}_{2,\infty})} 
\lesssim&\,     \delta_0 \int_0^t(1+\tau)^{-\frac{
3}{4}-\frac{s}{2}}  (1+t-\tau)^{-\frac{1}{4}+\frac{s_0}{2}}{\rm d}\tau \nonumber\\
&\,\times \Big(  \widetilde{\mathcal{D}}(t)+\|\widetilde f_0\|_{L_v^2(\dot H^1\cap \dot H^{N-1})}+\|\langle v\rangle\widetilde f_0 \|_{L_v^2(\dot H^1\cap\dot H^{N-2})} +\|\{\mathbf{I}-\mathbf{P}\}\widetilde f_0\|_{L_{x,v}^2} \nonumber\\
&\,\quad+\sum_{\substack{ 1\leq |\beta|\leq N-1 \\|\alpha|+|\beta| \leq N-1}}\|\partial^\alpha_{x}\partial^\beta_{v}\{\mathbf{I}-\mathbf{P}\}\widetilde f_0\|_{L_{x,v}^2 } \Big) \nonumber\\
\lesssim&\,  \delta_0 \Big(  \widetilde{\mathcal{D}}(t)+\|\widetilde f_0\|_{L_v^2(\dot H^1\cap \dot H^{N-1})}+\|\langle v\rangle\widetilde f_0 \|_{L_v^2(\dot H^1\cap\dot H^{N-2})} + \|\{\mathbf{I}-\mathbf{P}\}\widetilde f_0\|_{L_{x,v}^2} \nonumber\\
&\,\quad+\sum_{\substack{ 1\leq |\beta|\leq N-1 \\|\alpha|+|\beta| \leq N-1}}\|\partial^\alpha_{x}\partial^\beta_{v}\{\mathbf{I}-\mathbf{P}\}\widetilde f_0\|_{L_{x,v}^2 }\Big) \times(1+t)^{-\frac{s-s_0}{2}}.
\end{align}

For the case $s=\frac{1}{2}$, similar to \eqref{G3.9}, through a direct computation, we have
\begin{align}\label{G4.25}
 \|\widetilde H(t)\|_{L_v^2(\dot B^{\frac{1}{2}}_{2,\infty})} 
\lesssim&\,    \sup_{t\geq 0 }\|E\cdot (v\widetilde f,\nabla_v\widetilde f)(t)\|_{L_v^2(\dot B_{2,\infty}^{-\frac{3}{2}})}\nonumber\\
&+\sup_{t\geq 0 }\|E\cdot (v\widetilde f,\nabla_v\widetilde f)(t)\|_{L_v^2(\dot B_{2,\infty}^{\frac{1}{2}})}\int_0^te^{-\kappa_2\tau}{\rm d}\tau \nonumber\\
\lesssim&\, \sup_{t\geq 0} \|E(t)\|_{\dot B^{-\frac{3}{2}}_{2,\infty}\cap  \dot H^N} \|(v\widetilde f,\nabla_v\widetilde f)\|_{L_v^2(\dot H^1)} \nonumber\\
\lesssim&\,  \delta_0  \Big(  \widetilde{\mathcal{D}}(t)+\|\widetilde f_0\|_{L_v^2(\dot H^1\cap \dot H^{N-1})}+\|\langle v\rangle\widetilde f_0 \|_{L_v^2(\dot H^1\cap\dot H^{N-2})}+\|\{\mathbf{I}-\mathbf{P}\}\widetilde f_0\|_{L_{x,v}^2}  \nonumber\\
&+\sum_{\substack{ 1\leq |\beta|\leq N-1 \\|\alpha|+|\beta| \leq N-1}}\|\partial^\alpha_{x}\partial^\beta_{v}\{\mathbf{I}-\mathbf{P}\}\widetilde f_0\|_{L_{x,v}^2 }\Big)\times (1+t)^{\frac{\varepsilon}{2}-\frac{1}{4}}\nonumber\\
\lesssim&\,  \delta_0 \Big(  \widetilde{\mathcal{D}}(t)+\|\widetilde f_0\|_{L_v^2(\dot H^1\cap \dot H^{N-1})}+\|\langle v\rangle\widetilde f_0 \|_{L_v^2(\dot H^1\cap\dot H^{N-2})} + \|\{\mathbf{I}-\mathbf{P}\}\widetilde f_0\|_{L_{x,v}^2} \nonumber\\
&\,\quad+\sum_{\substack{ 1\leq |\beta|\leq N-1 \\|\alpha|+|\beta| \leq N-1}}\|\partial^\alpha_{x}\partial^\beta_{v}\{\mathbf{I}-\mathbf{P}\}\widetilde f_0\|_{L_{x,v}^2 }\Big) \times(1+t)^{-\frac{s-s_0}{2}},
\end{align}
for $s=s_0=\frac{1}{2}$.

Finally, we consider the case where $s\in \big( \frac{1}{2},1-\varepsilon\big]$, because $s - 2\in \big( -\frac{3}{2},-1-\varepsilon \big]$. Similar to \eqref{G3.7}, for any test function $\varPhi\in L_v^2(\mathcal{S})$,  it follows from \eqref{G2.16} that
\begin{align*} 
&\bigg\langle \int_0^t e^{(t-\tau)\mathcal{B}}_{L} E\cdot (v\widetilde f,\nabla_v\widetilde f)(\tau){\rm d}\tau, \varPhi      \bigg\rangle_{{x,v}}  \nonumber\\ 
\lesssim&\, \int_0^t \big( \big|E\cdot (v\widetilde f,\nabla_v\widetilde f)(\tau)\big|_{2}  ,\big|e_{L}^{(t-\tau)\mathcal{B}^*}\varPhi\big|_{2}  \big)  {\rm d}\tau \nonumber\\
\lesssim&\, \sup_{t\geq 0 }\|E\cdot (v\widetilde f,\nabla_v\widetilde f)(t)\|_{L_v^2(\dot B_{2,\infty}^{s
-2})} \int_0^{\infty}\| e_{L}^{\tau \mathcal{B}^*}\varPhi\|_{L_v^2(\dot B_{2,1}^{2-s})} {\rm d}\tau \nonumber\\
\lesssim &\, \sup_{t\geq 0 }\|E\cdot (v\widetilde f,\nabla_v\widetilde f)(t)\|_{ L_v^2(\dot B_{2,\infty}^{s-2})}  \|\varPhi\|_{L_v^2(\dot B_{2,1}^{-s})},
\end{align*}
which together with \eqref{duality} gives rise to
\begin{align}\label{G4.26}
 \|\widetilde H(t)\|_{L_v^2(\dot B^{s}_{2,\infty})} 
\lesssim&\,  \|\widetilde H_{L}(t)\|_{L_v^2(\dot B^{s}_{2,\infty})}+\|\widetilde H_{H}(t)\|_{L_v^2(\dot B^{s}_{2,\infty})} \nonumber\\
\lesssim&\,
\sup_{t\geq 0 }\|E\cdot (v\widetilde f,\nabla_v\widetilde f)(t)\|_{ \dot B_{2,\infty}^{s-2}}  +\sup_{0\leq t\leq T_1}\|E\cdot (v\widetilde f,\nabla_v\widetilde f)(t)\|_{ \dot B_{2,\infty}^{s}}\nonumber\\
\lesssim&\, \sup_{t\geq 0 }  \|E(t)\|_{\dot B^{-1}_{2,\infty}\cap \dot B^{1}_{2,1}}   \|  (v\widetilde f,\nabla_v\widetilde f)(t)\|_{ L_v^2(\dot B_{2,\infty}^{s+\frac{1}{2}})}  \nonumber\\
\lesssim&\, \sup_{t\geq 0} \|E(t)\|_{\dot B^{-\frac{3}{2}}_{2,\infty}\cap \dot H^{N} }   \|  (v\widetilde f,\nabla_v\widetilde f)(t)\|_{ L_v^2(\dot H^1\cap \dot H^{N-2} )} \nonumber\\
\lesssim&\, \delta_0  \Big(  \widetilde{\mathcal{D}}(t)+\|\widetilde f_0\|_{L_v^2(\dot H^1\cap \dot H^{N-1})}+\|\langle v\rangle\widetilde f_0 \|_{L_v^2(\dot H^1\cap\dot H^{N-2})} + \|\{\mathbf{I}-\mathbf{P}\}\widetilde f_0\|_{L_{x,v}^2} \nonumber\\
&\,\quad+\sum_{\substack{ 1\leq |\beta|\leq N-1 \\|\alpha|+|\beta| \leq N-1}}\|\partial^\alpha_{x}\partial^\beta_{v}\{\mathbf{I}-\mathbf{P}\}\widetilde f_0\|_{L_{x,v}^2 }\Big) \times(1+t)^{-\frac{1-\varepsilon-s_0}{2}}\nonumber\\
\lesssim&\, \delta_0  \Big(  \widetilde{\mathcal{D}}(t)+\|\widetilde f_0\|_{L_v^2(\dot H^1\cap \dot H^{N-1})}+\|\langle v\rangle\widetilde f_0 \|_{L_v^2(\dot H^1\cap\dot H^{N-2})} + \|\{\mathbf{I}-\mathbf{P}\}\widetilde f_0\|_{L_{x,v}^2} \nonumber\\
&\,\quad+\sum_{\substack{ 1\leq |\beta|\leq N-1 \\|\alpha|+|\beta| \leq N-1}}\|\partial^\alpha_{x}\partial^\beta_{v}\{\mathbf{I}-\mathbf{P}\}\widetilde f_0\|_{L_{x,v}^2 }\Big) \times(1+t)^{-\frac{s-s_0}{2}},
\end{align}
where we used the facts $s+\frac{1}{2}\in \big(1,\frac{3}{2}-\varepsilon\big]$ and  $-\frac{1-\varepsilon-s_0}{2}\leq -\frac{s-s_0}{2}$.

By collecting the estimates \eqref{G4.21} with \eqref{G4.24}, \eqref{G4.25} and \eqref{G4.26}, we complete the proof of \eqref{G4.19}.
\end{proof}

\subsection{Proof of asymptotic  stability of Cauchy problem }
In this subsection, with the help of Theorems \ref{T4.6}--\ref{T4.7}, we prove the stability of solutions for the problem \eqref{G1.5}.

\begin{proof}[Proof of Theorem \ref{Th2}]
Combining   \eqref{G4.14} in Theorem \ref{T4.6} with   \eqref{G4.19} in Theorem \ref{T4.7} yields
\begin{align*}
&\sup_{t\geq 0} (1+t)^{\frac{1-\varepsilon-s_0}{2}}  \Big( 
 \|\widetilde f(t)\|_{L_v^2(\dot H^1\cap \dot H^{N-1})}+\|\langle v\rangle\widetilde f(t) \|_{L_v^2(\dot H^1\cap\dot H^{N-2})} +\|\{\mathbf{I}-\mathbf{P}\}\widetilde f(t)\|_{L_{x,v}^2}  \Big)\nonumber\\
& +\sup_{t\geq 0} (1+t)^{\frac{1-\varepsilon-s_0}{2}}   \sum_{\substack{ 1\leq |\beta|\leq N-1 \\|\alpha|+|\beta| \leq N-1}}\|\partial^\alpha_{x}\partial^\beta_{v}\{\mathbf{I}-\mathbf{P}\}\widetilde f(t) \|_{L_{x,v}^2 }  +\sup_{t\geq 0} (1+t)^{\frac{s-s_0}{2}}\|\widetilde f\|_{L_v^2(\dot B_{2,\infty}^s)}  \nonumber\\
\lesssim&\,   \|\widetilde f_0\|_{L_v^2(\dot B_{2,\infty}^{s_0}\cap \dot B_{2,\infty}^{1-\varepsilon})}+\|\widetilde f_0\|_{L_v^2(\dot H^1\cap \dot H^{N-1})}+\|\langle v\rangle\widetilde f_0 \|_{L_v^2(\dot H^1\cap\dot H^{N-2})} +\|\{\mathbf{I}-\mathbf{P}\}\widetilde f_0\|_{L_{x,v}^2} \nonumber\\
&+\sum_{\substack{ 1\leq |\beta|\leq N-1 \\|\alpha|+|\beta| \leq N-1}}\|\partial^\alpha_{x}\partial^\beta_{v}\{\mathbf{I}-\mathbf{P}\}\widetilde f_0\|_{L_{x,v}^2 } \nonumber\\
\lesssim&\,  \|\widetilde f_0\|_{L_v^2(\dot B_{2,\infty}^{s_0}\cap \dot  H^{N-1})}+\|\langle v\rangle \widetilde f_0 \|_{L_v^2(\dot H^1\cap\dot H^{N-2})} +\|\{\mathbf{I}-\mathbf{P}\}\widetilde f_0\|_{L_{x,v}^2} \nonumber\\
&+\sum_{\substack{ 1\leq |\beta|\leq N-1 \\|\alpha|+|\beta| \leq N-1}}\|\partial^\alpha_{x}\partial^\beta_{v}\{\mathbf{I}-\mathbf{P}\}\widetilde f_0\|_{L_{x,v}^2 },
\end{align*}
for any $s\in\big[ -\frac{3}{2}+\varepsilon,1-\varepsilon\big]$,
where we used the smallness of $\delta_0$. This proves \eqref{G1.9}. Thus, we complete the proof of Theorem \ref{Th2}.
\end{proof}

\section{Existence and stability of  time-periodic solutions}
In this section, we shall prove the existence and stability of the time-periodic solution to the problem \eqref{G1.10}. By using Serrin’s method (cf. \cite{Serrin-1959-ARMA,Mp-1991-Nonlinearity}), we present our argument for the construction of time-periodic solutions of the Boltzmann equation.

\begin{proof}[Proof of Theorem \ref{Th1.3}]
Assume that the external force $E(t,x)$ is time-periodic with period $T>0$ and satisfies
\begin{equation}\label{pf.cond.E}
 \|E (t)\|_{\mathcal{C} (\mathbb R; \dot B_{2,\infty}^{-\frac{3}{2}}\cap \dot H^N)  }\leq \delta,
\end{equation}
with $N\geq 4$, where $0<\delta<\delta_0$ is a constant to be chosen later. Let $f_{*}(t,x,v)$ be the global solution of the Cauchy problem \eqref{G1.5} with the initial data $f_{*}(0,x,v)\equiv 0$ in view of Theorem \ref{Th1}. It then follows from Theorem \ref{Th1}  that
\begin{equation*}
f_\ast\in \mathcal{C}\big([0,\infty);L_v^2(\dot B_{2,\infty}^{\frac{1}{2}}\cap \dot H^N)\big),\quad  \langle v\rangle f_\ast\in \mathcal{C}\big([0,\infty);L_v^2( \dot H^1\cap \dot H^{N-1})\big),\quad 
\{\mathbf{I}-\mathbf{P}\}f_\ast\in \mathcal{C}\big([0,\infty);H^N_{x,v}\big),
\end{equation*}
and
\begin{equation} \label{G5.1}
\sup_{t\geq 0}\|f_\ast(t)\|_{\CE^{\frac{1}{2},N}}\leq  C_0\|E (t)\|_{\mathcal{C} (\mathbb R; \dot B_{2,\infty}^{-\frac{3}{2}}\cap \dot H^N)  }.
\end{equation}
Assuming $(1+C_0)\delta\leq \delta_0$, it follows from \eqref{pf.cond.E} and \eqref{G5.1} that
\begin{equation}\label{cond.fE}
\sup_{t\geq 0}\|f_\ast(t)\|_{\CE^{\frac{1}{2},N}}+\|E (t)\|_{\mathcal{C} (\mathbb R; \dot B_{2,\infty}^{-\frac{3}{2}}\cap \dot H^N)  }\leq  \delta_0.
\end{equation}
Let $m\geq k\geq 1$ be integers. From \eqref{cond.fE} and by applying Theorem \ref{Th1}, $f_*(t +(m-k)T,x,v)$ is the global solution of the Cauchy problem \eqref{G1.5} with the initial data $f_\ast((m-k)T,x,v)$. Let $0<\varepsilon<\frac{1}{2}$, then by Theorem \ref{Th2}, it follows that 
\begin{align*}
&\,\|f_*(t +(m-k)T)-f_{*}(t)\|_{L_v^2(\dot B^{1-\varepsilon}_{2,\infty}{\cap \dot H^{N-1}})}\nonumber\\
\lesssim&\,   (1+t)^{-\frac{1}{4}+\frac{\varepsilon}{2}}    \Big(\|f_*((m-k)T)-f_{*}(0)\|_{L_v^2(\dot B_{2,\infty}^{\frac{1}{2}}\cap\dot H^{N-1})}+ \big\|\{\mathbf{I}-\mathbf{P}\}(f_*((m-k)T)-f_{*}(0))\big\|_{L_{x,v}^2}  \nonumber\\
&+\big\|\langle v\rangle (f_*((m-k)T)-f_{*}(0))\big\|_{L_v^2(\dot H^1\cap \dot H^{N-2})}\nonumber\\
&+\sum_{\substack{ 1\leq |\beta|\leq N-1 \\|\alpha|+|\beta| \leq N-1}}\big\|\partial^\alpha_{x}\partial^\beta_{v}\{\mathbf{I}-\mathbf{P}\}(  f_*((m-k)T)-f_{*}(0))\big\|_{L_{x,v}^2 }\Big) \nonumber\\
\lesssim&\,  \delta_0(1+t)^{-\frac{1}{4}+\frac{\varepsilon}{2}} ,
\end{align*}
 for $t\geq 0$.
Taking $t=kT$ in the above inequality, one has
\begin{align}\label{G5.2}
&\,\|f_*(mT)-f_{*}(kT)\|_{L_v^2(\dot B^{1-\varepsilon}_{2,\infty}\cap \dot H^{N-1})}\lesssim \delta_0(1+kT)^{-\frac{1}{4}+\frac{\varepsilon}{2}},    
\end{align}
for all integers $m\geq k\geq 1$.
As $(1+kT)^{-\frac{1}{4}+\frac{\varepsilon}{2}}\rightarrow 0$ as $k\rightarrow \infty$ in \eqref{G5.2},  it shows that  the sequence  $\{f_*(nT)\}_{n\geq 1}$  is a Cauchy sequence in $L_v^2(\dot B_{2,\infty}^{1-\varepsilon}\cap \dot H^{N-1})$, so there is $f^{\infty}_*(x,v)\in L_v^2(\dot B_{2,\infty}^{1-\varepsilon}\cap\dot H^{N-1})$ such that 
\begin{align}\label{G5.4}
    {\lim_{n\rightarrow\infty}\|f_*(nT)-f_*^{\infty}\|_{L_v^2(\dot B_{2,\infty}^{1-\varepsilon}\cap\dot H^{N-1})}=0.}
\end{align}
Moreover, from \eqref{G5.1}, it holds
\begin{align} \label{G5.3}
\|f^{\infty}_*\|_{\CE^{\frac{1}{2},N}}\leq   \liminf_{n\to\infty}  \|f_*(nT)\|_{\CE^{\frac{1}{2},N}}\leq C_0\delta.
\end{align}
Therefore, from \eqref{G5.3} and \eqref{pf.cond.E}, the condition \eqref{G1.6} is satisfied with $f_0=f^{\infty}_*$. Applying Theorem \ref{Th1} again, let $f_{T}(t)$ be the global solution of the Cauchy problem \eqref{G1.5} with the initial data $f_{T}(0,x,v)=f^{\infty}_{*}(x,v)$. By Theorem \ref{Th2}, it holds
\begin{equation}
\label{pf.diff}
\|f_{T}(t)-f_{*}(t+(n-1)T)\|_{L_v^2(\dot B_{2,\infty}^{1-\varepsilon}\cap\dot H^{N-1})}\leq C_1\|f^{\infty}_*-{f_\ast}((n-1)T)\|_{L_v^2(\dot B_{2,\infty}^{1-\varepsilon}\cap\dot H^{N-1})},
\end{equation}
for any $t\geq 0$ and $n\geq 1$. By \eqref{pf.diff}, we let $t=T$ and take the infimum in $n$ so as to obtain
\begin{align*} 
&\|f_{T}({T})- f_{T}(0)\|_{L_v^2(\dot B_{2,\infty}^{1-\varepsilon}\cap\dot H^{N-1})}\\
&=\|f_{T}({T})- f^{\infty}_*\|_{L_v^2(\dot B_{2,\infty}^{1-\varepsilon}\cap\dot H^{N-1})}\\
&\leq 
 \liminf_{n\rightarrow\infty} \|f_{T}(T)-f_{*}(nT)\|_{L_v^2(\dot B_{2,\infty}^{1-\varepsilon}\cap\dot H^{N-1})}\nonumber\\ 
&\leq C_1 \liminf_{n\rightarrow\infty} \|f^{\infty}_*-{f_\ast}((n-1)T)\|_{L_v^2(\dot B_{2,\infty}^{1-\varepsilon}\cap\dot H^{N-1})}\nonumber\\
&= C_1 \lim_{n\rightarrow\infty} \|f^{\infty}_*-{f_\ast}((n-1)T)\|_{L_v^2(\dot B_{2,\infty}^{1-\varepsilon}\cap\dot H^{N-1})}\nonumber\\
&= 0,
\end{align*}
where we have used \eqref{G5.4} in the last line. 
Therefore, it holds 
$$
f_{T}({T},x,v)= f_{T}(0,x,v), \ \forall\,(x,v)\in \R^3\times \R^3,
$$
so $f_T(t,x,v)$ is the unique time-periodic solution with the same period $T>0$ for the problem \eqref{G1.10} and further \eqref{fT.bdd} follows. 

Finally, we prove the time decay estimate \eqref{G1.15}. In fact, using the embedding $L^p \hookrightarrow \dot B_{2,\infty}^{-3( \frac{1}{p}-\frac{1}{2})}$ in Proposition \ref{prop2.5} and \eqref{G1.9}, we compute that
\begin{align*}
   \|(f - f_T)(t)\|_{L_v^2(\dot H^{ s} )} \lesssim \|(f - f_T)(t)\|_{L_v^2(\dot B^{ s}_{2,1} )}  \lesssim&\,   \|(f - f_T)(t)\|_{L_v^2(\dot B_{2,\infty}^{-\frac{3}{2}+\varepsilon} )}^{\zeta^\prime} \|(f - f_T)(t)\|_{L_v^2(\dot B_{2,\infty}^{1-\varepsilon} )}^{1-\zeta^\prime}\nonumber\\
  \lesssim&\, (1 + t)^{-\frac{s}{2}-\frac{3}{2}(\frac{1}{p}-\frac{1}{2})}\|(f - f_T)(0)\|_{L_v^2( L^p\cap\dot H^N)},
\end{align*}
for any $s\in \big(-\frac{3}{2}+\varepsilon,1-\varepsilon \big)$, where $\zeta^\prime\in(0,1)$ satisfies
\begin{align*}
 \Big(-\frac{3}{2}+\varepsilon\Big)\zeta^\prime +  (1-\varepsilon)(1 - \zeta^\prime)=s.
\end{align*}
Thus, the proof of Theorem \ref{Th1.3} is completed.
\end{proof}

\noindent{\bf Acknowledgments:} 
The research of Renjun Duan was partially supported by the General Research Fund (Project No.~14303523) from RGC of Hong Kong and also by the grant from the National Natural Science Foundation of China (Project No.~12425109). The first author would like to thank Professor Yoshiyuki Kagei for many stimulating communications on the relevant topic over the past years.

\medskip
\noindent\textbf{Conflict of interest.} The authors do not have any possible conflicts of interest.

\medskip
\noindent\textbf{Data availability statement.}
 Data sharing is not applicable to this article as no data sets were generated or analyzed during the current study.

\bibliographystyle{plain}

\end{document}